\newtheorem{theorem}{Theorem}[chapter]
\newenvironment{proof}{\textbf{Proof:} }{\text{q.e.d.}}
\newtheorem{assumption}[theorem]{Assumption}
\newtheorem{proposition}[theorem]{Proposition}
{\theorembodyfont{\rmfamily}
\newtheorem{notation}[theorem]{Notation}
\newtheorem{convention}[theorem]{Convention}

\newtheorem{definition}[theorem]{Definition}

\newtheorem{lemma}[theorem]{Lemma}
\newtheorem{corollary}[theorem]{Corollary}
\newtheorem{remark}[theorem]{Remark}

\newtheorem{example}[theorem]{Example}}
\newcommand{\ncmd}{\newcommand}
\ncmd{\be}{\begin{enumerate}}
\ncmd{\ee}{\end{enumerate}}
\ncmd{\bi}{\begin{itemize}}
\ncmd{\ei}{\end{itemize}}
\ncmd{\beq}{\begin{equation}}
\ncmd{\eeq}{\end{equation}}
\ncmd{\bdfn}{\begin{definition}}
\ncmd{\edfn}{\end{definition}}
\ncmd{\bprop}{\begin{proposition}}
\ncmd{\eprop}{\end{proposition}}
\ncmd{\bthm}{\begin{theorem}}
\ncmd{\ethm}{\end{theorem}}
\ncmd{\brem}{\begin{remark}}
\ncmd{\erem}{\end{remark}}
\ncmd{\bcor}{\begin{corollary}}
\ncmd{\ecor}{\end{corollary}}
\ncmd{\bex}{\begin{example}}
\ncmd{\eex}{\end{example}}
\ncmd{\bnot}{\begin{notation}}
\ncmd{\enot}{\end{notation}}
\ncmd{\bass}{\begin{assumption}}
\ncmd{\eass}{\end{assumption}}
\ncmd{\blem}{\begin{lemma}}
\ncmd{\elem}{\end{lemma}}
\ncmd{\bproof}{\begin{proof}}
\ncmd{\eproof}{\end{proof}}
\ncmd{\idxS}[1]{\sindex[indexsymb]{#1}}
\ncmd{\idxD}[1]{\sindex[indexdef]{#1}}
\ncmd{\lra}{\mbox{$\longrightarrow$}}
\ncmd{\Lra}{\mbox{$\Longrightarrow$}}
\ncmd{\ura}{\underrightarrow}
\ncmd{\ua}{\mbox{$\uparrow$}}
\ncmd{\da}{\mbox{$\downarrow$}}
\ncmd{\hra}{\mbox{$\hookrightarrow$}}
\ncmd{\ra}{\mbox{$\rightarrow$}}
\ncmd{\Ra}{\mbox{$\Rightarrow$}}
\ncmd{\eqa}{\mbox{$\leftrightarrow$}}
\ncmd{\Eqa}{\mbox{$\Leftrightarrow$}}
\ncmd{\leqa}{\mbox{$\longleftrightarrow$}}
\ncmd{\Leqa}{\mbox{$\Longleftrightarrow$}}
\ncmd{\iso}{\mbox{$\tilde{\ra}$}}
\ncmd{\liso}{\mbox{$\tilde{\lra}$}}
\ncmd{\Reinfty}{\mbox{$\mathbb{R}\cup\{\infty\}$}}
\ncmd{\bbN}{\mathbb{N}}
\ncmd{\bbZ}{\mbox{$\mathbb{Z}$}}
\ncmd{\bbR}{\mbox{$\mathbb{R}$}}
\ncmd{\ggT}{\operatorname{ggT}}
\ncmd{\Gcd}[2]{\mathop{\text{gcd}(#1,#2)}}
\ncmd{\tf}[1]{\textfrak{#1}}
\ncmd{\calli}[1]{\mathcal{#1}}
\ncmd{\gfrak}{\textfrak{g}}
\ncmd{\hfrak}{\textfrak{h}}
\ncmd{\ti}[1]{\tilde{#1}}
   \ncmd{\tiV}{\ti{V}}\ncmd{\tiD}{\ti{D}}\ncmd{\tisigma}{\ti{\sigma}}
   \ncmd{\tij}{\ti{j}}\ncmd{\tii}{\ti{i}}
   \ncmd{\titfh}{\ti{\textfrak{h}}}\ncmd{\titfgt}{\ti{\textfrak{g}}}
   \ncmd{\tiJ}{\ti{J}}\ncmd{\tiJplus}{\ti{J}^+}\ncmd{\tiG}{\ti{G}}
   \ncmd{\tibG}{\mathbf{\ti{G}}} \ncmd{\tibH}{\mathbf{\ti{H}}}
\ncmd{\leftexp}[2]{{\vphantom{#2}}^{#1}{#2}}
\ncmd{\graded}[2]{\mbox{$\tf{gr}_{#1}(#2):=\oplus_i({#1}^i{#2}/{#1}^{i+1}{#2})$}}
\ncmd{\umin}[1]{\underset{#1}{\min}}
\ncmd{\uinf}[1]{\underset{#1}{\inf}}
\ncmd{\usup}[1]{\underset{#1}{\sup}}
\ncmd{\umax}[1]{\underset{#1}{\max}}
\ncmd{\uomin}[2]{\underset{#1}{\overset{#2}{\min}}}
\ncmd{\uoinf}[2]{\underset{#1}{\overset{#2}{\inf}}}
\ncmd{\uosup}[2]{\underset{#1}{\overset{#2}{\sup}}}
\ncmd{\uomax}[2]{\underset{#1}{\overset{#2}{\max}}}
\ncmd{\sequence}[1]{\mbox{$({#1}_n)_\mathbb{N}$}}
\ncmd{\pF}{\tf{p}_F}
\ncmd{\oK}{\mbox{$o_K$}}
\ncmd{\pK}{\mbox{$\textfrak{p}_K$}}
\ncmd{\pD}{\mbox{$\textfrak{p}_D$}}
\ncmd{\pFpow}[1]{\mbox{$\textfrak{p}_F^{#1}$}}
\ncmd{\pDpow}[1]{\mbox{$\textfrak{p}_D^{#1}$}}
\ncmd{\oDelta}{\mbox{$o_{\Delta}$}}
\ncmd{\pDelta}{\mbox{$\textfrak{p}_{\Delta}$}}
\ncmd{\vr}[1]{\mbox{$o_{#1}$}}
\ncmd{\vi}[1]{\mbox{$\textfrak{p}_{#1}$}}
\ncmd{\vipower}[2]{\mbox{$\tf{p}_{#1}^{#2}$}}
\ncmd{\Char}{\operatorname{char}}
\ncmd{\Building}{\tf{B}^1}
\ncmd{\building}{\tf{B}}
\ncmd{\Ical}{\mathcal{I}}
\ncmd{\IAE}{\mbox{$\mathcal{I}^{E^{\times}}$}}
\ncmd{\IB}{\mbox{$\mathcal{I}_E$}}
\ncmd{\Om}[1]{\mbox{$\Omega_{#1}$}}
\ncmd{\OmE}[1]{\mbox{$\Omega_{E,{#1}}$}}
\ncmd{\bary}{\mathop{bary}}
\ncmd{\typ}{\operatorname{typ}}
\ncmd{\Stern}{\operatorname{Stern}}
\ncmd{\diam}{\operatorname{diam}}
\ncmd{\Norm}[2]{\operatorname{Norm}_{#1}(#2)}
\ncmd{\Normone}[2]{\operatorname{Norm}^1_{#1}(#2)}
\ncmd{\Normtwo}[2]{\operatorname{Norm}^2_{#1}(#2)}
\ncmd{\ord}{\oparatorname{ord}}
\ncmd{\Ord}{\operatorname{Ord}}
\ncmd{\lattices}[1]{\operatorname{lattices}({#1})}
\ncmd{\rad}[1]{\operatorname{rad}({#1})}
\ncmd{\Her}[1]{\operatorname{Her}({#1})}
\ncmd{\Latt}[2]{\operatorname{Latt}_{#1}(#2)}
\ncmd{\Gitter}[2]{\operatorname{Latt}(#2,#1)}
\ncmd{\LC}{\operatorname{LC}}
\ncmd{\Lattone}[2]{\operatorname{Latt}^1_{#1}(#2)}
\ncmd{\Latttwo}[2]{\operatorname{Latt}^2_{#1}(#2)}
\ncmd{\Inn}{\operatorname{Inn}}
\ncmd{\Centr}{\operatorname{Z}}
\ncmd{\Z}{\operatorname{Z}}
\ncmd{\lmult}{\operatorname{l}}
\ncmd{\rmult}{\operatorname{r}}
\ncmd{\ind}{\operatorname{ind}}
\ncmd{\Gal}{\mathop{\text{Gal}}}
\ncmd{\LietiG}{\mbox{$\mathop{Lie}(\tilde{G})$}}
\ncmd{\LF}{\operatorname{LF}}
\ncmd{\gyr}{\textfrak{g}_{x,r}}
\ncmd{\hxr}{\textfrak{h}_{x,r}}
\ncmd{\Matr}{\operatorname{M}}
\ncmd{\trd}{\operatorname{trd}}
\ncmd{\Nrd}{\operatorname{Nrd}}
\ncmd{\trace}{\operatorname{tr}}
\ncmd{\Gram}{\operatorname{Gram}}
\ncmd{\gr}{\operatorname{\tf{gr}}}
\ncmd{\degree}{\operatorname{deg}}
\ncmd{\Skew}{\operatorname{Skew}}
\ncmd{\Sym}{\operatorname{Sym}}
\ncmd{\diag}{\operatorname{diag}}
\ncmd{\antidiag}{\operatorname{antidiag}}
\DeclareMathOperator{\row}{row}
\DeclareMathOperator{\MopRow}{Row}
\ncmd{\EMatr}[1]{\mathds{1}_{#1}}
\ncmd{\Mint}[3]{\mathop{\text{M}_{#1,#2}(#3)}}
\ncmd{\trans}[1]{{#1}^{\ensuremath{\mathsf{T}}}} 
\ncmd{\Lie}{\operatorname{Lie}}
\ncmd{\Abb}{\operatorname{Abb}}
\ncmd{\Top}{\operatorname{Top}}
\ncmd{\Hom}{\operatorname{Hom}}
\ncmd{\End}{\operatorname{End}}
\ncmd{\Aut}{\operatorname{Aut}}
\ncmd{\Iso}{\operatorname{Iso}}
\ncmd{\im}{\operatorname{im}}
\ncmd{\rang}{\operatorname{rang}}
\DeclareMathOperator{\image}{im}
\DeclareMathOperator{\rank}{rank}
\ncmd{\id}{\operatorname{id}}
\ncmd{\Span}{\operatorname{span}}
\ncmd{\tens}[3]{{#1}\otimes_{#2}{#3}}
\ncmd{\proj}{\operatorname{proj}}
\ncmd{\PO}{\operatorname{PO}}
\ncmd{\SL}{\text{SL}}
\ncmd{\GL}{\text{GL}}
\ncmd{\Sp}{\operatorname{Sp}}
\ncmd{\SO}{\operatorname{SO}}
\ncmd{\Ogp}{\operatorname{O}}
\ncmd{\Rad}{\operatorname{Rad}}
\ncmd{\nr}{\operatorname{nr}}
\ncmd{\GLDV}{\mbox{$\GL_DV$}}
\ncmd{\SLDV}{\mbox{$\SL_DV$}}
\ncmd{\U}{\operatorname{U}}
\ncmd{\SU}{\operatorname{SU}}
\ncmd{\AfSp}[2]{\operatorname{\mathbf{A}}^{#1}_{K}}
\ncmd{\Res}{\operatorname{Res}}
\ncmd{\bA}{\operatorname{\mathbf{A}}}
\ncmd{\bGm}{\operatorname{\mathbf{G}_{\mathbf{m}}}}
\ncmd{\bG}{\operatorname{\mathbf{G}}}
\ncmd{\bH}{\operatorname{\mathbf{H}}}
\ncmd{\bMatr}{\operatorname{\mathbf{M}}}
\ncmd{\bU}{\operatorname{\mathbf{U}}}
\ncmd{\bSU}{\operatorname{\mathbf{SU}}}
\ncmd{\bSL}{\operatorname{\mathbf{SL}}}
\ncmd{\bO}{\operatorname{\mathbf{O}}}
\ncmd{\bSO}{\operatorname{\mathbf{SO}}}
\ncmd{\bSp}{\operatorname{\mathbf{Sp}}}
\ncmd{\bGL}{\operatorname{\mathbf{GL}}}
\ncmd{\bT}{\operatorname{\mathbf{T}}}
\ncmd{\bAfSp}[1]{\operatorname{\mathbf{A}^{#1}}}
\ncmd{\Weylgroup}{\mbox{$\leftexp{v}{W}$}}
\ncmd{\Osheaf}[1]{\mbox{$\textfrak{O}_{#1}$}}
\ncmd{\RingedSpace}[1]{\mbox{$(#1,\Osheaf{#1})$}}
\ncmd{\maximalIdeal}[1]{\mbox{$\textfrak{m}_{#1}$}}
\ncmd{\Invariante}[2]{\mathop{\text{Iv}({#1},{#2})}}
\ncmd{\pairs}{\mathop{\text{pairs}}}
\ncmd{\zzmatrix}[4]{\left( \begin{array}{cc}#1&#2\\#3&#4 \end{array}\right)}
\ncmd{\Eb}{Euclidean building}
\ncmd{\Gr}{geometric realisation}
\ncmd{\vs}{vector space}
\ncmd{\nd}{non-degenerated}
\ncmd{\na}{non-archimedean}
\ncmd{\fd}{finite dimensional}
\begin{document}
\newindex[Index of definitions]{indexdef}
\newindex[Index of notation]{indexsymb}
\makeatletter
\begin{titlepage}
\begin{spacing}{1}
\newlength{\parindentbak}
  \setlength{\parindentbak}{\parindent}
\newlength{\parskipbak}
  \setlength{\parskipbak}{\parskip}

\setlength{\parindent}{0pt}
  \setlength{\parskip}{\baselineskip}

\thispagestyle{empty}

\expandafter\Hu@titlepagefont\expandafter

\begin{center}
{\LARGE \textbf{\ifx\Hu@doctitle\empty\default{doctitle}\else\Hu@doctitle\fi}}

\ifx\Hu@docsubtitle\empty\default{}\else\Hu@docsubtitle\fi

\so{DISSERTATION}

zur Erlangung des akademischen Grades

\ifx\Hu@degree\empty\default{degree}\else\Hu@degree\fi\\
im Fach
\ifx\Hu@subject\empty\default{subject}\else\Hu@subject\fi

eingereicht an der\\
\ifx\Hu@faculty\empty\default{faculty}\else\Hu@faculty\fi\\
\ifx\Hu@university\empty\default{university}\else\Hu@university\fi

von\\
\textbf{\ifx\Hu@authorprefix\empty\default{authorprefix}\else\Hu@authorprefix\fi\ \ifx\Hu@authorfirstname\empty\default{authorfirstname}\else\Hu@authorfirstname\fi\ \ifx\Hu@authorsurname\empty\default{authorsurname}\else\Hu@authorsurname\fi\ \ifx\Hu@authorsuffix\empty\default{authorsuffix}\else\Hu@authorsuffix\fi}\\
\ifx\Hu@authoradd\empty\default{authoradd}\else\Hu@authoradd\fi
\end{center}

\vfill

Pr\"asident der \ifx\Hu@university\empty\default{university}\else\Hu@university\fi:\\
\ifx\Hu@president\empty\default{president}\else\Hu@president\fi

Dekan der \ifx\Hu@faculty\empty\default{faculty}\else\Hu@faculty\fi:\\
\ifx\Hu@dean\empty\default{dean}\else\Hu@dean\fi

Gutachter:
\begin{tplist} 
  \ifx\Hu@approvala\empty\item \default{approvala}\else\item \Hu@approvala\fi
  \ifx\Hu@approvalb\empty\else\item \Hu@approvalb\fi
  \ifx\Hu@approvalc\empty\else\item \Hu@approvalc\fi
  \ifx\Hu@approvald\empty\else\item \Hu@approvald\fi
  \ifx\Hu@approvale\empty\else\item \Hu@approvale\fi
\end{tplist}

\begin{exlist}
  \item[eingereicht am:] \ifx\Hu@datesubmitted\empty\default{datesubmitted}\else\Hu@datesubmitted\fi 
  \item[Tag der m\"undlichen Pr\"ufung:] \ifx\Hu@dateexam\empty\default{dateexam}\else\Hu@dateexam\fi
\end{exlist}

\setlength{\parindent}{\parindentbak}
\setlength{\parskip}{\parskipbak}
\end{spacing}
\end{titlepage}
\makeatother
\tableofcontents
\newpage
\begin{abstract}
\section*{Introduction} 
This thesis is devoted to the description and characterisation of affine maps between enlarged Bruhat-Tits buildings of certain reductive groups 
over non-Archime\-dean local fields. More precisely we consider subgroups of classical groups which arise 
as the centraliser of a rational Lie algebra element which generate a semisimple algebra over the base field, and 
we study affine embeddings of the corresponding Bruhat-Tits buildings. Our approach is based on the fact that the building can be described in terms of lattice functions. 

In part two we consider unit groups of local central simple algebras or in other words general linear groups with coeffitients in a local division algebra under no assumption on the characteristic. Here we use the affine embeddings of Bruhat-Tits buildings of centraliser subgroups in order to recover the embedding data from the work of Broussous and Grabitz. Embedding data play an important role in the construction of simple types. It is to underline the usefulness of such affine embeddings and that we may expect further results in the future. 

\subsection*{Part 1}
For the construction of types for p-adic unitary groups S. Stevens applied a result of his paper with P. Broussous \cite{broussousStevens:09}. He used a map between the enlarged buildings of a centraliser and the group to apply an induction. The important property of this map is the compatibility with the Lie algebra filtrations (\textbf{CLF}) which correspond to the Moy-Prasad filtrations \cite{moyPrasad:94}. In that paper the quaternion algebra case is missing and the authors proposed a uniqueness and generalization conjecture to the reader. 
Functoriality questions for affine maps between Bruhat-Tits buildings have been studied before.
One work to mention is Landvogt's paper \cite{landvogt:00} and the paper of P. Broussous with B.Lemaire \cite{broussousLemaire:02} and with S.Stevens \cite{broussousStevens:09} 
which I am going to explain in more detail below. For example E. Landvogt already proved in \cite[2.1.1.]{landvogt:00} that for an inclusion $H\subseteq G$ of connected reductive $K$-groups there is toral and $H(L)$- and $\Gal(L|K)$-equivariant map from the enlarged Bruhat-Tits building of $H(L)$ into that of $G(L)$ such that after a normalisation of the metric of the latter building the map is isometrical. In his work he assumed $L|K$ to be a quasi-local extension. 

We consider a p-adic field $k_0$ of residue characteristic not two, a $k_0$-form $\bG:=\bU(h)$ of $\bGL_n,\ \bSp_n,$ or $\bO_n$ where $h$ is a 
signed hermitian form, a Lie algebra element $\beta\in\Lie(\bG)(k_0)$ such that $k_0[\beta]$ is semisimple and its centraliser $\bH:=\bU(h)_{\beta}.$ P. Broussous and S.Stevens give a model in terms of lattice functions for the enlarged Bruhat-Tits building $\Building(\bH,k_0)$ if $\beta$ is separable. This model leads them to the definition of $\Building(\bH,k_0)$ if $\beta$ is not separable. We embed $\Building(\bH,k_0)$ into $\Building(\bG,k_0)$ by an affine, $\bH(k_0)$-equivariant CLF-map $j.$  In 
\cite{broussousLemaire:02} such a map was fully studied in the other case of $\bGL_n(D)$ instead of $\bU(h)$ and in 
\cite{broussousStevens:09} the authors considered the case where the image of $h$ is a field and $k_0$ has an odd residual characteristic. In the latter paper the authors showed that if $\beta$ is non-zero and generates a field then the CLF-property determines $j.$ 
In this thesis we consider the general case, more precisely we include the quaternion algebra case and we analyse uniqueness without any further restriction on $\beta.$ The group $\bH$ decomposes under $\beta$ into classical groups $\bH_i.$ We construct the map $j$ such that it has the above properties, see theorem \ref{thmExistence}, and we prove at first in theorem 
\ref{thmUniquenessForJGLIsEmpty} that there is no other CLF-map from $\Building(\bH,k_0)$ to $\Building(\bG,k_0)$ if the groups $\bH_i$ are unitary, i.e. of the form $\bU(h_i),$ and not $k_0$-isomorphic to the isotropic $\bO_2.$  Secondly we show that in general a $\Centr(\bH^0(k_0))$-equivariant, affine CLF-map from $\Building(\bH,k_0)$ to $\Building(\bG,k_0)$ has to be unique up to a translation of the building $\Building(\bH,k_0),$ see \ref{thmCLFUnitaryCase}. A summary of the theorems of part one is given in chapter 
\ref{chapterSummaryOfTheTheorems}.
For the buildings we use the model with lattice functions which are introduced in \cite{broussousLemaire:02} and \cite{broussousStevens:09}.

The aim of chapter 1 is to give the exact definition of $\bGL_D(V)$ and $\bU(h).$ We also repeat the notion of a signed hermitian form and a Witt decomposition.

Chapter 2 relies heavily on results which are summarised in the appendix. The second aim of this work is to give a complete definition of the Bruhat-Tits building for $\bGL_D(V)$ over a p-adic field and for    
$\bU(h)$ over a p-adic field of residue characteristic not two. The way of construction is taken from 
the articles of Bruhat and Tits. We give the definition of several kinds of lattice functions and shortly introduce the Lie algebra filtrations. 

For the next two chapters we fix a separable Lie algebra element until section 
\ref{secUniquenessInTheGeneralCase}.
In chapter 3 the section \ref{secCLFProperty} is devoted to the definition of the CLF-property. After recalling results of \cite{broussousLemaire:02} we prove the existence of a CLF-map in the case of $\bU(h).$ The proof of the torality of the constructed map is given in chapter  \ref{chapterTorality}.

Chapter 4 provides the proof of the uniqueness results stated above and in section \ref{secGeneralisationToTheNonseparableCase} we show how the preceding results of chapter 3 and 4 generalise to the case of a non-separable Lie algebra element.

\subsection*{Part 2}
In the whole part 2 we consider a finite dimensional skewfield $D$ with centre a p-adic field $F.$
Embedding types were introduced in the paper of Broussous and Grabitz \cite{broussousGrabitz:00}. They considered one step on the way to construct the smooth dual of $G:=\GL_m(D)$ using Bushnell and Kutzko's strategy \cite{bushnellKutzko:93} for $\GL_n(F).$ The aim is to produce a list of possible candidates for simple types, i.e. a list of pairs $(J,\lambda)$ consisting of a compact mod center subgroup $J$ and a smooth irreducible representation of $J$ with two properties. The second property states that if two paires are contained in the same irreducible representation of $G$ then they are conjugate under the action of $G.$ The idea is to construct the list of $(J,\lambda)$ by an inductive procedure using simple strata. A simple stratum is a quadruple $[\tf{a},n,q,\beta],$ especially consisting of a hereditary order $\tf{a}$ normalised by an element $\beta$ of $A:=\Matr_m(D)$ which generates a field $E.$ To prove the second property above they needed a rigidity for simple strata relying on a description of the way $E|F$ is embedded in $A.$ This was done by introducing numerical invariants.

Let $E_D|F$ be the maximal unramified subextension of $E|F$ which can be embedded in $D.$ 
In part 2 we show how to obtain the embedding type of $(E,\tf{a})$ if one applies  
\[j_{E_D}:\building(\bGL_m(D),F)^{E_D^{\times}}\ra\building(Z_{\bGL_m(D)}(E_D),E_D)\] on the barycenter $M_{\tf{a}}$ of $\tf{a}$ (see \ref{thmconnection}). The inverse of $j_{E_D}$ is the unique CLF-map from the latter building into $\building(\bGL_m(D),F).$ The map was constructed and analysed by Broussous and Lemaire in \cite{broussousLemaire:02}.

In chapter 8 we recall the definition of embedding type and we introduce the easy numerical tools which enables us to decode the embedding type from $y:=j_{E_D}(M_{\tf{a}}).$

The aim of chapter 9 is to describe the simplicial structure of $\building(\bGL_m(D),F)$ in terms of lattice chains as it has been done in \cite{broussousLemaire:02}.

In chapter 10 we state the connection between the oriented barycentric coordinates of $y,$ i.e. the so called local type
of $y,$ and the embedding type of $(E,\tf{a}).$

At the end of the whole introduction I want to thank P. Broussous and Prof. Zink for giving me the first and the second topic respectively and for the whole and patient support. Broussous mainly gave me the hint to use roots for proving lemma \ref{lemBeta=0} and he helpfully pointed out mistakes and proofread my notes several times.
Prof. Zink proofread the second part and introduced me into its background.
I thank S. Stevens for stimulating discussions about or around the topic in Norwich. 
At the end I thank the DFG for supporting my doctorial between January 2006 and December 2008.

\section*{General notation}

\be
\item All rings we consider in this thesis are unital.
\item The set of natural numbers starts with 1 and the set of the first $r$ natural numbers is denoted by ${\bbN}_r.$\idxS{${\bbN}_r$} For the set of non-negative integers we use the symbol $\mathbb{N}_0$\idxS{$\mathbb{N}_0$} and the set of its first $r+1$ elements is writen as $\mathbb{N}_0^r.$\idxS{$\mathbb{N}_0^r$} 
\item If $k$ is a non-Archimedean local field with valuation $\nu$ we 
denote by
\bi
\item $o_k$ the valuation ring of $k,$\idxS{$o_k$}
\item $\tf{p}_k$\idxS{$\tf{p}_k$} the valuation ideal of $k$ and by
\item $\kappa_k$\idxS{$\kappa_k$} the residue field of $k.$
\ei
\item For an arbitrary field $k$ we fix an algebraic closure $\bar{k}$ and the maximal inseparable (resp. separable) field extension of $k$ in $\bar{k}$ is denoted by $k^{isep}$ (resp. $k^{sep}$).
\item If we have fixed a local field $(k,\nu)$ we also write $\nu$ for the unique extension of $\nu$ to $D$ for any finite dimensional skewfield extension $D|k.$  We also use the notation $o_D,\tf{p}_D$ and $\kappa_D.$ We write $e(D|k)$\idxS{$e(D\mid k)$} for the ramification index and 
$f(D|k)$\idxS{$f(D\mid k)$} for the inertia degree.\idxS{$o_D$}\idxS{$\tf{p}_D$}\idxS{$\kappa_D$}
\item The symbol $\Centr (N)$ denotes the center of $N$ and we write $\Centr_N(M)$ for the centraliser of $M$ in $N$ for a set $N$ with multiplication and a subset $M$ of $N.$\idxS{$\Centr_N(M),$ centraliser of $M$ in $N$}
\idxS{$\Centr(G),$ centre of $G$}
\item The $m\times m$ identity matrix is denoted by $\EMatr{m}.$\idxS{$\EMatr{m}$}
\ee  
\end{abstract}

\cleardoublepage
\part{Canonical maps, buildings and centralisers}\label{part1}
We fix a field $k.$ We have the following conventions on $k.$
\bi
\item In section \ref{subsecAlgebrasWithInvolution} and \ref{secFormsOfClassicalGroups} the characteristic of $k$ is not two.
\item In this part from chapter \ref{chapBruhatTitsBuildingOfAClassicalGroup} on we assume $k$ to be a non-Archimedian local 
field with discrete valuation $\nu.$ 
\item In this part from section \ref{secSelfDualLatticeFunctions} on we further assume $k$ to 
have residue characteristic not two.
\ei
 
\chapter{Classical groups}
\section{Algebraic preliminaries}


\subsection{Semisimple algebras}

In this subsection we do not need any restriction on the characteristic of $k.$
A finite dimensional $k$-algebra $A$ is \textit{simple}\idxD{simple algebra} if there is no ideal of $A$ which is different from $\{0\}$ and $A,$ and it is semisimple
if $A$ has no nilpotent ideal except the zero-ideal.

\begin{theorem}(Wedderburn)
If $A$ is a finite dimensional semisimple $k$-algebra there is a unique natural number $m$
and an $m$-tuple 
\[(p_1,\ldots,p_m)\]
of pairs
\[p_i=([D_i],n_i)\]
consisting of a $k$-algebra isomorphism class of a skewfield $D_i$ and a natural number $n_i$ such that there is a $k$-algebra isomorphism
\[A\cong\prod_{i=1}^m\Matr_{n_i}(D_i).\]\idxS{$\Matr_{n}(D)$ set of matrices of size $n\times n$ with entries in $D$}
Up to permutation the $m$-tuple $(p_1,\ldots,p_m)$ is uniquely determined by $A.$
\end{theorem}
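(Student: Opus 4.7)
The plan is to separate existence from uniqueness. For existence, first I would decompose $A$ as an internal direct product of simple two-sided ideals. Since $A$ is finite dimensional and semisimple (equivalently, the Jacobson radical vanishes, because any minimal nilpotent ideal would itself be a nonzero nilpotent ideal), $A$ viewed as a left module over itself is a sum of simple submodules. Grouping these by isomorphism class yields the isotypic components; each is a two-sided ideal of $A$, and distinct components annihilate one another, so $A\cong\prod_{i=1}^m A_i$ with each $A_i$ a simple $k$-algebra.

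Next, for each simple factor $A_i$ I would carry out the classical Artin--Wedderburn argument. Choose a minimal nonzero left ideal $L_i\subseteq A_i$, which exists by finite dimensionality. By Schur's lemma, $D_i:=\End_{A_i}(L_i)^{op}$ is a skewfield, and $L_i$ becomes a finite dimensional left $D_i$-module of some dimension $n_i$. Left multiplication by elements of $A_i$ then gives a $k$-algebra homomorphism $A_i\ra\End_{D_i}(L_i)\cong\Matr_{n_i}(D_i)$. Injectivity follows because the kernel is a proper two-sided ideal of the simple algebra $A_i$, and surjectivity follows from the Jacobson density theorem; in the finite dimensional situation it suffices to match $k$-dimensions on both sides once the image is known to be dense.

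For uniqueness I would argue in two stages. The decomposition $A=\prod A_i$ is intrinsic, because the factors are exactly the minimal nonzero two-sided ideals of $A$, so the unordered collection $\{A_1,\ldots,A_m\}$ is determined by $A$. Within each simple factor, $D_i$ is recovered (up to $k$-algebra isomorphism) as the opposite of the endomorphism ring of any simple left $A_i$-module, and all such modules are mutually isomorphic for a simple artinian algebra. Once $[D_i]$ is fixed, $n_i$ is determined by the dimension equation $\dim_k A_i = n_i^2\dim_k D_i$.

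The main obstacle I expect is establishing surjectivity of $A_i\ra\Matr_{n_i}(D_i)$ and tracking the opposite-ring convention in Schur's lemma consistently; in finite dimensions this reduces to dimension bookkeeping rather than a genuine density argument, but it is easy to introduce sign errors between left/right module structures on $L_i$ and on $D_i$. The remaining ingredients (existence of minimal ideals, Schur's lemma, and the uniqueness of the simple module of a simple artinian algebra) are essentially formal once the hypothesis "semisimple" is unpacked as "no nonzero nilpotent two-sided ideal" and then recast in module-theoretic language via the finite dimensional assumption.
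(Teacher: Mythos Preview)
Your proposal is a correct outline of the classical Artin--Wedderburn proof: decompose $A$ into simple two-sided ideals via isotypic components, realize each simple factor as a matrix ring over the opposite of the endomorphism skewfield of a minimal left ideal, and recover uniqueness from the intrinsic nature of the minimal two-sided ideals together with a dimension count. The steps you flag as delicate (surjectivity via density or dimension comparison, and the left/right bookkeeping for $D_i$) are exactly the places where care is needed, and you handle them appropriately.

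There is nothing to compare against, however: the paper states Wedderburn's theorem as a classical result and gives no proof. So your write-up stands on its own as a standard and correct argument.
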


\begin{remark}
\be
\item If $A$ in the theorem is commutative it is $k$-isomorphic to a product of fields.
\item A finite dimensional simple $k$-algebra is $k$-isomorphic to a matrix ring, because $A$ is unital by our general notation.  
\ee
\end{remark}

\begin{definition}
A finite dimensional $k$-algebra is \textit{separable}\idxD{separable algebra} if 
for every field extension $L|k$ the $L$-algebra
$\tens{A}{k}{L}$ is semisimple.
\end{definition}

\begin{remark}
A commutative finite dimensional $k$-algebra is separable 
if and only if it is $k$-isomorphic to a product of separable 
extension fields of $k.$
\end{remark}

\begin{definition}
An element $\beta$ of a finite dimensional $k$-algebra $A$ is 
 \textit{separable}\idxD{separable} if the $k$-algebra
$k[\beta]$ is separable.
\end{definition}

\begin{definition}\label{defSimpleDatum}
A triple $(A,V,D)$ consisting of:
\bi
\item a skewfield $D$ which is a finite dimensional $k$-algebra
\item a finite dimensional right $D$-vector space $V$ 
\item and $A:=\End_D(V)$
\ei
is called a \textit{simple $k$-datum}.\/\idxD{simple $k$-datum}
Such a datum is \textit{central}\/\idxD{central simple $k$-datum} if the center of $D$ is $k.$
If we want to emphasize $m:=\dim_DV$ and $d:=\deg(D)$ then we
write $(A,V,m,D,d)$ instead of $(A,V,D)$ and if in addition we need a 
maximal Galois extension $L$ of $k$ in $D$ we write
$(A,V,m,D,d,L|k).$\idxS{$(A,V,m,D,d,L\mid k)$ simple $k$-datum}
A simple datum is \textit{local}\idxD{local simple datum} if $k$ is a non-Archimedean local 
field.
\end{definition}


\subsection{Algebras with involution}\label{subsecAlgebrasWithInvolution}
For references we recommand the books \cite{knus:91},\\ \cite{knus:98} and \cite{scharlau:85}. We assume $\Char(k)\neq 2.$

An \textit{involution}\/\idxD{involution} of a ring is an antimultiplicative ring automorphism of order 1 or 2.

\begin{notation}
\be
\item If $\sigma$ is an involution on the ring $R,$ we introduce the following standard notation.
\[\Sym(R,\sigma):=\{r\in R\mid \sigma(r)=r\}\]\idxS{$\Sym(R,\sigma)$}
\[\Skew(R,\sigma):=\{r\in R\mid \sigma(r)=-r\}\]\idxS{$\Skew(R,\sigma)$} 
for the set of symmetric and the set of skewsymmetric elements of $R.$
\item For a semisimple finite dimensional $k$-algebra $A$ with involution $\sigma$
we denote by 
\[\U(\sigma):=\{a\in A^\times|\ \sigma(a)a=1\}\]\idxS{$\U(\sigma)$}
the \textit{unitary group}\/\idxD{unitary group} of $(A,\sigma)$
and by 
\[\SU(\sigma):=\{a\in \U(\sigma)|\ \Nrd(a)=1\}\]\idxS{$\SU(\sigma)$}
the \textit{special unitary}\/\idxD{special unitary group} group of $(A,\sigma).$
\ee
The symbol $\Nrd$ denotes the reduced norm of $A$ over $k.$\idxS{$\Nrd$}
\end{notation}

\begin{definition}
Let $A$ be a simple central finite dimensional $k$-algebra.
An involution $\sigma$ of $A$ is \textit{of the first kind}\/\idxD{involution of the first kind}
if it fixes every element of $\Centr(A)$ and 
\textit{of the second kind}\/\idxD{involution of the second kind} otherwise.
We call an involution of the first kind on $A$ \textit{orthogonal}\/\idxD{orthogonal involution}
if 
\[\dim_k\Sym(A,\sigma)>\dim_k \Skew(A,\sigma)\]
and \textit{symplectic}\/\idxD{symplectic involution} otherwise.
An involution of the second kind is also called \textit{unitary}\idxD{unitary involution}.
\end{definition}

\begin{remark}
For a symplectic involution on $A$ we have 
\[\dim_k\Sym(A,\sigma)<\dim_k \Skew(A,\sigma)\]
and for a unitary involution we get an
equality.
\end{remark}

\begin{assumption}
For this section we fix a central simple $k$-datum $(A,V,m,D,d).$
We assume that there is an involution $\rho$ on $D.$
\end{assumption}

One way to obtain an involution on $A$ is to take the adjoint involution of an $\epsilon$-hermitian form.

\begin{definition}
Fix an $\epsilon$ which is $1$ or $-1.$
An \textit{$\epsilon$-hermitian form on $V$}\idxD{hermitian form} is a biadditive map $h$ from $V\times V$ to $D$ such that
\be
\item $h(v,w)=\epsilon\rho(h(w,v))$ for all $v,w\in V$ and 
\item $h$ is sesquilinear in the first coordinate and linear in the second coordinate, i.e. 
\[ h(wd_1,vd_2)=\rho(d_1)h(w,v)d_2,\]
and 
\item it is non-degenerate.
\ee
The pair $(V,h)$ is called an \textit{$\epsilon$-hermitian space.}\idxD{hermitian space}
An $\epsilon$-hermitian form $h_1$ and a $\delta$-hermitian form $h_2$ are equivalent if there is an element $b\in k^\times$ such that $bh_1=h_2.$  
An involution $\sigma$ of $\End_D(V)$ is called \textit{the adjoint involution}\/\idxD{adjoint involution} of $h,$ and it is denoted by $\sigma_h,$ if for every $a\in\End_D(V)$ and for every $v,w\in V$ we have \[h(a(v),w)=h(v,\sigma(a)(w)).\]  
\end{definition}

If we do not want to state the $\epsilon$ of an $\epsilon$-hermitian form we write \textit{signed} \idxD{signed hermitian form} hermitian form.  
There is a general notion of quadratic forms given in \cite[7.3.3]{scharlau:85} which includes the notion of signed hermitian forms for the case of characteristic different from 2.

\begin{proposition}\label{propHermFInv}
There is a bijection from the set of equivalence classes of signed hermitian forms to the set of involutions of 
$\End_D(V)$ whose restriction to $\Centr(D)$ is $\rho.$ 
\end{proposition}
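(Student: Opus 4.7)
The plan is to build the bijection explicitly in both directions. For the forward map, given a non-degenerate $\epsilon$-hermitian form $h$, non-degeneracy yields for every $a \in \End_D(V)$ a unique $\sigma_h(a) \in \End_D(V)$ with $h(a(v),w) = h(v,\sigma_h(a)(w))$ for all $v,w \in V$. Anti-multiplicativity of $\sigma_h$ is immediate and $\sigma_h^2 = \id$ follows from $\epsilon^2 = 1$ combined with $h(w,v) = \epsilon\rho(h(v,w))$; scaling $h$ by $b \in k^\times$ leaves $\sigma_h$ unchanged, so the assignment descends to equivalence classes. Testing the defining relation on a scalar $z \in \Centr(D)$ acting as $z \cdot \id_V$ and using sesquilinearity in the first slot forces $\sigma_h(z) = \rho(z)$, placing $\sigma_h$ in the required target set.

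For injectivity I would argue as follows. If $\sigma_{h_1} = \sigma_{h_2}$, then non-degeneracy of $h_1$ realises it as a $\rho$-semilinear isomorphism $V \to \Hom_D(V,D)$, so any second non-degenerate form on $V$ can be written uniquely as $h_2(v,w) = h_1(v,u(w))$ for some $u \in \End_D(V)^\times$. Substituting into the two adjoint relations and cancelling via non-degeneracy of $h_1$ yields $\sigma_{h_2}(a) = u^{-1}\sigma_{h_1}(a)u$; under the hypothesis, $u$ therefore commutes with all of $\End_D(V)$, so $u \in \Centr(\End_D(V)) = k^\times$ and hence $h_2 \sim h_1$.

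For surjectivity, fix once and for all a non-degenerate signed hermitian reference form $h_0$ on $V$ with adjoint $\sigma_0$ (such forms exist by the usual diagonal construction). Given $\sigma$ as in the statement, both $\sigma$ and $\sigma_0$ restrict to $\rho|_k$ on $\Centr(\End_D(V)) = k$, so $\sigma \circ \sigma_0$ is a $k$-algebra automorphism of $\End_D(V)$ fixing the centre; by Skolem--Noether there exists $u \in \End_D(V)^\times$ with $\sigma(a) = u\sigma_0(a)u^{-1}$. Applying $\sigma$ once more and using $\sigma^2 = \id$ forces $\lambda := u^{-1}\sigma(u)$ to lie in $k^\times$ and to satisfy $\lambda\rho(\lambda) = 1$. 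In the first-kind case this immediately gives $\lambda \in \{\pm 1\}$; in the second-kind case I invoke Hilbert~90 for the quadratic extension $k \mid k^{\rho}$ to produce $\gamma \in k^\times$ with $\lambda = \gamma\rho(\gamma)^{-1}$, and replace $u$ by $\gamma u$, which normalises $\lambda$ to $1$ without altering $\Inn(u)$. Setting $h(v,w) := h_0(v,u^{-1}(w))$, a short calculation using $\sigma_0(u^{-1}) = \lambda^{-1}u^{-1}$ gives $h(w,v) = \epsilon_0\lambda\,\rho(h(v,w))$, so $h$ is $(\epsilon_0\lambda)$-hermitian with adjoint involution $\sigma_h = \sigma$.

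The hardest step is the second-kind case of surjectivity: without the Hilbert~90 normalisation one only obtains a $\lambda$-hermitian form with $\lambda \in k^\times$ of relative norm one, not a genuinely \emph{signed} ($\pm 1$-)hermitian one. Everything else is routine manipulation of the adjoint defining relation combined with Skolem--Noether and the centrality of $\End_D(V)$ over $k$.
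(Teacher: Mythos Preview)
Your proof is correct and follows essentially the same route as the paper: injectivity via showing the comparison element lies in the centre of $\End_D(V)$, and surjectivity via Skolem--Noether applied against a reference hermitian form together with Hilbert~90 to normalise the resulting norm-one scalar. The only cosmetic difference is that the paper carries this out in Gram-matrix coordinates (with reference involution $C\mapsto\rho(C)^T$) and splits cases according to whether $\rho$ fixes $\lambda$, whereas you work coordinate-free and split according to the kind of $\sigma$.
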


For a given signed hermitian form $h$ we have the map
\[\hat{h}:\ V\ra V^*,\ \hat{h}(v)(w):=h(v,w),\]
where $V^*$ is the dual vector space of $V.$
It is an isomorphism of $D$-left vector spaces where $D$ acts on $V$ on the left via $dv:=v\rho(d).$

\begin{proof}
Equivalent signed hermitian forms have the same adjoint involution. 
To prove the injectivity assume $\sigma_{h_1}=\sigma_{h_2}$ and we call this involution $\sigma.$ It implies that 
$\psi:=\hat{h}_2^{-1}\circ \hat{h}_1$ is in the center of $\End_D(V),$ because for $a\in\End_D(V)$ and $v,w\in V$ we have 
\begin{align*}
h_2(\psi(a(v)),w) &= h_1(a(v),w)\\
&= h_1(v,\sigma(a)(w))\\
&= h_2(\psi(v),\sigma(a)(w))\\
&= h_2(a(\psi(v)),w).
\end{align*} 
Thus $h_1$ and $h_2$ are equivalent. 
If one chooses a $D$-basis $(v_i)$ of $V$ the map 
\[h_{(v_i)}:\ (\sum_iv_i\lambda_i,\sum_iv_i\mu_i)\mapsto \sum_i\rho(\lambda_i)\mu_i\] 
is a hermitian form whose adjoint involution $\sigma_{(e_i)}$ equals to $\rho$ on $\Centr(D).$
We identify $A$ with $\Matr_m(D)$ and we have $\sigma_{(e_i)}(C)=\rho(C)^T$ where $\rho(C)$ is meant to be the matrix obtained after applying $\rho$ to every entry of $C.$
The surjectivity is now given by the Skolem-Noether theorem, more precisely: we take an involution $\sigma$ whose restriction to $\Centr(D)$ is $\rho.$ By the Skolem-Noether theorem there is a $B\in\GL_m(D)$ which satisfies
\[\sigma(C)=B\rho(C)^TB^{-1}\] for all matrices $C.$ By $\sigma^2=\id$ we get that 
there is a $\lambda\in\Centr(D)$ such that $B\rho(B^{-1})^T=\lambda \EMatr{m}$ and $\rho(\lambda)\lambda$ is $1$ by $\sigma_{(e_i)}^2=\id.$

\textbf{Case 1:} If $\rho$ fixes $\lambda,$ the matrix $B^{-1}$ is a Gram matrix of a $\lambda$-hermitian form with adjoint involution $\sigma.$

\textbf{Case 2:} If $\lambda$ is not a fixed point of $\rho,$ Hilbert's 90th theorem \cite[12.3]{kersten:90} implies the existence of an element $\alpha\in\Centr (D)^{\times}$ such that 
\[\alpha\rho(\alpha)^{-1}=\lambda.\] 
Thus $\alpha B^{-1}$ is the Gram matrix of a $1$-hermitian form whose adjoint involution is $\sigma.$
\end{proof}
\vspace{1em}

To study groups $\U(\sigma)$ we need Witt's theorem. 

\begin{definition}\idxS{$\antidiag(\ldots)$}
An $r\times r$-matrix $M$ is \textit{antidiagonal}\/\idxD{antidiagonal} if all entries $m_{ij}$ with $i+j\neq r+1$ are zero. We denote an antidiagonal matrix $M$ by 
\[\antidiag(m_{r,1},m_{r-1,2},\ldots,m_{1,r}).\] 
\end{definition}

The following theorem uses $\Char(k)\neq 2.$

\begin{theorem}{(Witt) \cite[7.9.2 (iii)]{scharlau:85}}\label{thmWitt}
Let $h$ be an $\epsilon$-hermitian form of $V.$ Then there is a basis $(v_i)$ of $V$ such that the Gram matrix 
$\Gram_{(v_i)}(h)$ of $h$ over $(v_i)$ has the form
\beq\label{eqGramMatr}
\left(\begin{array}{ccc}
0 & M & 0 \\
\epsilon M & 0 & 0 \\
0 & 0 & B
\end{array}\right)
\eeq
such that $M=\antidiag(1,\ldots,1)$ and that the $\epsilon$-hermitian form corresponding to $B$ is anisotropic. 
\end{theorem}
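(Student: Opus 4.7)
The plan is to reduce to the standard Witt decomposition of $(V,h)$: split off an orthogonal sum of hyperbolic planes as large as possible, leaving an anisotropic orthogonal complement, and then simply reorder the basis to match the antidiagonal shape in (\ref{eqGramMatr}).

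The heart of the proof is the following lemma. If $V$ contains a nonzero isotropic vector $v$, then $V$ contains a hyperbolic plane, i.e. a $D$-subspace $\Span_D(v,w)$ with $h(v,v)=h(w,w)=0$ and $h(v,w)=1$. To build $w$, I first use nondegeneracy to pick $w_1\in V$ with $h(v,w_1)\neq 0$; rescaling on the right via linearity in the second slot I obtain $w_2$ with $h(v,w_2)=1$. Then I replace $w_2$ by $w:=w_2+v\mu$ with $\mu:=-\epsilon h(w_2,w_2)/2$. Since $h(v,v)=0$, one checks $h(v,w)=1$, and using $\rho(h(w_2,w_2))=\epsilon h(w_2,w_2)$ together with $\Char(k)\neq 2$ a short computation gives $h(w,w)=h(w_2,w_2)+\epsilon\mu+\rho(\mu)=0$. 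This is the only place in the argument where the hypothesis $\Char(k)\neq 2$ is used.

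Given the lemma I iterate: setting $U_1:=\Span_D(v,w)$, its restricted form is nondegenerate, so $V=U_1\perp U_1^{\perp}$ and $h$ restricts nondegenerately to $U_1^{\perp}$. If that orthogonal complement still contains a nonzero isotropic vector I split off a second hyperbolic plane, and so on; the process terminates by finite-dimensionality. The result is a decomposition $V=U_1\perp\cdots\perp U_r\perp V_0$ with $U_i=\Span_D(x_i,y_i)$ satisfying $h(x_i,y_i)=1$ and $h(x_i,x_i)=h(y_i,y_i)=0$, and with $V_0$ either zero or anisotropic. Now choose any $D$-basis $u_1,\ldots,u_s$ of $V_0$ and reorder everything as
\[v_1=x_1,\ \ldots,\ v_r=x_r,\ v_{r+1}=y_r,\ v_{r+2}=y_{r-1},\ \ldots,\ v_{2r}=y_1,\ v_{2r+1}=u_1,\ \ldots,\ v_{2r+s}=u_s.\]
Mutual orthogonality of the $U_i$ and of each $U_i$ with $V_0$ forces all blocks of $\Gram_{(v_i)}(h)$ outside the three shown in (\ref{eqGramMatr}) to vanish. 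Inside the top-middle $r\times r$ block we have $h(v_i,v_{r+j})=h(x_i,y_{r+1-j})=\delta_{i,r+1-j}$, which is exactly $M=\antidiag(1,\ldots,1)$; the middle-left block is then $\epsilon M$ by hermitian symmetry, and the bottom-right block is the Gram matrix $B$ of $h|_{V_0}$ in $(u_l)$, which is anisotropic by construction.

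The main obstacle is the hyperbolic-plane lemma, where one must keep track of the sides on which scalars from the noncommutative $D$ act and verify that the correcting scalar $\mu=-\epsilon h(w_2,w_2)/2$ actually solves $\epsilon\mu+\rho(\mu)=-h(w_2,w_2)$ in both the symmetric and the skew-symmetric case. Everything after that is formal induction and reindexing.
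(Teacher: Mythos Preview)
The paper does not give its own proof of this theorem; it simply cites \cite[7.9.2 (iii)]{scharlau:85} and moves on. Your argument is correct and is essentially the standard textbook proof one finds in Scharlau: split off hyperbolic planes one at a time using the lemma that any nonzero isotropic vector extends to a hyperbolic pair (here $\Char(k)\neq 2$ is genuinely needed to solve $\epsilon\mu+\rho(\mu)=-h(w_2,w_2)$), iterate on the orthogonal complement, and then reindex. Your bookkeeping with the sesquilinearity and the computation $\rho(h(w_2,w_2))=\epsilon\,h(w_2,w_2)$ are handled correctly, and the final reordering yields exactly the antidiagonal block $M$ and its companion $\epsilon M$.
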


\begin{definition}
Under the assumptions of the theorem the size $r$ of $M$ does not 
depend on the basis $(v_i).$ We call $r$ the \textit{Witt index of $h.$}\idxD{Witt index}
\end{definition}

\begin{definition}\label{defWittDecomp}
Let $r$ be the Witt index of a signed hermitian form $h.$
A set of 1-dimensional vector subspaces 
\[\{V_1,V_{-1},V_2,V_{-2},\ldots V_{r},V_{-r}\}\]
together with an anistropic $D$-subvector space $V_0$ of $V$
such that 
\[\bigoplus_i V_i=V\]
is called a \textit{Witt decomposition}\idxD{Witt decomposition} of $h$ if for all 
\[i,j\in\{1,-1,2,-2,\ldots,r,-r\}\]
we have
\be
\item $h(V_i,V_j)=0$ if $i\neq -j$ and
\item $h(V_i,V_0)=0.$
\ee
\end{definition}

\begin{definition}
If $\rho$ is the identity of $D,$ implying $D$ equals $k,$
then a $1$-hermitian form is called \textit{symmetric bilinear 
form}\/\idxD{symmetric bilinear form} and a $-1$-hermitian form is said to be a \textit{skew symmetric}\/\idxD{skew symmetric bilinear form} bilinear form.
\end{definition}

\begin{theorem}\cite[7.6.3]{scharlau:85}\label{thmOrthogonalSum}
If we exclude the skew symmetric case from the assumptions of the last theorem, i.e. the case where $\epsilon=-1$ and $\rho=\id_D,$ the vector space $V$ has an orthogonal basis.
\end{theorem}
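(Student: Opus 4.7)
The strategy is to argue by induction on $\dim_D V$ once one shows that in each non-excluded case there exists $v\in V$ with $h(v,v)\neq 0$. Given such a $v$, the $D$-line $Dv$ is non-degenerate, so $V=Dv\perp(Dv)^\perp$; the restriction of $h$ to the orthogonal complement remains a non-degenerate signed hermitian form with the same sign and underlying involution, and the induction hypothesis supplies an orthogonal basis of $(Dv)^\perp$ which, adjoined to $v$, yields the desired orthogonal basis of $V$.

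The existence of such a $v$ splits into two cases. In Case A, namely $\rho=\id_D$ and $\epsilon=1$, the fact that the involution is trivial forces $D$ to be commutative, so $D=k$ and $h$ is an ordinary symmetric bilinear form. If $h(v,v)$ were zero for every $v$, the polarisation identity
\[h(v+w,v+w)-h(v,v)-h(w,w)=2h(v,w),\]
together with $\Char(k)\neq 2$, would force $h\equiv 0$, contradicting the non-degeneracy of $h$.

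Case B is $\rho\neq\id_D$. Here I would first apply Theorem~\ref{thmWitt} to decompose $V$ orthogonally as a sum of hyperbolic planes together with an anisotropic summand $V_0$. On $V_0$ one has $h(v,v)\neq 0$ for every nonzero $v$ by definition of anisotropy, so the induction applies immediately there. On a hyperbolic plane $H=De_1\oplus De_2$ with Gram matrix $\zzmatrix{0}{1}{\epsilon}{0}$, a direct computation gives
\[h(e_1+\alpha e_2,\,e_1+\alpha e_2)=\alpha+\epsilon\rho(\alpha).\]
For $\epsilon=1$ the choice $\alpha=1$ yields $2\neq 0$; for $\epsilon=-1$ the hypothesis $\rho\neq\id_D$ produces some $\alpha\in D$ with $\rho(\alpha)\neq\alpha$, whence $\alpha-\rho(\alpha)\neq 0$. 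Setting $v:=e_1+\alpha e_2$ and $w:=e_1-\epsilon\rho(\alpha)e_2$, one checks $h(v,w)=0$ and $h(w,w)=-h(v,v)\neq 0$, producing an orthogonal basis of $H$.

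The main obstacle I foresee is Case B: although on the anisotropic summand the induction is routine, handling each hyperbolic plane individually in the noncommutative hermitian setting requires the explicit choice of $\alpha$ above and uses $\rho\neq\id_D$ in an essential way. The hypothesis $\Char(k)\neq 2$ enters twice: in the polarisation step of Case A, and in delimiting the theorem's scope, since the excluded case $\rho=\id_D$, $\epsilon=-1$ gives $h(v,v)=-h(v,v)$ and hence $h(v,v)=0$ for all $v$, ruling out the existence of any orthogonal basis there.
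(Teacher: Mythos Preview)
The paper does not give its own proof of this theorem: it is stated with a citation to \cite[7.6.3]{scharlau:85} and used as a black box. So there is no paper proof to compare against; I simply assess your argument.

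Your proof is correct and is essentially the standard one found in Scharlau. The inductive scheme (find an anisotropic vector, split off its line, recurse) is exactly right, and your case analysis is sound. In Case~A the polarisation identity together with $\Char(k)\neq 2$ gives the required anisotropic vector. In Case~B your explicit treatment of a hyperbolic plane is correct: with $v=e_1+e_2\alpha$ one has $h(v,v)=\alpha+\epsilon\rho(\alpha)$, and your choices $\alpha=1$ for $\epsilon=1$, respectively any $\alpha$ with $\rho(\alpha)\neq\alpha$ for $\epsilon=-1$, do the job; the companion vector $w=e_1-e_2\,\epsilon\rho(\alpha)$ indeed satisfies $h(v,w)=0$ and $h(w,w)=-h(v,v)\neq 0$, and $\{v,w\}$ is a $D$-basis of the plane since $\alpha+\epsilon\rho(\alpha)\neq 0$.

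Two small remarks. First, $V$ is a \emph{right} $D$-vector space in this paper, so write $vD$ rather than $Dv$, and $e_2\alpha$ rather than $\alpha e_2$; your formulas are correct once read this way. Second, strictly speaking you do not need the Witt decomposition in Case~B: one can argue directly that if $h(v,v)=0$ for all $v$ then polarisation forces $a+\epsilon\rho(a)=0$ for every $a$ in the image of $h$, hence for every $a\in D$ by nondegeneracy, which contradicts $\rho(1)=1$ (if $\epsilon=1$) or $\rho\neq\id_D$ (if $\epsilon=-1$). Either route is fine.
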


\begin{definition}
A $D$-basis of $V$ such that the Gram matrix of an $\epsilon$-hermitian form $h$ is of the form in theorem \ref{thmWitt} such that 
the matrix $B$ is diagonal is called a \textit{Witt basis}\idxD{Witt basis} of $h.$
\end{definition}

\begin{corollary}\label{corExistOfWittBasis}
Under the assumptions of theorem \ref{thmWitt} for every 
Witt de\-com\-position of $h$ there is a Witt basis of $h$ such that the isotropic vectors of the basis 
span the isotropic lines and the other vectors together span the 
anisotropic vector space.  
\end{corollary}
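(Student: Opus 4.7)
My plan is to build a Witt basis directly from the given Witt decomposition $\{V_1, V_{-1}, \ldots, V_r, V_{-r}\}$ together with $V_0$. For each $i \in \{1, \ldots, r\}$ I choose an arbitrary nonzero vector $v_i \in V_i$ and then a companion $v_{-i} \in V_{-i}$ normalised so that $h(v_i, v_{-i}) = 1$. For the anisotropic part I apply Theorem \ref{thmOrthogonalSum} to $V_0$ to obtain an orthogonal $D$-basis $(u_1, \ldots, u_s)$; this is legitimate because in the skew-symmetric case excluded by that theorem every vector is isotropic, so $V_0 = 0$ there and the statement is vacuous for the $V_0$-component.

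The companion $v_{-i}$ exists by non-degeneracy of $h$. Indeed, the Witt decomposition axioms already make $v_i$ orthogonal to every $V_j$ with $j \neq -i$ and to $V_0$; if in addition $h(v_i, V_{-i}) = 0$ held, $v_i$ would lie in the radical of $h$, a contradiction. Thus for any nonzero $w \in V_{-i}$ one has $h(v_i, w) \in D^{\times}$, and using that $h$ is $D$-linear in the second argument the vector $v_{-i} := w \cdot h(v_i, w)^{-1}$ yields $h(v_i, v_{-i}) = 1$.

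With these choices I order the basis as $(v_1, \ldots, v_r, v_{-r}, v_{-r+1}, \ldots, v_{-1}, u_1, \ldots, u_s)$ and check the Gram matrix block by block: the upper-left and lower-right $r \times r$ blocks vanish because $h(V_i, V_j) = 0$ whenever $i$ and $j$ have the same sign, and the cross blocks with the $V_0$-part vanish by $h(V_i, V_0) = 0$. In the upper-right block the entry in position $(i, 2r+1-i)$ equals $h(v_i, v_{-i}) = 1$ and all others vanish, producing $M = \antidiag(1, \ldots, 1)$; the lower-left block is then $\epsilon M$ by the $\epsilon$-hermitian identity $h(v_{-i}, v_i) = \epsilon \rho(h(v_i, v_{-i})) = \epsilon$, and the $V_0$-block is diagonal by construction. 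The only point deserving real care is the sesquilinearity bookkeeping in the rescaling step: since $h$ is $\rho$-sesquilinear in the first variable but $D$-linear in the second, the correction factor $h(v_i, w)^{-1}$ must be absorbed from the right into $w \in V_{-i}$, not into $v_i$, or else a $\rho$-twist would appear and the normalisation would fail. Once the ordering of the basis and this convention are fixed, the verification of the block structure is routine.
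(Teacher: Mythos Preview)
Your proof is correct and follows exactly the approach the paper intends: the corollary is stated in the paper without proof, as an immediate consequence of Theorems \ref{thmWitt} and \ref{thmOrthogonalSum}, and you have filled in precisely those details. Your handling of the skew-symmetric exception to Theorem \ref{thmOrthogonalSum} (observing that then $V_0=0$) and the sesquilinearity bookkeeping in the normalisation step are both correct.
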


\begin{notation}\idxS{$\bU(h)$}
\idxS{$\bSU(h)$}
We set 
\[\U(h):=\U(\sigma_h) \text{ and } \SU(h):=\SU(\sigma_h)\] if $h$ is an $\epsilon$-hermitian form on $V.$ 
\end{notation}

\begin{definition}\label{defLocalHermkDatum}
A \textit{hermitian $k$-datum}\idxD{hermitian $k$-datum} is a tuple 
\[((A,V,D),\rho,k_0,h,\epsilon,\sigma)\]\idxS{$((A,V,D),\rho,k_0,h,\epsilon,\sigma)$}
\bi
\item $(A,V,D)$ is a central simple $k$-datum,
\item $\rho$ is an involution of $D$ whose set of central fixed points is $k_0,$
\item $h$ is an $\epsilon$-hermitian form on $V$ with adjoint involution $\sigma.$
\ei
A hermitian datum is \textit{local}\idxD{local hermitian $k$-datum} if $k$ is a non-Archimedean local field and $\rho$ is continuous under the valuation of $k.$ In this case $k_0$ is local too.
We write $(k,\nu)$-datum instead of $k$-datum to emphasize the valuation $\nu$ on $k.$ 
\end{definition}


\section{Forms of classical groups}\label{secFormsOfClassicalGroups}

A good introduction in the theory of classical groups can be found in 
\cite{platonovRapinchuk:94}. We only consider $\Char(k)\neq 2.$ 
Let us fix a natural number $n.$ 

\begin{notation}\idxS{$\bA^n$}\idxS{$\bGL_n$}\idxS{$\bSL_n$}
$\bA^n$ denotes the affine space of dimension $n.$ The groups $\bGL_n$ (resp. $\bSL_n$) are the general linear group (resp. the special linear group). All are considered as affine algebraic group schemes defined over the prime field of $k.$
\end{notation}

\begin{definition}\label{defOrthSymplGroup} We consider the transposition $( )^T$\idxS{$( )^T$} on $\bGL_n(\bar{k}).$ We denote by $\bO_n$\idxS{$\bO_n$} the \textit{orthogonal group}\idxD{orthogonal group}, i.e the subscheme of $\bGL_n$ which is defined by the equation $g^Tg=1,$  and by $\bSp_{2n}$ the \textit{symplectic group}\idxS{$\bSp_{2n}$}, i.e. the subscheme of $\bGL_{2n}$ given by the equation
\[g^TJg=J,\] where $J$ is 
\beq
\left(\begin{array}{cc}
0 & M \\
-M & 0 
\end{array}\right)
\eeq
and $M:=\antidiag(1,\ldots,1).$ 
The \textit{special orthogonal group}\idxD{special orthogonal group} $\bSO_n$\idxS{$\bSO_n$} is the intersection of $\bO_n$ with 
$\bSL_n.$
\end{definition}

In this section we use the notion of a $k$-form and therefore we give a general definition here.

\begin{definition}
An algebraic group defined over $k$ is called a \textit{$k$-group}\idxD{$k$-group}. Two $k$-groups 
are \textit{$k$-isomorphic to each other}\/\idxD{$k$-isomorphic} if there is an isomorphism of algebraic groups defined over $k$ between them. Let $L|k$ be a field extension in $\bar{k}|k$ and let $G$ be an $L$-group. A $k$-group $H$ is a {$k$-form of $G$} if $H$ and $G$ are $\bar{k}$-isomorphic. A $k$-form $H$ of $G$ is an {$L|k$-form of $G$} if $G$ and $H$ are $L$-isomorphic to each other.
\end{definition}

\begin{convention}
Instead of ``isomorphic as algebraic groups'' we only write ``isomorphic''.
\end{convention}

\begin{definition}
A \textit{classical group in the strict sense}\idxD{classical group in the strict sense} is an algebraic group which is $\bar{k}$-isomorphic to $\bSL_n(\bar{k}),\ \bSO_n(\bar{k})$ or $\bSp_{2n}(\bar{k}).$  
\end{definition}

\begin{proposition}\label{propSymAndSkewsymForms}
Let $V$ be an $n$-dimensional $\bar{k}$-vector space equipped with a non-de\-ge\-ne\-rate symmetric or alternate bilinear form $h.$ 
We assume that $V$ has a $k$-structure $V_k,$ i.e. $\tens{V_k}{k}{\bar{k}}=V,$ and that
\[\sigma_h=\tens{\sigma}{k}{\bar{k}}\] 
for an involution $\sigma$ of $\End_k(V_k)$ of the first kind. Then the following holds.
\be
\item If $h$ is alternate then the group $\U(h)$ equals $\SU(h)$ and is a $k|k$-form of $\bSp_{n}(\bar{k}).$
\item If $h$ is symmetric then the group $\SU(h)$ (resp. $\U(h)$) is a  $k^{sep}|k$-form of $\bSO_n(\bar{k})$ (resp. $\bO_n(\bar{k})$).
\ee
\end{proposition}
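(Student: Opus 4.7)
The plan is to translate everything back to a $k$-rational datum, then diagonalise or symplectically normalise the form on $V_k$ using the structure theorems already established.

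First I would use Proposition \ref{propHermFInv} applied to the $k$-algebra $\End_k(V_k)$, with $D=k$ and $\rho=\id_k$ (permissible since $\sigma$ is of the first kind), to produce a signed bilinear form $h_k$ on $V_k$ whose adjoint involution is $\sigma$. Base-changing to $\bar k$, the form $h_k\otimes_k\bar k$ has adjoint involution $\sigma\otimes_k\bar k=\sigma_h$, so by the uniqueness half of Proposition \ref{propHermFInv} (applied over $\bar k$) it is $\bar k^{\times}$-equivalent to $h$. In particular $h_k$ has the same symmetry type $\epsilon$ as $h$, and the $k$-group $\U(h_k)\subseteq\bGL(V_k)$ satisfies $\U(h_k)\otimes_k\bar k=\U(h)$, so it suffices to realise $\U(h_k)$ over $k$ (resp.\ $k^{sep}$) as the desired classical group.

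Next I would handle the alternating case. Since $h_k$ is non-degenerate alternating, its anisotropic part vanishes, so Theorem \ref{thmWitt} produces a $k$-basis of $V_k$ in which $\Gram(h_k)$ is exactly the matrix $J$ of Definition \ref{defOrthSymplGroup}; the induced $k$-isomorphism $\GL(V_k)\iso\bGL_n$ identifies $\U(h_k)$ with $\bSp_n$ over $k$, giving a $k|k$-form. The identity $\U(h)=\SU(h)$ then follows from the classical fact that every symplectic matrix has determinant one (e.g.\ via the Pfaffian of the Gram matrix).

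For the symmetric case I would instead invoke Theorem \ref{thmOrthogonalSum} to obtain an orthogonal $k$-basis of $V_k$, so that $\Gram(h_k)=\diag(a_1,\dots,a_n)$ with $a_i\in k^{\times}$. Because $\Char(k)\neq 2$, each polynomial $x^2-a_i$ is separable, so $\sqrt{a_i}\in k^{sep}$, and rescaling the basis by $\sqrt{a_i}^{-1}$ transports $h_k\otimes_k k^{sep}$ to the standard symmetric form of Gram matrix $\EMatr n$. This gives a $k^{sep}$-isomorphism between $\U(h_k)\otimes_k k^{sep}$ and $\bO_n\otimes k^{sep}$; intersecting with $\bSL_n$ yields the corresponding statement for $\SU(h_k)$ and $\bSO_n$. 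Both are therefore $k^{sep}|k$-forms as claimed.

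The step I expect to require the most care is the very first one: checking that the bilinear form $h_k$ produced by Proposition \ref{propHermFInv} genuinely has the same symmetry type $\epsilon$ as $h$ and that the group $\U(h_k)$, defined as a $k$-subscheme of $\bGL(V_k)$ via $\sigma$, really base-changes to the $\bar k$-group $\U(h)$. Once this descent is firmly in place, the rest is an application of Witt's theorem and the availability of square roots in $k^{sep}$.
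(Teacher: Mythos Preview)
Your proposal is correct and follows essentially the same route as the paper: descend $h$ to a $k$-rational form via Proposition~\ref{propHermFInv}, then normalise the Gram matrix using Theorem~\ref{thmWitt} (alternating case) or Theorem~\ref{thmOrthogonalSum} plus square roots in $k^{sep}$ (symmetric case). The paper phrases the descent step as scaling $h$ by some $\lambda\in\bar k^{\times}$ so that it lands in $k$, whereas you construct $h_k$ first and then base-change; these are the same argument, and your framing has the minor advantage that $\U(h_k)$ is visibly a $k$-group from the outset, avoiding the paper's final appeal to ``$k$-closed and defined over $k^{sep}$ implies defined over $k$''.
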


\begin{remark}
The set $\End_{\bar{k}}(V)$ is made to an affine space defined over $k$ in taking a $k$-basis of $\End_k(V_k)$ and introducing coordinates, i.e. we have
\[\End_{\bar{k}}(V)\cong\bA^{n^2}(\bar{k})\]
Every $k$-linear isomorphism of $\End_k(V_k)$ induces a $\bar{k}$-linear isomorphism of $\bA^{n^2}(\bar{k})$ defined over $k.$
The composition of maps in $\End_{\bar{k}}(V)$ coinsides with a $k$-morphism \[\phi:\ \bA^{n^2}(\bar{k})\times\bA^{n^2}(\bar{k})\ra\bA^{n^2}(\bar{k})\] 
and $(\bA^{n^2}(\bar{k}),\phi)$ is $k$-isomorphic to 
$(\Matr_n(\bar{k}),\circ).$
We identify the groups $\U(h)$ and $\SU(h)$ of the proposition with the corresponding subsets of $\bA^{n^2}(\bar{k}).$
The assertions of the proposition are valid for any choice of the basis of $\End_k(V_k).$ 
\end{remark}

\begin{proof} (of proposition \ref{propSymAndSkewsymForms})
By proposition \ref{propHermFInv} there is a $\lambda\in\bar{k}$ such that $\lambda h$ maps $V_k\times V_k$ to $k.$ Without loss of generality we assume that $\lambda$ is one. 
\be
\item By theorem \ref{thmWitt} there is a $k$-basis of $V_k$ such that the Gram matrix $G$ of $h$ is $J$ from definition \ref{defOrthSymplGroup}. Thus all elements of $\U(h)$ have determinant $1$ and there is a $k$-isomorphism from $\bA^{n^2}(\bar{k})$ to $\Matr_n(\bar{k})$ which maps $\U(h)$  onto $\bSp_n(\bar{k}).$ Thus $\U(h)$ and $\SU(h)$ equal and are $k|k$-forms of $\bSp_n(\bar{k}).$
\item By theorem \ref{thmOrthogonalSum} there is an orthogonal basis of $V_k$ with respect to $h|_{V_k\times V_k},$ i.e. the corresponding Gram matrix $G$ of $h$ is diagonal and has entries in $k.$ It implies that there is a $k$-isomorphism from $\bA^{n^2}(\bar{k})$ to $\Matr_n(\bar{k})$
which maps $\U(h)$ (resp. $\SU(h)$) to $\U(\tilde{h})$ (resp. $\SU(\tilde{h})$) where 
\[\tilde{h}:\bar{k}^n\times \bar{k}^n\ra\bar{k}\]
is a bilinear form whose Gram matrix under the standard basis of $\bar{k}^n$ is $G.$ The characteristic of $k$ is not $2$ and thus the roots 
of the diagonal elements of 
$G$ are separable over $k,$ i.e. there is a diagonal matrix $X$ in $\Matr_n(k^{sep})$ such that $XX=G.$ 
The inner $\bar{k}$-algebra automorphism $\Inn(X)$ maps $\U(\tilde{h})$ (resp. $\SU(\tilde{h})$) onto $\bO_n(\bar{k})$ (resp. $\bSO_n(\bar{k})$).
 This map is a $k^{sep}$-isomorphism. $\bO_n$ and $\bSO_n$ are defined over the prime field and therefore defined over $k^{sep}.$ Thus $\U(h)$ 
and $\SU(h)$ are defined over $k^{sep}.$ Both groups are $k$-closed too, and since $k^{sep}|k$ 
is a separable field extension they are defined over $k.$
\ee
\end{proof}

\bass
We fix a central simple $k$-datum $(A,V,m,D,d,L|k)$ and we assume $L$ to be a subfield $\bar{k}.$
\eass

We now give forms of $\bSL_n(\bar{k}),$ $\bSO_n(\bar{k})$ and $\bSp_n(\bar{k}).$
We also write $\GL_D(V)$ for $\Aut_D(V)$ and $\SL_D(V)$ for the set of elements of $\End_D(V)$ with reduced norm one.

\begin{proposition}\label{propGLmSLm}
The group $\GL_D(V)$ (resp. $\SL_D(V)$) is the set of $k$-rational points of an $L|k$-form of 
$\bGL_{md}(\bar{k})$ (resp. $\bSL_{md}(\bar{k})$) denoted by $\bGL_D(V)$\idxS{$\bGL_D(V)$}(resp. $\bSL_D(V)$\idxS{$\bSL_D(V)$}). 
\end{proposition}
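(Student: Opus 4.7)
The plan is to construct the scheme structure by embedding $\End_D(V)$ into an affine space and then exhibit the required $L$-isomorphism using the splitting property of $L|k$.

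First I would fix a $k$-basis $(b_j)$ of $A=\End_D(V)$, whose $k$-dimension equals $(md)^2$, and use it to identify $A$ with $\bA^{(md)^2}(\bar{k})$ as an affine space defined over $k$, exactly as in the remark preceding the proof of proposition \ref{propSymAndSkewsymForms}. Since the composition in $A$ is $k$-bilinear, it corresponds to a $k$-morphism $\phi:\bA^{(md)^2}\times\bA^{(md)^2}\to\bA^{(md)^2}$, turning $(\bA^{(md)^2},\phi)$ into a $k$-algebra scheme which is $k$-isomorphic to $(A,\circ)$. Next I would invoke the fact that the reduced norm $\Nrd:A\to k$ is a homogeneous polynomial map of degree $md$ with coefficients in $k$ (this is a standard feature of central simple $k$-algebras, cf.\ the definition of $\Nrd$ fixed in the notation section), so that $\bGL_D(V):=\{a\in\bA^{(md)^2}\mid \Nrd(a)\neq 0\}$ is an open $k$-subscheme and $\bSL_D(V):=\{a\in\bA^{(md)^2}\mid \Nrd(a)=1\}$ is a closed $k$-subscheme.

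I would then check that the group operation (composition, restricted to units) is a $k$-morphism on $\bGL_D(V)$ and that inversion is $k$-rational, which follows again from $\Nrd$ being a $k$-polynomial via Cramer-type formulas. On $k$-points this gives $\bGL_D(V)(k)=\GL_D(V)$ and $\bSL_D(V)(k)=\SL_D(V)$, because $\Nrd(a)\neq 0$ characterises the invertible elements of $A$ and $\Nrd(a)=1$ is exactly the defining condition of $\SL_D(V)$.

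It remains to identify the $\bar{k}$-models and establish an $L|k$-form. Since $L$ is a maximal subfield of $D$, it is a splitting field of $D$, so $\tens{D}{k}{L}\cong\Matr_d(L)$ and hence $\tens{A}{k}{L}\cong\End_{\Matr_d(L)}(\tens{V}{k}{L})\cong\Matr_{md}(L)$ as $L$-algebras. Under this isomorphism $\Nrd\otimes\id_L$ becomes the usual determinant on $\Matr_{md}(L)$ (this is the defining property of the reduced norm). Transporting the scheme structure along the corresponding $L$-linear isomorphism $\bA^{(md)^2}\otimes_k L\to \bA^{(md)^2}\otimes_k L$ then gives $L$-isomorphisms $\bGL_D(V)\times_k L\cong\bGL_{md,L}$ and $\bSL_D(V)\times_k L\cong\bSL_{md,L}$, which in particular become $\bar{k}$-isomorphisms after further base change. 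The main (and only) nontrivial point is that $\Nrd$ is a $k$-polynomial whose base change to $L$ is the determinant; everything else is bookkeeping in the affine embedding. Once that is in hand, $\bGL_D(V)$ and $\bSL_D(V)$ are $L|k$-forms of $\bGL_{md}(\bar{k})$ and $\bSL_{md}(\bar{k})$ respectively.
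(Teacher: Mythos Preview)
Your argument is correct and follows essentially the same path as the paper: fix a $k$-basis of $A$ to realise $\End_D(V)\otimes_k\bar{k}$ as affine space, use that $L$ splits $D$ so that $A\otimes_k L\cong\Matr_{md}(L)$ carrying $\Nrd$ to $\det$, and read off the $L$-isomorphism with $\bGL_{md}$ and $\bSL_{md}$.

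Two small differences in presentation are worth noting. First, the paper defines $\bGL_D(V)$ as the closed subscheme $\{(g,y)\in\bA^{(md)^2+1}\mid \Nrd(g)y=1\}$ rather than as the open locus $\{\Nrd\neq 0\}$; of course these describe the same affine scheme. Second, the paper establishes that $\bGL_D(V)$ and $\bSL_D(V)$ are defined over $k$ by first showing they are defined over $L$ (via the isomorphism to $\bGL_{md}$, $\bSL_{md}$) and $k$-closed, then invoking separability of $L|k$; you instead argue directly that the defining equations live over $k$ because $\Nrd$ has coefficients in $k$. Your route is slightly more direct, while the paper's descent argument has the advantage of also yielding the group-scheme structure over $k$ without needing your Cramer-type remark on inversion.
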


\begin{proof}
We fix a $k$-basis of $\End_D(V)$ to introduce coordinates. We identify the tensor product \mbox{$\tens{\End_D(V)}{k}{\bar{k}}$} with $\bA^{m^2d^2}(\bar{k}),$ i.e. the set of $k$-rational points of $\bA^{m^2d^2}$ is identified with $\End_D(V).$ $\tens{\End_D(V)}{k}{\bar{k}}$ is $L$-isomorphic to $\Matr_{md}(\bar{k})$ as a $\bar{k}$-algebra, because $L$ is a splitting field of $D.$ The reduced norm corresponds to the determinant on $\Matr_{md}(\bar{k})$ and thus there is an $L$-isomorphism 
from $\bA^{m^2d^2+1}(\bar{k})$ to $\Matr_{md}(\bar{k})\times \bar{k}$ which maps 
\[\bGL_D(V):=\{(g,y)\in (\tens{\End_D(V)}{k}{\bar{k}})\times\bar{k}|\ \Nrd(g)y=1\}\]
onto $\bGL_{md}(\bar{k})$ and 
\[\bSL_D(V):=\{g\in\bGL_DV|\ \Nrd(g)=1\}\]
onto $\bSL_{md}(\bar{k}).$ 

The reduced norm is a homogeneous polynomial in $k[X_1,\ldots,X_{(md)^2}]$ and therefore $\bGL_D(V)$ and $\bSL_D(V)$ are $k$-closed. $\bGL_{md}$ and $\bSL_{md}$ are defined over the prime field and therefore defined over $L.$ Thus $\bGL_D(V)$ and $\bSL_D(V)$ are defined over $L.$ The separability of $L|k$ implies that both groups are defined over $k.$ 
\end{proof}

\begin{remark}
If $F$ is an intermediate field between $k$ and $\bar{k}$ then the set of $F$-rational points of 
$\bGL_D(V)$ is given by $(\tens{\End_D(V)}{k}{F})^\times.$
\end{remark}

\bass
We now extend our assumption and fix a hermitian $k$-datum
\[((A,V,m,D,d,L|k),\rho,k_0,h,\epsilon,\sigma).\]
\eass

\begin{proposition}\cite[2.15]{platonovRapinchuk:94}\label{propHermForms}
There is an algebraic group $\bU(h)$\idxS{$\bU(h)$} (resp. $\bSU(h)$)\idxS{$\bSU(h)$} 
which is 
\be
\item an $L|k$-form of $\bSp_{md}(\bar{k})$ if $\sigma$ is symplectic,
\item a $k^{sep}|k$-form of $\bO_{md}(\bar{k})$ (resp. $\bSO_{md}(\bar{k})$) if $\sigma$ is orthogonal and
\item an $L|k_0$-form of $\bGL_{md}(\bar{k})$ (resp. $\bSL_{md}(\bar{k})$) if $\sigma$ is unitary
\ee
such that its set of $k$-rational points is $\U(h)$ (resp. $\SU(h)$). 
\end{proposition}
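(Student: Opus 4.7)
The plan is to realize $\bU(h)$ as the closed subscheme of $\bGL_D(V)$ cut out by the polynomial equation $\sigma(g)g=\EMatr{md}$, and $\bSU(h)$ as the intersection $\bU(h)\cap \bSL_D(V)$. Since $\sigma$ is a polynomial map on the affine space $A$ defined over $k$ in cases (1) and (2) and over $k_0$ in case (3), the resulting subschemes are defined over $k$ respectively $k_0$, and by construction their $k$- respectively $k_0$-rational points are exactly $\U(h)$ and $\SU(h)$. The task is then to compute the isomorphism class of the base-change to $\bar k$ in each of the three cases.

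For the first-kind cases, $\rho$ is of the first kind and $k_0=k$. Base-change along the splitting field $L|k$ of $D$ (which is Galois, hence separable, by definition \ref{defSimpleDatum}) gives an $L$-isomorphism $(A,\sigma)\otimes_k L\cong(\Matr_{md}(L),\tilde\sigma)$ in which $\tilde\sigma$ is still of the first kind and of the same symplectic or orthogonal type as $\sigma$. By Proposition \ref{propHermFInv} applied over $L$, the involution $\tilde\sigma$ is adjoint to a non-degenerate bilinear form $\tilde h$ on $L^{md}$, alternating in case (1) and symmetric in case (2). Applying Proposition \ref{propSymAndSkewsymForms} with the $L$-structure $V_L=L^{md}$ then identifies $\bU(h)\otimes_k\bar k$ with $\bSp_{md}(\bar k)$ in case (1) and with $\bO_{md}(\bar k)$ respectively $\bSO_{md}(\bar k)$ in case (2). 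The descent arguments already present in Propositions \ref{propGLmSLm} and \ref{propSymAndSkewsymForms} — relying on the separability of $L|k$ respectively $k^{\text{sep}}|k$ — then yield the asserted fields of definition.

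For the unitary case $\rho$ is of the second kind, so $k|k_0$ is a separable quadratic extension. Extending scalars from $k_0$ to $k$ splits $\tens{k}{k_0}{k}\cong k\times k$, so $\tens{A}{k_0}{k}\cong A\times A^{\mathrm{op}}$ with $\sigma\otimes\id_k$ acting as the swap anti-involution (because $\sigma$ interchanges the two embeddings of $k$ into $\tens{k}{k_0}{k}$). The equation $\sigma(g)g=1$ thus becomes $(a,b)\in A\times A^{\mathrm{op}}$ with $ab=1$, and projection onto the first factor induces a $k$-isomorphism $\bU(h)\otimes_{k_0}k\cong\bGL_D(V)$. A further base-change to $L$ combined with Proposition \ref{propGLmSLm} identifies this over $\bar k$ with $\bGL_{md}(\bar k)$, while imposing $\Nrd(g)=1$ yields $\bSL_{md}(\bar k)$ for $\bSU(h)$.

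The main obstacle is the careful bookkeeping of fields of definition. In particular, for the unitary case one must verify that there is a $k_0$-basis of $A$ making $\sigma$ a $k_0$-polynomial map on the affine space $A$; this rests on the existence of a natural $k_0$-form of $A$ obtained from a $\sigma$-stable $k_0$-subalgebra, together with the separability of $k|k_0$. Once this is secured, all three descent steps reduce to standard Galois descent along separable extensions.
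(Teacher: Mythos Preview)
Your proposal is correct and follows essentially the same route as the paper: define $\bU(h)$ by the equation $\sigma(g)g=1$ inside the appropriate ambient group, then split $D$ over $L$ and invoke Proposition~\ref{propSymAndSkewsymForms} in the first-kind cases, and in the unitary case use the splitting $k\otimes_{k_0}k\cong k\times k$ to identify the base-changed group with $\bGL_D(V)$. The one difference is that in case~(3) the paper works explicitly inside the Weil restriction $\Res_{k|k_0}(\bGL_D(V))$ as the ambient $k_0$-scheme, which is exactly the formal device that absorbs the ``$k_0$-basis of $A$'' bookkeeping you flag as the main obstacle; your direct base-change description is the same argument unpacked.
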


\begin{definition}\label{defbUsigmah}
We also denote $\bU(h)$ and $\bSU(h)$ by $\bU(\sigma_h)$
 and $\bSU(\sigma_h)$ respectively.
\end{definition}

For the orthogonal and the unitary case in the proposition we needed that the characteristic of $k$ is different from two. In the unitary case $k|k_0$ has degree two. 

\begin{proof}
At first we assume that $\sigma$ is of the first kind and we define
\[\bU(h):=\{g\in\bGL_DV|\ g^{(\tens{\sigma}{k}{\id})}g=1\}\]
and
\[\bSU(h):=\{g\in\bU(h)|\ \Nrd(g)=1\}.\]
The assertions (1) and (2) follow now from proposition \ref{propSymAndSkewsymForms},
because $\tens{\End_D(V)}{k}{\bar{k}}$ is $L$-isomorphic to $\Matr_{md}(\bar{k}).$

At second we assume that $\sigma$ is of the second kind. Here we use the notion of the Weil-restriction, see appendix chapter \ref{appWeilRestriction}.
We have 
\[\Res_{k|k_0}(\tens{\End_D(V)}{k}{\bar{k}})=\tens{\End_D(V)}{k_0}{\bar{k}}\]
and a commutative diagram
\[\begin{matrix}
\tens{\End_D(V)}{k_0}{\bar{k}} & \cong & (\tens{\End_D(V)}{k}{\bar{k}})\times (\tens{\End_D(V)}{k}{\bar{k}})^{\rho}\\
\cup & & \cup\\
\Res_{k|k_0}(\bA^1)(\bar{k})& \cong & \bA^1(\bar{k})\times\bA^1(\bar{k})\\
\end{matrix}
\]
where the $k$-isomorphism on the top is induced by \[\tens{\lambda}{k_0}{\mu}\in\tens{k}{k_0}{\bar{k}}\mapsto(\lambda\mu,\rho(\lambda)\mu).\]
We now explain $(\tens{\End_D(V)}{k}{\bar{k}})^{\rho}.$
Under the choice of a $k$-basis in $\End_D(V)$ the composition of endomorphisms is given by a morphism 
\[\phi:\ \bA^{(md)^2}\times\bA^{(md)^2}\ra\bA^{(md)^2}\] defined over $k.$
The multiplication on $(\tens{\End_D(V)}{k}{\bar{k}})^{\rho}$ is given by 
$\phi^{\rho}.$ 
The involution $\tens{\sigma}{k_0}{\id}$ defines an involution 
$\tilde{\sigma}$ on the right side of the diagram 
which permutes the coordinates of $\bar{k}\times\bar{k}.$
Thus there is an $L$-isomorphism from $(\tens{\End_D(V)}{k}{\bar{k}})\times (\tens{\End_D(V)}{k}{\bar{k}})^{\rho}$ to 
$\Matr_{md}(\bar{k})\times\Matr_{md}(\bar{k})$
such that the involution $\tilde{\sigma}$ corresponds to the involution
\[\sigma':(B_1,B_2)\mapsto (B_2^T,B_1^T).\]
The group $\U(\sigma')$ is a $k|k$-form of $\bGL_{md}$ and thus 
\[\bU(h):=\{g\in\Res_{k|k_0}(\bGL_DV)|\ g^{\Res_{k|k_0}(\tens{\sigma}{k}{\id})}g=1\}\]  
is an $L|k_0$-form of $\bGL_D(V).$ 
The Weil-restriction of the reduced norm from $k$ to $k_0$ 
corresponds to $\det\times\det$ on $\Matr_{md}(\bar{k})\times\Matr_{md}(\bar{k})$ and we conclude that 
under \[\bU(h)\cong\bGL_{md}\]
the group 
\[\bSU(h):=\{g\in\bU(h)|\ \Res_{k|k_0}(\Nrd)(g)=1\}\]
is mapped onto $\bSL_{md}(\bar{k}).$ 
The groups fulfill
\[\bU(h)(k_0)=\U(h) \text{ and }\bSU(h)(k_0)=\SU(h)\]
by definition.
\end{proof}

\begin{remark}
For a version of a converse of this theorem see \cite[26.9, 26.12, 26.14, 26.15]{knus:98}.
\end{remark}

Later we only consider non-Archimedean local fields and there
we have the following restricted possibility for the involution.

\begin{theorem}\cite[10.2.2]{scharlau:85}\label{thmDScharlau}
If $k$ is a non-Archimedean local field and $\rho$ is an involution of $D$ then if $\rho$ is of the second kind we have $D=k$ and if $\rho$ is of the first kind, the degree of $D$ is not bigger than two.
\end{theorem}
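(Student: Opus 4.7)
The plan is to invoke two classical structural theorems about involutions on central simple algebras together with the explicit Brauer-group structure of non-Archimedean local fields.

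\textbf{First kind.} By Albert's theorem, a central simple $k$-algebra $D$ admits an involution of the first kind if and only if the class $[D]$ has order dividing $2$ in the Brauer group $\operatorname{Br}(k)$. Since $k$ is a non-Archimedean local field, local class field theory provides a Hasse-invariant isomorphism $\operatorname{inv}_k\colon\operatorname{Br}(k)\cong\mathbb{Q}/\mathbb{Z}$. Its $2$-torsion therefore consists of exactly two elements, $0$ and $\tfrac12$, represented respectively by $k$ itself and by the unique quaternion division algebra over $k$. Hence the only possibilities are $\degree(D)=1$ or $\degree(D)=2$.

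\textbf{Second kind.} Since $\rho$ is of the second kind, its restriction to $\Centr(D)=k$ is a nontrivial $k_0$-auto\-mor\-phism, so $k|k_0$ is a separable quadratic extension. The Albert--Riehm--Scharlau theorem characterises the existence of an involution of the second kind on $D$ extending the nontrivial element of $\Gal(k|k_0)$ by the vanishing of the corestriction $\operatorname{cor}_{k|k_0}([D])\in\operatorname{Br}(k_0)$. For an extension of non-Archimedean local fields, the corestriction corresponds, via the Hasse invariants $\operatorname{inv}_k$ and $\operatorname{inv}_{k_0}$, to the identity map $\mathbb{Q}/\mathbb{Z}\to\mathbb{Q}/\mathbb{Z}$. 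Consequently $[D]=0$ in $\operatorname{Br}(k)$, so $D$ is a matrix algebra over $k$; being a skewfield, it must equal $k$.

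\textbf{Main difficulty.} The combinatorial part of the argument is immediate once one accepts the two deep inputs: Albert's existence criterion for first-kind involutions and the Albert--Riehm--Scharlau criterion for the second kind. The latter, together with the explicit description of $\operatorname{cor}_{k|k_0}$ on Hasse invariants, carries the bulk of the work and is where cohomological machinery enters. In practice one therefore simply quotes these results, as the author does with \cite[10.2.2]{scharlau:85}, rather than unwinding the class field theoretic input in full detail.
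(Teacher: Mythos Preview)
The paper does not give its own proof of this theorem; it merely cites \cite[10.2.2]{scharlau:85} and uses the statement as a black box. Your argument is correct and is precisely the standard route one finds in the literature: Albert's criterion reduces the first-kind case to identifying the $2$-torsion of $\operatorname{Br}(k)\cong\mathbb{Q}/\mathbb{Z}$, and the Albert--Riehm--Scharlau criterion together with the compatibility $\operatorname{inv}_{k_0}\circ\operatorname{cor}_{k|k_0}=\operatorname{inv}_k$ forces $[D]=0$ in the second-kind case. There is nothing to compare against here beyond noting that you have supplied what the paper deliberately outsourced.
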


\chapter{Bruhat-Tits building of a Classical group}\label{chapBruhatTitsBuildingOfAClassicalGroup}
For an introduction to the theory of buildings, I recommand \cite{abramenkoBrown:08} or the previous version \cite{brown:89}.

\bass\label{asstiG}
In this chapter we fix a local simple $k$-datum
\[(A,V,m,D,d,L|k),\] especially we assume that $k$ is a non-Archimedean local field with valuation $\nu.$ 
The unique extension of $\nu$ to a finite dimesional skewfield over $k$ is also denoted by $\nu.$ 
We denote $\bGL_D(V)$ by $\tibG$\idxS{$\tibG$} and its set of $k$-rational points by $\tiG,$\idxS{$\tiG$} i.e.
$\tiG:=\GL_D(V).$
\eass


\section{Norms and lattice functions}
This is a collection of definitions and results of \cite{broussousLemaire:02} and \cite{bruhatTitsIII:84}.


\subsection{First definitions}

\begin{definition}
A \textit{$D$-norm}\idxD{norm} on $V$ is a map $\alpha:\ V\ra \Reinfty$ such that for all $t\in D$ and $v,v'\in V$
we have
\be
\item $\alpha(v)=\infty\ \Lra\ v=0,$
\item $\alpha(tv)=\alpha(v)+\nu(t)$ and
\item $\alpha(v+v')\geq \min(\alpha(v),\alpha(v')).$
\ee
The set of $D$-norms on $V$ is denoted by $\Normone{D}{V}.$\idxS{$\Normone{D}{V}$}
\end{definition}

Given a norm the family of balls around $0$ is a decreasing function of lattices. We recall the definition below.

\begin{definition}\label{defODLattice}
A finitely generated $o_{D}$-submodule $\Gamma$ of $V$ is called a \textit{(full) $o_{D}$-lattice of} $V$ if $\Span_{D}(\Gamma)=V.$ 
We denote by $\Gitter{o_{D}}{V}$\idxS{$\Gitter{o_{D}}{V}$} the set of all full $o_D$-lattices of $V.$
\end{definition}

\begin{definition}\cite[2.1]{broussousLemaire:02} \label{defLatticeFunction}
A map $\Lambda :\ \bbR\ra \Gitter{o_D}{V}$ is called an \textit{$o_D$-lattice function,}\idxD{lattice function} if for all 
reals $r$ and $s$ with $r\leq s$ we have
\be
\item $\Lambda (r+\nu(\pi_D))$ equals $\Lambda (r)\pi_D,$
\item $\Lambda (r)$ contains $\Lambda (s)$ and
\item the lattice $\Lambda(r)$ is the intersection of the $\Lambda(r-\epsilon)$ where $\epsilon$ runs over all positive real numbers, i.e. $\Lambda$ is left continuous, when $\Gitter{o_D}{V}$ is endowed with the discrete topology.
\ee 
The set of $o_D$-lattice functions is denoted by $\Lattone{o_D}{V}.$\idxS{$\Lattone{o_D}{V}$}
For $\Lambda\in\Lattone{o_{D}}{V}$ the number of elements in $\Lambda([0,\nu(\pi_D)[)$ is the \textit{(simplicial) rank} \idxD{rank of a lattice function} of $\Lambda.$
\end{definition}

\begin{remark}\cite[I.2.4]{broussousLemaire:02}\label{remBLI.2.4}
The map $\alpha\mapsto\Lambda_{\alpha}$ with 
\[\Lambda_{\alpha}(r):=\{x\in V|\ \alpha(x)\geq r\}\]\idxS{$\Lambda_{\alpha}$}
is a $\tilde{G}$-set isomorphism from $\Normone{D}{V}$ to $\Lattone{o_D}{V}$
with actions
\[g.\alpha:=\alpha\circ g^{-1}\text{ and } (g.\Lambda)(r):=g(\Lambda(r)).\]
\end{remark}

In certain proofs it is useful to reduce the $m$-dimensional case to lower dimensional cases. This is done with the concept of a splitting vector space decomposition.

\begin{definition}\cite[1.4]{bruhatTitsIII:84}
A family $(V^1,\ldots,V^l)$ of $D$-vector subspaces is a 
\textit{splitting decomposition of $V$} \idxD{splitting decomposition for a norm} for $\alpha\in \Normone{D}{V},$ if 
\bi
\item $V=\oplus_iV^i$ and
\item for all $(v_i)\in\prod V^i$ we have 
\[\alpha(\sum_iv_i)=\min_i\alpha(v_i).\]
\ei
A family $(V^i)$ is a \textit{splitting decomposition of $V$} for\idxD{splitting decomposition for a \\ lattice function} an $o_D$-lattice function $\Lambda$ on $V$ if 
\[\Lambda(t)=\oplus_i(\Lambda(t)\cap V^i)\] holds for all $t.$\/
We also say that the norm or the lattice function is split
by $(V^i)_i.$
If all $V^i$ are one dimensional and $(b_i)\in\prod_iV^i$
is a $D$-basis of $V$ we call it a \textit{splitting basis}\idxD{splitting basis}
for norms and lattice functions which are split by $(V^i).$
\end{definition}

\begin{definition}\cite[1.11 (17)]{bruhatTitsIII:84}\label{defDualNormVStar}
The \textit{dual of $\alpha$}\idxD{dual of a norm}\/ is the $o_D$-norm $\alpha^*$ on 
\[V^*:=\Hom_D(V,D)\]\idxS{$V^*$} defined by
\[\alpha^*(f):=\inf\{\nu(f(v))-\alpha(v)|\ v\in V\setminus \{0\}\}.\]\idxS{$\alpha^*$}
\end{definition}

\begin{proposition}\cite[1.11 (18), 1.26]{bruhatTitsIII:84}\label{propDualNorm}
\be
\item The dual basis of a splitting basis $(v_i)_i$ of $\alpha$ is
a splitting basis of $\alpha^*$ and the equation $\alpha^*(v^*_i)=-v_i$ holds.
\item Any two $o_D$-norms on $V$ have a common splitting basis.
\ee
\end{proposition}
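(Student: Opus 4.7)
\bproof
Part (1) is a direct calculation from the definition of $\alpha^*,$ while part (2) proceeds by induction on $m:=\dim_DV$ via a compactness argument on the projective space $\mathbb{P}(V):=(V\setminus\{0\})/D^\times.$

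For (1), let $(v_i)$ be a splitting basis of $\alpha$ and $(v_i^*)\subset V^*$ the dual basis with $v_i^*(v_j)=\delta_{ij}.$ Every $v\in V\setminus\{0\}$ has a unique expansion $v=\sum_jv_jt_j$ with $\alpha(v)=\min_j(\alpha(v_j)+\nu(t_j))$ by the splitting property. For $f=\sum_is_iv_i^*\in V^*$ one has $f(v)=\sum_js_jt_j,$ so the non-archimedean triangle inequality yields
\[\nu(f(v))-\alpha(v)\geq\min_j(\nu(s_j)+\nu(t_j))-\min_k(\alpha(v_k)+\nu(t_k))\geq\min_j(\nu(s_j)-\alpha(v_j)),\]
with equality attained at $v=v_{j_0}$ for any $j_0$ realising the rightmost minimum. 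Hence $\alpha^*(f)=\min_j(\nu(s_j)-\alpha(v_j)).$ Specialising to $f=v_i^*$ gives $\alpha^*(v_i^*)=-\alpha(v_i),$ and since $\alpha^*(s_jv_j^*)=\nu(s_j)-\alpha(v_j),$ the general formula reads $\alpha^*(f)=\min_j\alpha^*(s_jv_j^*),$ which is precisely the splitting of $\alpha^*$ by $(v_i^*).$

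For (2), the base case $m=1$ is immediate; assume $m\geq 2.$ The function $\phi(v):=\alpha(v)-\beta(v)$ on $V\setminus\{0\}$ is invariant under the $D^\times$-action by the scaling axiom of a norm, and therefore descends to a continuous function on $\mathbb{P}(V).$ Because $D$ is locally compact and all $o_D$-norms induce the same topology on $V,$ the quotient $\mathbb{P}(V)$ is compact, so $\phi$ attains its minimum on some line $Dv_1.$ The plan is to show that the extremality of $v_1$ forces the existence of a $D$-hyperplane $W$ complementary to $Dv_1$ such that $(Dv_1,W)$ is a common splitting decomposition for $\alpha$ and $\beta.$ The inductive hypothesis applied to $\alpha|_W$ and $\beta|_W$ then yields a common splitting basis of $W,$ and adjoining $v_1$ gives the desired basis of $V.$

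The main obstacle is the construction of $W.$ My plan is to take $W:=\ker f_1$ for a functional $f_1\in V^*$ normalised by $f_1(v_1)=1$ whose defining infima in $\alpha^*(f_1)$ and $\beta^*(f_1)$ are both attained at $v=v_1.$ Such an $f_1$ is produced by a parallel compactness argument on $\mathbb{P}(V^*)$ applied to $\beta^*-\alpha^*,$ using part~(1) to pass between splittings on $V$ and on $V^*.$ Verifying that $\ker f_1$ is in fact orthogonal to $Dv_1$ with respect to both norms reduces, via the non-archimedean triangle inequality, to the joint extremal property of the pair $(v_1,f_1),$ and this minimax verification is the technical heart of the argument.
\eproof
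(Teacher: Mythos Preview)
The paper does not actually prove this proposition; it is quoted from \cite{bruhatTitsIII:84} without argument. So there is no ``paper's proof'' to compare against, and your attempt must stand on its own.

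Your part~(1) is fine. The chain of inequalities is correct once one notes that $\min_j A_j-\min_k B_k\geq\min_j(A_j-B_j)$, and the attainment at $v=v_{j_0}$ is clear.

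Part~(2), however, is only a plan, not a proof. You pick $v_1$ minimising $\alpha-\beta$ on $\mathbb{P}(V)$ and then propose to produce the common splitting hyperplane as $W=\ker f_1$ for an $f_1$ obtained by a ``parallel compactness argument on $\mathbb{P}(V^*)$ applied to $\beta^*-\alpha^*$''. Two things are missing. First, you assert without argument that such an extremal $f_1$ satisfies $f_1(v_1)\neq 0$; nothing in your setup prevents the dual extremiser from lying in the annihilator of $v_1$. Second, and more seriously, you never establish the link between extremality of $f_1$ for $\beta^*-\alpha^*$ and the claimed joint attainment of the infima defining $\alpha^*(f_1)$ and $\beta^*(f_1)$ at $v=v_1$. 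You yourself flag this ``minimax verification'' as the technical heart and then stop. That is precisely the step that carries all the content: without it, you have not shown that $\ker f_1$ splits either norm relative to $Dv_1$, let alone both.

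The standard route (which is what Bruhat--Tits do) avoids the dual detour: one first shows that any single norm admits a splitting complement to any line, then chooses $v_1$ minimising $\alpha-\beta$, takes a $\beta$-splitting complement $W$ of $v_1D$, and verifies directly from the extremality inequality $\alpha(v)-\beta(v)\geq\alpha(v_1)-\beta(v_1)$ that $W$ also splits $\alpha$. If you want to rescue your dual approach, you need to supply the missing minimax lemma explicitly; otherwise, the direct argument is both shorter and complete.
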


The following definition generalises the definition
\cite[1.1.1]{bruhatTitsI:72}.

\begin{definition}\label{defAffineStructure}{(Affine structure)}
An \textit{affine structure}\idxD{affine structure} on a set $S$ is a function
\[a:S\times S\times [0,1]\ra S.\] We write
\[ts+(1-t)s':=a(s,s',t).\]
\end{definition}

\begin{definition}
For a real number $x$ we denote by $[x]+$ the smallest integer which is not smaller than $x.$ 
\end{definition}

\begin{remark}\label{remAffineStructure}
We have an affine structure on $\Lattone{o_D}{V}.$
If $\Lambda$ and $\Lambda'$ are two elements of $\Lattone{o_D}{V}$ with a common splitting basis $(v_i),$ i.e. there are $m$-tupels $(\alpha_i)$ and $(\beta_i)$ of real numbers such that
\[\Lambda(x)=\bigoplus_i v_i\tf{p}_D^{[(x-\alpha_i)d]+}\]
and
\[\Lambda'(x)=\bigoplus_i v_i\tf{p}_D^{[(x-\beta_i)d]+},\]
then for $\lambda\in [0,1]$ we define a new element of 
$\Lattone{o_D}{V}$ by 
\[(\lambda\Lambda+(1-\lambda)\Lambda')(x):
=\bigoplus_iv_i\tf{p}_D^{[(x-\lambda\alpha_i-(1-\lambda)\beta_i)d]+}.\] 
This definition does not depend on the choice of the basis $(v_i).$
\end{remark}

\begin{remark}\label{remDSOLF}
Let $V'$ be another finite dimensional right $D$-vector space.
The map
\[\Lattone{o_D}{V}\times\Lattone{o_D}{V'}\ra\Lattone{o_D}{V\oplus V'} \]
given by 
\[(\Lambda,\Lambda')\mapsto \Lambda\oplus\Lambda'\]
with 
\[(\Lambda\oplus\Lambda')(t):=\Lambda(t)\oplus\Lambda'(t)\]
is affine and $\tilde{G}\times\GL_D(V')$-equivariant. 
\end{remark}

\begin{definition}\label{defDSOLF}
The lattice function $\Lambda\oplus\Lambda'$ is called the 
\textit{direct sum}\idxD{direct sum of lattice functions} of $\Lambda$ and $\Lambda'$
\end{definition}  

\begin{remark}
Under $\alpha\mapsto\Lambda_\alpha$ the affine structure of $\Lattone{o_{D}}{V}$ defines the following affine structure on 
$\Normone{D}{V}.$ For $\lambda\in [0,1],\ \alpha,\ \alpha'\in\Normone{o_D}{V}$
and a common splitting basis 
$(v_1,\ldots,v_n)$ we have that 
\[(\lambda\alpha+(1-\lambda)\alpha')\]
is the norm with splitting basis $(v_i)_i$
such that
\[v_i\mapsto \lambda\alpha(v_i)+(1-\lambda)\alpha'(v_i).\]
\end{remark}


\subsection{Square lattice functions}\label{subsecSquarelatticeFct}

\begin{assumption}
Let us now assume that $k$ is the center of $D.$
\end{assumption}

We also have $k$-norms and $k$-lattice functions on $A.$ We recall that for an $o_D$-lattice function $\Lambda$ (resp. $D$-norm $\alpha$) on $V$ the $o_k$-lattice function 
\[r\mapsto \End(\Lambda)(r):=\{a\in A |\ a(\Lambda(s))\subseteq\Lambda(s+r)\ \forall s\in\mathbb{R}\}\]\idxS{$\End(\Lambda)$}
(resp. $k$-norm
\[a\mapsto\End(\alpha)(a):=\inf\{\alpha(a(x))-\alpha(x)|\ x\in V\setminus \{0\}\})\]\idxS{$\End(\alpha)$}
on $A$ is called \textit{square lattice function}\idxD{square lattice function} (resp. \textit{square norm}\idxD{square norm}) on $A.$ The set of square lattice functions, square norms on $A$ is denoted by $\Latttwo{o_k}{A},\ \Normtwo{k}{A}$\idxS{$\Latttwo{o_k}{A}$}\idxS{$\Normtwo{k}{A}$}
respectively. There is a $\tilde{G}$-actions on $\Normtwo{k}{A}$, $\Latttwo{o_k}{A}$ which is given by
\[g\beta:=\beta\circ \Inn(g^{-1}),\ (g\Gamma)(r) :=\Inn(g)(\Gamma(r))\]
respectively where $\Inn$ denotes the adjoint action of $\tilde{G}$ on $A.$ 

\begin{remark}{\cite[4.10]{broussousLemaire:02}}
The map $\End(\alpha)\mapsto \End(\Lambda_{\alpha})$ is a $\tilde{G}$-set isomorphism from $\Normtwo{k}{A}$ to $\Latttwo{o_k}{A}.$
\end{remark}

A square lattice functions encodes the $o_D$-lattice function up to translation.

\begin{definition}\label{defLattoDV}
The \textit{translation of an $o_D$-lattice function $\Lambda$ }\idxD{translation of a lattice function} 
by a real number $s$ is defined as
\[(\Lambda+s)(t):=\Lambda(t-s).\]
Two $o_D$-lattice functions $\Lambda$ and $\Lambda'$ are \textit{equivalent}\idxD{equivalent lattice functions} if $\Lambda$ is a translation of $\Lambda'.$
The set of equivalence classes of $o_D$-lattice functions of 
$V$ is denoted by $\Latt{o_D}{V}.$\idxS{$\Latt{o_D}{V}$} Taking classes in remark \ref{remAffineStructure} one obtains an affine structure for 
$\Latt{o_D}{V},$ i.e. 
\[\lambda[\Lambda]+(1-\lambda)[\Lambda']:=[\lambda\Lambda+(1-\lambda)\Lambda'],\ \lambda\in [0,1].\]
The \textit{translation of an $o_D$-norm $\alpha$} of $V$ by an element $s$ of $\bbR$ is defined as
\[(\alpha+s)(v):=\alpha(v)+s.\]
Two norms are \textit{equivalent} if one is a translation of the other and the set of all equivalence classes is denoted by
$\Norm{D}{V}.$\idxS{$\Norm{o_D}{V}$}
\end{definition}

\begin{theorem}\cite[I.4]{broussousLemaire:02}\label{thmBLI.4}
The following is a commutative diagram of $\tilde{G}$-set isomorphisms.
\begin{equation}\label{eqThmCD}
\begin{array}{ccc}
\Norm{D}{V} & \ra &\Latt{o_D}{V}\\
\downarrow &&\downarrow\\
\Normtwo{k}{A} &\ra &\Latttwo{o_k}{A}\\
\end{array}
\end{equation}
The maps are defined as follows.
\bi
\item on the top: $[\alpha]\mapsto[\Lambda_\alpha],$
\item on the bottom: $\beta\mapsto\Lambda_\beta,$
\item on the left: $[\alpha]\mapsto\End(\alpha),$
\item on the right: $[\Lambda]\mapsto\End(\Lambda).$
\ei
\end{theorem}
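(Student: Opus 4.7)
The plan is to verify each arrow of the diagram separately, establish commutativity by a direct calculation, and then deduce the bijectivity of all four maps from that of the horizontal ones. The two main inputs are Remark \ref{remBLI.2.4} (which supplies the norm-to-lattice-function bijection on any $D$-vector space) and Proposition \ref{propDualNorm}(2) (common splitting bases). $\tilde{G}$-equivariance of each map is a routine check that I would leave to the end.

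First I would handle the horizontal maps. Remark \ref{remBLI.2.4} gives a $\tilde{G}$-equivariant bijection $\Normone{D}{V} \to \Lattone{o_D}{V}$, and the trivial identity $\Lambda_{\alpha+s} = \Lambda_\alpha + s$ shows this bijection descends to an isomorphism $\Norm{D}{V} \to \Latt{o_D}{V}$. The same remark applied to $A$ as a $k$-vector space yields a bijection $\Normone{k}{A} \to \Lattone{o_k}{A}$, and the bottom arrow of the diagram will turn out to be its restriction to the subsets $\Normtwo{k}{A}$ and $\Latttwo{o_k}{A}$ once we have commutativity.

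For the vertical arrows I would note that $\End(\alpha+s) = \End(\alpha)$ and $\End(\Lambda+s) = \End(\Lambda)$ both follow directly from unwinding the definitions, so both arrows are well-defined on equivalence classes; and they are surjective by the very definition of $\Normtwo{k}{A}$ and $\Latttwo{o_k}{A}$ as the images of $\End$. Commutativity of the square is then essentially tautological: spelling out the defining condition on $a \in A$ in each of $\End(\Lambda_\alpha)(r)$ and $\Lambda_{\End(\alpha)}(r)$ produces exactly the same inequality $\alpha(a(v)) \geq \alpha(v) + r$ for every nonzero $v \in V$. Combined with the surjectivity of the top map, this also shows that the general bijection $\Normone{k}{A} \to \Lattone{o_k}{A}$ restricts to a bijection on the square subsets, i.e.\ the bottom arrow is a bijection.

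The main obstacle is injectivity of the vertical maps. I would prove it for the left arrow directly; injectivity of the right arrow then follows formally from commutativity and bijectivity of the two horizontal arrows. Given $\alpha, \alpha' \in \Normone{D}{V}$ with $\End(\alpha) = \End(\alpha')$, Proposition \ref{propDualNorm}(2) furnishes a common splitting basis $(v_1, \ldots, v_m)$ of $\alpha$ and $\alpha'$. For each pair of indices $(i,j)$ the matrix unit $E_{ij} : v_k \mapsto \delta_{jk} v_i$ is $D$-linear, hence lies in $A$, and a short calculation using the splitting property gives $\End(\alpha)(E_{ij}) = \alpha(v_i) - \alpha(v_j)$, with the infimum attained at $x = v_j$. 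Applying the same formula to $\alpha'$, the hypothesis forces the quantity $\alpha(v_i) - \alpha'(v_i)$ to be independent of $i$; calling this constant $c$, the splitting property propagates the equality $\alpha = \alpha' + c$ from the basis to all of $V \setminus \{0\}$. This completes the proof that the left vertical arrow is a bijection, and with it the whole theorem.
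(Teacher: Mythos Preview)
The paper does not give its own proof of this theorem: it is stated with a reference to \cite[I.4]{broussousLemaire:02} and then immediately followed by the next remark, so there is nothing in the paper to compare your argument against line by line.

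That said, your proposal is correct and is essentially the standard argument. The key nontrivial step---injectivity of $[\alpha]\mapsto\End(\alpha)$---is handled exactly as one would expect: a common splitting basis from Proposition~\ref{propDualNorm}(2), the evaluation $\End(\alpha)(E_{ij})=\alpha(v_i)-\alpha(v_j)$ via a direct infimum computation, and then the conclusion that $\alpha-\alpha'$ is constant on the basis and hence everywhere. The remaining checks (well-definedness on classes, commutativity of the square, bijectivity of the horizontal maps via Remark~\ref{remBLI.2.4}) are indeed routine. One small point worth making explicit in a write-up: when you say $E_{ij}$ is $D$-linear, you are extending from the basis by right $D$-linearity, so $E_{ij}(\sum_k v_k d_k)=v_i d_j$; the infimum computation then goes through because $\min_k(\alpha(v_k)+\nu(d_k))\leq \alpha(v_j)+\nu(d_j)$ whenever $d_j\neq 0$.
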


For sake of completeness we give a second diagram.

\begin{remark}
The map given in remark \ref{remBLI.2.4}
induces a commutative diagram of $\tilde{G}$-set isomorphisms.
\begin{equation}\label{eqThmCD2}
\begin{array}{ccc}
\Normone{D}{V} & \ra &\Lattone{o_D}{V}\\
\downarrow &&\downarrow\\
\Norm{D}{V} &\ra &\Latt{o_D}{V}\\
\end{array}
\end{equation}
The maps downwards send an element to its equivalence class.
\end{remark}

We give a last remark describing the behavior of 
square lattice functions under direct sum.
We use the assumptions of remark \ref{remDSOLF}.

\begin{definition}
For $a\in A$ and $a'\in \End_D(V')$ the direct sum of
$a$ and $a'$ in $\End_D(V\oplus V')$ is defined by
\[(a\oplus a')(v,v'):=(a(v),a'(v')).\]
\end{definition}

\begin{proposition}\label{propCLFunderDS}
\[\End(\Lambda\oplus\Lambda')(t)\cap(\End_D(V)\oplus\End_D(V'))=
\End(\Lambda)(t)\oplus\End(\Lambda')(t).\]
\end{proposition}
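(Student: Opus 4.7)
The plan is to prove both inclusions directly from the definitions. Any element of $\End_D(V)\oplus\End_D(V')$, viewed as a subset of $\End_D(V\oplus V')$ via the direct sum construction of the preceding definition, has the form $a\oplus a'$ with $a\in\End_D(V)$ and $a'\in\End_D(V')$; and by definition of the direct sum of lattice functions we have $(\Lambda\oplus\Lambda')(s)=\Lambda(s)\oplus\Lambda'(s)$ for every real $s$.

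For the inclusion $\supseteq$, I would fix $a\in\End(\Lambda)(t)$ and $a'\in\End(\Lambda')(t)$ and simply compute
\[
(a\oplus a')((\Lambda\oplus\Lambda')(s))
= a(\Lambda(s))\oplus a'(\Lambda'(s))
\subseteq \Lambda(s+t)\oplus\Lambda'(s+t)
= (\Lambda\oplus\Lambda')(s+t)
\]
for every $s\in\mathbb{R}$, which is the defining condition for $a\oplus a'$ to belong to $\End(\Lambda\oplus\Lambda')(t)$. Membership in $\End_D(V)\oplus\End_D(V')$ is automatic.

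For the converse inclusion $\subseteq$, I would take $b\in\End(\Lambda\oplus\Lambda')(t)$ lying in $\End_D(V)\oplus\End_D(V')$ and write $b=a\oplus a'$. The defining inclusion $b((\Lambda\oplus\Lambda')(s))\subseteq(\Lambda\oplus\Lambda')(s+t)$ reads
\[
a(\Lambda(s))\oplus a'(\Lambda'(s))\subseteq \Lambda(s+t)\oplus\Lambda'(s+t).
\]
Since the sum $V\oplus V'$ is direct, projecting onto the first and second summand yields $a(\Lambda(s))\subseteq\Lambda(s+t)$ and $a'(\Lambda'(s))\subseteq\Lambda'(s+t)$ respectively for all $s$, which means $a\in\End(\Lambda)(t)$ and $a'\in\End(\Lambda')(t)$, as required.

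There is no real obstacle here: the statement is a formal consequence of the definition of $\End(\cdot)(t)$ together with the fact that inclusions of subspaces of a direct sum are detected componentwise. The only point worth being explicit about is that one must restrict to $\End_D(V)\oplus\End_D(V')$ before the equality can hold — elements of $\End(\Lambda\oplus\Lambda')(t)$ with genuine off-diagonal components $V\to V'$ or $V'\to V$ are excluded precisely by this intersection, which is why the stated formula has the intersection on the left-hand side.
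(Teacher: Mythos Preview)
Your proof is correct. The paper actually states this proposition without proof, treating it as an immediate consequence of the definitions; your argument spells out precisely the two inclusions one would write down, and there is nothing to add.
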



\section{\texorpdfstring{The Bruhat-Tits building of $\bGL_D(V)$ over $k$}{The Bruhat Tits building of GLD(V) over k}} \label{secBTBuildingsGLDV}

We consider the building of the following valuated root datum mentioned in
\cite{bruhatTitsIII:84}. We briefly repeat the construction.
As usual  $X^*(?)_k$\idxS{$X^*(?)_k$} and $X_*(?)_k$\idxS{$X_*(?)_k$} denote the set of $k$-rational characters and cocharacters respectively.

\begin{assumption}
In this section we assume that $k=\Centr(D).$
\end{assumption}

We take a $D$-basis $(v_i)$ of $V$ and consider the maximal $k$-split torus $T$ of $\tibG$ whose set of $k$-rational points is  \[\{t\in \tiG\mid\ tv_i\in kv_i,\text{ for all }i\}.\]
With the basis $\GL_D(V)$ identifies with $\GL_m(D)$ and the $k$-rational points of $T$ are diagonal matrices.
The torus acts on the Lie algebra by conjugation and the $k$-rational roots of $T$ are the characters
\[t\mapsto a_{i,j}(t):=t_it^{-1}_j,\ i,j\in\bbN_m\text{ for } i\neq j.\]
The root system 
\[\Phi:=\{a_{i,j}\mid\ i,j\in\bbN_m,\ i\neq j\}\]
of $\tens{X^*(T/\Centr(T))}{\bbZ}{\bbR}$ is of type $A_{m-1}.$
We denote by $u_{i,j}(x)$ the matrix of the homomorphism
\[v_k\mapsto v_k+v_j\delta_{i,k}x\]
The set of $k$-rational points of $\Centr_{\bGL_D(V)}(T)$ together with the root groups 
\[U_{i,j}:=\{u_{i,j}(x)\mid\ x\in D\},\ i\neq j,\ i,j\in\bbN_m,\] form a valuated root datum using the valuation
\begin{equation}\label{eqValuatedRootDatum}
\phi_{a_{i,j}}(u_{i,j}(x)):=\nu(x).
\end{equation}
For the definition of a valuated root datum see \cite[6.2]{bruhatTitsI:72} or \ref{defValuationOfARootDatum}. For the example see \cite[10]{bruhatTitsI:72}. A short introduction of the steps for the construction of the building of a valuated root datum can be found in the appendix \ref{appChBuildingOfAValuatedRootDatum}.

The vec\-tor space $W:=\tens{X_*(T/\Centr(T))_k}{\bbZ}{\bbR}$ is iden\-ti\-fied with the dual of \\
$\tens{X^*(T/\Centr(T))_k}{\bbZ}{\bbR}$ via the na\-tu\-ral pairing
\[X^*(T/\Centr(T))_k\times X_*(T/\Centr(T))_k\ra \bbZ\]
and we de\-note there\-fore $\tens{X^*(T/\Centr(T))_k}{\bbZ}{\bbR}$ by $W^*.$
The standard apartment is the set $\Delta$ of all valuations of \[(\Centr_{\GL_D(V)}(T(k)),(U_{i,j})_{i,j})\] which are equipollent to $\phi,$ see section \ref{secBuildingOfAValuatedRootDatum}. $\Delta$ is an affine space over $W.$ A vector $w$ of $W$ acts on $A$ by
\[\psi\mapsto (u\in U_a\mapsto \psi_a(u)+a(w)).\]
The group $T(k)$ acts on $\Delta$ by translation via
\[(t,\psi)\mapsto \psi+w(t)\]
where $w(t)\in W$ is defined by 
\[\forall a\in\Phi:\ a(w(t))=\nu(a(t)).\]
The set $N(T)(k)$ of $k$-rational pionts of the normaliser $N(T)$ of $T$ in $\tibG$ precisely consists of the monomial matrixes with entries in $D$ and the above action extends for an element $n\in N(T)(k)$ via
\[ n.(\phi+w)=\phi+n.w \]
where  
\[ a_{\tau(i)\tau(j)}(n.w)=a_{ij}(w)+\nu(n_{\tau(j),j})-\nu(n_{\tau(i),i})\]
and $\tau$ is the involution defined by $n_{\tau(i),i}\neq 0.$
  
\begin{definition}
The building $\building(\tibG,k)$\idxS{$\building(\bGL_D(V),k)$} of the valuated root datum defined in (\ref{eqValuatedRootDatum}) is the set of equivalence classes of $\tilde{G}\times \Delta$ under the relation: 
\[(g,x)\sim (h,y)\] if and only if there exists a monomial matrix $n$ such that 
\[n(x)=y\text{ and } hn\in gP_x.\] 
The set of all apartments is given by the sets of the form $g\Delta,\ g\in \tilde{G}$ using the $\tilde{G}$-action on the first coordinate. 
The definition of $P_{x}$ is given in section \ref{secBuildingOfAValuatedRootDatum} in appendix \ref{appChBuildingOfAValuatedRootDatum}. 
We denote $\building(\tibG,k)$ the \textit{Bruhat-Tits building of }\idxD{Bruhat-Tits building of $\tibG$} $\tibG$ over $k.$
\end{definition}

\begin{remark}\idxS{$\building(\bSL_D(V),k)$}
The Bruhat-Tits building of $\bSL_D(V)$ over $k$ is constructed in the same way and it is canonically identified with the Bruhat-Tits building of $\tibG$ over $k.$
\end{remark}

\begin{remark}
In chapter \ref{chapEnlargedBuildingOfAReductiveGroup} we recall the notion of an enlarged building from \cite[4.2.16]{bruhatTitsII:84}. 
If $\building(G,k)$ is the Bruhat-Tits building of a reductive group over a local field $k$ we denote the enlarged building by 
$\Building(G,k).$\idxS{$\Building(\tibG,k)$}
The group $X^*(\bSL_D(V))_k$ is trivial and $X^*(\tibG)_k$ is isomorphic to $\bbZ.$ Thus 
$\Building(\bSL_D(V),k)$ and $\building(\bSL_D(V),k)$ coincide and $\tibG$ has a proper enlarged building over $k.$
\end{remark}

\brem
The apartments are in one to one correspondence with the maximal $k$-split tori of $\tibG.$
\erem

As described in \cite[2.11]{bruhatTitsIII:84} and \cite{broussousLemaire:02} 
one can associate to a point $x$ of the enlarged apartment $\Delta^1:=\tens{X_*(T)_k}{\bbZ}{\bbR}$ a $D$-norm.
 in the following way.
\beq\label{eqBTBL}
\alpha_x(\sum_id_iv_i):=\inf_i(\nu(d_i)-a_i(x))
\eeq
where $a_i\in X^*(T)$ is the projection to the $i$th coordinate and 
\[a_i(x):=<x,a_i>.\]
The $o_D$-lattice function corresponding to $\alpha_x$ is denoted by $\Lambda_x.$
The maps from $\Delta^1$ to $\Normone{D}{V}$, $\Lattone{o_D}{V}$ resp. induced by (\ref{eqBTBL})
can be extended to the whole enlarged building and if one asks for some further properties then this extension is possible in a unique way. More precisely by Bruhat, Tits, Broussous and Lemaire we have the following theorem. Here we use that the dual $W^1$ of $\tens{X_*(\tibG)}{\bbZ}{\bbR}$ acts on the enlarged building by translations. For further description see \ref{chapEnlargedBuildingOfAReductiveGroup}. One defines an action of $W^1$ on the set of $o_D$-lattice functions by 
\[\Lambda+\lambda nd\check{a}_0:=\Lambda+\lambda \]
where \[\check{a}_0:=\frac{1}{n}\sum_i\check{a}_i\]
and where $(\check{a}_i)$ is the dual basis of $(a_i)$ in $X_*(T).$ 

\begin{theorem}\label{thmBLI1.4}\cite[I.1.4.,I.2.4, II.1.1. for $F=E$]{broussousLemaire:02}
There is a unique $A^{\times}$- and $W^1$- equivariant affine bijection 
\[\Building(\tibG,k)\ra \Lattone{o_D}{V}\]
extending the map
\[x\in \Delta^1\mapsto \Lambda_x.\]
This bijection induces the unique $\tilde{G}$-equivariant and affine map from $\building(\tibG,k)$ to the set of lattice function classes $\Latt{o_D}{V}.$ It is bijective and an extension of 
\[x\in \Delta\mapsto [\Lambda_x].\]
\end{theorem}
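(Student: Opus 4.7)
The plan is to take the prescribed map $x \in \Delta^1 \mapsto \Lambda_x$ given by (\ref{eqBTBL}) on the enlarged standard apartment, extend it to $\Building(\tibG,k)$ by $\tiG$-equivariance via the coset description of the building, and then verify well-definedness, bijectivity and equivariance; uniqueness will follow from $\tiG$-equivariance, and the statement for the non-enlarged building will be obtained by passing to $W^1$-quotients. On $\Delta^1$ itself, affineness of $x \mapsto \Lambda_x$ is immediate from (\ref{eqBTBL}) since $\alpha_x(\sum_i d_i v_i)$ depends affinely on the values $a_i(x) = \langle x, a_i\rangle$, and the bijection of remark \ref{remBLI.2.4} preserves the affine structures of remark \ref{remAffineStructure}. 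Equivariance for the $k$-rational points of $N(T)$ (diagonal matrices and permutations of the basis $(v_i)$) is a direct check from the formula, and equivariance under translation by the central coweight $\check{a}_0$ amounts to $\Lambda_{x+\check{a}_0}=\Lambda_x + \frac{1}{n}$, which matches the $W^1$-action on $\Lattone{o_D}{V}$ defined just before the theorem statement.

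Next I would extend the map by $\Lambda_{g\cdot x} := g \cdot \Lambda_x$ for $g \in \tiG$ and $x \in \Delta^1$, using the action of $\tiG$ on $\Lattone{o_D}{V}$ from remark \ref{remBLI.2.4}. Well-definedness reduces, via the explicit equivalence relation defining $\Building(\tibG,k)$, to two assertions: first, that for every $n \in N(T)(k)$ one has $n\cdot\Lambda_x = \Lambda_{n\cdot x}$, a direct computation from the definition of $\alpha_x$ and the monomial action on the basis; second, that the subgroup $P_x$ fixes $\Lambda_x$. For the second point I reduce to the root groups $U_{i,j}$: an element $u_{i,j}(c)$ lies in $P_x$ exactly when $\nu(c)\geq -a_{i,j}(x)$, and the resulting map $\sum_l d_l v_l \mapsto \sum_l d_l v_l + d_j c v_i$ does not decrease $\alpha_x$ on any vector, which gives the claim. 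Affineness and $\tiG$-, $W^1$-equivariance on all of $\Building(\tibG,k)$ then follow from the apartment check by the equivariance of the construction.

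For bijectivity, surjectivity uses the basic fact that every $o_D$-lattice function admits a splitting basis (in the spirit of proposition \ref{propDualNorm}(2)); such a basis determines a maximal $k$-split torus and hence an apartment in which $\Lambda$ is realised by (\ref{eqBTBL}). Injectivity within one apartment is immediate from the formula, and to pass between apartments I would appeal to the identification of the $\tiG$-stabilizer of a point $x \in \Delta^1$ with the $\tiG$-stabilizer of $\Lambda_x$. Uniqueness is then automatic: any $A^\times$- and $W^1$-equivariant affine extension of the prescribed map agrees with our extension on $\Delta^1$, hence on $\tiG\cdot \Delta^1 = \Building(\tibG,k)$. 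Quotienting both sides by the translation action of $W^1$ yields the claimed $\tiG$-equivariant affine bijection from $\building(\tibG,k)$ to $\Latt{o_D}{V}$. The main technical obstacle I expect is the tight identification of the $\tiG$-stabilizer of $x \in \Delta^1$ with the stabilizer of $\Lambda_x$, since both well-definedness of the extension and injectivity across apartments rest on it.
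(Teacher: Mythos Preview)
The paper does not supply its own proof of this theorem: it is stated with a citation to \cite[I.1.4., I.2.4, II.1.1.\ for $F=E$]{broussousLemaire:02} and immediately followed by a definition, so there is nothing in the paper to compare your argument against. Your outline is the standard reconstruction of the Broussous--Lemaire argument (define the map on the enlarged apartment via the explicit norm formula, extend by $\tiG$-equivariance using the coset description, verify compatibility with the $N(T)(k)$-action and that $P_x$ fixes $\Lambda_x$, then get bijectivity from the existence of splitting bases and the stabiliser identification), and it is correct in substance. One small sharpening: for well-definedness you only need the inclusion $P_x\subseteq\mathrm{Stab}_{\tiG}(\Lambda_x)$, not the full identification you flag at the end; the reverse inclusion (or rather the statement that two points with the same image are related by the building equivalence) is what underlies injectivity, and in Broussous--Lemaire this is handled via the square lattice function $\End(\Lambda)$ and its relation to the parahoric filtration rather than by a bare stabiliser computation.
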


\begin{definition}\label{defLieAlgebraFiltrationGLDV}
The set of $k$-rational points of the Lie algebra of $\tibG$ is $A.$
For a point $x\in\Building(\tibG,k)$ we denote the square lattice function corresponding to $x$ by $\LF(x,\tibG,k).$\idxS{$\LF(x,\bGL_D(V),k)$}
This sequence is called the \textit{Lie algebra filtration}\idxD{Lie algebra filtration} of 
$x$ in $A.$
\end{definition}

The last proposition of this section is not used in this part of the thesis, but in the 
next part. We shortly explain the simplicial structure of $\building(\bGL_D(V),k).$
For this we explain the structure for $\Delta$ and apply the action of 
$\GL_D(V).$ The hyperplanes of $\Delta$ given by the equations
\[ a_{i,j}=\frac{k}{d}\]
for $i,j\in\bbN_m$ with $i\neq j$ and $k\in\bbZ$ cut out a cell decomposition of $\Delta,$ see for example \cite[1.3.3]{bruhatTitsI:72}
for the simplicial structure given by an affine root system or \cite[VI.1.B]{brown:89} or \cite[12.1]{garrett:97}. In the next we consider the last map of the above theorem, i.e.
the correspondence with $\Latt{o_D}{V}.$ The ideas of the following proposition are taken form \cite[2.16]{bruhatTitsIII:84}. 

\begin{proposition}\cite{bruhatTitsIII:84}\label{propBTII2.16}
\be
\item
The apartment $\Delta$ is mapped to the set of classes of $o_D$-lattice functions which are split by $(v_i).$
\item
An element $x$ of $\building(\bGL_D(V),k)$ lies on a face of rank $k$ if and only if $\Lambda_x$ has rank $k.$ (We only consider faces which are open in their affine span, i.e. we consider cells.)
\ee
\end{proposition}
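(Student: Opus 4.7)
For part~(1), I argue directly from formula~(\ref{eqBTBL}). For $x\in\Delta^1$ one reads off $\alpha_x(v_j)=-a_j(x)$, and the infimum in (\ref{eqBTBL}) gives precisely the splitting identity $\alpha_x(\sum_i d_iv_i)=\min_i(\nu(d_i)+\alpha_x(v_i))$, so $(v_i)$ is a splitting basis of $\alpha_x$, and hence also of $\Lambda_x$. Conversely any $D$-norm $\alpha$ split by $(v_i)$ is determined by the values $\beta_i:=\alpha(v_i)$, and one recovers $\alpha=\alpha_x$ by choosing $x\in\Delta^1$ with $a_i(x)=-\beta_i$. Passing to equivalence classes via Theorem~\ref{thmBLI1.4}, and noting that the central component of the $W^1$-action corresponds to translation of lattice functions by scalars, the image of $\Delta$ under the induced bijection to $\Latt{o_D}{V}$ is exactly the set of classes split by $(v_i)$.

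For part~(2), by the $\tilde{G}$-equivariance of the bijection and the fact that every point of $\building(\tibG,k)$ lies in some apartment $g\Delta$, it suffices to prove the claim for $x\in\Delta$. Writing $d=\deg D$ so that $\nu(\pi_D)=\tfrac{1}{d}$, part~(1) yields
\[\Lambda_x(t)=\bigoplus_i v_i\,\tf{p}_D^{\lceil(t+a_i(x))d\rceil}.\]
This lattice jumps precisely when $t$ crosses a value of the form $-a_i(x)+\tfrac{k}{d}$ for some $i$ and some $k\in\bbZ$, so the number of distinct lattices in $\Lambda_x([0,\nu(\pi_D)[)$ equals the cardinality $r$ of the image of $\{a_i(x)\}_i$ in $\bbR/\nu(\pi_D)\bbZ$; that is by definition the rank of $\Lambda_x$, and equivalently it is the number of blocks of the partition of $\bbN_m$ defined by $i\sim j\Leftrightarrow a_{i,j}(x)\in\nu(\pi_D)\bbZ$.

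It remains to match $r$ with the simplicial rank of the face of $\Delta$ through $x$. The cell decomposition of $\Delta$ is the affine Coxeter complex of type $\tilde{A}_{m-1}$, cut out by the hyperplanes $\{a_{i,j}=k/d\}$ with $i\neq j$, $k\in\bbZ$. By the standard combinatorics of $\tilde{A}_{m-1}$ (compare \cite[VI.1.B]{brown:89}, \cite[1.3.3]{bruhatTitsI:72}), the face whose relative interior contains $x$ has exactly as many vertices as there are blocks in the above partition, namely $r$, which yields the desired matching. I expect this final combinatorial identification to be the main point to check carefully: one must verify that in the $\tilde{A}_{m-1}$ picture the equalities $a_i(x)\equiv a_j(x)\pmod{\nu(\pi_D)}$ cutting out the face of $x$ correspond one-to-one to its vertices, and keep the normalizations straight (namely that $\nu$ takes values in $\tfrac{1}{d}\bbZ$ on $D$ while being $\bbZ$-valued on $k$, and that $\Delta$ is the quotient of $\Delta^1$ by the central line $\bbR\cdot\sum_i\check{a}_i$).
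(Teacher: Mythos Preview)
Your argument is correct and the underlying idea---identifying the simplicial rank of the face through $x$ with the number of distinct residues of the $a_i(x)$ modulo $\nu(\pi_D)=\tfrac{1}{d}$, equivalently the number of jumps of $\Lambda_x$ on a period---is exactly what the paper does. The only difference is in how the final combinatorial matching is carried out: where you appeal to the general structure of the $\tilde{A}_{m-1}$ Coxeter complex and the partition $i\sim j\Leftrightarrow a_{i,j}(x)\in\tfrac{1}{d}\bbZ$, the paper first uses the $\SL_D(V)$-equivariance to move $x$ into the closure of the standard chamber $\bar{C}=\{\,0\leq a_1(x)\leq a_2(x)\leq\cdots\leq a_m(x)=\tfrac{1}{d}\,\}$, where the face of rank $r$ is visibly cut out by choosing which of the $m$ weak inequalities become equalities, so that counting jumps in the ordered sequence $(a_1(x),\ldots,a_{m-1}(x),\tfrac{1}{d})$ gives the rank directly. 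Both routes are valid; the paper's reduction to a fundamental domain makes the step you flag as ``the main point to check carefully'' entirely transparent, at the cost of invoking transitivity on chambers, while your formulation avoids that reduction but leaves the Coxeter-theoretic identification as a citation rather than a computation.
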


\begin{proof}
\be
\item This follows from the definition.
\item Because of the $\SL_D(V)$-equivariance we only have to consider the subset of $\Delta$ given by the inequalities 
\[\alpha_{i+1,i}(x)\geq 0\text{ for }i\in\bbN_{m-1}\text{ and }\alpha_{m,1}(x)\leq \frac{1}{d}.\]
It is the closure of the chamber $C$ which is defined by the strict inequalities. The image of $\bar{C}$ is the set 
\[\{[\Lambda_x]\mid\ x\in\Delta^1,\ 0\leq a_1(x)\leq a_2(x)\leq\ldots\leq a_m(x)= \frac{1}{d}\}.\]
The proof now  is an easy counting of jumps in the sequence \[(a_1(x),a_2(x),a_3(x),\ldots,a_{m-1}(x),\frac{1}{d}).\]
\ee
\end{proof}


\section{Self-dual lattice functions}\label{secSelfDualLatticeFunctions}
This is a collection of results of \cite{broussousStevens:09} and \cite{bruhatTitsIV:87} and we slightly generalise the definition of self-dual objects and propositions of \cite{broussousStevens:09}.
For this section we make the following assumption.

\begin{assumption}\label{assSDLF}
We fix a local hermitian $(k,\nu)$-datum
\[((A,V,m,D,d,L|k),\rho,k_0,h,\epsilon,\sigma)\] and we assume $k$ to have residue characteristic not two
(see definition \ref{defLocalHermkDatum}).
\end{assumption}

\subsection{Duality}

We explain how $h$ defines a map of order two on all spaces of lattice functions and norms which we have considered. 
At first we define this map for $\Lattone{o_D}{V}$ and then for all sets of the diagrams 
(\ref{eqThmCD}) and (\ref{eqThmCD2}).

\begin{definition}
For a lattice function $\Lambda\in \Lattone{o_D}{V}$ and a real number $r$ we define
\[\Lambda(r+):=\cup_{s>r}\Lambda(s). \]
\end{definition}

\begin{definition}[\cite{broussousStevens:09} after Prop. 3.2]
Given a lattice $M\in \Gitter{o_{D}}{V}$ and a lattice function $\Lambda\in \Lattone{o_D}{V}$ the duals are defined by
\[M^{\#}:=\{x\in V|\ h(x,M)\subseteq\vi{D} \}\]\idxS{$(\ )^{\#}$} and
\[\Lambda^{\#}(r):=[\Lambda((-r)+)]^{\#}.\] \idxS{$\Lambda^{\#}$}
\end{definition}

\begin{remark}\label{remEndoDualLatt}
A $D$-endomorphism $a$ of $V$ behaves under the dualisation of a lattice in the following way:
\[(a^{\sigma}(M))^{\#}=a^{-1}(M^{\#}).\]
\end{remark}

\begin{proposition}\label{propDualLatt}
\be
\item For all $M\in \Gitter{o_D}{V}$ the set $M^{\#}$ is a full $o_D$-lattice and $(M^{\#})^{\#}=M.$
\item For all $\Lambda\in \Lattone{o_D}{V}$ we have $\Lambda^{\#}\in \Lattone{o_D}{V}$ and $(\Lambda^{\#})^{\#}=\Lambda.$
\ee
\end{proposition}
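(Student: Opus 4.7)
The plan is to handle the two parts in sequence, using throughout that $\rho$ is continuous (by definition of a local hermitian datum), so that $\rho(o_D) = o_D$ and $\rho(\vi{D}) = \vi{D}$.

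For part (1): That $M^{\#}$ is an $o_D$-submodule of $V$ follows from sesquilinearity,
\[ h(xd,m) = \rho(d)h(x,m) \in \rho(o_D)\vi{D} \subseteq \vi{D}, \]
for $x \in M^{\#}$ and $d \in o_D$. Fullness I would obtain by transporting via the $D$-linear bijection $\hat{h}: V \to V^{*}$ which identifies $M^{\#}$ with the classical dual lattice $\{f \in V^{*} : f(M) \subseteq \vi{D}\}$, a full $o_D$-lattice by standard duality. The inclusion $M \subseteq M^{\#\#}$ follows from
\[ h(m,y) = \epsilon\,\rho(h(y,m)) \in \rho(\vi{D}) = \vi{D} \]
for $m \in M$ and $y \in M^{\#}$. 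For the reverse inclusion I would pick an $o_D$-basis $(e_i)$ of $M$ and, using non-degeneracy of $h$, introduce the dual family $(e^i)$ in $V$ characterised by $h(e^i,e_j) = \delta_{ij}$; a direct computation yields $M^{\#} = \bigoplus_i e^i \vi{D}$, and one more iteration gives $M^{\#\#} = \bigoplus_i e_i o_D = M$.

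For part (2): A lattice function has only finitely many jumps on any bounded interval (being left-continuous, $\pi_D$-periodic, and taking values in the discrete set $\Gitter{o_D}{V}$), so each right-limit $\Lambda((-r)+) = \bigcup_{s>-r}\Lambda(s)$ stabilises to a single full lattice; by part (1), $\Lambda^{\#}(r) = [\Lambda((-r)+)]^{\#}$ is then a full $o_D$-lattice. To check the three axioms of $\Lattone{o_D}{V}$: periodicity follows from $\Lambda((-r-\nu(\pi_D))+) = \Lambda((-r)+)\pi_D^{-1}$ combined with the lattice identity $(N\pi_D^{-1})^{\#} = N^{\#}\pi_D$ (valid because $\rho(\pi_D)$ is a uniformiser and $\vi{D}$ is a two-sided ideal); monotonicity is immediate from inclusion-reversal of $(\cdot)^{\#}$; left continuity uses $(\bigcup_\alpha N_\alpha)^{\#} = \bigcap_\alpha N_\alpha^{\#}$ together with the elementary identity $\bigcup_{\epsilon>0}\Lambda((-r+\epsilon)+) = \Lambda((-r)+)$. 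For involutivity, I would pick $\delta > 0$ so small that $\Lambda^{\#}$ is constant on $(-r,-r+\delta)$ and $\Lambda$ is constant on $(r-\delta,r]$ (both possible by discreteness), so that $\Lambda((r-\delta/2)+) = \Lambda(r)$; then unwinding $(\Lambda^{\#})^{\#}(r)$ via the involutivity of $(\cdot)^{\#}$ on lattices supplied by part (1) yields $\Lambda(r)$.

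The main obstacle is the book-keeping of right-limits $\Lambda(t+)$ through the various dualisations, and the verification that the $\rho$-twist does not corrupt the expected lattice identities such as $(N\pi_D^{-1})^{\#} = N^{\#}\pi_D$. Once one exploits the discrete jump structure of lattice functions to reduce everything to the lattice-level statement of part (1), the remaining steps are routine.
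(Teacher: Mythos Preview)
Your argument is correct and proceeds along the same general lines as the paper, with one mild difference in part (1) worth noting.

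For part (1), the paper fixes a Witt basis $(v_i)$ of $h$, verifies the claim explicitly for the reference lattice $M':=\bigoplus_i v_i o_D$ (where $(M')^{\#}=\bigoplus_i v_i\vi{D}$ and $((M')^{\#})^{\#}=M'$ are immediate), and then transports to an arbitrary $M$ via an automorphism $f$ with $f(M')=M$, using the identity $f^{\sigma}(M^{\#})=(M')^{\#}$ from Remark~\ref{remEndoDualLatt}. Your route---choosing an $o_D$-basis of $M$ and its $h$-dual family $(e^i)$ and computing $M^{\#}=\bigoplus_i e^i\vi{D}$ directly---is the standard dual-basis computation and is equally valid; it avoids the Witt basis but requires invoking freeness of $o_D$-lattices (which is fine here). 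The paper's transport argument is perhaps slightly more structural, yours more elementary.

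For part (2), both proofs are essentially identical: exploit the discreteness of the jump set so that at generic $r$ the right-limits collapse and one can apply part (1), then extend to all $r$ by left continuity. The paper phrases this as ``true at pairs $r,-r$ of continuity points of $\Lambda$'' and extends by density; you phrase it via a small $\delta$ on which both $\Lambda$ and $\Lambda^{\#}$ are constant. The content is the same.
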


\begin{proof}
\be
\item Let $(v_i)_i$ be a Witt basis of $h$ (see corollary \ref{corExistOfWittBasis}), and let $f$ be a $D$-linear automorphism of $V$ which maps 
\[M':=\bigoplus_{i=1}^mv_i\vr{D}\] 
onto $M$. Assertion 1 is true for $M'$ because the following two equations hold 
\[(M')^{\#}=\bigoplus_{i=1}^mv_i\vi{D} \text{ and }((M')^{\#})^{\#}=(M'\tf{p}_D)^{\#}=(M')^{\#}\vipower{D}{-1}=M'.\]
Further we have 
\[f^{\sigma}(M^{\#})=(M')^{\#}.\]
Therefore $M^{\#}$ is a full lattice and 
\[(M^{\#})^{\#}\ =\ (f^{\sigma})^{\sigma}(((M')^{\#})^{\#})\ =\ f(M')\ =\ M\]
as required.
\item For the first assertion we only show (3) of definition \ref{defLatticeFunction}. 
\begin{align*}
\cap_{\epsilon>0}\Lambda^{\#}(r-\epsilon) & = \{v\in V|\ h(v,\cup_{\epsilon>0}\Lambda((-r+\epsilon)+))\subseteq\vi{D}\}\\ & = \{v\in V|\ h(v,\Lambda(-r)+)\subseteq\vi{D}\} \\ & = \Lambda^{\#}(r).\\
\end{align*}
The second assertion is true in pairs $r$, $-r$ of continuity points of $\Lambda$ because of 
\[(\Lambda^{\#})^{\#}(r)=(\Lambda(r)^{\#})^{\#}=\Lambda(r).\]
The density of the set of these $r$ in $\mathbb{R}$ and the left continuity of $\Lambda$ and $(\Lambda^{\#})^{\#}$ extend the equality to all real nunbers. 
\ee
\end{proof}
\vspace{1em}

Before we transfer $( )^\#$ to other spaces, we introduce 
the analogous definition for the dual of a norm. This was introduced by Bruhat and Tits.

\begin{definition}
The \textit{dual}\idxD{dual of a norm with respect to an hermitian form} of a $D$-norm $\alpha$ on $V$ with respect to $h$
is the $D$-norm given by
\[\bar{\alpha}(v):=\inf_{w\in V,w\neq 0}(\nu(h(v,w))-\alpha(w)).\]\idxS{$\bar{\alpha}$}
We skip the notion "with respect to $h$" after the following lemma because definition \ref{defDualNormVStar} is not used after the proof.  
\end{definition}

\begin{lemma}\cite[2.5]{bruhatTitsIV:87}\label{lemBarAlpha}
The dual of a norm with respect to $h$ is well defined and if $(v_i)$ is a splitting basis of $\alpha$ and 
$(w_i)$ is a $D$-basis of $V$ such that 
\[h(w_i,v_j)=\delta_{i,j}\]
then $(w_i)$ is a splitting basis of $\bar{\alpha},$
and the value of $\bar{\alpha}$ in $w_i$ is $-\alpha(v_i).$
\end{lemma}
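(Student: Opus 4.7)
My plan is to prove the formula
\[
\bar{\alpha}\!\left(\sum_{i} w_i e_i\right) = \min_i\bigl(\nu(e_i) - \alpha(v_i)\bigr)
\]
by a direct calculation. This formula simultaneously yields the splitting property of $(w_i)$ for $\bar{\alpha}$, the claimed value $\bar{\alpha}(w_i) = -\alpha(v_i)$, and the well-definedness of $\bar{\alpha}$ as a $D$-norm on $V$.

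The existence of the dual basis $(w_i)$ is immediate from the non-degeneracy of $h$: the map $\hat{h}\colon V \to V^*$ introduced before Proposition \ref{propHermFInv} is bijective, so setting $w_i := \hat{h}^{-1}(v_i^*)$ for the dual basis $(v_i^*)$ of $(v_i)$ gives vectors with $h(w_i, v_j) = \delta_{ij}$. Write $a_i := \alpha(v_i)$, so $\alpha(\sum_j v_j d_j) = \min_j(\nu(d_j) + a_j)$ by the splitting hypothesis. For $w = \sum_i w_i e_i$ and $v = \sum_j v_j d_j$, sesquilinearity in the first slot together with linearity in the second give
\[
h(w, v) = \sum_{i,j} \rho(e_i)\, h(w_i, v_j)\, d_j = \sum_i \rho(e_i)\, d_i.
\]
Since the valuation on $D$ is the unique extension from its centre, it is $\rho$-invariant, and the ultrametric inequality yields $\nu(h(w,v)) \geq \min_i(\nu(e_i) + \nu(d_i))$. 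Combined with the termwise estimate
\[
\min_i(\nu(e_i) + \nu(d_i)) - \min_j(\nu(d_j) + a_j) \geq \min_i(\nu(e_i) - a_i),
\]
this gives $\bar{\alpha}(w) \geq \min_i(\nu(e_i) - a_i)$. The matching upper bound is produced by the test vector $v = v_{i_0}$, where $i_0$ realises the minimum on the right-hand side: one computes $h(w, v_{i_0}) = \rho(e_{i_0})$ and $\alpha(v_{i_0}) = a_{i_0}$, so $\nu(h(w, v_{i_0})) - \alpha(v_{i_0}) = \nu(e_{i_0}) - a_{i_0}$.

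The formula forces $\bar{\alpha}(w) > -\infty$ for $w \neq 0$ while $\bar{\alpha}(0) = \infty$, and exhibits $(w_i)$ as a splitting basis with $\bar{\alpha}(w_i) = -\alpha(v_i)$. The remaining norm axioms are routine: homogeneity $\bar{\alpha}(wt) = \bar{\alpha}(w) + \nu(t)$ follows from $h(wt, v) = \rho(t)h(w,v)$ together with $\nu \circ \rho = \nu$, and the ultrametric inequality descends directly from the one for $h$ in the first argument. The only subtle point is the $\rho$-invariance of $\nu$, which is precisely what makes the ultrametric bound on $\nu(\sum_i \rho(e_i)d_i)$ tight enough for the lower and upper bounds to match; without it the basis $(w_i)$ would not split $\bar{\alpha}$.
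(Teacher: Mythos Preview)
Your proof is correct. You argue by direct computation: expand $h(w,v)$ in the paired bases, bound $\nu(h(w,v))-\alpha(v)$ below termwise, and realise the bound with a single test vector $v=v_{i_0}$. This is a genuinely different route from the paper's. The paper observes that $\bar{\alpha}$ factors as $\alpha^*\circ\hat{h}$, where $\alpha^*$ is the dual norm on $V^*$ from Definition~\ref{defDualNormVStar} and $\hat{h}\colon V\to V^*$ is the isomorphism $v\mapsto h(v,\cdot)$; well-definedness of $\bar{\alpha}$ then comes for free, and the splitting statement is inherited from Proposition~\ref{propDualNorm}(1) applied to $\alpha^*$, since $\hat{h}$ carries $(w_i)$ to the dual basis $(v_i^*)$. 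Your approach is more self-contained (it does not invoke the Bruhat--Tits result quoted in Proposition~\ref{propDualNorm}) and makes the role of the $\rho$-invariance of $\nu$ explicit; the paper's approach is shorter and explains structurally why $\bar{\alpha}$ is a norm, namely because it is a norm on $V^*$ pulled back along a linear isomorphism.
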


A basis $(w_i)$ as in the above lemma exists, because 
$h$ is non-degenerate.

\begin{proof}
Under 
\[(\hat{h})^\#:\Norm{D}{V^*}\ra\Abb(V,\bbR),\]
\[\beta\mapsto\beta\circ\hat{h},\]
the image of $\alpha^*$ is $\bar{\alpha}.$ 
See proposition \ref{propHermFInv} for the definition of 
$\hat{h}.$ Therefore $\bar{\alpha}$ is an $o_D$-norm of $V.$
We now prove that $(w_i)$ splits $\bar{\alpha}.$ The norm $\alpha^*$ has $(v_i^*)$ as a splitting basis which is the image of $(w_i)$ under $\hat{h}.$ Thus $(w_i)$ is a splitting basis of 
$\bar{\alpha},$ and
\[\bar{\alpha}(w_i)=\alpha^*(v_i)=-\alpha(v_i).\]
\end{proof}

\begin{proposition}\label{propMapsInducedByInv}
Under the diagrams (\ref{eqThmCD}) and (\ref{eqThmCD2}) the map $()^\#$ corresponds to the following maps:
\be
\item on $\Normone{D}{V}:$ $\alpha\mapsto\bar{\alpha}$
\item on $\Norm{D}{V}:$ $[\alpha]^{\sigma}:=[\bar{\alpha}],$\idxS{$[\alpha]^{\sigma}$}
\item on $\Latt{o_D}{V}:$ $[\Lambda]^{\sigma}:=[\Lambda^\#].$\idxS{$[\Lambda]^{\sigma}$}
\item on $\Latttwo{o_k}{A}:$ $\tf{a}^{\sigma}(t):=(\tf{a}(t))^\sigma,$\idxS{$\tf{a}^{\sigma}$}
\item on $\Normtwo{k}{A}:$ $\beta^{\sigma}(f):=\beta(f^\sigma),$
\ee 
\end{proposition}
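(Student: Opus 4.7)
The plan is to verify the explicit formula in (1) by direct computation, to deduce (2) and (3) by passing to equivalence classes, and to derive (4) and (5) from Remark \ref{remEndoDualLatt}. Throughout, the commutativity of the diagrams (\ref{eqThmCD}) and (\ref{eqThmCD2}) ensures that once the map $()^\#$ is fixed on $\Lattone{o_D}{V}$, it induces a map on every other vertex of those diagrams, and the content of the proposition is exactly the explicit description of these induced maps.

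For (1) I fix $v \in V$ and $r \in \bbR$ and prove $\bar{\alpha}(v) \geq r \Leftrightarrow v \in \Lambda_\alpha^\#(r)$. The forward direction is immediate: if $\bar{\alpha}(v) \geq r$ then for every $w$ with $\alpha(w) > -r$ one has $\nu(h(v,w)) \geq r + \alpha(w) > 0$, hence $h(v,w) \in \tf{p}_D$, so $v \in [\Lambda_\alpha((-r)+)]^\# = \Lambda_\alpha^\#(r)$. For the converse, given $v \in \Lambda_\alpha^\#(r)$ and any nonzero $w$, I rescale $w$ by $\lambda \in D^\times$ with $\nu(\lambda)$ slightly larger than $-r - \alpha(w)$, so that $w\lambda$ belongs to $\Lambda_\alpha((-r)+)$; this forces $\nu(h(v,w)) + \nu(\lambda) > 0$, and letting $\nu(\lambda)$ approach $-r - \alpha(w)$ from above while invoking the discreteness of the value group yields $\nu(h(v,w)) \geq r + \alpha(w)$, hence $\bar{\alpha}(v) \geq r$. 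Items (2) and (3) then reduce to the compatibility of $()^\#$ and of $\alpha \mapsto \bar{\alpha}$ with translations; a short computation from the definitions gives $(\Lambda + s)^\# = \Lambda^\# - s$ and $\overline{\alpha + s} = \bar{\alpha} - s$, so both involutions descend, and the explicit formulas follow from (1) together with the commutativity of diagram (\ref{eqThmCD2}).

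For (4) I apply Remark \ref{remEndoDualLatt} in the following equivalent form: for any $a \in A$ and lattices $M, N \in \Gitter{o_D}{V}$, one has $\sigma(a)(M) \subseteq N$ if and only if $a(N^\#) \subseteq M^\#$. Taking $M = \Lambda((-s-t)+)$ and $N = \Lambda((-s)+)$, so that $M^\# = \Lambda^\#(s+t)$ and $N^\# = \Lambda^\#(s)$, this translates the condition $a(\Lambda^\#(s)) \subseteq \Lambda^\#(s+t)$ for every $s$ into the condition $\sigma(a)(\Lambda((-s-t)+)) \subseteq \Lambda((-s)+)$ for every $s$; a routine argument using the left-continuity of $\Lambda$ and the discreteness of $\nu$ shows that the latter is equivalent to $\sigma(a) \in \End(\Lambda)(t)$. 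This yields $\End(\Lambda^\#)(t) = \sigma(\End(\Lambda)(t))$, which is (4). Finally, (5) is obtained by transporting (4) along the bijection $\End(\alpha) \leftrightarrow \End(\Lambda_\alpha)$ of diagram (\ref{eqThmCD}): under this identification, $\beta^{\sigma}(f) = \beta(f^{\sigma})$ is precisely the square-norm reformulation of $\End(\Lambda^\#)(t) = \sigma(\End(\Lambda)(t))$.

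The most delicate step is the converse direction of (1), where strict and weak inequalities in the discrete value group must be reconciled; this is precisely what the $(-r)+$ in the definition of $\Lambda^\#$ is designed to handle, and the rest of the proposition essentially propagates this single computation through the diagrams.
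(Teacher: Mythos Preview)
Your proof is correct and follows essentially the same route as the paper. For item~(1) the paper simply cites \cite[3.3]{broussousStevens:09}, whereas you spell out the direct verification; your treatment of~(4) via Remark~\ref{remEndoDualLatt} and of~(5) by transport along diagram~(\ref{eqThmCD}) matches the paper's argument almost line for line. The only cosmetic difference in~(4) is that the paper arranges the chain of equivalences so that the passage from $\Lambda((-s-r)+)$ to $\End(\Lambda)(r)$ is immediate (the substitution $s'=-s-r$ runs over all reals as $s$ does), avoiding the separate appeal to left-continuity that you invoke; and in your converse direction of~(1) the phrase ``letting $\nu(\lambda)$ approach'' is slightly informal for a discrete value group, but choosing the minimal admissible $\nu(\lambda)\in\frac{1}{d}\bbZ$ makes the step rigorous.
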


\begin{proof}
1. The proof is similar to \cite[3.3.]{broussousStevens:09}.

2. and 3. The maps are well defined which follows immediately from the definitions.

4. We prove for all $\Lambda\in \Lattone{o_D}{V}:$
\[\End(\Lambda^\#)= \End(\Lambda)^\sigma.\]
For an element $a$ of $A$ the following statements are equivalent.
\bi
\item $a^\sigma\in \End(\Lambda^\#)(r)$
\item The lattice $a^\sigma([\Lambda((-s)+)]^\#)$ is contained in $[\Lambda((-s-r)+)]^\#$ for all real numbers $s$. 
\item The lattice $a\Lambda((-s-r)+)$ is contained in $\Lambda((-s)+)$ for all real numbers $s$ by remark \ref{remEndoDualLatt}.
\item $a$ is an element of $\End(\Lambda)(r).$
\ei

5. The bijection 
\[\Normone{k}{A}\cong\Lattone{o_k}{A}\]
maps the norm $\End(\alpha)\circ\sigma$ to $\End(\Lambda_\alpha)^\sigma$ which is $\End(\Lambda_\alpha^\#)$ by assertion 4.
The commutativity of diagram (\ref{eqThmCD}) implies that 
$\End(\bar{\alpha})$ is mapped to $\End(\Lambda_{\bar{\alpha}})$ which is $\End(\Lambda_\alpha^\#)$ by assertion 1.
The equality 
\[\End(\alpha)\circ\sigma=\End(\bar{\alpha})\]
follows now from the injectivety of the above bijection.
\end{proof} 

\begin{remark}\label{remEquivariance}
\be
\item All maps given in \ref{propMapsInducedByInv} have order two 
by proposition \ref{propDualLatt}. 
\item Let $\Lambda$ be an $o_D$-lattice function. For $g\in\GL_DV$ we have 
\[(g\Lambda)^\#=(g^\sigma)^{-1}\Lambda^\#.\]
\ee
\end{remark}

\begin{proposition}\label{propAffDL}
The map 
\[( )^\#:\Lattone{o_D}{V}\ra\Lattone{o_D}{V}\]
is affine and $\U(h)$-equivariant. 
\end{proposition}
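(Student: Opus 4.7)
The $\U(h)$-equivariance follows immediately from Remark \ref{remEquivariance}(2): for $g\in\U(h)$ we have $g^\sigma=g^{-1}$, and hence $(g\Lambda)^\#=(g^\sigma)^{-1}\Lambda^\#=g\Lambda^\#$. The real content is affineness, and my plan is to transfer the statement from lattice functions to $D$-norms via the $\tilde G$-equivariant bijection of Remark \ref{remBLI.2.4}, and then verify it on a common splitting basis.

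By Proposition \ref{propMapsInducedByInv}(1), under the correspondence $\alpha\mapsto\Lambda_\alpha$ the map $(\ )^\#$ on $\Lattone{o_D}{V}$ corresponds to the map $\alpha\mapsto\bar\alpha$ on $\Normone{D}{V}$, so it suffices to show that $\alpha\mapsto\bar\alpha$ is affine. Given two norms $\alpha,\alpha'\in\Normone{D}{V}$, Proposition \ref{propDualNorm}(2) furnishes a common splitting basis $(v_i)$; by non-degeneracy of $h$ there exists a $D$-basis $(w_i)$ of $V$ with $h(w_i,v_j)=\delta_{i,j}$. Lemma \ref{lemBarAlpha} then tells us that $(w_i)$ is a splitting basis of both $\bar\alpha$ and $\bar\alpha'$, with values
\[\bar\alpha(w_i)=-\alpha(v_i),\qquad \bar\alpha'(w_i)=-\alpha'(v_i).\]

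Fix $\lambda\in[0,1]$. By the definition of the affine structure on $\Normone{D}{V}$ recalled at the end of Section 2.1.1, the norm $\lambda\alpha+(1-\lambda)\alpha'$ is split by $(v_i)$ with $v_i\mapsto\lambda\alpha(v_i)+(1-\lambda)\alpha'(v_i)$. Applying Lemma \ref{lemBarAlpha} once more to this norm and the same dual basis $(w_i)$ yields that $\overline{\lambda\alpha+(1-\lambda)\alpha'}$ is split by $(w_i)$ with
\[w_i\;\longmapsto\;-\lambda\alpha(v_i)-(1-\lambda)\alpha'(v_i)=\lambda\bar\alpha(w_i)+(1-\lambda)\bar\alpha'(w_i),\]
which is exactly the norm $\lambda\bar\alpha+(1-\lambda)\bar\alpha'$. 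Hence $\alpha\mapsto\bar\alpha$ is affine, and transporting along the bijection $\alpha\mapsto\Lambda_\alpha$ (which is itself affine by the definition of the affine structures in Remark \ref{remAffineStructure} and its norm counterpart) finishes the proof.

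There is no real obstacle here: once one recognises that Lemma \ref{lemBarAlpha} computes $\bar\alpha$ explicitly on a splitting basis in terms of $\alpha$, the affineness becomes a one-line verification. The only small care needed is to apply Proposition \ref{propDualNorm}(2) to obtain a single basis $(v_i)$ splitting both input norms simultaneously, and to observe that the same dual basis $(w_i)$ then serves for both output norms and for the convex combination.
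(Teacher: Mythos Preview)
Your proof is correct and follows essentially the same approach as the paper: reduce to norms via Proposition \ref{propMapsInducedByInv}(1), pick a common splitting basis, and use Lemma \ref{lemBarAlpha} to compute $\bar\alpha$ on the dual basis $(w_i)$, whence affineness is immediate. The paper's proof is identical in substance, though it leaves the existence of a common splitting basis implicit where you cite Proposition \ref{propDualNorm}(2).
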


\begin{proof}
The equivariance follows from \ref{remEquivariance}[2.]. 
We prove the affineness with norms and lemma \ref{lemBarAlpha}. Let $(v_i)$ be a splitting basis of two $D$-norms $\alpha$ and $\alpha'.$ We choose an element $\lambda\in [0,1].$ The basis $(v_i)$ also splits $\gamma:=\lambda\alpha+(1-\lambda)\alpha'.$
By lemma \ref{lemBarAlpha} the $D$-basis
$(w_i)$ fulfilling 
\[h(w_i,v_j)=\delta_{i,j}\]
splits $\bar{\alpha},\ \bar{\alpha'}$ and $\bar{\gamma}$ and the values at $w_i$ are 
\[\bar{\alpha}(w_i)=-\alpha(v_i),\ \bar{\alpha'}(w_i)=-\alpha'(v_i)\]
and
\[\bar{\gamma}(w_i)=-\gamma(v_i)=-\lambda\alpha(v_i)-(1-\lambda)\alpha'(v_i).\]
Thus
\[\bar{\gamma}(w_i)=\lambda\bar{\alpha}(w_i)+(1-\lambda)\bar{\alpha'}(w_i),\]
which proves the affiness of the map $\alpha\mapsto \bar{\alpha}.$
\end{proof}


\subsection{MM-norms}

We are interested in the sets of self-dual objects. For the self-dual norms Bruhat and Tits gave another definition, the definition of an MM-norm.

\begin{definition}[\cite{bruhatTitsIV:87}[2.1]]
One says that $\alpha\in \Normone{D}{V}$ is \textit{dominated  by $h$}\idxD{norm dominated by an hermitian form}, if for all
$v,v'\in V$ we have
\beq \label{defDominated}
\alpha(v)+\alpha(v')\leq \nu(h(v,v')).
\eeq
\end{definition}

\begin{remark}\label{remSplitDominated}
If $(v_i)_i$ is a splitting basis for $\alpha$ then $\alpha$ is dominated by $h$ if and only if for all $i,\ j$ we have 
\[\alpha(v_i)+\alpha(v_j)\leq \nu(h(v_i,v_j)).\]
\end{remark}

We make $\Normone{D}{V}$ to a poset by defining $\alpha\leq\beta$ if $\alpha(v)\leq\beta(v)$
for all $v\in V.$ 

\begin{definition}[\cite{bruhatTitsIV:87} 2.1]
A maximal element of the set of $\alpha\in \Normone{D}{V}$ dominated by $h$ is called a \textit{MM-norm for $h$}\idxD{MM-norm} (maximinorante in French).
\end{definition}

\begin{lemma}\label{lemAlphaAlphaBar}
A $D$-norm $\alpha$ satisfies the following three properties.
\be
\item For all $v,v'\in V$ we have 
\beq\label{eqAlphaAlphaBarDominated}
\alpha(v)+\bar\alpha(v')\leq \nu(h(v,v')).
\eeq
\item $\bary(\alpha):=\frac{1}{2}\alpha+\frac{1}{2}\bar\alpha$ is dominated by $h.$
\item If $\alpha$ is dominated by $h$ then $\bary(\alpha)\geq\alpha.$
\ee
\end{lemma}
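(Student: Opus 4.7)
The plan is to handle the three items in order, with (1) feeding directly into (2) and (3). The structural ingredients I expect to use throughout are a common splitting basis of $\alpha$ and $\bar\alpha$, supplied by proposition \ref{propDualNorm}(2), and the elementary fact that $\rho$ preserves $\nu$: as a continuous involution of the local skewfield $D$, $\rho$ preserves $o_D$ and its unique maximal two-sided ideal $\tf{p}_D$, hence each power $\tf{p}_D^n$, so $\nu\circ\rho=\nu$ on $D^{\times}$.

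For (1) I would unfold the definition of $\bar\alpha(v')$ as the infimum over nonzero $w\in V$ of $\nu(h(v',w))-\alpha(w)$. The case $v=0$ is trivial since both sides are $\infty$; otherwise, specialising to $w=v$ gives $\bar\alpha(v')\le\nu(h(v',v))-\alpha(v)$. Because $h$ is $\epsilon$-hermitian with $\epsilon=\pm1$, one has $h(v',v)=\epsilon\,\rho(h(v,v'))$, and the valuation observation above yields $\nu(h(v',v))=\nu(h(v,v'))$. Rearranging proves (1).

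For (2) I would choose a common splitting basis $(v_i)$ of $\alpha$ and $\bar\alpha$; by the definition of the affine structure on $\Normone{D}{V}$, the basis $(v_i)$ also splits $\bary(\alpha)$ with $\bary(\alpha)(v_i)=\tfrac{1}{2}(\alpha(v_i)+\bar\alpha(v_i))$. By remark \ref{remSplitDominated} it suffices to verify the domination inequality on pairs of basis vectors. Applying (1) once to $(v_i,v_j)$ and once to $(v_j,v_i)$ and using $\nu(h(v_j,v_i))=\nu(h(v_i,v_j))$ produces the two inequalities $\alpha(v_i)+\bar\alpha(v_j)\le\nu(h(v_i,v_j))$ and $\alpha(v_j)+\bar\alpha(v_i)\le\nu(h(v_i,v_j))$; adding them and halving yields $\bary(\alpha)(v_i)+\bary(\alpha)(v_j)\le\nu(h(v_i,v_j))$.

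For (3), suppose $\alpha$ is dominated by $h$. Then $\alpha(v)+\alpha(w)\le\nu(h(v,w))$ rewrites as $\alpha(v)\le\nu(h(v,w))-\alpha(w)$ for every nonzero $w$; taking the infimum over $w$ gives $\alpha(v)\le\bar\alpha(v)$ pointwise, i.e.\ $\bar\alpha\ge\alpha$. On the common splitting basis this reads $\bary(\alpha)(v_i)=\tfrac{1}{2}(\alpha(v_i)+\bar\alpha(v_i))\ge\alpha(v_i)$, and since both $\alpha$ and $\bary(\alpha)$ are computed from $(v_i)$ by the same minimum formula, the inequality propagates to arbitrary $v\in V$. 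I do not anticipate a genuine obstacle: the only point with any content is the equality $\nu\circ\rho=\nu$, and the rest is bookkeeping around the common splitting basis.
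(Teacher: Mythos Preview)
Your argument is correct. For parts (1) and (3) it matches the paper's proof, which simply records that (1) is immediate from the definition of $\bar\alpha$ and that domination by $h$ is equivalent to $\alpha\le\bar\alpha$, from which (3) follows at once. For part (2) you take a slightly different route: the paper observes that $\overline{\bary(\alpha)}=\bary(\alpha)$ and then applies (1) with $\bary(\alpha)$ in place of $\alpha$, whereas you work directly on a common splitting basis, apply (1) to the two ordered pairs $(v_i,v_j)$ and $(v_j,v_i)$, and average. Your version is more self-contained, since the equality $\overline{\bary(\alpha)}=\bary(\alpha)$ quietly relies on the affineness of $\alpha\mapsto\bar\alpha$ (established in the paper via lemma~\ref{lemBarAlpha} and proposition~\ref{propAffDL}); the paper's version is shorter once that fact is in hand. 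Either way the content is the same.
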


\begin{proof}
The first assertion follows from the definition of $\bar\alpha$ and it implies the 
second assertion because $\overline{\bary(\alpha)}=\bary(\alpha)$ 
Point 3 follows because to be dominated by $h$ is equivalent to $\alpha\leq\bar{\alpha}.$
\end{proof}

A part of \cite[2.5]{bruhatTitsIV:87} is the following
proposition.

\begin{proposition}[F. Bruhat, J. Tits]
For $\alpha\in \Normone{D}{V}$ the following statements are equivalent.
\be
\item $\alpha=\bar\alpha.$
\item $\alpha$ is a MM-norm.
\ee
\end{proposition}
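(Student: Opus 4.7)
The equivalence is essentially extracted from Lemma \ref{lemAlphaAlphaBar} together with one monotonicity observation and one use of common splitting bases, so I would organise the plan into two short implications with a preliminary lemma.

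\textbf{Preliminary.} First I would note the antitone behaviour of the duality $\alpha \mapsto \bar\alpha$: if $\alpha\leq\beta$ are two $D$-norms, then the infimum defining $\bar\alpha(v)=\inf_{w\neq 0}(\nu(h(v,w))-\alpha(w))$ is taken over a pointwise larger set of values than the one for $\bar\beta$, hence $\bar\beta\leq\bar\alpha$. This is the only fact about $(\,\bar{}\,)$ beyond Lemma \ref{lemAlphaAlphaBar} that I need.

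\textbf{(1)\,$\Rightarrow$\,(2).} Assume $\alpha=\bar\alpha$. Lemma \ref{lemAlphaAlphaBar}(1) then reads $\alpha(v)+\alpha(v')\leq\nu(h(v,v'))$, so $\alpha$ is dominated by $h$. For maximality, pick any $\beta\in\Normone{D}{V}$ dominated by $h$ with $\beta\geq\alpha$. Being dominated is equivalent to $\beta\leq\bar\beta$ (this equivalence is already implicitly used in the proof of Lemma \ref{lemAlphaAlphaBar}(3)). Combining this with the antitone behaviour and the hypothesis yields
\[
\beta\ \leq\ \bar\beta\ \leq\ \bar\alpha\ =\ \alpha\ \leq\ \beta,
\]
whence $\beta=\alpha$. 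So $\alpha$ is an MM-norm.

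\textbf{(2)\,$\Rightarrow$\,(1).} Assume $\alpha$ is an MM-norm. By Lemma \ref{lemAlphaAlphaBar}(2) the barycentre $\bary(\alpha)=\tfrac12\alpha+\tfrac12\bar\alpha$ is dominated by $h$, and by part (3) of the same lemma it satisfies $\bary(\alpha)\geq\alpha$. The maximality of $\alpha$ forces $\bary(\alpha)=\alpha$. To conclude $\bar\alpha=\alpha$ I would apply Proposition \ref{propDualNorm}(2) to obtain a common splitting basis $(v_i)$ of $\alpha$ and $\bar\alpha$; setting $a_i:=\alpha(v_i)$ and $b_i:=\bar\alpha(v_i)$, the explicit formula for the affine structure on $\Normone{D}{V}$ gives $\bary(\alpha)(v_i)=\tfrac{a_i+b_i}{2}$, so $\bary(\alpha)=\alpha$ translates into $a_i=b_i$ for all $i$, and hence $\alpha=\bar\alpha$ on the whole space via the common splitting.

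The main obstacle is the finish of the second implication: one must convert the scalar identity $\bary(\alpha)=\alpha$ into pointwise equality of $\alpha$ and $\bar\alpha$, and this is where one genuinely uses the existence of a common splitting basis (Proposition \ref{propDualNorm}(2)) rather than just formal manipulation, since ``subtracting'' norms is not a priori meaningful. Once that step is unlocked, everything else is a direct rewriting of Lemma \ref{lemAlphaAlphaBar}.
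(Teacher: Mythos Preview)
Your proof is correct and follows exactly the paper's approach: the paper's $1\Rightarrow 2$ is the same antitone chain $\gamma\leq\bar\gamma\leq\bar\alpha=\alpha$, and its $2\Rightarrow 1$ likewise deduces $\alpha=\bary(\alpha)$ from Lemma~\ref{lemAlphaAlphaBar}(2),(3) and then concludes $\alpha=\bar\alpha$. The only difference is that the paper writes the last step as a bare ``Thus $\alpha=\bar\alpha$'', whereas you spell out the splitting-basis argument that actually justifies it---your version is the more honest one.
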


\begin{proof}
$1.\Ra 2.:$ $\alpha$ is dominated by $h,$ since $\alpha\leq\bar\alpha.$ If $\gamma\geq\alpha$ and $\gamma$ is dominated by $h,$ then $\gamma\leq\bar\gamma\leq\bar\alpha=\alpha,$ thus 2.
$2.\Ra 1.:$ By remark \ref{lemAlphaAlphaBar} (2 and 3) we get $\alpha=\bary(\alpha).$ Thus $\alpha=\bar\alpha.$ 
\end{proof}


\subsection{Self-duality}

We obtain two diagrams with sets of self-dual objects from the 
diagrams (\ref{eqThmCD}) and (\ref{eqThmCD2}). An element of 
one of the sets given in these dia\-grams is called \textit{self-dual}\idxD{self-dual}
if it is a fixed point of the corresponding map given in proposition \ref{propMapsInducedByInv}.

\begin{notation}
We denote the set of self-dual objects as follows:
\bi
\item for $\Normone{D}{V},\ \Norm{D}{V}:$ $\Normone{h}{V},\ \Norm{h}{V},$\idxS{$\Normone{h}{V}$}\idxS{$\Norm{h}{V}$}
\item for $\Lattone{o_D}{V},\ \Latt{o_D}{V}:$ $\Lattone{h}{V},\ \Latt{h}{V}$\idxS{$\Lattone{h}{V}$}\idxS{$\Latt{h}{V}$}
\item for $\Normtwo{k}{A}:$ $\Normtwo{\sigma}{A},$ \idxS{$\Normtwo{\sigma}{A}$}
\item for $\Latttwo{o_k}{A}:$ $\Latttwo{\sigma}{A}.$\idxS{$\Latttwo{\sigma}{A}$}
\ei
\end{notation} 

The next proposition is a corollary of proposition \ref{propAffDL}.

\begin{proposition}\label{propAffStructureOnLatt1hV}
The sets $\Normone{h}{V}$ and $\Lattone{h}{V}$ are closed under the affine structure of $\Normone{h}{V}$ and $\Lattone{h}{V}$ respectively.
\end{proposition}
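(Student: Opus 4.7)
The plan is to derive this statement as an immediate corollary of Proposition \ref{propAffDL}, which establishes that the duality map $(\ )^\#$ on $\Lattone{o_D}{V}$ is affine (and whose proof simultaneously shows that $\alpha\mapsto\bar\alpha$ is affine on $\Normone{D}{V}$, via lemma \ref{lemBarAlpha}).

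First I would treat the lattice function case. Let $\Lambda,\Lambda'\in\Lattone{h}{V}$ and $\lambda\in[0,1]$. By Proposition \ref{propAffDL} the duality map is affine, so
\[
(\lambda\Lambda+(1-\lambda)\Lambda')^{\#}=\lambda\Lambda^{\#}+(1-\lambda)(\Lambda')^{\#}=\lambda\Lambda+(1-\lambda)\Lambda',
\]
where the second equality uses self-duality of $\Lambda$ and $\Lambda'$. Hence $\lambda\Lambda+(1-\lambda)\Lambda'\in\Lattone{h}{V}$.

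Next I would repeat the same argument for norms. By the affineness of $\alpha\mapsto\bar\alpha$ shown inside the proof of Proposition \ref{propAffDL}, if $\alpha,\alpha'\in\Normone{h}{V}$ and $\lambda\in[0,1]$, then
\[
\overline{\lambda\alpha+(1-\lambda)\alpha'}=\lambda\bar\alpha+(1-\lambda)\bar{\alpha'}=\lambda\alpha+(1-\lambda)\alpha',
\]
so $\Normone{h}{V}$ is closed in $\Normone{D}{V}$ under the affine structure. There is no real obstacle here; the work was done in Propositions \ref{propAffDL} and \ref{propMapsInducedByInv}, and this statement just records that the fixed point set of an affine involution is closed under affine combinations. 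The only small subtlety to double-check is that the affine structure defined on the quotients (which is the ambient structure of the $\Normone{h}{V}$ and $\Lattone{h}{V}$ appearing in the statement) is indeed compatible with the one on the total spaces, but this is built into Remark \ref{remAffineStructure} and Definition \ref{defLattoDV}.
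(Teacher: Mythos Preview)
Your proposal is correct and follows exactly the paper's approach: the paper simply states that this proposition is a corollary of Proposition~\ref{propAffDL}, and you have spelled out that corollary explicitly by observing that the fixed-point set of an affine involution is closed under affine combinations.
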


\begin{proposition}
We get two commutative diagrams of $\U(h)$-equivariant maps.
\beq
\begin{array}{ccc}\label{diagNormSgigma}
\Norm{h}{V} & \ra &\Latt{h}{V}\\
\downarrow &&\downarrow\\
\Normtwo{\sigma}{A} &\ra &\Latttwo{\sigma}{A}\\
\end{array}
\eeq
\beq
\begin{array}{ccc}\label{diagNormh}
\Normone{h}{V} & \ra &\Lattone{h}{V}\\
\downarrow &&\downarrow\\
\Norm{h}{V} &\ra &\Latt{h}{V}\\
\end{array}
\eeq
The maps in the second diagram are affine.
\end{proposition}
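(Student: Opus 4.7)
The plan is to deduce both diagrams from the already-established diagrams (\ref{eqThmCD}) and (\ref{eqThmCD2}) by restricting each map to the subset of self-dual objects, and then to verify the three pieces: well-definedness of the restrictions, $\U(h)$-equivariance, and (for the second diagram) affineness.

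First I would check that each of the four maps in diagrams (\ref{eqThmCD}) and (\ref{eqThmCD2}) sends self-dual objects to self-dual objects. This is essentially a translation of Proposition \ref{propMapsInducedByInv}: the commutative diagrams (\ref{eqThmCD}) and (\ref{eqThmCD2}) intertwine the duality operations $\alpha \mapsto \bar{\alpha}$, $[\alpha] \mapsto [\alpha]^{\sigma}$, $[\Lambda] \mapsto [\Lambda]^{\sigma}$, $\tf{a} \mapsto \tf{a}^{\sigma}$, $\beta \mapsto \beta^{\sigma}$. Hence a fixed point on one side is mapped to a fixed point on the other, which gives the two diagrams once we restrict.

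Next I would verify $\U(h)$-equivariance. Since $\U(h) \subseteq \tiG$ and all the maps in (\ref{eqThmCD}) and (\ref{eqThmCD2}) are already $\tiG$-equivariant, the restricted maps are automatically $\U(h)$-equivariant provided the action of $\U(h)$ on each of the six ambient sets preserves the self-dual subset. For $\Lattone{o_D}{V}$, this is the computation in Remark \ref{remEquivariance}(2): if $g \in \U(h)$, then $g^{\sigma} = g^{-1}$, so $(g\Lambda)^{\#} = (g^{\sigma})^{-1}\Lambda^{\#} = g\Lambda^{\#} = g\Lambda$ whenever $\Lambda = \Lambda^{\#}$. The same computation, transported by the diagrams of Proposition \ref{propMapsInducedByInv}, gives the preservation on the other five sets.

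Finally, for the affineness of the second diagram I would argue as follows. By Proposition \ref{propAffStructureOnLatt1hV} both $\Normone{h}{V}$ and $\Lattone{h}{V}$ inherit an affine structure from their ambient sets. The horizontal map $\alpha \mapsto \Lambda_{\alpha}$ is affine on $\Normone{D}{V}$ by construction in Remark \ref{remAffineStructure}, and its restriction to $\Normone{h}{V}$ is therefore affine into $\Lattone{h}{V}$. The two vertical maps are the passage to equivalence classes under translation, which are affine by the very definition of the affine structures on $\Norm{D}{V}$ and $\Latt{o_D}{V}$ in Definition \ref{defLattoDV}, and restrict to affine maps $\Normone{h}{V} \to \Norm{h}{V}$ and $\Lattone{h}{V} \to \Latt{h}{V}$. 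The bottom map $[\alpha] \mapsto [\Lambda_{\alpha}]$ is the induced quotient of an affine map and is thus affine as well. The only small thing to be careful about is that the self-dual classes in $\Norm{h}{V}$ and $\Latt{h}{V}$ really do have representatives that are self-dual norms/lattice functions (so that the vertical restrictions are surjective); this is where the barycenter construction $\bary(\alpha) = \frac{1}{2}\alpha + \frac{1}{2}\bar{\alpha}$ from Lemma \ref{lemAlphaAlphaBar} produces a self-dual representative in any given class. I expect this last compatibility check to be the only place requiring more than a formal argument; the rest is unwinding the definitions.
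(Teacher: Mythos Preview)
Your approach is correct and matches the paper's reasoning: the paper does not even give an explicit proof of this proposition, treating it as an immediate consequence of Proposition~\ref{propMapsInducedByInv} (the duality operations are intertwined by the diagrams), Remark~\ref{remEquivariance} (for $\U(h)$-equivariance), and Proposition~\ref{propAffStructureOnLatt1hV} together with Proposition~\ref{propAffDL} (for affineness). One small remark: the surjectivity of the vertical maps that you discuss at the end is not part of the proposition itself; the paper handles it in a separate paragraph after the proposition, using exactly the translation $\alpha+\frac{s}{2}$ when $\bar\alpha=\alpha+s$, which coincides with your $\bary(\alpha)$.
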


The vertical maps of diagram (\ref{diagNormh}) are surjective.
Indead if a $D$-norm $\alpha$ satisfies $[\alpha]^{\sigma}=[\alpha]$ 
there is a real number $s$ such that
\[\bar{\alpha}=\alpha+s.\]
It follows that the norm $\alpha+\frac{s}{2}$ lies in
$\Normone{h}{V},$ because 
\[\overline{\alpha+\frac{s}{2}}=\bar{\alpha}-\frac{s}{2}=\alpha+\frac{s}{2}.\]

\begin{remark}
An analogous argument shows that two self-dual $D$-norms are
equivalent if and only if they equal, i.e. that all maps of diagram (\ref{diagNormh}) are bijective.
\end{remark}

We consider the direct sum of self-dual lattice functions.

\begin{remark}\label{remDSOSDLF}
Let $V'$ be another finite dimensional $D$-right vector space with 
an $\epsilon$-hermitian form $h'.$ On $V\oplus V'$ we have the $\epsilon$-hermitian form $\tilde{h}$ defined by
\[\tilde{h}((v,v'),(w,w'))=h(v,w)+h'(v',w').\]
\be
\item We have 
\[(\Lambda\oplus\Lambda')^\#=\Lambda^\#\oplus\Lambda'^\#\]
for two lattice functions $\Lambda\in\Lattone{o_D}{V}$
and $\Lambda'\in\Lattone{o_D}{V'}.$
\item The direct sum of self-dual lattice functions is self-dual.
\ee 
\end{remark}

\begin{assumption}
For the last part of this subsection let us assume that 
$h$ is isotropic and that 
\[V=W\oplus W'\]
with maximal totally isotropic subspaces of $V.$
We put $k:=\dim_DW.$ 
\end{assumption}

\begin{definition}
For $M\in\Gitter{o_k}{W}$ we define \textit{its dual in $W'$}\idxD{dual of a lattice}
by
\[M^{\#,W'}:=\{w'\in W'|\ h(w',M)\subseteq\tf{p}_D\},\]\idxS{$(\ )^{\#,W'}$}
and analogously $M'^{\#,W}$ for $M'\in\Gitter{o_k}{W'}.$
The \textit{dual of}\idxD{dual of a lattice} $\Lambda\in\Lattone{o_D}{W}$ \textit{in $W'$} is
defined by 
\[\Lambda^{\#,W}(t):=(\Lambda((-t)+))^{\#,W'},\]\idxS{$\Lambda^{\#,W'}$}
and we have an anologous definition for $o_D$-lattice functions of
$W'.$
\end{definition}

\begin{proposition}\label{propIDOLF}
For $M\in\Gitter{o_k}{W},$ $Q\in\Gitter{o_k}{W'},$ $\Lambda\in\Lattone{o_D}{W}$ and $\Lambda'\in\Lattone{o_D}{W'}$ we have:
\be
\item $(M\oplus Q)^\#=Q^{\#,W}\oplus M^{\#,W'}$
and $Q^{\#,W}$ and $M^{\#,W'}$ are full lattices in the corresponding vector spaces.
\item $(\Lambda\oplus\Lambda')^{\#}=\Lambda'^{\#,W}\oplus\Lambda^{\#,W'}$ and $\Lambda'^{\#,W}$ and $\Lambda^{\#,W'}$ are lattice functions in the corresponding vector spaces.
\ee
\end{proposition}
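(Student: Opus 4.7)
The plan is to reduce everything to a single bilinear computation exploiting that $W$ and $W'$ are totally isotropic, and then read off the lattice function version pointwise.

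For (1), write $x=w+w'\in V$ with $w\in W$, $w'\in W'$, and expand $h$ on $M\oplus Q$ using the isotropy $h(W,W)=h(W',W')=0$:
\[
h(x,m+q)=h(w,m)+h(w,q)+h(w',m)+h(w',q)=h(w,q)+h(w',m)
\]
for every $m\in M$, $q\in Q$. Setting $m=0$ shows that $x\in(M\oplus Q)^{\#}$ forces $h(w,q)\in\tf{p}_D$ for all $q\in Q$, i.e.\ $w\in Q^{\#,W}$; setting $q=0$ forces $w'\in M^{\#,W'}$. Conversely both conditions together imply the sum lies in $\tf{p}_D$. This proves $(M\oplus Q)^{\#}=Q^{\#,W}\oplus M^{\#,W'}$. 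For fullness, Proposition \ref{propDualLatt} says $(M\oplus Q)^{\#}$ is a full $o_D$-lattice of $V$. Since this lattice is the (internal) direct sum of $Q^{\#,W}\subseteq W$ and $M^{\#,W'}\subseteq W'$ and $V=W\oplus W'$, each summand is a finitely generated $o_D$-submodule whose $D$-span must be the respective factor, hence a full lattice of $W$ resp.\ $W'$.

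For (2), apply (1) pointwise. Using that $(\Lambda\oplus\Lambda')((-r)+)=\Lambda((-r)+)\oplus\Lambda'((-r)+)$ (which is clear from the definition of the direct sum of lattice functions and the fact that a union of direct sums along a chain decomposes summand-wise), we compute
\[
(\Lambda\oplus\Lambda')^{\#}(r)=\bigl[\Lambda((-r)+)\oplus\Lambda'((-r)+)\bigr]^{\#}=\Lambda'^{\#,W}(r)\oplus\Lambda^{\#,W'}(r)
\]
by part (1). This establishes the claimed identity of maps $\mathbb{R}\to\Gitter{o_D}{V}$. That $\Lambda'^{\#,W}$ and $\Lambda^{\#,W'}$ are themselves $o_D$-lattice functions of $W$ and $W'$ now follows by checking the three axioms of Definition \ref{defLatticeFunction} on the left hand side via Proposition \ref{propDualLatt}: since $(\Lambda\oplus\Lambda')^{\#}$ satisfies the $\pi_D$-equivariance, monotonicity, and left continuity, and since the decomposition along $V=W\oplus W'$ is unique, each component inherits these properties.

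The proof is essentially a direct calculation; the only real input is the isotropy, which kills the cross-terms in the expansion of $h$. The step requiring slightly more care is the fullness in (1) and, equivalently, the lattice function axioms for the two restricted duals in (2); both are handled by invoking the corresponding property of the honest dual together with the uniqueness of the decomposition $V=W\oplus W'$.
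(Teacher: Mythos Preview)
Your proof is correct and follows essentially the same approach as the paper: the equality in (1) comes from the isotropy $h(W,W)=h(W',W')=0$ (you set $m=0$ and $q=0$ separately, the paper phrases this as passing from a sum to a union of the two pieces), and fullness/lattice-function axioms are read off from Proposition~\ref{propDualLatt} applied to the ambient dual together with the direct-sum decomposition. The only addition you make is the explicit remark that $(\Lambda\oplus\Lambda')((-r)+)$ decomposes summand-wise, which the paper leaves implicit.
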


\begin{proof}
For 1.: The equality is a consequence of $h(W,W)=h(W',W')=\{0\},$
i.e. \[(w,w')\in(M\oplus Q)^\#\]
if and only if 
\[h(w,Q)+h(M,w')\subseteq\tf{p}_D\]
if and only if
\[h(w,Q)\cup h(M,w')\subseteq \tf{p}_D\]
if and only if 
\[w\in Q^{\#,W} \text{ and }w'\in M^{\#,W'}.\]
The set $(M\oplus Q)^\#$ is a full lattice in $V$ and by the 
equality we get that $Q^{\#,W}$ is a full lattice in $W$ and $M^{\#,W'}$ is a full lattice in $W'.$

For 2.: From 1. we get the equality. The left side is a lattice function. Thus both summands on the right side of the equation are lattice functions.
\end{proof}

\begin{proposition}\label{propIDInverse}
\be
\item The maps 
\[()^{\#,W'}:\Gitter{o_k}{W}\ra\Gitter{o_k}{W'}\]
and 
\[()^{\#,W}:\Gitter{o_k}{W'}\ra\Gitter{o_k}{W}\]
are inverse to each other.
\item The maps from 1 are affine.
\ee
\end{proposition}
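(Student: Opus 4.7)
For part (1), I would leverage the direct-sum formula of Proposition \ref{propIDOLF}(1) together with the involutivity of the full dual $()^{\#}$ on $V = W \oplus W'$ (Proposition \ref{propDualLatt}(1)). Pick any $M \in \Gitter{o_k}{W}$ and any auxiliary $Q \in \Gitter{o_k}{W'}$, and form the lattice $L := M \oplus Q$ in $V$. By Proposition \ref{propIDOLF}(1),
\[
L^{\#} \;=\; Q^{\#,W} \oplus M^{\#,W'}.
\]
Applying the same formula once more to this splitting, and using $L^{\#\#} = L$, yields
\[
M \oplus Q \;=\; (M^{\#,W'})^{\#,W} \oplus (Q^{\#,W})^{\#,W'}.
\]
Since both sides are direct-sum decompositions with respect to the fixed splitting $V = W \oplus W'$, uniqueness of the $W$-component gives $(M^{\#,W'})^{\#,W} = M$, and simultaneously $(Q^{\#,W})^{\#,W'} = Q$ from the $W'$-component. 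Both compositions of the partial duals are therefore identity maps.

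For part (2), since $\Gitter{o_k}{W}$ carries no intrinsic affine structure, I read ``affine'' in the natural sense that the induced map on lattice functions, which does carry an affine structure by Remark \ref{remAffineStructure}, is affine; this is the meaningful content. Fix an auxiliary $\Lambda'_0 \in \Lattone{o_D}{W'}$ and consider the composite
\[
\Lambda \;\longmapsto\; \Lambda \oplus \Lambda'_0 \;\longmapsto\; (\Lambda \oplus \Lambda'_0)^{\#} \;=\; \Lambda_0^{\prime\,\#,W} \oplus \Lambda^{\#,W'}.
\]
The first arrow is affine by Remark \ref{remDSOLF}, the second by Proposition \ref{propAffDL}. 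The image lands in the set of lattice functions with fixed $W$-summand $\Lambda_0^{\prime\,\#,W}$; on this set the projection to the $W'$-summand is itself affine by Remark \ref{remDSOLF}. Composing, $\Lambda \mapsto \Lambda^{\#,W'}$ is affine; the other direction is obtained by swapping the roles of $W$ and $W'$.

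The main obstacle I anticipate is essentially bookkeeping: making sure that the conventions used in Proposition \ref{propIDOLF} (where the superscript plays slightly different roles for lattices and for lattice functions) are applied consistently when iterating the direct-sum formula, and pinning down a workable interpretation of ``affine'' for part (2). Once those are in order, both claims reduce to short formal manipulations with results already in hand.
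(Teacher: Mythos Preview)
Your proof is correct and follows essentially the same route as the paper: for (1) you apply Proposition~\ref{propIDOLF}(1) twice to $M\oplus Q$ and compare with $(M\oplus Q)^{\#\#}=M\oplus Q$ from Proposition~\ref{propDualLatt}, and for (2) you combine the affineness of $()^{\#}$ (Proposition~\ref{propAffDL}) with the affineness of direct sums (Remark~\ref{remDSOLF}) and the splitting of Proposition~\ref{propIDOLF}(2). Your reading of ``affine'' as a statement about the induced map on lattice functions is exactly the interpretation the paper uses, and your argument simply spells out in more detail what the paper compresses into a one-line reference to \ref{propAffDL}, \ref{remDSOLF}, and \ref{propIDOLF}[2].
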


\begin{proof}
\be
\item We take $M\in\Gitter{o_k}{W}$ and $Q\in\Gitter{o_k}{W'}$ and by 1. of proposition \ref{propIDOLF} we get
\[((M\oplus Q)^\#)^\#=(Q^{\#,W}\oplus M^{\#,W'})^\#
=(M^{\#,W'})^{\#,W}\oplus (Q^{\#,W})^{\#,W'}.\]
From 2. of proposition \ref{propDualLatt}
we get 
\[((M\oplus Q)^\#)^\#=M\oplus Q.\]
Both equalities together imply the first assertion. 
\item By proposition \ref{propAffDL} the map $( )^\#$ on $\Lattone{o_D}{V}$ is affine. By remark \ref{remDSOLF} and \ref{propIDOLF}[2.] we get the second assertion.
\ee
\end{proof}

\begin{definition}\label{defEmbGLUh}
For an endomorphism $a\in\End_D(W),$ there is a unique 
endomorphism of $W'$ denoted by $a^{\sigma,W'}$ such that
$(a\oplus 0)^\sigma=0\oplus a^{\sigma,W'}.$
We define an embedding \[i_{W,W'}:\GL_D(W)\ra \U(h)\] as follows
\[i_{W,W'}(g)(w,w'):=(g(w),(g^{\sigma, W'})^{-1}(w')).\]
The map $i_{W,W'}$ defines a $k$-morphism and the differential at identity 
is given by
\[di_{W,W'}(a):=a\oplus (-a^{\sigma,W'}).\]
Its image is a subset of $\Lie(\bU(h))(k_0).$
\end{definition}

\begin{proposition}\label{propAffEmb}
The map
\[\phi:\Lattone{o_D}{W}\ra\Lattone{h}{V}\]
defined by
\[\phi(\Lambda):=\Lambda\oplus\Lambda^{\#,W'}\]
is affine and $\GL_D(W)$-equivariant, i.e.
\[\phi(g\Lambda)=i_{W,W'}(g)\phi(\Lambda).\]
\end{proposition}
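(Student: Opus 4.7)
The plan is to check three things in turn: first that $\phi$ actually lands in $\Lattone{h}{V}$, second that it is affine, and third that it is $\GL_D(W)$-equivariant.

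For well-definedness I would apply Proposition \ref{propIDOLF}[2.] to compute
\[\phi(\Lambda)^{\#} = (\Lambda\oplus\Lambda^{\#,W'})^{\#} = (\Lambda^{\#,W'})^{\#,W}\oplus \Lambda^{\#,W'},\]
and then invoke Proposition \ref{propIDInverse}[1.], which says that $()^{\#,W}$ and $()^{\#,W'}$ are mutually inverse, to identify the first summand with $\Lambda$. Thus $\phi(\Lambda)^{\#} = \phi(\Lambda)$ so $\phi(\Lambda)\in\Lattone{h}{V}$.

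Affineness is a straightforward assembly. The map $\Lambda\mapsto\Lambda^{\#,W'}$ is affine by Proposition \ref{propIDInverse}[2.], and the direct sum operation $\Lattone{o_D}{W}\times\Lattone{o_D}{W'}\to\Lattone{o_D}{V}$ is affine by Remark \ref{remDSOLF}. Composing the two gives affineness of $\phi$. Nothing tricky here.

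The main (and really only) technical step is $\GL_D(W)$-equivariance, which reduces to proving the identity
\[(g\Lambda)^{\#,W'} = (g^{\sigma,W'})^{-1}\Lambda^{\#,W'}\]
for $g\in\GL_D(W)$. To establish it I would first derive, from Definition \ref{defEmbGLUh} and the definition of $\sigma_h$, the duality relation $h(gw,w') = h(w,g^{\sigma,W'}w')$ for all $w\in W$, $w'\in W'$; this is obtained by writing $g\oplus 0$, applying $\sigma$, and using $h(W,W)=h(W',W')=0$ to eliminate the cross terms. Then, unravelling $(g\Lambda)^{\#,W'}(t) = \{w'\in W'\mid h(w',g\Lambda((-t)+))\subseteq\tf{p}_D\}$ and applying $h(v,u)=\epsilon\rho(h(u,v))$ together with the displayed duality relation shows that $w'$ lies in $(g\Lambda)^{\#,W'}(t)$ exactly when $g^{\sigma,W'}w'$ lies in $\Lambda^{\#,W'}(t)$. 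Combining this with the explicit formula for $i_{W,W'}(g)$ acting on $\Lambda\oplus\Lambda^{\#,W'}$ gives $\phi(g\Lambda) = i_{W,W'}(g)\phi(\Lambda)$. The only possible pitfall is keeping careful track of the sesquilinearity conventions and of which side of $h$ is being dualised, but once the adjoint identity is set up correctly the rest is mechanical.
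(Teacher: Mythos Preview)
Your proof is correct and follows essentially the same route as the paper: well-definedness via Proposition~\ref{propIDOLF}[2.] and Proposition~\ref{propIDInverse}[1.], affineness via Proposition~\ref{propIDInverse}[2.] and Remark~\ref{remDSOLF}, and equivariance via the identity $(g\Lambda)^{\#,W'}=(g^{\sigma,W'})^{-1}\Lambda^{\#,W'}$ deduced from the adjoint relation $h((0,w'),(g(w),0))=h((0,g^{\sigma,W'}(w')),(w,0))$. You have simply spelled out the details that the paper leaves implicit.
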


\begin{proof}
We have $\Lambda\oplus\Lambda^{\#,W'}\in\Lattone{h}{V}$ by  
\ref{propIDOLF}[2.] and \ref{propIDInverse}[1.].
The affineness follows from \ref{propIDInverse}[2.]
and remark \ref{remDSOLF}. For the equivariance we need
\[(g\Lambda)^{\#,W'}=(g^{\sigma,W'})^{-1}\Lambda^{\#,W'}\] which follows from 
\[h((0,w'),(g(w),0))=h((0,g^{\sigma,W'}(w')),(w,0)).\]
\end{proof}


\section{\texorpdfstring{The Bruhat-Tits building of $\bU(h)$}{The Bruhat Tits building of U(h)}}\label{secBruhatTitsBuildingOfUh}

We adopt assumption \ref{assSDLF}.

\begin{remark}
From \ref{thmDScharlau} we deduce that $D$ can only have an index $d$ which is 1 or 2, and if the index is 2 then $k_0$ equals $k.$
Without loss of generality we can assume $\epsilon=-1$ if $d=2$ by \cite[(22.a)]{bruhatTitsIV:87}.
\end{remark}

In this section we describe the Bruhat-Tits building of $\bSU(h)$ as a subset of $\Building(\tibG,k).$ It was done by Bruhat and Tits in terms of norms. We use  the concept of self dual lattice functions from the last section. This description was introduced in \cite{broussousStevens:09} based on \cite{bruhatTitsIV:87}. 

\begin{notation}
We denote by $\bO^{is}_{2,k}$ the $k$-split isotropic orthogonal group of rank $1,$ i.e. the unitary group given by an isotropic symmetric $k$-bilinear form on $k^2.$ 
\end{notation}

\begin{example}\label{exOmittedCase}
The connected component of $\bO^{is}_{2,k}$ is $k$-isomorphic to $\bGm$ and we can apply section \ref{secBTBuildingsGLDV}. 
Its Bruhat-Tits building over $k$ is a point and its enlarged
building is a line.  The $\bGm(k)$-action on $\Lattone{o_k}{k}$ is extended to an $\bO^{is}_{2,k}(k)$-action via 
\[\antidiag(1,1).(s\mapsto\tf{p}_k^{[s-y]+}):=(s\mapsto\tf{p}_k^{[s+y]+}).\]
\end{example}

\begin{proposition}\label{propCondO2is}
The following three assertions are equivalent.
\be
\item There is a $k_0$-isomorphism between $\bU(h)$ and $\bO^{is}_{2,k_0}.$ 
\item There is a $k_0$-isomorphism between $\bSU(h)$ and $\bGm.$ 
\item The following system of conditions is satisfied:
\[D=k=k_0 \text{ and }m=2\text{ and }\sigma \text{ is orthogonal}\]
\[\text{ and }h \text{ is isotropic.}\]
\ee
\end{proposition}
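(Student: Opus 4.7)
The plan is to prove the cyclic implications $(3) \Rightarrow (1) \Rightarrow (2) \Rightarrow (3)$, where the substantial work lies in the last step. For $(3) \Rightarrow (1)$, under (3) the form $h$ is a non-degenerate isotropic symmetric $k_0$-bilinear form on $k_0^2$, so theorem \ref{thmWitt} gives a basis in which its Gram matrix is $\antidiag(1,1)$, and $\bU(h)$ is by definition $\bO^{is}_{2,k_0}$. For $(1) \Rightarrow (2)$, the subgroup $\bSU$ of $\bO^{is}_{2,k_0}$ consists of the determinant-one elements and by example \ref{exOmittedCase} is $k_0$-isomorphic to $\bGm$; any $k_0$-isomorphism $\bU(h) \cong \bO^{is}_{2,k_0}$ sends $\bSU(h)$ (the reduced-norm-one subgroup) onto this $\bGm$.

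For $(2) \Rightarrow (3)$, I begin with a dimension count using proposition \ref{propHermForms}. Over $\bar{k}_0$ the group $\bSU(h)$ becomes $\bSp_{md}(\bar{k}_0)$, $\bSO_{md}(\bar{k}_0)$ or $\bSL_{md}(\bar{k}_0)$, of dimensions $md(md+1)/2$, $md(md-1)/2$ and $(md)^2-1$ respectively. For $\bSU(h)$ to be one-dimensional the only possibility is the orthogonal case with $md=2$. By the remark preceding the proposition we have $d \in \{1,2\}$, leaving $(m,d) \in \{(2,1),(1,2)\}$.

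Now I would rule out $(m,d)=(1,2)$. In that case $D$ is the quaternion division algebra over $k_0=k$ with $\rho$ of the first kind and $\epsilon=-1$ (by the WLOG stated in the remark). Writing $V=D$ and $h(v,w)=\rho(v)cw$ for some $c\in\Skew(D,\rho)^{\times}$, the equations $\rho(g)cg=c$ together with $\Nrd(g)=1$ reduce to $gc=cg$, so
\[ \bSU(h)(k_0)=\{\,g\in k_0[c]^{\times}\mid \Nrd(g)=1\,\}. \]
Because $c\notin k_0$ and $D$ is a division algebra, $k_0[c]$ is a quadratic field extension of $k_0$, and $\bSU(h)$ is the associated anisotropic norm-one torus, which is not $k_0$-isomorphic to $\bGm$. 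Hence (2) forces $(m,d)=(2,1)$, giving $D=k$; since $\sigma$ is orthogonal of the first kind we must have $\rho=\id_k$ and therefore $k_0=k$. Finally $h$ is forced to be isotropic: a non-degenerate symmetric bilinear form on $k_0^2$ is either isotropic, in which case $\bSU(h)\cong\bGm$ via $t\mapsto\diag(t,t^{-1})$ in a Witt basis, or anisotropic, in which case $\bSU(h)$ is the anisotropic norm-one torus of a non-trivial quadratic extension of $k_0$ and again not $\bGm$.

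The hard part is the exclusion of the quaternion case $(m,d)=(1,2)$: the dimension count alone admits it, and the argument requires identifying $\bSU(h)$ inside $D^{\times}$ as the norm-one subgroup of the quadratic subfield $k_0[c]$ and observing that this subfield is a genuine field extension because $D$ has index two.
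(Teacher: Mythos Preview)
Your cyclic argument is sound, and the dimension count followed by the case split is a clean direct route. One point in the quaternion case $(m,d)=(1,2)$ needs clarification: the step ``$\rho(g)cg=c$ and $\Nrd(g)=1$ reduce to $gc=cg$'' uses $\rho(g)=g^{-1}$, which is the relation $\bar g=g^{-1}$ for the \emph{canonical} involution $\bar{\phantom{g}}$ on $D$, not for a general first-kind involution $\rho$. You may either declare $\rho$ to be the canonical involution without loss of generality (only $\sigma_h$ matters for $\bSU(h)$, and by proposition \ref{propHermFInv} applied with the canonical involution every orthogonal $\sigma$ on $D$ is the adjoint of some skew-hermitian form for it), or argue directly from $\sigma_h=\Inn(s)\circ\bar{\phantom{g}}$ with $s$ a pure quaternion to obtain $gs=sg$ and hence $\SU(h)\subseteq k_0[s]^\times$. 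Either way your conclusion---$\bSU(h)$ is the anisotropic norm-one torus of a genuine quadratic field extension of $k_0$, hence not $k_0$-isomorphic to $\bGm$---stands.

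The paper takes a different route for $(2)\Rightarrow(3)$: it simply invokes proposition \ref{propExEnlargedBTBClassGrps}, which says $X^*(\bSU(h))_{k_0}\neq 1$ if and only if condition (3) holds, and observes that $\bSU(h)\cong_{k_0}\bGm$ forces a nontrivial $k_0$-rational character group. That appendix result is in turn proved by a case analysis very close to yours (and in the $d=2$ case likewise places $\SU(h)$ inside a quadratic subfield of $D$). Your direct argument avoids the detour through character groups; the paper's packaging isolates a statement of independent interest about when the enlarged building is proper.
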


\begin{proof}
The implication $3\Ra 1$ and $1\Ra 2$ are obvious and $2\Ra 3$ is a direct consequence of 
\ref{propExEnlargedBTBClassGrps}.
\end{proof}
\vspace{1em}

\begin{assumption}
In the following introduction we assume that $\bU(h)$ is not $k_0$-iso\-mor\-phic to $\bO^{is}_{2,k_0}.$ 
\end{assumption}

We consider the building of the valuated root datum given in \cite[1.15.]{bruhatTitsIV:87}, denote it by $\building(\bSU(h),k_0)$\idxS{$\building(\bSU(h),k_0)$} and call it the \textit{Bruhat-Tits building of 
$\bSU(h)$ over $k_0$}. 

\begin{remark}
\be
\item From the definition of a building corresponding to a valuated root datum it follows that in the anisotropic case the building is a point and in the isotropic case the building is the geometric realisation of a thick Euclidean building.
\item The apartments are in one to one correspondence with the Witt decompositions.
\ee
\end{remark}

Let us now fix a Witt basis $(v_i)_{I\cup I_0}$ of $V$ with respect to $h,$ i.e. we have the Witt decomposition 
\[V_i:=v_iD,\ i\in I,\ V_0:=\sum_{i\in I_0} v_iD.\]  
Let $T$ be the torus defined over $k_0$ whose set of $k_0$-rational
points is given by the $k_0$-rational points $t$ of $\bSU(h)$
satisfying:
\be
\item $t.v_i\in k_0v_i$ for all $i\in I$ and 
\item $t.v=v$ for all $v\in V_0$
\ee
Then $T$ is a maximal $k_0$-split torus of $\bSU(h).$
We look at the characters $a_i$ defined on $T(K_0)$ by
\[tv_i=\alpha_i(t)^{-1}v_i,\ t\in T(K_0).\]
There is a bijection from $\Delta$ the apartment of $\building(\bSU(h),k_0)$ corresponding to the torus $T$ to the set of the MM-norms which split under the given Witt basis. 
\[x\in \Delta\mapsto \alpha_x\]
where 
\[\alpha_x(\sum_{i\in I}v_i\lambda_i+v_0):=\inf\{\frac{1}{2}\nu(q(v_0)),\inf_{i\in I}\{\nu(\lambda_i)-a_i(x)\}\}.\] 
Here $q$ is the pseudo-quadratic form corresponding to $h.$
(see \cite[1.2. (9)]{bruhatTitsIV:87})
This gives a map from $\Delta$ to $\Lattone{h}{V},$ whose image is the  set of self-dual lattice functions which are split under our 
Witt basis. This set is denoted by $\Lattone{h,(V_i)_i}{V}.$
We denote the self dual lattice function corresponding to $x\in \Delta$ 
by $\Lambda_x.$

\bdfn\label{defBuildingU(h)}
If $\bU(h)$ is connected then the Bruhat-Tits building $\building(\bU(h),k_0)$\idxS{$\building(\bU(h),k_0)$} of $\U(h)$ is defined analogously and canonically identifies with $\building(\bSU(h),k_0).$ If $\bU(h)$ is not connected we define 
$\building(\bU(h),k_0)$ to be $\building(\bSU(h),k_0).$ 
\edfn

\begin{remark}
For the case we consider there is no proper enlarged building of $\bSU(h)$ and $\bU(h)$ because 
$X^*(\bSU(h))_{k_0}$ and $X^*(\bU(h)^0)_{k_0}$ are trivial, i.e. 
 we have 
\[\building(\bSU(h),k_0)=\Building(\bSU(h),k_0)=\building(\bU(h),k_0)=\Building(\bU(h),k_0).\]
\end{remark}

Broussous and Stevens proved a reformulation of \cite[2.12]{bruhatTitsIV:87} for the case where $D=k.$ With minor changes their proof is valid for the case $D\neq k$
(see also \cite{lemaire:09} \S4).

\begin{theorem}\cite[Prop. 4.2.]{broussousStevens:09}\label{thmBS4.2}
There a unique $\bU(h)(k_0)$-equivariant affine map 
\[\building(\bU(h),k_0)\ra \Lattone{h}{V}.\]
It is bijective and an extension of the map
\[x\in \Delta\mapsto \Lambda_x.\]
\end{theorem}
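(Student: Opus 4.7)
The plan is to exploit the construction of $\building(\bU(h),k_0)$ as a quotient of $\bU(h)(k_0)\times\Delta$ and to verify that the prescription $(g,x)\mapsto g\cdot\Lambda_x$ descends to a well-defined map with the claimed properties. By Remark~\ref{remEquivariance} and Proposition~\ref{propAffDL}, the subset $\Lattone{h}{V}\subseteq\Lattone{o_D}{V}$ is $\bU(h)(k_0)$-stable and, by Proposition~\ref{propAffStructureOnLatt1hV}, closed under the affine structure, so the target carries both an $\bU(h)(k_0)$-action and an affine structure.

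First I would check well-definedness of the descent: two pairs $(g,x)$ and $(h',y)$ represent the same point of the building iff there is $n\in N(T)(k_0)$ with $n.x=y$ and $h'n\in gP_x$, so it suffices to verify (a) $n\cdot\Lambda_x=\Lambda_{n.x}$ for $n\in N(T)(k_0)$ and (b) $p\cdot\Lambda_x=\Lambda_x$ for $p\in P_x$. Both are direct computations from the explicit formula for $\alpha_x$: (a) uses that $N(T)(k_0)$ acts by monomial matrices relative to the chosen Witt basis, with the action on the character lattice matching the action on $\Delta$; (b) follows since each generator of $P_x$ is a root-group element $u_a(t)$ with $\phi_a(u_a(t))\geq a(x)$ or a bounded factor of the centraliser of $T$, and precisely this condition preserves $\alpha_x$. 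Affineness on $\Delta$ is immediate from the defining formula of $\alpha_x$, and Proposition~\ref{propAffDL} propagates affineness to the whole building via equivariance.

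Bijectivity I would prove in two steps. Injectivity on the image of a single apartment is read off the formula, since the parameters $a_i(x)$ are recovered from $\Lambda_x$; across apartments, if $\Lambda_y=g\cdot\Lambda_x$ with $g\in\bU(h)(k_0)$, one exhibits a common splitting Witt basis for the two self-dual lattice functions and reduces to the single-apartment case. For surjectivity, the key assertion is that every $\Lambda\in\Lattone{h}{V}$ admits a splitting Witt basis, whence $\Lambda$ lies in the image of the apartment attached to that basis and hence in $\bU(h)(k_0)$ times the image of $\Delta$. Uniqueness then follows immediately: any $\bU(h)(k_0)$-equivariant affine map extending $x\mapsto\Lambda_x$ on $\Delta$ is forced on every $g\Delta$ by equivariance, and these apartments cover the building.

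The main obstacle I anticipate is producing a splitting Witt basis for an arbitrary self-dual lattice function. This is the hermitian refinement of Proposition~\ref{propDualNorm}(2) for ordinary $D$-norms; the natural route is to pass from $\Lambda$ to its associated self-dual $D$-norm via diagram~\eqref{diagNormh}, invoke the equivalence of self-duality with the MM-property to obtain a norm with good domination behaviour, and then induct on $\dim_D V$ by peeling off a hyperbolic plane using Theorem~\ref{thmWitt} or an anisotropic line using Theorem~\ref{thmOrthogonalSum}. The running assumptions that $\bU(h)$ is not $k_0$-isomorphic to $\bO^{is}_{2,k_0}$ and that the residue characteristic of $k$ is odd (from~\ref{assSDLF}) are precisely what is needed to keep the one-dimensional base cases of this induction well-behaved.
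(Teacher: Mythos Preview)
The paper does not supply its own proof of this theorem: it is quoted from \cite[Prop.~4.2]{broussousStevens:09}, with the remark just before the statement that Broussous and Stevens reformulated \cite[2.12]{bruhatTitsIV:87} for $D=k$ and that their proof carries over to $D\neq k$ with minor changes (see also \cite{lemaire:09}~\S4). So there is nothing in the present paper to compare your argument against beyond this pointer to the literature.

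That said, your plan is essentially the line followed in those sources: descend $(g,x)\mapsto g\Lambda_x$ from $\bU(h)(k_0)\times\Delta$, and identify surjectivity with the existence of a splitting Witt basis for an arbitrary self-dual norm. You are right that the latter is the crux; in Bruhat--Tits~IV this is precisely \cite[2.9]{bruhatTitsIV:87}, and the residue-characteristic hypothesis enters exactly there.

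One genuine gap: your uniqueness paragraph only shows there is a unique $\bU(h)(k_0)$-equivariant affine map \emph{extending $x\mapsto\Lambda_x$ on $\Delta$}. The theorem asserts more, namely that there is a unique $\bU(h)(k_0)$-equivariant affine map at all, and that it happens to restrict to $x\mapsto\Lambda_x$. To close this you must argue that the restriction to $\Delta$ of \emph{any} such map is forced. The standard way is to use equivariance under the stabiliser $N$ of $\Delta$ in $\bU(h)(k_0)$: an affine $N$-equivariant map from $\Delta$ into the set of self-dual lattice functions split by the fixed Witt basis is unique, because the affine Weyl group action on $\Delta$ has no nonzero invariant translation and the linear part is already determined by the root-group compatibility (compare the argument for Theorem~\ref{thmBLI1.4} in \cite{broussousLemaire:02}). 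Without this step your argument establishes existence and bijectivity but not the full uniqueness clause.
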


\brem\label{remIdO2}
In the omitted case, see example \ref{exOmittedCase}, we have \[\Building(\bO^{is}_{2,k},k)\cong\Lattone{o_k}{V}\cong\bbR,\ (\tf{p}_k^{[s-y]+})_{s\in\bbR}\mapsto y.\]\idxS{$\Building(\bO^{is}_{2,k},k)$}
The affine space $\Lattone{h}{V}$ can also be identified with $\bbR$ if one fixes a Witt-basis $(v_1,v_2)$ for the unique Witt decomposition of $V,$ precisely
\[y\in\bbR\mapsto\ (\tf{p}_k^{[s-y]+}v_1\oplus\tf{p}_k^{[s+y]+}v_2)_s.\]
The identity of $\bbR$ induces the unique $\bO^{is}_{2,k}(k)$-equivariant affine bijection from $\Building(\bO^{is}_{2,k},k)$ to $\Lattone{h}{V}$ because the identity is the only affine  map $j$ of $\bbR$ which satisfies $j(y+1)=j(y)+1$  and $j(-y)=-j(y)$ for all $y\in\bbR.$ 
\erem

We also have a notion of a Lie algebra filtration here.

\begin{definition}\label{defLieAlgebraFiltrationUh}
The set of $k_0$-rational points of $\Lie(\bU(h))$ is the set 
\[\{A\in a |\ a+\sigma(a)=0\}\]
of skewsymmetric $D$-en\-do\-mor\-phisms of $V$ with respect to $\sigma.$ For a point $x$ of \\ $\Building(\bU(h),k_0)$ the following intersection
\[\LF(x,\bU(h),k_0):=\LF(x,\tibG,k)\cap\Lie(\bU(h))(k_0)\]\idxS{$\LF(x,\bU(h),k_0)$}
defines an $o_{k_0}$-lattice function in $\Lie(\bU(h))(k_0).$
It is called the \textit{Lie algebra filtration}\idxD{Lie algebra filtration in the case of unitary groups} of 
$x$ in $\Lie(\bU(h))(k_0).$
\end{definition}

\begin{theorem}\cite{lemaire:09}
The filtration $\LF(x,\bU(h),k_0)$ coincides with the\\ Moy-Prasad fil\-tra\-tion.
\end{theorem}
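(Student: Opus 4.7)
The plan is to reduce to the standard apartment by equivariance, and there verify the agreement root-space by root-space using explicit descriptions.

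First I would verify that $\LF(x,\bU(h),k_0)$ is $\bU(h)(k_0)$-equivariant as a function of $x$. This follows from the $\tiG$-equivariance of $x\mapsto\LF(x,\tibG,k)=\End(\Lambda_x)$ (theorem \ref{thmBLI1.4}) together with the fact that the adjoint action of $\bU(h)(k_0)\subseteq\tiG$ on $A$ preserves the Lie subalgebra $\Lie(\bU(h))(k_0)=\Skew(A,\sigma)$, so intersection commutes with the action. The Moy-Prasad filtration is $\bU(h)(k_0)$-equivariant by construction. Since any two apartments are conjugate under $\bU(h)(k_0)$, it suffices to check equality at a single point $x$ of the standard apartment $\Delta$ attached to a Witt basis $(v_i)_{i\in I\cup I_0}$.

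Next, on $\Delta$ both filtrations decompose under the adjoint action of the maximal $k_0$-split torus $T$. The Lie algebra $\Lie(\bU(h))(k_0)$ is the direct sum of its $T$-weight spaces, corresponding to the roots of the valuated root datum of $\bU(h)$ described in \cite{bruhatTitsIV:87} (roots of the form $\pm a_i\pm a_j$, $\pm a_i$, $\pm 2a_i$, plus a Cartan piece, depending on whether $\sigma$ is symplectic, orthogonal or unitary and whether $D=k$ or $D$ is quaternionic). For each such root $\alpha$, the root subspace is explicitly spanned by matrix entries which can be written down in the Witt basis, and the Moy-Prasad filtration piece in this root space is by definition controlled by the valuation $\phi_\alpha$ of the root datum.

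Now $\LF(x,\tibG,k)=\End(\Lambda_x)$ admits an analogous decomposition: using that $\Lambda_x$ is split by the Witt basis with $\Lambda_x(t)=\bigoplus_iv_i\tf{p}_D^{[(t-\alpha_x(v_i))d]+}$, an endomorphism $a=(a_{ij})$ lies in $\End(\Lambda_x)(r)$ precisely when $\nu(a_{ij})\geq r+\alpha_x(v_j)-\alpha_x(v_i)$ for all $i,j$. Intersecting with $\Skew(A,\sigma)$ couples the $(i,j)$ and $(j,i)$ entries via $\sigma$, and the resulting conditions read off, root by root, as the valuations of the root datum of $\bU(h)$ evaluated at $x$. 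A direct comparison with formula (\ref{eqValuatedRootDatum}) and the root-group valuations of \cite{bruhatTitsIV:87} then shows that, in each weight space, the condition $a\in \LF(x,\bU(h),k_0)(r)$ is exactly the Moy-Prasad condition.

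The main obstacle, and the reason the proof is not a triviality, is the bookkeeping in the non-split contributions: the anisotropic kernel $V_0$, the quaternion case $d=2$, and the unitary case $k\neq k_0$ all produce ``doubled'' roots $\pm 2a_i$ and genuinely quadratic conditions rather than linear ones, and there the Moy-Prasad valuation $\phi_{2a_i}$ is not simply the restriction of $\phi_{a_i}$ but is pinned down by the pseudo-quadratic form $q$ of \cite[1.2.(9)]{bruhatTitsIV:87}. The key computation is to check that $\alpha_x(v)=\tfrac12\nu(q(v))$ for $v\in V_0$, which is the very normalization in the formula defining $x\mapsto\alpha_x$ on $\Delta$, and that this matches the value of $\phi_{2a_i}$ prescribed by the valuated root datum; the residue-characteristic-not-two hypothesis of assumption \ref{assSDLF} is what allows one to pass between the hermitian form $h$ and the pseudo-quadratic form $q$ without losing information.
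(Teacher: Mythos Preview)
The paper does not prove this theorem at all: it is stated with the citation \cite{lemaire:09} and no proof environment follows. There is therefore no ``paper's own proof'' to compare your proposal against; the result is quoted from Lemaire's work and used as a black box.

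As for your outline itself, the strategy of reducing to a fixed apartment by equivariance and then comparing root-space by root-space against the valuations of the root datum from \cite{bruhatTitsIV:87} is the natural one, and is indeed how such comparisons are carried out in the literature. The genuinely delicate part, which you correctly flag but do not resolve, is the contribution of the anisotropic kernel and of the roots $2a_i$ in the quaternion and unitary cases: there the Moy--Prasad filtration on the Cartan piece $\Lie(\Centr_{\bU(h)}(T))(k_0)$ is not just given by the root valuations but by the filtration coming from the anisotropic group $\bU(h|_{V_0\times V_0})$, and matching this with the lattice-function description requires the detailed analysis that Lemaire carries out. Your final paragraph gestures at this but stops short of the actual computation; a complete proof would need to verify that the filtration on $\End_D(V_0)\cap\Skew(A,\sigma)$ induced by the unique self-dual norm on $V_0$ agrees with the Moy--Prasad filtration of the anisotropic factor, which is not immediate from the normalization $\alpha_x(v_0)=\tfrac12\nu(q(v_0))$ alone.
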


\chapter{Maps which are compatible with the Lie algebra filtrations}\label{chMapsWhichAreCLF}

\section{Compatibility with the Lie algebra filtrations}\label{secCLFProperty}

The notion of CLF-map was introduced in \cite{broussousLemaire:02}.
Let $F$ be non-Archimedean local field with valuation ring $o_F.$

\begin{definition}
Let $B$ be a finite dimensional Lie algebra over $F.$ An $o_F$-Lie algebra filtration of $B$ is an $o_F$-lattice function of $B.$
\end{definition}

\begin{definition}\label{defLFGroup}\idxD{Lie algebra filtration in general}
\be
\item A reductive $F$-group $G$ with the Bruhat-Tits buil\-ding\\ $\building(G,F)$ is said to be an \textit{LF-$F$-group}, if every point $x$ of $\building(G,F)$ is at\-tached to an $o_F$-Lie algebra filtration $\LF(x,G,F)$ of $\Lie(G)(F).$ If there is no confusion we skip the prefix $o_F.$\idxS{$\LF(x,G,F)$}
\item The Lie algebra filtration of a Lie algebra $B$ attached to a point $x$ is also denoted by  $\LF(x,B).$\idxS{$\LF(x,B)$}
\ee
\end{definition}

If $G$ is a connected LF-$F$-group we can attach a Lie algebra filtration to every element of 
the enlarged building $\Building(G,k)$ 
if we use the projection to the first component
\[\Building(G,F)\ra\building(G,F),\ (y,w)\mapsto y,\] see \ref{chapEnlargedBuildingOfAReductiveGroup},
i.e. we define
\[\LF((y,w),\bG,F):=\LF(y,\bG,F).\]

\begin{definition}\label{defExtOfAPoint}
Let $\bH$ and $\bG$ be LF-$F$-groups. Let $i:\bH\ra\bG$ be an $F$-ho\-mo\-mor\-phism. We call a point $y$ of $\building(\bG,F)$ an \textit{extension of $x\in\building(\bH,F)$ with respect to $i$}\idxD{extension of a point}  if 
\beq\label{eqCompLF}\LF(y,\bG,F)\cap \im(di)=di(\LF(x,\bH,F))\eeq
where $di:\Lie(\bH)\ra\Lie(\bG)$ is the differential of $i.$
We omit to mention $i$ if the choice of $i$ is clear. An analogous definition can be made using also enlarged buildings.
\end{definition}
 
\begin{definition}\label{defCLF}
Under the assumptions of definition \ref{defExtOfAPoint} a map
$j$ between subsets of the buildings $\building(\bH,F)$ and $\building(\bG,F)$
is \textit{compatible with the Lie algebra filtrations (CLF) with respect to $i$}\idxD{CLF-map}\idxD{compatible the Lie algebra filtrations} if we have that an element $y$ of $\building(\bG,F)$ is an extension of  $x\in\building(\bH,F)$ if $j$ maps $x$ to $y$ or $y$ to $x.$
We give analogous definitions for maps between subsets of enlarged buildings or between subsets of an enlarged and a non-enlarged building.
\end{definition}

\begin{example}
Assume we have given a local hermitian $k$-datum of residue characteristic not two with unitary involution $\sigma.$
We have the inclusion
\[\bU(h)\ra\Res_{k|k_0}(\bGL_k(V))\stackrel{/k}{\cong}\bGL_k(V)\times\bGL_k(V),\]
\[\U(h)=\bU(h)(k_0)\subseteq\Res_{k|k_0}(\bGL_k(V))(k_0)=\GL_k(V)=\bGL_k(V)(k)\]
and 
\[\Lie(\bU(h))(k_0)\subseteq\Lie(\Res_{k|k_0}(\bGL_k(V)))(k_0)=\End_k(V)=\Lie(\bGL_k(V))(k).\]
Thus we have a notion of CLF for maps
\[\Building(\bU(h),k_0)\ra\Building(\bGL_k(V),k)=\Building(\Res_{k|k_0}(\bGL_k(V)),k_0).\]
\end{example}

The aim of this work is to analyse how precise a map is determined by the CLF property. 


\section{Buildings of centralisers}\label{secBuildOfCentralisers}

\begin{assumption}\label{assCh3and4}
For the rest of part 1 we adopt assumption \ref{asstiG} and we assume that $k$ has residue characteristic not two.
\end{assumption}

From section \ref{secBuildOfCentralisers} to section \ref{secUniquenessInTheGeneralCase} we only 
consider centralisers of \textbf{separable} Lie algebra elements and separable field extensions. 
In section \ref{secGeneralisationToTheNonseparableCase} we explain how the results of chapter 
\ref{chMapsWhichAreCLF} and \ref{chUniquenessResults} generalise to the non-separable case.

\begin{notation}
For a group action 
\[G\times W\ra W\] we denote the fixator of an element $w\in W$ by $G_{w}$ and of a subset $S$ of $W$ by $G_S.$ \idxS{$G_S$}
If $W$ is the Lie algebra of an algebraic group and if we do not specify the action, we use the adjoint group action.  
\end{notation}

\subsection{\texorpdfstring{The case of $\bGL_D(V)$}{The case of GLD(V)}}\label{subsecForGLDV}

Let $E$ be a commutative separable $k$-sub\-alge\-bra of 
$\Lie(\tibG)(k),$ i.e. $E$ splits in\-to a pro\-duct of separable field ex\-ten\-sions
of $k:$
\[E=\prod_iE_i.\]
If $1_i$ is the idempotent corresponding to $1_{E_i}$ we put
$V_i:=1_iV.$ 
For every $i$ there is an $E_i$-algebra isomorphism 
\[\End_{\tens{E_i}{k}{D}}(V_i)\cong \End_{\Delta_i}(W_i)\]
for some skewfields $\Delta_i$ central over $E_i$ and some finite dimensional $\Delta_i$-vector spaces $W_i.$

By the separability of $E$ over $k$ there is a canonical $k$-isomorphism
\[\tibG_E\cong\prod_i\bGL_D(V_i)_{E_i}\]
and thus the centraliser $\tibG_E$\idxS{$\tibG_E$} is $k$-isomorphic to 
\[\prod_i\Res_{E_i|k}(\bGL_{\Delta_i}(W_i)).\]
 The building of $\tibG_E$ over $k$ is $\GL_D(V)_E$-equivariantly isomorphic to 
\beq\label{eqbuildProdGL}\prod_i\building(\bGL_{\Delta_i}(W_i),E_i)\eeq
and the enlarged building is $\GL_D(V)_E$-equivariantly isomorphic to 
\beq\label{eqBuildProdGL}\prod_i\Building(\bGL_{\Delta_i}(W_i),E_i).\eeq
We identify these products with $\building(\tibG_E,k)$\idxS{$\building(\tibG_E,k)$} and $\Building(\tibG_E,k)$\idxS{$\Building(\tibG_E,k)$} respectively, and we work with the lattice function models of the factors.

\begin{notation}\label{notHDelta}
Instead of $\bGL_{\Delta_i}(W_i)$ we write $\bGL_{\tens{E_i}{k}{D}}(V_i).$\idxS{$\bGL_{\tens{E_i}{k}{D}}(V_i)$}
\end{notation}

\begin{definition}\label{defLFInProduct}
The Lie algebra filtration of a point $x=(x_i)$ of $\building(\tibG_E,k)$ or the enlarged building $\Building(\tibG_E,k)$ is given by 
the direct sum of the Lie algebra filtrations of the points $x_i,$ i.e.
\[\LF(x,\tibG_E,k)(t):=\oplus_i\LF(x_i,\bGL_{\tens{E_i}{k}{D}}(V_i),E_i)(t),\ t\in\bbR,\]\idxS{$\LF(x,\bG_E,k)$}
the sum of the corresponding square lattice functions.
\end{definition}


\subsection{\texorpdfstring{The case of $\bU(h)$}{The case of U(h)}}\label{subsecForUh}

Here we use the same idea as in the previous subsection.
We take a local hermitian datum with the fixed simple datum of assumption \ref{assCh3and4}. 
We consider the unitary group $\bG:=\bU(h)\subseteq\Res_{k|k_0}(\tibG )$\idxS{$\bG$}

Let $\beta$ be an element of 
\[\Lie(\bG)(k_0)=\{a\in\End_D(V)|\ a^\sigma+a=0\}\]\idxS{$\Lie(\bG)(k_0)$}
which is \textbf{separable} over $k,$
and we put $\bH:=\bG_\beta.$\idxS{$\bH$}

The semisimplicity of $k[\beta]$ gives us the following decompositions:

\bi
\item $E:=k[\beta]=\prod_{i\in J}E_i,$\idxS{$E_i,\ 1_i,\ V_i,\ \beta_i$} a product of fields,
\item $1=\sum_{i\in J}1_i,$ the decomposition of $1$ into primitive idempotents,
\item $V:=\oplus_{i\in J}(V_i),$ $V_i:=1_iV$ and
\item $\beta=\sum_{i\in J}\beta_i,\ \beta_i=1_i\beta.$
\ei

On $J$ we choose a representation system $J_{un+}$ for the equivalence relation defined by 
\[i\sim j\text{ if } i=j \text{ or }\sigma(1_i)=1_j\]
and we put 
\[J_{un}:=\{j\in J|\ \sigma(1_j)=1_j\}\text{ }J_+:=J_{un+}\setminus J_{un}\] and \[J_-:=J\setminus J_{un+}.\]\idxS{$J_{un},\ J_{un+},\ J_{+},\ J_{-}$}
We define $-i:=j$ if $\sigma(1_i)=1_j.$
For $i\in J_{un}$ we denote $(E_i)_0$ to be the set of fixed points 
of $\sigma$ in $E_i.$

We obtain a lattice function model for the enlarged building of $\bH$ in the 
following way. The following polynomial isomorphism
\begin{align*}
\bH(k_0) &= \bG(k_0)_{\beta} \\
&= \prod_{i\in J_{un+}}U(h_{(V_i+V_{-i})\times (V_i+V_{-i})})_{\beta_i+\beta_{-i}} \\
&\cong \prod_{i\in J_{un}}U(h_{V_i\times V_{i}})_{\beta_i}\times \prod_{i>0}
\GL_{\tens{E_i}{k}{D}}(V_i)_{\beta_i}\\
&= \prod_{i\in J_{un}} \Res_{(E_i)_0|k_0}(\bU(\sigma|_{\End_{\tens
{E_i}{k}{D}}(V_i)}))(k_0)\times \prod_{i>0}\Res_{E_i|k_0}(\bGL_{\tens
{E_i}{k}{D}}(V_i))(k_0)
\end{align*}
extends to an algebraic isomorphism
\[\bH\cong \prod_{i\in J_{un}} 
\Res_{(E_i)_0|k_0}(\bU(\sigma|_{\End_{\tens{E_i}{k}{D}}(V_i)}))\times 
\prod_{i>0}\Res_{E_i|k_0}(\bGL_{\tens{E_i}{k}{D}}(V_i))\]
because $\bH(k_0)$ is Zariski-dense in $\bH$ because $\bH$ is reductive and defined over the infinite field $k_0$ 
by the separability of $\beta.$ For he definition of 
$\bU(\sigma|_{\End_{\tens{E_i}{k}{D}}(V_i)})$ see \ref{defbUsigmah}.
For a reductive group $\Gamma$ defined over a local field $L$, we have 
\[ \Building(\Gamma, L)\cong\Building(\Res_{L|F}(\Gamma),F)\]
for every finite separable field extension $L|F.$ Thus $\Building(\bH,k_0)$ is isomorphic to
\[\prod_{i\in J_{un}} 
\Building (\Res_{(E_i)_0|k_0}(\bU (\sigma|_{\End_{\tens{E_i}{k}{D}}(V_i)})),k_0)
\times \prod_{i>0}\Building (\Res_{E_i|k_0}(\bGL_{\tens{E_i}{k}{D}}(V_i)),k_0)\]
which by the method of restriction of scalars is isomorphic to 
\[\cong \prod_{i\in J_{un}} \Building (\bU (\sigma|_{\End_{\tens
{E_i}{k}{D}}(V_i)}),(E_i)_0)\times \prod_{i>0}\Building (\bGL_{\tens
{E_i}{k}{D}}(V_i),E_i).\]

\begin{definition}\label{defLFForProductsInTheUnitaryCase}
Analogously to definition \ref{defLFInProduct} the Lie algebra filtration of a point $x=(x_i)$ of $\Building(\bH,k_0)$ is 
defined to be the direct sum of the Lie algebra filtrations of the points $x_i,$ i.e. for $t\in\bbR$ we put 
$\LF(x,\bH,k_0)(t)$ to be \[\prod_{i\in J_{un}}\idxS{$\LF(x,\bH,k_0)=\LF(x,\bG_{\beta},k_0)$}
\LF(x_i,\Skew(\End_{\tens{E_i}{k}{D}}(V_i),\sigma))(t)\times\prod_{i\in J_+}
\LF(x_i,\bGL_{\tens{E_i}{k}{D}}(V_i),E_i)(t).\]
\end{definition}

\subsection{Notation and simplification for the unitary case}

We use the notation of subsection \ref{subsecForUh}.
Under the choice of $J_+$ we make the following simplification of the situation. We put $J_{GL}:=J_+\cup J_-$\idxS{$J_{GL}$} and we introduce the following notation.
For a symbol $a\in\{+,-,un,GL\}$ we put 
\[V_a:=\sum_{i\in J_a}(V_i),\ E_a:=\sum_{i\in J_a}E_i,\ 1_a:=\sum_{i\in J_a}1_i,\ \beta_a:=\sum_{i\in J_a}\beta_i,\ \tibG_a:=\bGL_D(V_a),\]
for $b\in\{un,GL\}$ we put 
\[h_b:=h|_{V_b\times V_b},\ \sigma_b:=\sigma|_{\End_D(V_b)},\ \bG_b:=\bU(h_b),\ \bH_b:=(\bG_b)_{\beta_b}\] and for 
$c\in\{un,+\}$ we put $\tibH_c:=(\tibG_c)_{E_c}.$\idxS{$\tibG_+,\ \tibG_-,\ \tibG_{un},\ \tibG_{GL}$}\idxS{$\bG_{un},\ \bG_{GL}$}\idxS{$\bH_{un},\ \bH_{GL}$}\idxS{$\tibH_{un},\ \tibH_{+}$}\idxS{$h_{un},\ h_{GL}$}\idxS{$\sigma_{un},\ \sigma_{GL}$}\idxS{$V_+,\ V_-,\ V_{un},\ V_{GL}$}\idxS{$E_+,\ E_-,\ E_{un},\ E_{GL}$}\idxS{$1_+,\ 1_-,\ 1_{un},\ 1_{GL}$}\idxS{$\beta_+,\ \beta_-,\ \beta_{un},\ \beta_{GL}$}
For example we have the following commutative diagrams.
\[\begin{matrix}
\tibG_{GL}(k)\times\tibG_{un}(k)&\ra & \tibG(k)\\
\ua & & \ua \\
\bG_{GL}(k_0)\times\bG_{un}(k_0)&\ra & \bG(k_0),
\end{matrix}
\]
\[
\begin{matrix}
\tibH_{un}(k)&\ra & \tibG_{un}(k)\\
\ua & & \ua \\
\bH_{un}(k_0)&\ra & \bG_{un}(k_0),
\end{matrix}
\]
\begin{remark} We have 
\[\Building(\bH,k_0)=\Building(\bH_{un},k_0)\times\Building(\bH_{GL},k_0).\]
\end{remark}


\section{\texorpdfstring{CLF-maps in the case of $\bGL_D(V)$}{CLF maps in the case of GLD(V)}}\label{secForGLDV}

In this section we recall results of \cite{broussousLemaire:02}.
We fix a separable field extension $E|k$ in $\Lie(\tibG)(k).$ 
 The group $E^{\times}$ acts on $\Latttwo{o_k}{A}$
by conjugation, i.e. there is an $E^{\times}$-action on $\building(\tibG,k).$

\begin{theorem}\label{thmBLII.1.1}\cite[II.1.1]{broussousLemaire:02}
There is a unique CLF-application 
\[j:\ \building(\tibG,k)^{E^\times}\ra\building(\tibG_E,k).\]
The map $j$ is bijective and $j^{-1}$ is the unique $\tibG_E(k)$-equivariant affine map from $\building(\tibG_E,k)$ to $\building(\tibG,k).$
\end{theorem}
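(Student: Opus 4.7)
The plan is to construct the candidate inverse $\iota := j^{-1}$ via lattice functions and then deduce all four claims from it. Put $B := \End_{\tens{E}{k}{D}}(V) \subseteq A$, so that $\tibG_E(k) = B^{\times}$ and $\Lie(\tibG_E)(k) = B$. The tensor product $\tens{E}{k}{D}$ is a central simple $E$-algebra, Morita-equivalent to some skewfield $\Delta$ over $E$, with $V$ corresponding to a finite-dimensional $\Delta$-module $W$ and $B \cong \End_{\Delta}(W)$. Theorem \ref{thmBLI.4} applied to the simple $E$-datum $(\End_{\Delta}(W), W, \Delta)$ identifies $\building(\tibG_E, k)$ with $\Latt{o_{\Delta}}{W}$. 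Every $o_{\Delta}$-lattice function corresponds to an $o_D$-lattice function on $V$ whose class is fixed by the conjugation action of $E^{\times}$, since left-multiplication by $e \in E^{\times}$ merely shifts the filtration by $\nu_E(e)$. This yields a canonical map
\[\iota \colon \building(\tibG_E, k) \longrightarrow \building(\tibG, k)^{E^{\times}}.\]

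Next I would verify that $\iota$ is $\tibG_E(k)$-equivariant (immediate from $\tibG_E(k) \subseteq \tibG(k)$) and affine. For affineness, pick a $\Delta$-basis of $W$ simultaneously splitting two given $o_{\Delta}$-lattice functions; the induced $D$-basis of $V$ simultaneously splits the corresponding $o_D$-lattice functions, and both affine structures of Remark \ref{remAffineStructure} are then given by the same barycentric formula on this basis. The CLF property then follows from the tautological identity
\[\End(\Lambda)(r) \cap B \ =\ \{b \in B : b(\Lambda(s)) \subseteq \Lambda(s+r)\ \text{for all } s \in \bbR\},\]
whose right-hand side is exactly the $B$-square lattice function of $\Lambda$ at level $r$; this is condition (\ref{eqCompLF}) for the inclusion $B \hookrightarrow A$.

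It remains to establish bijectivity of $\iota$ and the two uniqueness statements. Injectivity is automatic from Theorem \ref{thmBLI.4}. For surjectivity, given a class $[\Lambda] \in \building(\tibG, k)^{E^{\times}}$ with representative $\Lambda$, the fixation produces a group homomorphism $c \colon E^{\times} \to \bbR$ via $e\Lambda = \Lambda + c(e)$; the $o_D$-lattice function axioms constrain $c$ to be proportional to $-\nu_E$, and a single real translation of $\Lambda$ makes $c = -\nu_E$ outright, producing an $o_{\tens{E}{k}{D}}$-lattice function representative. For uniqueness among $\tibG_E(k)$-equivariant affine maps, any such map is determined on a single apartment by its value at one point and by its linear part; a maximal $k$-split torus $S \subseteq \tibG_E$ embedded in a maximal $k$-split torus $T \subseteq \tibG$ forces the linear part via the induced $\bbR$-linear map $X_{*}(S)_k \otimes \bbR \hookrightarrow X_{*}(T)_k \otimes \bbR$ and forces the base point by the triviality of the $\Centr(\tibG_E)(k)$-action on the non-enlarged target. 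For uniqueness among CLF maps, if $j'$ is another, then $\iota \circ j'$ is a self-map of $\building(\tibG, k)^{E^{\times}}$ preserving the square lattice function $\End(\Lambda)$ pointwise, and since Theorem \ref{thmBLI.4} recovers $[\Lambda]$ from $\End(\Lambda)$, this forces $\iota \circ j' = \id$, hence $j' = j$. The main obstacle will be the surjectivity normalisation, where abstract $E^{\times}$-stability of a class has to be upgraded to genuine $o_E$-linearity of a concrete representative.
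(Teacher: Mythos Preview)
The paper does not prove this theorem; it is quoted verbatim from \cite[II.1.1]{broussousLemaire:02} and used as a black box. So there is no in-paper argument to compare against, and your outline is essentially a reconstruction of the Broussous--Lemaire proof. The construction of $\iota$, the verification of equivariance, affineness, and the CLF identity are all along the right lines.

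There is, however, a genuine gap in your uniqueness argument for CLF maps. You write that if $j'$ is another CLF map then $\iota\circ j'$ ``preserves the square lattice function $\End(\Lambda)$ pointwise''. This is not what the CLF condition gives you. If $y\in\building(\tibG,k)^{E^\times}$ and $z:=\iota(j'(y))$, the CLF property of $j'$ yields $\LF(j'(y),B)=\LF(y,A)\cap B$, and the CLF property of $\iota$ yields $\LF(z,A)\cap B=\LF(j'(y),B)$. Combining, you only get
\[
\LF(z,A)\cap B=\LF(y,A)\cap B,
\]
i.e.\ the two square lattice functions agree \emph{after intersecting with $B$}, not on the nose. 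Concluding $z=y$ from this is exactly the non-trivial step: one must show that a point of $\building(\tibG,k)^{E^\times}$ is already determined by the $B$-part of its Lie algebra filtration. That statement is the real content of the uniqueness (it reappears later in the paper as Theorem~\ref{thmBS10.3}), and it requires an argument, typically carried out apartment-by-apartment using a splitting basis adapted to $E$. Your appeal to Theorem~\ref{thmBLI.4} is circular here, since that theorem recovers $[\Lambda]$ from the \emph{full} $\End(\Lambda)$, not from $\End(\Lambda)\cap B$.
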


\begin{notation}\idxS{$j_E$}\idxS{$j^E$}
We denote 
\[j_E:=j\text{ and }j^E:=j^{-1}.\]
In this part of the thesis we mainly consider $j^E.$
\end{notation}

In \cite{broussousLemaire:02} the authors describe $j^{E}.$ They
define a map between the enlarged buildings and apply the projection to the non-enlarged buildings. 
The projection from an enlarged to the non-enlarged building is 
\[(y,w)\mapsto y.\] 

\begin{theorem}\cite[II.3.1, II.4]{broussousLemaire:02}\label{thmBL02I.3.1II.4}
There is an affine $\tibG_E(k)$-equivariant CLF-map $\tilde{j}$ from 
$\Building(\tibG_E,k)$ to $\Building(\tibG,k),$
such that the first component of $\tilde{j}(y,w)$ is $j^{E}(y).$
\end{theorem}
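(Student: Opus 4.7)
The plan is to construct $\tilde j$ explicitly using the lattice function models of both enlarged buildings and to deduce its properties coordinate by coordinate. By Theorem \ref{thmBLI1.4} I identify $\Building(\tibG, k)$ with $\Lattone{o_D}{V}$. For the centraliser, the canonical $k$-iso\-mor\-phism $\tibG_E \cong \prod_i \Res_{E_i|k}(\bGL_{\tens{E_i}{k}{D}}(V_i))$ from subsection \ref{subsecForGLDV}, combined with the compatibility of enlarged buildings with Weil restriction and Theorem \ref{thmBLI1.4} applied factor by factor, yields an identification $\Building(\tibG_E, k) \cong \prod_{i \in J} \Lattone{o_{\tens{E_i}{k}{D}}}{V_i}$. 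A point $(y,w) \in \Building(\tibG_E, k)$ is thus encoded by a tuple of lattice functions $(\Lambda_i)_i$.

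I then define $\tilde j((\Lambda_i)_i) := \bigoplus_{i \in J} \Lambda_i$. Each $\Lambda_i$ is first viewed as an $o_D$-lattice function on $V_i$ via the inclusion $o_D \hookrightarrow o_{\tens{E_i}{k}{D}}$ (each $\Lambda_i(r)$ remains a full $o_D$-lattice, and the axioms of Definition \ref{defLatticeFunction} are preserved after the standard normalisation of the parameter $r$ that accounts for the ramification of $\tens{E_i}{k}{D}$ over $D$). The direct sum then lies in $\Lattone{o_D}{V}$ by Definition \ref{defDSOLF}.

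Affineness of $\tilde j$ follows from Remark \ref{remDSOLF} applied iteratively to the factors; the $\tibG_E(k)$-equi\-variance is immediate since $\tibG_E(k)$ acts diagonally on the product and direct sum is equivariant under such product actions. For the CLF property, Proposition \ref{propCLFunderDS} gives
\[\End\left(\bigoplus_i \Lambda_i\right)(t) \cap \bigoplus_i \End_D(V_i) = \bigoplus_i \End(\Lambda_i)(t),\]
and intersecting further with $\Lie(\tibG_E)(k) = \prod_i \End_{\tens{E_i}{k}{D}}(V_i)$ yields $\bigoplus_i \bigl(\End(\Lambda_i)(t) \cap \End_{\tens{E_i}{k}{D}}(V_i)\bigr)$, which is precisely $\LF((y,w), \tibG_E, k)(t)$ by Definition \ref{defLFInProduct}. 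The first component of $\tilde j(y,w)$ is the image of $y$ under the induced affine, $\tibG_E(k)$-equivariant map $\building(\tibG_E, k) \to \building(\tibG, k)$ obtained by passing to equivalence classes of lattice functions; by the uniqueness statement in Theorem \ref{thmBLII.1.1} this map must coincide with $j^E$.

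The main obstacle I expect is the factor-by-factor check that the square lattice function $\End(\Lambda_i)$ in $\End_D(V_i)$ restricted to $\End_{\tens{E_i}{k}{D}}(V_i)$ really agrees with the Lie algebra filtration attached to $\Lambda_i$ as a point of $\Building(\bGL_{\tens{E_i}{k}{D}}(V_i), E_i)$. This is not a formal consequence of Proposition \ref{propCLFunderDS}: it requires identifying the two normalisations of the $\bbR$-parameter (one coming from $\nu$ on $D$, the other from $\nu$ on $\tens{E_i}{k}{D}$) and checking that Theorem \ref{thmBLI1.4}, applied in the $E_i$-context, produces the same endomorphism filtration. Once this single-factor compatibility is verified, the global CLF property follows by taking direct sums.
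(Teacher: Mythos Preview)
You have misread the standing hypothesis: at this point in the text (section \ref{secForGLDV}), $E$ is a separable \emph{field} extension of $k$, not a product of fields. Thus $|J|=1$, the decomposition $V=\bigoplus V_i$ is trivial, and all of the direct-sum machinery you invoke (Remark \ref{remDSOLF}, Proposition \ref{propCLFunderDS}) collapses to a tautology. What remains of your construction in the one-factor case is: take an $o_{\tens{E}{k}{D}}$-lattice function $\Lambda$ and regard it as an $o_D$-lattice function on $V$. That is indeed the correct map, and it is the formula given in \cite[II.3.1]{broussousLemaire:02}. But then the CLF property is exactly the ``obstacle'' you isolate at the end --- that $\End_D(\Lambda)(t)\cap\End_{\tens{E}{k}{D}}(V)$ coincides with the square lattice function of $\Lambda$ computed on the $E$-side with the correct normalisation --- and you do not verify it. So for the theorem as stated you have not supplied a proof; you have identified the crux and deferred it.

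That said, the paper does not prove this step either: its proof is a citation, pointing to \cite[II.3.1]{broussousLemaire:02} for existence, CLF and equivariance, and to \cite[II.4]{broussousLemaire:02} for affineness. What you have actually written is, almost verbatim, the paper's proof of the \emph{next} theorem, Theorem \ref{thmGenThmBS10.3}, which generalises to semisimple $E$ by taking the direct sum of the single-field maps $\tilde j_i$ supplied by the present theorem. So your argument is correct and matches the paper --- just for the wrong statement. To prove Theorem \ref{thmBL02I.3.1II.4} itself you would either have to reproduce the Broussous--Lemaire computation (essentially the explicit description recalled in Theorem \ref{thmDescrjE}), or, as the paper does, cite it.
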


\begin{proof}
The existence of $\tilde{j}$ is stated in lemma \cite[II.3.1]{broussousLemaire:02}.
The affineness is proven in section \cite[II.4]{broussousLemaire:02} for $j^{E},$ but the proof actually shows that $\tilde{j}$ is affine.
The $\bG_E(k)$-equivariance follows from the formula of $\tilde{j}$
given in \cite[II.3.1]{broussousLemaire:02}.
\end{proof}

\begin{corollary}\label{corImageOfTildejIsETimesFP}
The image of $\tilde{j}$ is the set of $o_D$-lattice functions of $V$
which are $o_E$-lattice functions.
\end{corollary}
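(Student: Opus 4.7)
The strategy is to work in the lattice-function models. Theorem \ref{thmBLI1.4} identifies $\Building(\tibG,k)$ with $\Lattone{o_D}{V}$; applied to $\bGL_{\tens{E}{k}{D}}(V)$ over the local field $E$ (noting that $\tibG_E$ identifies with $\Res_{E|k}\bGL_{\tens{E}{k}{D}}(V)$ by subsection \ref{subsecForGLDV}), the same theorem identifies $\Building(\tibG_E,k)$ with the space of $o_{\tens{E}{k}{D}}$-lattice functions of $V$. Under these identifications, the corollary asserts that $\tilde{j}$ is the forgetful map viewing an $o_{\tens{E}{k}{D}}$-lattice function as an $o_D$-lattice function, and that this forgetful map has image exactly the $o_D$-lattice functions which are also $o_E$-lattice functions.

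First I prove the inclusion ``image of $\tilde{j}\subseteq\{o_D$-lattice functions which are also $o_E$-lattice functions$\}$''. Take $x\in\Building(\tibG_E,k)$ with associated $o_{\tens{E}{k}{D}}$-lattice function $\Lambda'$, and set $y:=\tilde{j}(x)$ with underlying $o_D$-lattice function $\Lambda_y$. By definitions \ref{defLFInProduct} and \ref{defLieAlgebraFiltrationGLDV}, $\LF(x,\tibG_E,k)=\End(\Lambda')$, so the CLF property of $\tilde{j}$ reads
\[\End(\Lambda_y)\cap\End_{\tens{E}{k}{D}}(V)=\End(\Lambda').\]
Evaluating at $t=0$ gives $o_E\subseteq\End(\Lambda_y)(0)$, since the scalars $o_E$ always lie in degree zero of the square lattice function of an $E$-algebra; thus each $\Lambda_y(r)$ is $o_E$-stable. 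Applying the same identity to the scalar elements $\pi_E\in\End(\Lambda')(\nu(\pi_E))$ and $\pi_E^{-1}\in\End(\Lambda')(-\nu(\pi_E))$ yields both inclusions $\pi_E\Lambda_y(r)\subseteq\Lambda_y(r+\nu(\pi_E))$ and $\Lambda_y(r+\nu(\pi_E))\subseteq\pi_E\Lambda_y(r)$, hence the $\pi_E$-period condition. Together with the left-continuity already present, $\Lambda_y$ is an $o_E$-lattice function.

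For the reverse inclusion, let $\Lambda$ be an $o_D$-lattice function which is simultaneously an $o_E$-lattice function. Each $\Lambda(r)$ is then a full lattice stable under both actions of $o_E$ and $o_D$, and the two period conditions $\Lambda(r+\nu(\pi_D))=\Lambda(r)\pi_D$ and $\Lambda(r+\nu(\pi_E))=\pi_E\Lambda(r)$ are compatible; by separability of $E|k$, this makes $\Lambda$ an $o_{\tens{E}{k}{D}}$-lattice function of $V$, corresponding to a unique point $x\in\Building(\tibG_E,k)$. The explicit construction of $\tilde{j}$ in \cite[II.3.1]{broussousLemaire:02} is arranged precisely so that $\tilde{j}(x)$ is the $o_D$-lattice function underlying $\Lambda$; hence $\tilde{j}(x)=\Lambda$ and $\Lambda$ lies in the image. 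The main obstacle is cleanly verifying that a simultaneously $o_D$- and $o_E$-stable lattice function with the correct periods really produces the $\pi_\Delta$-period under the decomposition $\tens{E}{k}{D}\cong M_s(\Delta)$; this rests on the separability of $E|k$, which makes $\tens{E}{k}{D}$ a semisimple $E$-algebra with a well-behaved maximal order, so that via Morita equivalence the combined stabilities determine a unique $o_\Delta$-lattice function on the simple module $W$.
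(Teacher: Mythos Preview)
Your approach differs from the paper's, and the reverse inclusion is left incomplete.

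The paper gives a short structural argument that avoids both CLF and any explicit formula: since $\tilde{j}$ is $\tibG_E(k)$-equivariant, it is in particular $k^\times$-equivariant, so if $\Lambda$ lies in the image then so does $\Lambda+l$ for every integer $l$; affineness then forces the whole translation class $\{\Lambda+s:s\in\bbR\}$ into the image. Hence $\im(\tilde{j})$ is a union of full translation classes and is determined by its projection to $\Latt{o_D}{V}$, which by Theorem~\ref{thmBL02I.3.1II.4} equals $\im(j^{E})=(\Latt{o_D}{V})^{E^\times}$ from Theorem~\ref{thmBLII.1.1}. That is already known to be the set of classes of $o_D$-lattice functions which are $o_E$-lattice functions, and the corollary follows.

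Your forward inclusion via CLF is correct and gives an alternative proof of that half. Your reverse inclusion, however, is not an argument but a deferral: you assert that a simultaneously $o_D$- and $o_E$-periodic lattice function yields a point of $\Building(\tibG_E,k)$ whose image under $\tilde{j}$ is $\Lambda$, cite the explicit construction in \cite[II.3.1]{broussousLemaire:02}, and then explicitly flag the Morita/$\pi_\Delta$-period compatibility as an unresolved ``main obstacle''. That is precisely the content needing proof. The paper's route sidesteps this entirely by reducing to the already-established bijection $j^{E}$ on the non-enlarged level, which is why it is both shorter and complete.
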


\begin{proof}
If $\Lambda\in \Lattone{o_D}{V}$ is in the image of $\tilde{j}$ then
$\Lambda+l$ is in the image too for every integer $l,$ because of the $k^\times$-equivariance. The affineness implies that a lattice function is an element of $\im(\tilde{j})$ if and only if the whole class is a subset of $\im(\tilde{j}).$ The assertion follows now from 
\[\im(j^E)=(\Latt{o_D}{V})^{E^\times}.\]
\end{proof}
\vspace{1em}

One has a uniqueness result for $j^E$ on the level of non-enlarged buildings.

\begin{theorem}\cite[10.3]{broussousStevens:09}\label{thmBS10.3}
For two points $y\in\building(\tibG,k)$ and $x\in\building(\tibG_E,k)$ which satisfy
\[\LF(y,\tibG,k)\cap \End_{\tens{E}{k}{D}}(V)\supseteq\LF(x,\tibG_E,k)\]
we have $j^{E}(x)=y.$
\end{theorem}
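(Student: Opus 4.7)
The plan is to use the uniqueness of the CLF bijection of Theorem \ref{thmBLII.1.1}: first I would show that $y$ lies in the fixed locus $\building(\tibG,k)^{E^\times}$, so that $j:=(j^E)^{-1}$ is defined at $y$, and then that $j(y)=x$, which by bijectivity forces $y=j^E(x)$. For the $E^\times$-invariance: since $\LF(x,\tibG_E,k)(0)$ always contains $o_E\cdot\id$, the hypothesis gives $o_E\subseteq\LF(y,\tibG,k)(0)$, so $o_E$ stabilises every $\Lambda_y(s)$ in any representative $\Lambda_y$ of the class corresponding to $y$. Applying the hypothesis at level $\pm\nu(\pi_E)$ to the elements $\pi_E^{\pm 1}\in\LF(x,\tibG_E,k)(\pm\nu(\pi_E))$ yields the two opposite containments which combine to $\pi_E\Lambda_y(s)=\Lambda_y(s+\nu(\pi_E))$; hence $\alpha\Lambda_y(s)=\Lambda_y(s+\nu(\alpha))$ for every $\alpha\in E^\times$, which is exactly $E^\times$-fixedness of the class $[\Lambda_y]$ in $\Latt{o_D}{V}$.

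Setting $x':=j(y)\in\building(\tibG_E,k)$, the defining CLF property of $j$ turns the hypothesis into the internal inclusion $\LF(x',\tibG_E,k)\supseteq\LF(x,\tibG_E,k)$ of square lattice functions on $\End_{\tens{E}{k}{D}}(V)$. The proof therefore reduces to the following lattice-function lemma: such an inclusion of square lattice functions forces equality of the associated classes in $\Latt{o_{\tens{E}{k}{D}}}{V}$. My plan for the lemma is to pick a common splitting basis $(w_i)$ of representative norms $\alpha,\alpha'$ via Proposition \ref{propDualNorm}[2.] and use the standard formula $\End(\alpha)(e_{ij})=\alpha(w_j)-\alpha(w_i)$ for the elementary endomorphisms $e_{ij}:w_i\mapsto w_j$. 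The inclusion then translates into the family of inequalities $\alpha(w_j)-\alpha(w_i)\le\alpha'(w_j)-\alpha'(w_i)$ for all $i,j$, which upon swapping $i$ and $j$ become equalities, so $\alpha-\alpha'$ is a global constant and $[\alpha]=[\alpha']$.

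The hardest point is this last lemma in the non-split setting, when $\tens{E}{k}{D}$ is a genuine central simple $E$-algebra rather than a matrix algebra over $E$: one has to interpret $e_{ij}$ inside $\End_{\tens{E}{k}{D}}(V)$ and keep careful track of the non-commutative value group, but the splitting-basis argument of Bruhat--Tits should go through verbatim. Once the lemma is established we obtain $x'=x$ in $\building(\tibG_E,k)$, and hence $y=j^E(j(y))=j^E(x)$ as required.
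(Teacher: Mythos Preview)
The paper does not supply its own proof of this theorem: it simply cites \cite[10.3]{broussousStevens:09} and remarks that the argument given there for $D=k$ goes through without change when $D\neq k$. There is therefore nothing in the paper to compare your argument against line by line.

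Your proposal is correct and is in fact the natural way to prove the result. A couple of small remarks. First, your worry about the ``non-split setting'' is unfounded: the centraliser $B=\End_{\tens{E}{k}{D}}(V)$ is, by Wedderburn, $E$-isomorphic to $\End_\Delta(W)$ for a skewfield $\Delta$ central over $E$, and the building $\building(\tibG_E,k)$ is modelled on $\Norm{\Delta}{W}$. Proposition~\ref{propDualNorm}(2) gives a common splitting $\Delta$-basis $(w_i)$ of $W$ for the two norms, the elementary maps $e_{ij}\colon w_l\mapsto\delta_{il}w_j$ lie in $\End_\Delta(W)$, and the formula $\End(\alpha)(e_{ij})=\alpha(w_j)-\alpha(w_i)$ holds exactly as over a field. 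No extra care is needed. Second, in Step~1 you could shorten the $E^\times$-invariance check by observing directly that $\alpha\cdot\id_V\in\LF(x,\tibG_E,k)(\nu(\alpha))$ for every $\alpha\in E^\times$, which already gives both inclusions at once; but your two-step version with $o_E$ and $\pi_E^{\pm1}$ is of course fine.
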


In \cite[10.3]{broussousStevens:09} this theorem was proven for the case
$D=k$ and $E$ is generated by one element. The proof did not use the second assumption, and it goes over to $D\neq k$ without changes. 
The theorem generalises easily to semisimple subalgebras.

We assume now that $E$ is a semisimple $k$-subalgebra of $\End_D(V)$ and $E$ may not be a field.
We put $o_E:=\sum_io_{E_i}$.

As in theorem \ref{thmBLII.1.1} we describe $\building(\tibG_E,k)$ as a subset of $\building(\tibG,k).$ We can not use the action of $E^{\times}$ because there are no fixed points in  $\building(\tibG,k)$ if $E$ is not a field. The set of $o^{\times}_E$-fixed points is too big. We therefore introduce a new notion of lattice function. 

\bdfn
An \textit{$o_E$-$o_D$-lattice function}\idxD{lattice function} of $V$ is an $o_D$-lattice function of $V$ 
which splits under $(V_i)$ such that for every $i$ the function
\[t\mapsto \Lambda(t)\cap V_i\]
is an $o_{E_i}$-lattice function of $V_i.$ We denote the set of $o_E$-$o_D$-lattice functions
by\\ $\Lattone{o_E-o_D}{V}.$
\edfn

\begin{theorem}\label{thmGenThmBS10.3}
Under the assumptions and notation of subsection \ref{subsecForGLDV}
there is an affine and $\tibG_E(k)$-equivariant CLF-map
\[\Building(\tibG_E,k)\ra\Building(\tibG,k),\] whose image in terms of lattice functions is
$\Lattone{o_E-o_D}{V}.$ 
\end{theorem}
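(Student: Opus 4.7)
The plan is to reduce to the field case already treated in Theorem \ref{thmBL02I.3.1II.4} and Corollary \ref{corImageOfTildejIsETimesFP} by exploiting the product decomposition induced by the primitive idempotents of $E$. Write $E = \prod_{i} E_i$ with primitive idempotents $1_i$, set $V_i := 1_i V$, and note that the $k$-Lie algebra of $\tibG_E$ is
\[
\Lie(\tibG_E)(k) = \bigoplus_{i} \End_{\tens{E_i}{k}{D}}(V_i) \subseteq \bigoplus_i \End_D(V_i) \subseteq A,
\]
and that the inclusion $\tibG_E \hookrightarrow \tibG$ factors through $\prod_i \bGL_D(V_i) \hookrightarrow \bGL_D(V)$. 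Using the identification (\ref{eqBuildProdGL}) and the lattice-function model, the claim will follow from a factorwise construction.

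For each $i$, Theorem \ref{thmBL02I.3.1II.4} (applied to the separable field extension $E_i|k$ sitting in $\End_D(V_i)$) provides an affine, $\bGL_D(V_i)_{E_i}(k)$-equivariant CLF-map
\[
\tilde{j}_i:\Building(\bGL_{\tens{E_i}{k}{D}}(V_i),E_i)\ra \Building(\bGL_D(V_i),k),
\]
and, by Corollary \ref{corImageOfTildejIsETimesFP}, its image in $\Lattone{o_D}{V_i}$ is the set of $o_{E_i}$-lattice functions of $V_i$. I would define $\tilde{j}$ on $\Building(\tibG_E,k)\cong\prod_i \Building(\bGL_{\tens{E_i}{k}{D}}(V_i),E_i)$ by the direct sum of lattice functions:
\[
\tilde{j}((y_i)_i):= \bigoplus_i \tilde{j}_i(y_i),
\]
using Remark \ref{remDSOLF}. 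Affineness and $\tibG_E(k)$-equivariance of $\tilde{j}$ are then immediate from the corresponding properties of each $\tilde{j}_i$ together with the affineness and equivariance of the direct-sum operation on lattice functions.

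The CLF-property is the heart of the argument. Fix $y=(y_i) \in \Building(\tibG_E,k)$ and put $\Lambda_i := \tilde{j}_i(y_i)$, $\Lambda := \bigoplus_i \Lambda_i$. By Proposition \ref{propCLFunderDS} applied iteratively, for every $t\in\bbR$,
\[
\End(\Lambda)(t) \cap \bigoplus_i \End_D(V_i) = \bigoplus_i \End(\Lambda_i)(t).
\]
Intersecting further with $\Lie(\tibG_E)(k)=\bigoplus_i \End_{\tens{E_i}{k}{D}}(V_i)$ and using the CLF-property of each $\tilde{j}_i$ gives
\[
\LF(\tilde{j}(y),\tibG,k)(t)\cap \Lie(\tibG_E)(k) = \bigoplus_i \bigl(\End(\Lambda_i)(t)\cap \End_{\tens{E_i}{k}{D}}(V_i)\bigr) = \bigoplus_i \LF(y_i,\bGL_{\tens{E_i}{k}{D}}(V_i),E_i)(t),
\]
which by Definition \ref{defLFInProduct} is exactly $\LF(y,\tibG_E,k)(t)$. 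Hence $\tilde{j}$ is CLF with respect to the inclusion $\tibG_E\hookrightarrow \tibG$.

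Finally, the image description: by Corollary \ref{corImageOfTildejIsETimesFP}, the image of $\tilde{j}_i$ consists exactly of those $o_D$-lattice functions of $V_i$ which are $o_{E_i}$-lattice functions, so the image of $\tilde{j}$ consists precisely of lattice functions of $V$ which split under $(V_i)_i$ with each summand an $o_{E_i}$-lattice function of $V_i$, i.e.\ of $\Lattone{o_E-o_D}{V}$. The main subtlety I foresee is bookkeeping the two successive intersections (first with $\bigoplus_i\End_D(V_i)$, then with $\Lie(\tibG_E)(k)$); the first step really needs Proposition \ref{propCLFunderDS}, while the second is formal once the idempotents $1_i$ lie in $\Lie(\tibG_E)(k)$, which is where the separability of $E|k$ (ensuring the $1_i$ are defined over $k$) enters.
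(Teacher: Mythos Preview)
Your proof is correct and follows essentially the same route as the paper: apply Theorem \ref{thmBL02I.3.1II.4} factorwise to get $\tilde{j}_i$, compose with the direct-sum map $\oplus_*$ (affine and equivariant by Remark \ref{remDSOLF}, CLF by Proposition \ref{propCLFunderDS}), and read off the image from Corollary \ref{corImageOfTildejIsETimesFP}. Your closing remark on separability is slightly off---the idempotents $1_i$ lie in $E\subseteq A$ and are $k$-rational regardless; separability is used earlier for the reductive structure of $\tibG_E$---but this does not affect the argument.
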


\begin{proof}
For every index $i$ theorem \ref{thmBL02I.3.1II.4} ensures the existence of an affine $\GL_{\tens{E_i}{k}{D}}(V_i)$-equivariant CLF-map \[\tilde{j}_i:\Building(\bGL_{\tens{E_i}{k}{D}}(V_i),E_i)\ra\Building(\bGL_D(V_i),k).\] Thus the product $\tilde{j}:=\prod_i\tilde{j}_i$ is an affine and $\tibG_E(k)$-equivariant CLF-map.
The map
\[\oplus_*:\prod_i\Building(\bGL_D(V_i),k)\ra\Building(\tibG,k)\]
defined by 
\[(\Lambda_i)_i\mapsto \oplus_i\Lambda_i\]
is affine and $\prod_i\GL_D(V_i)$-equivariant by 
remark \ref{remDSOLF} and CLF by proposition \ref{propCLFunderDS}.
Thus the map
\[j:=\oplus_*\circ \tilde{j}\] fulfils the asserted properties. The assertion about the image follows from corollary \ref{corImageOfTildejIsETimesFP}.
\end{proof}


\section{\texorpdfstring{CLF-maps in the case of $\bO_{2,k}^{is}$}{CLF maps in the case of O2kis}}\label{secIsotropicO2}

We consider a local hermitian $k$-data which satisfies

\begin{equation}\label{eqCondO2is}
D=k=k_0,\dim_kV=2,\rho=\id,\text{ Witt index}=1, \epsilon=1.
\end{equation}

\begin{remark}\label{remCalcisO2}
There is only one Witt decomposition and we have a corresponding $k$-basis $v_1,\ v_2$ such that 
\[h(v_1\lambda_1+v_2\lambda_2,v_1\mu_1+v_2\mu_2)=\lambda_1\mu_2+\lambda_2\mu_1.\]
The objects are the followings.
\be
\item $\bO^{is}_{2,k}(k)=\{\left(\begin{matrix} \alpha & 0 \\ 0 & \alpha^{-1}\end{matrix}\right),\ \left(\begin{matrix} 0 & \alpha \\ \alpha^{-1} & 0\end{matrix}\right)|\ \alpha\in k^\times\}$
\item We have $\sigma(B)=\tilde{B},\ B\in\GL_2(k),$ where $\tilde{B}$ is obtained from $B$ in permuting the diagonal entries.
\item The set $\Lie(\bO_{2,k}^{is})(k)$ is the set of diagonal matrices
$\diag(a,-a)$ where $a$ runs over $k.$ Any element of $\Lie(\bO_{2,k}^{is})(k)$ 
is therefore separable, e.g. for $a\neq 0$ we have
\[k[\diag(a,-a)]=k[\diag(1,-1)]=k\diag(1,0)\times k\diag(0,1).\]
\item The group $(\bO_{2,k}^{is})^0$ is canonically $k$-isomorphic to $\bGm$ via the following embedding:
\[g\in\bGm(k)\mapsto incl(g)\in \bO_{2,k}^{is}(k)\]
where $incl(g)(v_1)=v_1g$ and $incl(g)(v_2)=v_2g^{-1}.$ 
\ee
\end{remark}

\begin{remark}
Every element of $\Building(\bO_{2,k}^{is},k)$ has the same Lie algebra filtration, precisely
\[t\mapsto \{\diag(a,-a)|\ a\in\tf{p}_k^{[t]+}\}.\]
\end{remark}

\begin{proposition}\label{propExistenceO2Is}
There is an affine map 
\[j:\ \Building((\bO_{2,k}^{is})_\beta,k)\ra\Building(\bO_{2,k}^{is},k)\]
which is affine, $(\bO_{2,k}^{is})_\beta(k)$-equivariant and compatible with the Lie algebra filtrations.
\end{proposition}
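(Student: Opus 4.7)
The plan is a case distinction on $\beta.$ By Remark \ref{remCalcisO2}(3), every element of $\Lie(\bO_{2,k}^{is})(k)$ has the form $\diag(a,-a).$ If $\beta=0,$ then $(\bO_{2,k}^{is})_\beta=\bO_{2,k}^{is},$ and I take $j$ to be the identity map of $\Building(\bO_{2,k}^{is},k);$ all three properties are immediate.

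So assume $\beta=\diag(a,-a)$ with $a\neq 0.$ Using the description of $\bO_{2,k}^{is}(k)$ in Remark \ref{remCalcisO2}(1), a short matrix computation (valid because $\Char(k)\neq 2$) shows that no antidiagonal element commutes with $\beta;$ hence $(\bO_{2,k}^{is})_\beta$ coincides with the connected component $(\bO_{2,k}^{is})^0,$ which is $k$-isomorphic to $\bGm$ via the map $\mathrm{incl}$ of Remark \ref{remCalcisO2}(4). By Example \ref{exOmittedCase} and Remark \ref{remIdO2} both enlarged buildings identify canonically with $\bbR$ via lattice functions: the parameter $y\in\bbR$ corresponds to $s\mapsto\tf{p}_k^{[s-y]+}$ on the $\bGm$-side and to $s\mapsto\tf{p}_k^{[s-y]+}v_1\oplus\tf{p}_k^{[s+y]+}v_2$ on the $\bO_{2,k}^{is}$-side. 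I define $j$ to be the identity map of $\bbR$ under these identifications.

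The verification is then mostly mechanical. Affineness is trivial. For equivariance, an element $g\in\bGm(k)$ with $\nu(g)=n$ shifts the parameter $y$ by $-n$ on both sides: directly on the $\bGm$-side by $g\Lambda_y=\Lambda_{y-n},$ and on the $\bO_{2,k}^{is}$-side via
\[\mathrm{incl}(g)(\tf{p}_k^{[s-y]+}v_1\oplus\tf{p}_k^{[s+y]+}v_2)=\tf{p}_k^{[s-(y-n)]+}v_1\oplus\tf{p}_k^{[s+(y-n)]+}v_2,\]
the signs matching because $\mathrm{incl}(g)$ multiplies $v_1$ by $g$ and $v_2$ by $g^{-1}.$ The CLF-property is automatic: the differential
\[di\colon\Lie((\bO_{2,k}^{is})_\beta)\ra\Lie(\bO_{2,k}^{is}),\quad a\mapsto\diag(a,-a),\]
is a $k$-isomorphism of one-dimensional Lie algebras, so the intersection in Definition \ref{defExtOfAPoint} reduces to $\LF(j(x),\bO_{2,k}^{is},k).$ Evaluating at $t\in\bbR,$ this equals $\{\diag(a,-a)\mid a\in\tf{p}_k^{[t]+}\}$ by the remark following Remark \ref{remCalcisO2}, which in turn is $di(\tf{p}_k^{[t]+})=di(\LF(x,\bGm,k)(t)),$ as required.

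The only real obstacle is bookkeeping: one has to pin down the two identifications with $\bbR$ carefully enough for the $\bGm(k)$-actions to agree. Once that is done, the CLF-check is essentially empty because $di$ is an isomorphism onto its target, and every point of $\Building(\bO_{2,k}^{is},k)$ carries the same Lie algebra filtration.
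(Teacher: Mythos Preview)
Your proof is correct and follows essentially the same approach as the paper: case distinction on $\beta$, identity map for $\beta=0$, and for $\beta\neq 0$ the map induced by the identity of $\bbR$ under the two lattice-function identifications. You supply more detail than the paper (which simply declares affineness and equivariance ``obvious'' and checks CLF via the single observation $d(\mathrm{incl})(\tf{p}_k^{[t]+})=\{\diag(a,-a)\mid a\in\tf{p}_k^{[t]+}\}$), but the underlying construction and justification are the same.
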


\begin{proof}
In the case of $\beta=0$ we take for $j$ the identity of $\Building(\bO_{2,k}^{is},k).$
If $\beta$ is not zero the following map $j$ defined by
\[(\tf{p}_k^{[t+s]+})_{t\in\bbR}\mapsto 
(v_1\tf{p}_k^{[t+s]+}+v_2\tf{p}_k^{[t-s]+})_{t\in\bbR}\]
is compatible with the Lie algebra filtrations with respect to $incl,$
because on both sides there is only one filtration and we have
\[d(incl)(\tf{p}_k^{[t]+})=\{\diag(a,-a)\mid\ a\in\tf{p}_k^{[t]+}\}.\]
The affineness and the equivariance are obvious.
\end{proof}
\vspace{1em}

We consider the identifications of remark \ref{remIdO2}.

\begin{proposition}\label{propTransO2Is}
A map 
\[j:\ \Building((\bO_{2,k}^{is})_\beta,k)\ra\Building(\bO_{2,k}^{is},k)\]
which is $\bGm(k)$-equivariant and affine is a translation of $\bbR.$
The translations of $\bbR$ are in terms of lattice functions 
$\bGm(k)$-equivariant.
\end{proposition}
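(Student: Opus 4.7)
The plan is to reduce everything to elementary affine geometry on $\bbR$ via the identifications of remark \ref{remIdO2} and example \ref{exOmittedCase}. First I would identify both source and target with $\bbR$: the target $\Building(\bO^{is}_{2,k},k)$ is identified with $\bbR$ by $y\mapsto (\tf{p}_k^{[s-y]+}v_1\oplus\tf{p}_k^{[s+y]+}v_2)_s$, and for the source $\Building((\bO^{is}_{2,k})_\beta,k)$ I would treat the two cases: if $\beta=0$ the centraliser equals $\bO^{is}_{2,k}$ and the same identification applies; if $\beta\neq 0$, by remark \ref{remCalcisO2}(3) the centraliser is the diagonal torus, canonically $k$-isomorphic to $\bGm$ via $incl$, and its enlarged building is identified with $\bbR$ via $y\mapsto(\tf{p}_k^{[s-y]+})_s$ as in example \ref{exOmittedCase}.

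Next I would compute the $\bGm(k)$-action on $\bbR$ under each of these identifications. Since $g\in k^\times$ acts on lattice functions of $k$ by $\Lambda\mapsto g\Lambda$, one finds $g.(\tf{p}_k^{[s-y]+})_s=(\tf{p}_k^{[s-y-\nu(g)]+})_s$, so on $\bbR$ the action is $y\mapsto y+\nu(g)$. The same computation using $incl(g)(v_1)=v_1g,\ incl(g)(v_2)=v_2g^{-1}$ shows the action on the self-dual side is also $y\mapsto y+\nu(g)$. So on both $\bbR$'s the $\bGm(k)$-action is translation by $\nu(g)$, with image $\nu(k^\times)=\bbZ$.

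Now let $j:\bbR\to\bbR$ be affine, hence $j(y)=ay+b$ for some $a,b\in\bbR$. The $\bGm(k)$-equivariance $j(y+\nu(g))=j(y)+\nu(g)$ forces $a\nu(g)=\nu(g)$ for every $g\in k^\times$; taking $g$ a uniformiser gives $a=1$, so $j$ is the translation $y\mapsto y+b$. Conversely, any translation $j(y)=y+b$ trivially commutes with the translations $y\mapsto y+\nu(g)$, hence is $\bGm(k)$-equivariant. Unwinding the identification, the translation by $b$ corresponds to the map of lattice functions $(\tf{p}_k^{[s-y]+}v_1\oplus\tf{p}_k^{[s+y]+}v_2)_s\mapsto (\tf{p}_k^{[s-y-b]+}v_1\oplus\tf{p}_k^{[s+y+b]+}v_2)_s$, which commutes with scaling by $g\in k^\times$, giving the second assertion. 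No step is really an obstacle here; the only point to watch is to verify, in the case $\beta\neq 0$, that the centraliser actually reduces to the torus $\bGm$, which is immediate from the matrix calculation in remark \ref{remCalcisO2}.
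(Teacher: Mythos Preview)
Your argument is correct and follows essentially the same approach as the paper: both reduce to showing that an affine self-map of $\bbR$ satisfying $j(y+1)=j(y)+1$ (coming from equivariance under a uniformiser) must have slope $1$, hence is a translation. You spell out the identifications and the converse direction more explicitly than the paper does, but the core idea is identical.
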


\begin{proof}
The linear part of an affine map $j$ on $\bbR$ must be 
the identity if $j$ satisfies
\[j(s+1)=j(s)+1\]
for all $s\in \bbR.$ Thus such a $j$ must be a translation. 
\end{proof}

\begin{remark}\label{remUniqO2isEquivTranslation}
The identity is the only $\bO_{2,k}^{is}(k)$-equivariant translation of $\bbR.$
\end{remark}


\section{\texorpdfstring{CLF-maps in the unitary case}{CLF maps in the unitary case}}\label{secTheUnitaryCase}

We use the notation of subsection \ref{subsecForUh}.

\begin{convention}\label{conbGnotO2is}
We omit the case where $\bG$ is $k_0$-isomorphic to $\bO_{2,k_0}^{is},$ i.e. the case  considered in section \ref{secIsotropicO2}. Therefore $\bG$ has no proper enlarged building over $k_0.$
\end{convention}

\begin{convention}\label{convFixedBuildingModels}
We work with the models of the buildings in terms of lattice functions and square lattice functions and we have thus fixed isomorphisms of the form given in theorem \ref{thmBLI1.4} and theorem \ref{thmBS4.2}. 
\end{convention}

\begin{theorem}\label{thmExistence}
There is an injective, affine and $\bH(k_0)$-equivariant CLF-map
\[j:\ \Building(\bH,k_0)\ra\building(\bG,k_0)\]
whose image in terms of lattice functions is the set of self-dual $o_E$-$o_D$-lattice functions of $V.$
\end{theorem}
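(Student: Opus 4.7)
My plan is to exploit the product decomposition $\bH\cong\prod_{i\in J_{un}}\bH_i\times\prod_{i\in J_+}\bH_i^{GL}$ recorded in subsection \ref{subsecForUh}, build a CLF-map on each factor using material already established, and then glue by orthogonal direct sum. Correspondingly $V$ splits as $V_{un}\oplus V_{GL}=\bigoplus_{i\in J_{un}}V_i\oplus\bigoplus_{i\in J_+}(V_i\oplus V_{-i})$ and $h$ respects this splitting since $h(V_i,V_j)=0$ unless $j=-i$.

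\textbf{The GL-blocks.} For each $i\in J_+$, theorem \ref{thmBL02I.3.1II.4} (equivalently the generalisation in theorem \ref{thmGenThmBS10.3}) produces an affine, $\GL_{\tens{E_i}{k}{D}}(V_i)(k_0)$-equivariant CLF-map $\tilde j_i\colon\Building(\bGL_{\tens{E_i}{k}{D}}(V_i),E_i)\to\Lattone{o_D}{V_i}$, whose image is exactly the set of $o_{E_i}$-$o_D$-lattice functions of $V_i$ (corollary \ref{corImageOfTildejIsETimesFP}). I then apply proposition \ref{propAffEmb} with $W=V_i$, $W'=V_{-i}$ and hermitian form $h|_{(V_i\oplus V_{-i})\times(V_i\oplus V_{-i})}$ to obtain an affine, $\bH_i^{GL}(k_0)$-equivariant map
\[
\Lambda\mapsto\Lambda\oplus\Lambda^{\#,V_{-i}}\colon\Lattone{o_D}{V_i}\to\Lattone{h}{V_i\oplus V_{-i}},
\]
so the composite $j_i^{GL}:=\phi_i\circ\tilde j_i$ lands in self-dual $o_E$-$o_D$-lattice functions of $V_i\oplus V_{-i}$.

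\textbf{The unitary blocks.} For each $i\in J_{un}$, the group $\bH_i=\bU(\sigma|_{\End_{\tens{E_i}{k}{D}}(V_i)})$ is itself the unitary group of a hermitian datum over $(E_i)_0$ (with underlying skewfield $\Delta_i$ and module $W_i$ such that $\End_{\Delta_i}(W_i)=\End_{\tens{E_i}{k}{D}}(V_i)$). Theorem \ref{thmBS4.2} furnishes the unique affine $\bH_i((E_i)_0)$-equivariant bijection from $\Building(\bH_i,(E_i)_0)$ onto $\Lattone{h_i}{W_i}$, and this identifies canonically with the set of self-dual $o_{\tens{E_i}{k}{D}}$-lattice functions of $V_i$, i.e.\ the self-dual $o_{E_i}$-$o_D$-lattice functions of $V_i$. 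Call this map $j_i^{un}$. The exceptional case $\bG\cong\bO_{2,k_0}^{is}$ is excluded by convention \ref{conbGnotO2is}.

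\textbf{Glueing and verification.} Define $j$ on a point $x=((x_i)_{i\in J_{un}},(x_i)_{i\in J_+})$ by
\[
j(x):=\bigoplus_{i\in J_{un}}j_i^{un}(x_i)\;\oplus\;\bigoplus_{i\in J_+}j_i^{GL}(x_i).
\]
Affineness and $\bH(k_0)$-equivariance are immediate from the factor-wise statements together with remark \ref{remDSOLF}; self-duality of the output follows from remark \ref{remDSOSDLF}; the image is precisely the self-dual $o_E$-$o_D$-lattice functions, because such a lattice function necessarily splits along $(V_i)_i$ (it is fixed by each $1_i$), and the constructions above realise every admissible datum on each block. Injectivity comes from the fact that each $j_i^{un}$ is bijective and that the map $\Lambda\mapsto(\Lambda,\Lambda^{\#,V_{-i}})$ has $\Lambda$ as left inverse. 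The CLF condition is the main point: one must verify $\LF(j(x),\bG,k_0)\cap\Lie(\bH)(k_0)=\LF(x,\bH,k_0)$. Using proposition \ref{propCLFunderDS} iteratively this reduces to the block-wise CLF statement, which for the GL-blocks is built into theorem \ref{thmBL02I.3.1II.4} (and unaffected by passing from $\End_D(V_i)$ to its skew part via $\phi$, since $\Lie(\bH_i^{GL})(k_0)$ is embedded in $\End_D(V_i\oplus V_{-i})$ via $a\mapsto a\oplus(-a^{\sigma,V_{-i}})$ of definition \ref{defEmbGLUh}), and for the unitary blocks is automatic from theorem \ref{thmBS4.2} together with definition \ref{defLieAlgebraFiltrationUh}. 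The main obstacle I foresee is precisely this CLF verification on the GL-blocks: one must check that intersecting $\End(\tilde j_i(x_i)\oplus\tilde j_i(x_i)^{\#,V_{-i}})$ with the image of $di_{V_i,V_{-i}}$ equals $di_{V_i,V_{-i}}(\End(\tilde j_i(x_i)))$, which follows from remark \ref{remEndoDualLatt} applied level-by-level but deserves to be written out. (Torality of $j$ is deferred to chapter \ref{chapterTorality} and is not needed here.)
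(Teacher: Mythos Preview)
Your treatment of the GL-blocks and the gluing step matches the paper's lemmas \ref{lemExCaseJ+}, \ref{lemSplitting} and \ref{lemExOfACLFMap} essentially verbatim, so that part is fine.

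The genuine gap is in the unitary blocks. You write that theorem \ref{thmBS4.2} identifies $\Building(\bH_i,(E_i)_0)$ with $\Lattone{h_i}{W_i}$ and that ``this identifies canonically with the set of self-dual $o_{E_i}$-$o_D$-lattice functions of $V_i$''. But $W_i$ is a $\Delta_i$-vector space and $V_i$ is a $D$-vector space; they are only Morita-related, and there is no tautological passage from $o_{\Delta_i}$-lattice functions on $W_i$ to $o_D$-lattice functions on $V_i$. What actually provides this passage is the Broussous--Lemaire map $j^{E_i}\colon\building(\tibH_i,E_i)\to\building(\tibG_i,k)$ of theorem \ref{thmBLII.1.1}, and the whole point is that one must \emph{prove} that $j^{E_i}$ sends the self-dual locus $\Building(\bH_i,(E_i)_0)$ into the self-dual locus $\Building(\bG_i,k_0)$. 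This is the content of lemma \ref{lemCaseJ0} in the paper: one applies $\sigma$ to the CLF identity, obtains that $\LF(j^{E_i}(x),\tilde{\tf g}_i)^{\sigma}$ again satisfies the intersection property with $\tilde{\tf h}_i$, and then invokes the strong uniqueness theorem \ref{thmBS10.3} to conclude that the $\sigma$-twisted point equals $j^{E_i}(x)$, i.e.\ $j^{E_i}(x)$ is self-dual. Without this step your map $j_i^{un}$ is not defined, and consequently your CLF claim (``automatic from theorem \ref{thmBS4.2} together with definition \ref{defLieAlgebraFiltrationUh}'') is circular: definition \ref{defLieAlgebraFiltrationUh} only tells you how to intersect with the skew part \emph{once you already have an $o_D$-lattice function on $V_i$}.

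The same omission bites in your image argument: to see that \emph{every} self-dual $o_{E_i}$-$o_D$-lattice function on $V_i$ arises from some point of $\Building(\bH_i,(E_i)_0)$ one pulls it back along $j^{E_i}$ to a point of $\building(\tibH_i,E_i)$ and must then argue that this preimage is itself self-dual (lemma \ref{lemMapsWithImageETFP}, case 1.1). Your sentence ``the constructions above realise every admissible datum on each block'' skips exactly this.
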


We construct the map using the diagram
\[\begin{matrix}
\Building(\bH_{un},k_0)\times \Building(\bH_{GL},k_0) & \stackrel{\phi}{\ra} & \Building(\bG_{un},k_0)\times \Building(\bG_{GL},k_0)\\
\parallel & & \psi\da\\
\Building(\bH,k_0) & \stackrel{j}{\ra} & \building(\bG,k_0)\\
\end{matrix}\]

We have to construct $\phi$ and $\psi.$

\begin{lemma}\label{lemSplitting}
There is an injective, affine and $\bG_{un}(k_0)\times\bG_{GL}(k_0)$-equivariant CLF-map $\psi.$
\end{lemma}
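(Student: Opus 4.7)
The plan is to define $\psi$ on the lattice-function models (from theorem \ref{thmBS4.2}) via the direct-sum construction. Given $(y_{un},y_{GL})\in\Building(\bG_{un},k_0)\times\Building(\bG_{GL},k_0)$ with corresponding self-dual lattice functions $\Lambda_{un}\in\Lattone{h_{un}}{V_{un}}$ and $\Lambda_{GL}\in\Lattone{h_{GL}}{V_{GL}}$, set
$$\psi(y_{un},y_{GL}) := \Lambda_{un}\oplus\Lambda_{GL}.$$
The first step is to check that $V_{un}$ and $V_{GL}$ are $h$-orthogonal. Since $\sigma(1_j)=1_{-j}$ and $-i=i$ for $i\in J_{un}$ while $-J_+ = J_-$, both idempotents $1_{un}$ and $1_{GL}$ are $\sigma$-fixed, so for $v\in V_{un}$, $w\in V_{GL}$ one has $h(v,w)=h(1_{un}v,1_{GL}w)=h(v,\sigma(1_{un})1_{GL}w)=h(v,1_{un}1_{GL}w)=0$. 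Remark \ref{remDSOSDLF} then yields $\Lambda_{un}\oplus\Lambda_{GL}\in\Lattone{h}{V}$, so $\psi$ is well defined.

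Affineness follows immediately from remark \ref{remDSOLF}. For $\bG_{un}(k_0)\times\bG_{GL}(k_0)$-equivariance, let $\iota:\bG_{un}\times\bG_{GL}\hookrightarrow\bG$ be the natural inclusion coming from the above orthogonal decomposition; it acts block-diagonally on $V$, so
$$\psi(g_{un}y_{un},g_{GL}y_{GL}) = (g_{un}\Lambda_{un})\oplus(g_{GL}\Lambda_{GL}) = \iota(g_{un},g_{GL}).\psi(y_{un},y_{GL}).$$
Injectivity is clear, since intersecting $(\Lambda_{un}\oplus\Lambda_{GL})(t)$ with $V_{un}$ (respectively $V_{GL}$) recovers $\Lambda_{un}(t)$ (respectively $\Lambda_{GL}(t)$).

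The main point is the CLF property. Under $d\iota$ the Lie algebra $\Lie(\bG_{un}\times\bG_{GL})(k_0)=\Skew(\End_D(V_{un}),\sigma_{un})\oplus\Skew(\End_D(V_{GL}),\sigma_{GL})$ embeds as the block-diagonal subspace of $\Skew(A,\sigma)=\Lie(\bG)(k_0)$. Proposition \ref{propCLFunderDS} applied to $\Lambda_{un}\oplus\Lambda_{GL}$ gives
$$\End(\Lambda_{un}\oplus\Lambda_{GL})(t)\cap(\End_D(V_{un})\oplus\End_D(V_{GL}))=\End(\Lambda_{un})(t)\oplus\End(\Lambda_{GL})(t).$$
Since $\sigma$ restricts to $\sigma_{un}$ on $\End_D(V_{un})$ and to $\sigma_{GL}$ on $\End_D(V_{GL})$, intersecting both sides with $\Skew(A,\sigma)$ produces
$$\LF(\psi(y_{un},y_{GL}),\bG,k_0)(t)\cap d\iota(\Lie(\bG_{un}\times\bG_{GL})(k_0)) = \LF(y_{un},\bG_{un},k_0)(t)\oplus\LF(y_{GL},\bG_{GL},k_0)(t),$$
which is precisely the required CLF identity. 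The only nontrivial ingredient is thus proposition \ref{propCLFunderDS}; everything else is formal bookkeeping and the orthogonality of $V_{un}$ and $V_{GL}$ established at the outset.
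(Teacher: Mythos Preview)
Your proof is correct and follows essentially the same approach as the paper: define $\psi$ as the direct sum of self-dual lattice functions, use the orthogonality $h(V_{un},V_{GL})=0$ together with remark \ref{remDSOSDLF} for well-definedness, and deduce the CLF property from proposition \ref{propCLFunderDS} combined with the identity $\Skew(A,\sigma)\cap(\End_D(V_{un})\oplus\End_D(V_{GL}))=\Skew(\End_D(V_{un}),\sigma_{un})\oplus\Skew(\End_D(V_{GL}),\sigma_{GL})$. You simply spell out a few more details (the orthogonality computation, the recovery of the summands) that the paper leaves as ``obvious''.
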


\begin{proof}
We define $\psi$ to be the map
\[\Lattone{h_{un}}{V_{un}}\times\Lattone{h_{GL}}{V_{GL}}\ra \Lattone{h}{V}\]
given by
\[(\Lambda_{un},\Lambda_{GL})\mapsto \Lambda_{un}\oplus\Lambda_{GL}.\]
The map is welldefined by 2. of remark \ref{remDSOSDLF} because
$h(V_{un},V_{GL})=\{0\}.$ The affineness, the equivariance and the injectivity are obvious. The CLF-property follows from 
proposition \ref{propCLFunderDS} and 
\[\Skew(\End_D(V),\sigma_h)\cap (\End_D(V_{un})\oplus\End_D(V_{GL}))=\]
\[\Skew(\End_D(V_{un}),\sigma_{un})\oplus\Skew(\End_D(V_{GL}),\sigma_{GL}).\]
\end{proof}

\begin{lemma}\label{lemCaseJ0}
There is an injective, affine and $\bH_{un}(k_0)$-equivariant CLF-map
\[\phi_{un}:\ \Building(\bH_{un},k_0)\ra\Building(\bG_{un},k_0).\]
\end{lemma}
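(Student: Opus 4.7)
The plan is to reduce the construction to the factors of the decomposition of $\bH_{un}$ (coming from the decomposition $E = \prod_i E_i$) and then glue them by direct sum, exactly in the spirit of lemma \ref{lemSplitting}. More precisely, by subsection \ref{subsecForUh} we have
\[
\Building(\bH_{un},k_0)\ \cong\ \prod_{i\in J_{un}}\Building\bigl(\bU(\ti{h}_i),(E_i)_0\bigr),
\]
where $\ti{h}_i$ denotes the $\tens{E_i}{k}{D}$-hermitian form on $V_i$ whose adjoint involution is the restriction of $\sigma$ to $\End_{\tens{E_i}{k}{D}}(V_i)$. Under the identifications of section \ref{secBruhatTitsBuildingOfUh}, the $i$-th factor is modelled by $\Lattone{\ti{h}_i}{V_i}$, the set of $\ti{h}_i$-self-dual $o_{\tens{E_i}{k}{D}}$-lattice functions of $V_i$. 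Our target, on the other hand, is $\Lattone{h_{un}}{V_{un}}$, and the orthogonal decomposition $h_{un}=\oplus_{i\in J_{un}}h_i$ together with remark \ref{remDSOSDLF} allows us to produce elements of $\Lattone{h_{un}}{V_{un}}$ as direct sums of elements of the $\Lattone{h_i}{V_i}$.

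Therefore the main task reduces, for each $i\in J_{un}$, to the construction of an injective, affine, $\bU(\ti{h}_i)((E_i)_0)$-equivariant CLF-map
\[
\phi_{un,i}:\ \Lattone{\ti{h}_i}{V_i}\ \lra\ \Lattone{h_i}{V_i}.
\]
The map is obtained by simply forgetting the $o_{\tens{E_i}{k}{D}}$-refinement of the lattice function and retaining only its structure as an $o_D$-lattice function; equivalently, it is the restriction to the self-dual parts of the affine, equivariant CLF-map
\[
\ti{j}_i:\ \Building\bigl(\bGL_{\tens{E_i}{k}{D}}(V_i),(E_i)_0\bigr)\ \lra\ \Building(\bGL_D(V_i),k)
\]
provided by theorem \ref{thmBL02I.3.1II.4}. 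Injectivity is immediate since the forgetful map on lattice functions is injective, affineness is inherited from $\ti{j}_i$, and equivariance with respect to $\bU(\ti{h}_i)((E_i)_0)$ follows from the $\GL_{\tens{E_i}{k}{D}}(V_i)$-equivariance of $\ti{j}_i$ combined with the inclusion $\bU(\ti{h}_i)((E_i)_0)\subseteq \GL_{\tens{E_i}{k}{D}}(V_i)$. The CLF-property of $\phi_{un,i}$ at the level of $\Lie(\bU(\ti{h}_i))((E_i)_0)=\Skew(\End_{\tens{E_i}{k}{D}}(V_i),\sigma|)$ follows from the CLF-property of $\ti{j}_i$ by intersecting both sides of (\ref{eqCompLF}) with $\Skew(\End_D(V_i),\sigma_{h_i})$, using that the latter restricts on $\End_{\tens{E_i}{k}{D}}(V_i)$ to the skew-symmetric part for $\sigma|$.

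Once the factorwise maps are in place, I set
\[
\phi_{un}\bigl((\Lambda_i)_{i\in J_{un}}\bigr)\ :=\ \bigoplus_{i\in J_{un}}\phi_{un,i}(\Lambda_i).
\]
This lands in $\Lattone{h_{un}}{V_{un}}$ by remark \ref{remDSOSDLF}[2.], is affine and $\bH_{un}(k_0)$-equivariant by remark \ref{remDSOLF} together with the product decomposition of $\bH_{un}(k_0)$, and injective because the summands $V_i$ are a fixed orthogonal decomposition of $V_{un}$. The CLF-property with respect to the inclusion $\bH_{un}\hookrightarrow \bG_{un}$ follows from the definition of $\LF(\cdot,\bH_{un},k_0)$ in definition \ref{defLFForProductsInTheUnitaryCase} (which is, by construction, the direct sum of the factor filtrations) together with proposition \ref{propCLFunderDS}, applied to the decomposition $\End_D(V_{un})=\bigoplus_{i\in J_{un}}\End_D(V_i)$ (note $\End_D(V_i)\cdot\End_D(V_j)=0$ for $i\ne j$ since the $V_i$ are distinct components) and intersected with the skew-symmetric part.

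The main obstacle is the first half of step two: verifying that the forgetful map indeed sends $\ti{h}_i$-self-dual $o_{\tens{E_i}{k}{D}}$-lattice functions to $h_i$-self-dual $o_D$-lattice functions. The two hermitian forms share the same adjoint involution on $\End_{\tens{E_i}{k}{D}}(V_i)$, and by proposition \ref{propHermFInv} this pins down $\ti{h}_i$ up to scalar. The key computation consists in showing that the two duality operations $(\cdot)^{\#_{h_i}}$ and $(\cdot)^{\#_{\ti{h}_i}}$ agree on the subset of $o_{\tens{E_i}{k}{D}}$-lattice functions; equivalently, that an $o_{\tens{E_i}{k}{D}}$-lattice function is fixed by the involution $(\cdot)^\sigma$ of proposition \ref{propMapsInducedByInv} (coming from $h_i$) if and only if it is self-dual with respect to $\ti{h}_i$. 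This is a local calculation on a common splitting basis (provided by corollary \ref{corExistOfWittBasis} applied to $\ti{h}_i$), using lemma \ref{lemBarAlpha} and the compatibility of the valuations on $D$ and on $\tens{E_i}{k}{D}$ via tameness (granted since the residue characteristic is not two).
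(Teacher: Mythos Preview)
Your overall architecture matches the paper exactly: build $\phi_{un,i}$ for each $i\in J_{un}$ by restricting the Broussous--Lemaire map $j^{E_i}$ (equivalently your $\ti{j}_i$) to the self-dual locus, then take the direct sum. The affineness, equivariance, CLF and injectivity arguments you give for the direct sum step are the same as the paper's.

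The difference lies entirely in your ``main obstacle'' paragraph, i.e.\ in how you justify that $\ti{j}_i$ carries $\ti{h}_i$-self-dual lattice functions to $h_i$-self-dual ones. You propose a direct comparison of the two duality operations $( )^{\#_{h_i}}$ and $( )^{\#_{\ti{h}_i}}$ via a splitting-basis computation. This is not actually carried out, and as stated it is delicate: $\ti{h}_i$ is only determined up to a scalar in $E_i^{\times}$ (that is what proposition \ref{propHermFInv} gives, applied over $E_i$), and different choices shift $( )^{\#_{\ti{h}_i}}$ by a translation, so the bare claim that the two operations ``agree'' on $o_{\tens{E_i}{k}{D}}$-lattice functions is normalization-dependent. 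One can make your approach work, but it requires more than the sketch you give.

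The paper sidesteps this computation entirely. Working at the level of square lattice functions (where the scalar ambiguity in $\ti{h}_i$ disappears, since only $\sigma$ matters), it observes that for $x\in\Building(\bH_{un},k_0)$ the filtration $\LF(x,\tilde{\tf{h}}_{un})$ is $\sigma$-invariant; hence by the CLF property of $j^{E_{un}}$ both $\LF(j^{E_{un}}(x),\tilde{\tf{g}}_{un})$ and its $\sigma$-twist $\LF(j^{E_{un}}(x),\tilde{\tf{g}}_{un})^{\sigma}$ satisfy the same intersection identity with $\End_{\tens{E_{un}}{k}{D}}(V_{un})$. Now the uniqueness theorem \ref{thmBS10.3} forces the two points of $\building(\tibG_{un},k)$ to coincide, i.e.\ the square lattice function of $j^{E_{un}}(x)$ is $\sigma$-invariant, so $j^{E_{un}}(x)\in\Building(\bG_{un},k_0)$. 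This is shorter and avoids any choice of $\ti{h}_i$ or explicit basis computation; it is worth knowing, since the same trick (use CLF plus uniqueness to transport self-duality) recurs later in the paper.
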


\begin{proof}
\be 
\item At first we assume that $E_{un}$ is a field.
We use diagram (\ref{diagNormh}) and in terms of lattice functions we consider $\Building(\bH_{un},k_0)$  and $\Building(\bG_{un},k_0)$ as em\-bed\-ded in $\building(\tibH_{un},k)$ and $\building(\tibG_{un},k)$ respectively.
We have to prove that the image of $\Building(\bH_{un},k_0)$ under 
$j^{E_{un}}$ of theorem \ref{thmBLII.1.1} is a subset of $\Building(\bG_{un},k_0).$
For an element $x$ of $\Building(\bH_{un},k_0)$ we have
\[\LF(x,\Lie(\tibH_{un}),k)=\LF(j^{E_{un}}(x),\Lie(\tibG_{un}),k)\cap\End_{\tens{E_{un}}{k}{D}}(V_{un})\] by the CLF-property of $j^{E_{un}}.$
The left hand side is invariant under $\sigma,$ and we 
obtain 
\beq\label{eqComp}
\LF(x,\Lie(\tibH_{un}),k)=\LF(j^{E_{un}}(x),\Lie(\tibG_{un}),k)^\sigma\cap\End_{\tens{E_{un}}{k}{D}}(V_{un}).
 \eeq
By \ref{propMapsInducedByInv}[4.] there is a point $y$ of $\building(\tibG_{un},k)$ whose Lie algebra filtration is
\[\LF(j^{E_{un}}(x),\Lie(\tibG_{un}),k)^\sigma.\] 
By theorem \ref{thmBS10.3} the equation (\ref{eqComp}) implies $y=j^{E_{un}}(x),$ i.e. the Lie algebra filtration of $j^{E_{un}}(x)$ in 
$\End_D(V_{un})$ is self-dual and therefore 
$j^{E_{un}}(x)$ lies in $\Building(\bG_{un},k_0).$ We define \[\phi_{un}:=j^{E_{un}}|_{\Building(\bH_{un},k_0)}.\]
\item If $E_{un}$ is not necessarily a field we get for every $i\in J_{un}$ a map $\phi_{un,i}$ constructed above. The image of $\phi_{un,i}$ is a subset of $\Lattone{h|_{V_i\times V_i}}{V_i}$ and we define $\phi_{un}$ to be the direct sum of the maps $\phi_{un,i}.$ The assertion follows now from 2. of
remark \ref{remDSOSDLF}. 
\item We now prove the injectivity of $\phi_{un}.$ If two tupel of self-dual lattice functions $(\Lambda_i)$ and $(\Lambda'_i)$ are in the same fiber of $\phi_{un}$ we obtain
\[j^{E_i}([\Lambda_i])=j^{E_i}([\Lambda'_i])\] for all indexes $i.$ The injectivity of $j^{E_i}$
implies $[\Lambda_i]=[\Lambda'_i]$ and the self-duality implies $\Lambda_i=\Lambda'_i.$
\ee
\end{proof}

\begin{lemma}\label{lemExCaseJ+}
There is an injective, affine $\bH_{GL}(k_0)$-equivariant CLF-map
\[\phi_{GL}:\ \Building(\bH_{GL},k_0)\ra\Building(\bG_{GL},k_0).\]
\end{lemma}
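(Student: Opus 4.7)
Because $\sigma(1_i)=1_{-i}$ for every $i\in J_{GL}$, the decomposition $V_{GL}=V_+\oplus V_-$ splits $V_{GL}$ into a pair of complementary maximal totally isotropic subspaces for $h_{GL}$; this is exactly the situation of Proposition \ref{propAffEmb} with $W=V_+$ and $W'=V_-$. The identity $\beta_-=-\beta_+^{\sigma,V_-}$, forced by $\sigma_{GL}(\beta_{GL})=-\beta_{GL}$, ensures that the embedding $i_{V_+,V_-}$ of Definition \ref{defEmbGLUh} restricts to an isomorphism $\tibH_+\xrightarrow{\sim}\bH_{GL}$ of algebraic groups over $k_0$, with $\bH_{GL}$ viewed as the Weil restriction $\Res_{k|k_0}(\tibH_+)$ via the product decomposition of subsection \ref{subsecForUh}. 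Accordingly, $\Building(\bH_{GL},k_0)$ is canonically identified with $\Building(\tibH_+,k)$.

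I plan to build $\phi_{GL}$ as the composition
\[\Building(\bH_{GL},k_0)\cong\Building(\tibH_+,k)\xrightarrow{\tilde{j}_+}\Lattone{o_D}{V_+}\to\Lattone{h_{GL}}{V_{GL}}=\Building(\bG_{GL},k_0),\]
where $\tilde{j}_+$ is the affine, $\tibH_+(k)$-equivariant CLF-map provided by Theorem \ref{thmGenThmBS10.3}, applied to the simple datum underlying $\tibG_+$ together with the semisimple subalgebra $E_+$, and the final arrow is the map $\Lambda\mapsto\Lambda\oplus\Lambda^{\#,V_-}$ of Proposition \ref{propAffEmb}. Affineness is immediate because each factor is affine. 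Injectivity follows since $\Lambda$ is recovered from its $V_+$-summand and $\tilde{j}_+$ is bijective. Equivariance for $\bH_{GL}(k_0)=i_{V_+,V_-}(\tibH_+(k))$ combines the $\tibH_+(k)$-equivariance of $\tilde{j}_+$ with the identity $\phi(g\Lambda)=i_{V_+,V_-}(g)\phi(\Lambda)$ asserted in Proposition \ref{propAffEmb}.

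The nontrivial point is the CLF property. Fix $y\in\Building(\bH_{GL},k_0)$, let $y_+$ be its image in $\Building(\tibH_+,k)$, and set $\Lambda:=\tilde{j}_+(y_+)$, so that $\phi_{GL}(y)=\Lambda\oplus\Lambda^{\#,V_-}$. Proposition \ref{propCLFunderDS} gives
\[\End(\Lambda\oplus\Lambda^{\#,V_-})(t)\cap(\End_D(V_+)\oplus\End_D(V_-))=\End(\Lambda)(t)\oplus\End(\Lambda^{\#,V_-})(t).\]
Because $\Lambda\oplus\Lambda^{\#,V_-}$ is self-dual, Proposition \ref{propMapsInducedByInv}(4) makes the left-hand side stable under $\sigma_{GL}$; applied to $a\oplus 0$ with $a\in\End_D(V_+)$ this yields the equivalence $a\in\End(\Lambda)(t)\Leftrightarrow a^{\sigma,V_-}\in\End(\Lambda^{\#,V_-})(t)$. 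Since $\Lie(\bH_{GL})(k_0)=\{a\oplus(-a^{\sigma,V_-}):a\in\Lie(\tibH_+)(k)\}$ lies inside $\End_D(V_+)\oplus\End_D(V_-)$, intersecting the above identity with $\Lie(\bH_{GL})(k_0)$ collapses to
\[\LF(\phi_{GL}(y),\bG_{GL},k_0)(t)\cap\Lie(\bH_{GL})(k_0)=di_{V_+,V_-}\bigl(\End(\Lambda)(t)\cap\Lie(\tibH_+)(k)\bigr).\]
The CLF property of $\tilde{j}_+$ rewrites the bracket as $\LF(y_+,\tibH_+,k)(t)$, and by Definitions \ref{defLFInProduct} and \ref{defLFForProductsInTheUnitaryCase}, both of which compute the $J_+$-part factor by factor on the summands $\End_{\tens{E_i}{k}{D}}(V_i)$, its image under $di_{V_+,V_-}$ is exactly $\LF(y,\bH_{GL},k_0)(t)$. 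The main obstacle I expect is the self-duality equivalence in the middle step, which reduces to a direct application of Proposition \ref{propMapsInducedByInv}(4) together with the direct-sum splitting of the square lattice function; the rest is bookkeeping across the Weil restriction identification.
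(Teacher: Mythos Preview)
Your proof is correct and follows essentially the same route as the paper: identify $\Building(\bH_{GL},k_0)$ with $\Building(\tibH_+,k)$, apply the map from Theorem \ref{thmGenThmBS10.3} into $\Building(\tibG_+,k)=\Lattone{o_D}{V_+}$, and compose with the map $\Lambda\mapsto\Lambda\oplus\Lambda^{\#,V_-}$ of Proposition \ref{propAffEmb}. The paper dismisses the CLF property of the second factor as ``an easy calculation knowing that $\Lie(\tibG_+)(k)\hookrightarrow\Lie(\bG_{GL})(k_0)$ is given by $a\mapsto a\oplus(-a^{\sigma,V_-})$'', whereas you spell it out via Proposition \ref{propCLFunderDS} and the self-duality equivalence from Proposition \ref{propMapsInducedByInv}(4); this is exactly the intended computation.
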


\begin{proof}
In terms of lattice functions we have 
\[\Building(\bH_{GL},k_0)=\Building(\tibH_+,k)\]
 and we take a map
\[\phi_{GL,1}:\Building(\tibH_+,k)\ra\Building(\tibG_+,k)\]
constructed as in the proof of theorem \ref{thmGenThmBS10.3}. The map 
\[\phi_{GL,2}:\ \Building(\tibG_+,k)\ra\Building(\bG_{GL},k_0)\]
is obtained by proposition \ref{propAffEmb}. The CLF property is an easy calculation knowing that
\[\Lie(\tibG_+)(k)\hra\Lie(\bG_{GL})(k_0)\]
is given by
\[a\mapsto a\oplus (-a^{\sigma,V_-}).\]
$\phi_{GL,2}$ is injective, affine and $\tibG_+(k)$-equivariant.
We put 
\[\phi=\phi_{GL,2}\circ\phi_{GL,1}.\]
\end{proof}
\vspace{1em}

The combination of all lemmas provides the proof of part one of theorem \ref{thmExistence}.

\begin{lemma}\label{lemExOfACLFMap}
There is an injective, affine and $\bH(k_0)$-equivariant CLF-map
\[j:\ \Building(\bH,k_0)\ra\building(\bG,k_0)\]
\end{lemma}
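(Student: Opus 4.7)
The plan is to define $j$ as the composition of the maps produced by the three preceding lemmas, namely
\[ j := \psi \circ (\phi_{un}\times \phi_{GL}), \]
using the canonical identification
\[ \Building(\bH,k_0) = \Building(\bH_{un},k_0)\times\Building(\bH_{GL},k_0) \]
from the final remark of section~\ref{secBuildOfCentralisers}. The four properties (well-definedness, injectivity, affineness, equivariance, CLF) then follow by checking each is preserved under composition and under cartesian product.

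First I would observe that $\phi_{un}\times\phi_{GL}$ is an injective, affine map from $\Building(\bH_{un},k_0)\times\Building(\bH_{GL},k_0)$ to $\Building(\bG_{un},k_0)\times\Building(\bG_{GL},k_0)$, equivariant for the product group $\bH_{un}(k_0)\times \bH_{GL}(k_0)$; this is immediate from lemmas \ref{lemCaseJ0} and \ref{lemExCaseJ+}. Next, $\psi$ is injective, affine and $\bG_{un}(k_0)\times\bG_{GL}(k_0)$-equivariant by lemma \ref{lemSplitting}. Since $\bH(k_0)=\bH_{un}(k_0)\times\bH_{GL}(k_0)$ embeds into $\bG_{un}(k_0)\times\bG_{GL}(k_0)$ through the natural product inclusion, the composition $j$ inherits $\bH(k_0)$-equivariance. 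Injectivity and affineness of $j$ follow likewise from injectivity and affineness of each factor.

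The only step requiring a small calculation is the CLF property. For a point $x=(x_{un},x_{GL})\in\Building(\bH,k_0)$ put $y_{un}:=\phi_{un}(x_{un})$, $y_{GL}:=\phi_{GL}(x_{GL})$ and $z:=\psi(y_{un},y_{GL})=j(x)$. Using convention \ref{convFixedBuildingModels} and definition \ref{defLFForProductsInTheUnitaryCase}, we have the evident decomposition
\[ \Lie(\bH)(k_0) = \Lie(\bH_{un})(k_0)\oplus\Lie(\bH_{GL})(k_0) \subseteq \Lie(\bG_{un})(k_0)\oplus\Lie(\bG_{GL})(k_0). \]
Intersecting the filtration $\LF(z,\bG,k_0)$ first with the larger summand and applying the CLF-property of $\psi$ gives
\[ \LF(z,\bG,k_0)\cap(\Lie(\bG_{un})(k_0)\oplus\Lie(\bG_{GL})(k_0)) = \LF(y_{un},\bG_{un},k_0)\oplus\LF(y_{GL},\bG_{GL},k_0). \]
Intersecting this equality further with $\Lie(\bH)(k_0)$ and applying the CLF-property of $\phi_{un}$ and $\phi_{GL}$ to each summand yields
\[ \LF(z,\bG,k_0)\cap\Lie(\bH)(k_0) = \LF(x_{un},\bH_{un},k_0)\oplus\LF(x_{GL},\bH_{GL},k_0) = \LF(x,\bH,k_0), \]
which is exactly the CLF-condition for $j$.

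I expect no serious obstacle here; the content has been absorbed into lemmas~\ref{lemSplitting}, \ref{lemCaseJ0} and \ref{lemExCaseJ+}, and the remaining task is purely formal bookkeeping. The only place where a little care is needed is making sure the two intersections in the CLF-calculation can be performed in either order and that the $\oplus$ distributes over the intersection with $\Lie(\bH_{un})\oplus\Lie(\bH_{GL})$, which it does because the latter is a direct sum of subspaces of the respective factors.
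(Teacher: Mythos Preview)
Your proposal is correct and takes exactly the same approach as the paper: the paper's proof consists of the single line defining $j:=\psi\circ(\phi_{un}\times\phi_{GL})$, leaving all verifications implicit. Your version simply spells out those verifications, including the CLF calculation, which is sound and indeed the intended content.
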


\begin{proof}
We define 
\[j:=\psi\circ (\phi_{un}\times \phi_{GL}),\]
where $\phi_{GL}:=\phi_{GL,2}\circ\phi_{GL,1}.$
\end{proof}
\vspace{1em}

In the proof above many choices have been made. The following lemma finishes the proof of theorem \ref{thmExistence}. 

\begin{lemma}\label{lemMapsWithImageETFP}
Let $j$ be a map from $\Building(\bH,k_0)$ to $\building(\bG,k_0)$
constructed as in the proof of \ref{lemExOfACLFMap}.
The image of $j$ is the set of self-dual $o_E$-$o_D$-lattice functions.
\end{lemma}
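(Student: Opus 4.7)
The plan is to prove both inclusions by splitting every lattice function along $V=\bigoplus_{i\in J}V_i$ and exploiting the pairing rule $h(V_i,V_j)=0$ unless $j=-i$, which makes $V_{un}\perp V_{GL}$ and makes $V_+,V_-$ a pair of maximal totally isotropic subspaces of $V_{GL}$.

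For the forward inclusion, write
\[j(x_{un},x_{GL})=\phi_{un}(x_{un})\oplus\phi_{GL,1}(x_{GL})\oplus\phi_{GL,1}(x_{GL})^{\#,V_-}.\]
By construction $\phi_{un}(x_{un})=\bigoplus_{i\in J_{un}}j^{E_i}(x_{un,i})$, where each summand is self-dual by Lemma \ref{lemCaseJ0} and an $o_{E_i}$-$o_D$-lattice function on $V_i$ by Corollary \ref{corImageOfTildejIsETimesFP}. Theorem \ref{thmGenThmBS10.3} gives $\phi_{GL,1}(x_{GL})=\bigoplus_{i\in J_+}\Lambda_i$ with each $\Lambda_i\in\Lattone{o_{E_i}-o_D}{V_i}$. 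The pairing rule immediately forces
\[\Bigl(\bigoplus_{i\in J_+}\Lambda_i\Bigr)^{\#,V_-}=\bigoplus_{i\in J_+}\Lambda_i^{\#,V_{-i}},\]
and the essential arithmetic verification is that each $\Lambda_i^{\#,V_{-i}}$ is an $o_{E_{-i}}$-lattice function on $V_{-i}$. For $e\in o_{E_{-i}}$ and $w\in\Lambda_i^{\#,V_{-i}}(t)$, the identity $h(ew,v)=h(w,\sigma(e)v)$ combined with $\sigma(E_{-i})=E_i$, $\nu\circ\sigma=\nu$, and the $o_{E_i}$-stability of $\Lambda_i$ gives $\sigma(e)\Lambda_i((-t-\nu(e))+)\subseteq\Lambda_i((-t)+)$, hence $ew\in\Lambda_i^{\#,V_{-i}}(t+\nu(e))$. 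Self-duality of the total assembly then follows from Proposition \ref{propIDOLF}(2) and Remark \ref{remDSOSDLF}(2).

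For the reverse inclusion, let $\Lambda=\bigoplus_{i\in J}\Lambda_i$ be self-dual and $o_E$-$o_D$. A direct calculation of $\Lambda^\#$ using the pairing rule gives the $V_j$-component equal to $\Lambda_{-j}^{\#,V_j}$; self-duality then forces $\Lambda_j=\Lambda_{-j}^{\#,V_j}$ for all $j\in J$. Setting $\Lambda_{un}:=\bigoplus_{i\in J_{un}}\Lambda_i$ (automatically self-dual on $V_{un}$) and $\Lambda_+:=\bigoplus_{i\in J_+}\Lambda_i$, we have $\Lambda=\Lambda_{un}\oplus\Lambda_+\oplus\Lambda_+^{\#,V_-}$. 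Theorem \ref{thmGenThmBS10.3} provides a unique $x_{GL}\in\Building(\bH_{GL},k_0)$ with $\phi_{GL,1}(x_{GL})=\Lambda_+$. For each $i\in J_{un}$ take $x_{un,i}\in\building(\tibH_{un,i},k)$ to be the $j^{E_i}$-preimage of $\Lambda_i$; by CLF,
\[\LF(x_{un,i},\tibH_{un,i},k)=\LF(\Lambda_i,\tibG_{un,i},k)\cap\End_{\tens{E_i}{k}{D}}(V_i).\]
The right hand side is $\sigma$-stable, since $\LF(\Lambda_i,\tibG_{un,i},k)$ is (Proposition \ref{propMapsInducedByInv}(4) applied to the self-dual $\Lambda_i$) and $\End_{\tens{E_i}{k}{D}}(V_i)$ is ($\sigma(E_i)=E_i$ for $i\in J_{un}$). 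Applying Proposition \ref{propMapsInducedByInv}(4) again on the $\tibH_{un,i}$-side shows the class of $x_{un,i}$ is self-dual, so $x_{un,i}\in\Building(\bH_{un,i},k_0)$. Hence $j(x_{un},x_{GL})=\Lambda$.

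The main obstacle is the bookkeeping: juggling the partition $J=J_{un}\sqcup J_+\sqcup J_-$, the two dualities $(\cdot)^\#$ in $V$ versus $(\cdot)^{\#,V_{-i}}$ as a pairing of $V_i$ with $V_{-i}$, and the translation between self-duality of a point in $\building(\tibH_{un,i},k)$ and self-duality of its Lie algebra filtration via Proposition \ref{propMapsInducedByInv}. The arithmetic content reduces to the two short computations above: the pairing rule splits $\Lambda^\#$ into the claimed orthogonally paired pieces, and $o_{E_i}$-stability transports across $\sigma$ to $o_{E_{-i}}$-stability of $\Lambda_i^{\#,V_{-i}}$.
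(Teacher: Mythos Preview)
Your proof is correct and uses the same key ingredients as the paper: for $i\in J_{un}$ you invoke the surjectivity of $j^{E_i}$ together with the self-duality transfer via the $\sigma$-stability of the Lie algebra filtration (this is exactly the paper's argument in its Case~1.1), and for $i\in J_+$ you use the pairing $\Lambda\mapsto\Lambda\oplus\Lambda^{\#,V_-}$ from Proposition~\ref{propAffEmb} (the paper's Case~1.2). The only difference is organizational: the paper first reduces to the situation $|J_{un+}|=1$ and then appeals to known bijections, whereas you work directly in full generality and separate the two inclusions; along the way you spell out the verification that $\Lambda_i^{\#,V_{-i}}$ is an $o_{E_{-i}}$-lattice function, a detail the paper asserts but does not check explicitly.
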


\begin{proof}
\textbf{Case 1:} At first we consider the case where $J_{un+}$ has exactly one element. 

\textbf{Case 1.1:} If the index lies in $J_{un}$ the map $j$ is a restriction of $j^{E}$ 
 whose image is the set of classes of $o_E$-$o_D$-lattice functions by  
\ref{thmBLII.1.1}. 
Let $y$ be an element of $\building(\bG,k_0)$ whose self-dual lattice function is an $o_E$-$o_D$-lattice function. By the surjectivity of
$j^{E}$ there is an $x\in\Building(\tibH,k)$ such that
$j^{E}(x)=y$ and therefore 
\[\LF(y,\End_D(V))\cap\End_{\tens{E}{k}{D}}(V)=\LF(x,\End_{\tens{E}{k}{D}}(V)).\] 
The self-duality of $\LF(y,\End_D(V))$ implies the self-duality of the Lie algebra filtration 
of $x,$ i.e. $x$ has to be an element of $\Building(\bH,k_0).$

\textbf{Case 1.2:} If the unique element of $J_{un+}$ is an element of 
$J_+,$ then the map $j$ is a composition of 
\[(\Lattone{o_D}{V_i})^{E_i^{\times}}\liso (\Lattone{h}{V})\cap(\Lattone{o_E-o_D}{V})\]
from proposition \ref{propAffEmb}
and 
\[\tilde{j}:\ \Building(\bH,k_0)=\Building((\tibG_i)_{E_i},k)\liso\Building(\tibG_i,k)^{E_i^\times}.\]
Thus the image of $j$ is the set of self-dual $o_E$-$o_D$-lattice functions of $V.$

\textbf{Case 2:} In the general case $j$ is the direct sum of maps of the kind of the two cases 1.1 and 1.2 which finishes the proof.
\end{proof}


\chapter{Uniqueness results}\label{chUniquenessResults}

\begin{notation}
We take the notation and assumptions of subsection 
\ref{subsecForUh} and convention \ref{conbGnotO2is}
\end{notation}

In theorem \ref{thmExistence}
we proved the existence of an affine and $\bH(k_0)$-equivariant 
CLF-map \[\Building(\bH,k_0)\ra\building(\bG,k_0).\]
In this chapter we observe in which sense the CLF property
determines such a map if we forget the affineness or weaken the equivariance. It
will force uniqueness if $\bH$ has no factor $k_0$-isomorphic to
$\bO_{2,k_0}^{is}$ and $J_{GL}$ is empty.
It will force a uniqueness up to translations of $\Building(\bH,k_0)$ in
general. For technical reasons we introduce further notation.

\begin{notation}
For $i\in J_{un+}$ we put
\bi
\item $h_i:=h|_{(V_i+V_{-i})\times(V_i+V_{-i})},$\idxS{$h_i$}
\item $\bG_i:=\bU(h_i)$ and
$\bH_i:=(\bG_i)_{E_i+E_{-i}},$\idxS{$\bG_i$}\idxS{$\bH_i$}
\item $\tibG_i:=\bGL_D(V_i)$ and
$\tibH_i:=(\tibG_i)_{E_i}.$\idxS{$\tibG_i$}\idxS{$\tibH_i$}
\ei
\end{notation}

To shorten the notation we write  
\bi
\item $\tilde{\tf{g}}$ for $\Lie(\tibG)(k),$ \idxS{$\tilde{\tf{g}}$}
\item $\tf{g}$ for $\Lie(\bG)(k_0),$ \idxS{$\tf{g}$}
\item $\tilde{\tf{h}}$ for $\Lie(\tibH)(k)$ and  \idxS{$\tilde{\tf{h}}$}
\item $\tf{h}$ for $\Lie(\bH)(k_0).$\idxS{$\tf{h}$}
\ei
For any index $i$ we denote
\bi
\item $\Lie(\tibG_i)(k)$ by $\tilde{\tf{g}}_i,$\idxS{$\tilde{\tf{g}}_i$}
\item $\Lie(\bG_i)(k_0)$ by $\tf{g}_i,$\idxS{$\tf{g}_i$}
\item $\Lie(\tibH_i)(k)$ by $\tilde{\tf{h}}_i$ and \idxS{$\tilde{\tf{h}}_i$}
\item $\Lie(\bH_i)(k_0)$ by $\tf{h}_i.$\idxS{$\tf{h}_i$}
\ei

We have to remark the following correspondence. For $i\in J_+$ we have 
\[(\bH_i)(k_0)=\{a+(a^{-1})^{\sigma}|\
a\in\Aut_{\tens{E_i}{k}{D}}(V_i)\}\cong\Aut_{\tens{E_i}{k}{D}}(V_i)=\tibH_i
(k),\]
\[\tf{h}_i=\{a-(a)^{\sigma}|\
a\in\End_{\tens{E_i}{k}{D}}(V_i)\}\cong\End_{\tens{E_i}{k}{D}}(V_i)=\tilde{\tf{h
}}_i\]
thus a Lie algebra filtration in $\tilde{\tf{h}}_i$ corresponds to a Lie algebra
filtration in $\tf{h}_i.$
The Lie algebra filtrations are defined in  \ref{defLieAlgebraFiltrationGLDV},
\ref{defLieAlgebraFiltrationUh} and \ref{defLFForProductsInTheUnitaryCase}.

\section{Factorisation}

\begin{lemma}\label{lem1betai0}
There is at most one index $i\in J$ such that $\beta_i=0$
and if such an index exists it has to be in $J_{un}.$
\end{lemma}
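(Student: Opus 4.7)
The plan is to exploit the description of $E = k[\beta]$ as $k[x]/(m_\beta)$, where $m_\beta$ is the minimal polynomial of $\beta$ over $k$. Since $\beta$ is separable, $m_\beta$ factors as a product $m_\beta = \prod_{i \in J} m_i$ of pairwise coprime monic irreducible polynomials, and the Chinese Remainder Theorem gives the decomposition $E = \prod_{i \in J} E_i$ with $E_i \cong k[x]/(m_i)$. Under this identification $\beta_i \in E_i$ corresponds to the image of $x$, hence is a root of $m_i$, and the primitive idempotent $1_i$ is a polynomial in $\beta$ that cuts out the $i$-th factor.

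For the first assertion, suppose $\beta_i = 0$. Then $m_i(0) = 0$, and since $m_i$ is monic irreducible, this forces $m_i(x) = x$. If two distinct indices $i \neq j$ satisfied $\beta_i = \beta_j = 0$, then $m_i$ and $m_j$ would both equal $x$, contradicting the fact that the irreducible factors of $m_\beta$ are pairwise distinct. Hence at most one such index exists.

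For the second assertion, note that the hypothesis $\beta \in \Lie(\bG)(k_0)$ means $\sigma(\beta) = -\beta$. Since $E = k[\beta] = k[-\beta]$, the anti-multiplicative involution $\sigma$ restricts to an involution of the commutative $k$-algebra $E$, hence permutes its primitive idempotents; by definition of the index $-i$, we have $\sigma(1_i) = 1_{-i}$. Applying $\sigma$ to $\beta_i = 1_i \beta$ and using anti-multiplicativity yields
\[
\sigma(\beta_i) \;=\; \sigma(\beta)\,\sigma(1_i) \;=\; -\beta \cdot 1_{-i} \;=\; -\beta_{-i}.
\]
Thus $\beta_i = 0$ implies $\beta_{-i} = 0$. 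If $i \notin J_{un}$, then $-i \neq i$ and we would have two distinct indices annihilated, contradicting the uniqueness just established. Therefore $i = -i$, i.e.\ $\sigma(1_i) = 1_i$, which is precisely the condition $i \in J_{un}$.

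The argument is essentially formal once the correspondence between the idempotent decomposition of $E$ and the factorization of $m_\beta$ is set up; the only point requiring care is to invoke anti-multiplicativity correctly when transporting $\sigma$ across the product $1_i \beta$, so that the index swap $i \mapsto -i$ appears cleanly. I do not anticipate any substantive obstacle.
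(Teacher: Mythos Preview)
Your proof is correct and takes essentially the same approach as the paper, with the second assertion argued identically via $\sigma(\beta_i)=-\beta_{-i}$. For the first assertion the paper writes $1_i=P(\beta)$ for some $P\in k[X]$ and derives the contradiction $P(0)=1$ (from $1_i\cdot 1_i=1_i$) and $P(0)=0$ (from $1_i\cdot 1_j=0$) directly, whereas you phrase the same idea through the factorization of the minimal polynomial; both are equally valid.
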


\begin{proof}
Assume that there are two different indexes $i,j\in J$ such that $\beta_i$ and
$\beta_j$ are zero. We take a polynomial $P$ with coefficients in $k$ such that
$1_i=P(\beta).$
We obtain firstly
\[1_i=1_i1_i=1_iP(\beta)=1_iP(0),\]
i.e.
\[(1-P(0))1_i=0,\]
and secondly
\[0=1_i1_j=P(\beta)1_j=P(0)1_j.\]
The element $P(0)$ lies in $k$ and therefore 
it must be $1$ by first and $0$ by the second equality which gives a
contradiction. 

If there is one index with $\beta_i=0$ then $-\beta_{-i}=\beta_i^\sigma$ is zero
too, and 
by the above argument $i$ equals $-i$ and thus $i\in J_{un}.$ 
\end{proof}
\vspace{1em}

For this section, \textbf{let $y\in\building(\bG,k_0)$ be an extension
of $x\in\Building(\bH,k_0)$}, i.e.
\beq\label{eqIntersectionProperty}
\LF(x,\tf{h})=\LF(y,\tf{g})\cap \tf{h}.
\eeq

We want to show that $\LF(y,\tf{g})$ is a direct sum of Lie algebra filtrations
of $\tf{g}_i$ where $i$ runs over $J.$
The element $x$ is a vector of elements $x_i\in\Building(\bH_i,k_0)$ for $i\in
J_{un+}.$

\begin{lemma}\label{lemId}
The idempotents $1_i$ are elements of $\LF(y,\tilde{\tf{g}})(0).$ 
\end{lemma}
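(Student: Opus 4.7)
The plan is to prove $1_i\in\LF(y,\tilde{\tf{g}})(0)=\End(\Lambda_y)(0)$ by treating $i\in J_+\cup J_-$ and $i\in J_{un}$ separately and, in each case, manufacturing $1_i$ inside the ring $\LF(y,\tilde{\tf{g}})(0)$ from an element already transported from $\LF(x,\tf{h})(0)$ via the CLF hypothesis (\ref{eqIntersectionProperty}) and the inclusion $\LF(y,\tf{g})(0)\subseteq\LF(y,\tilde{\tf{g}})(0)$ of definition \ref{defLieAlgebraFiltrationUh}. I will exploit that $\LF(y,\tilde{\tf{g}})(0)$ is an $o_k$-subring of $\tilde{\tf{g}}$ containing $\tfrac12$, since the residue characteristic is not two.

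For $i\in J_+$ I would use $\eta_i:=1_i-1_{-i}$. Because $\sigma(1_i)=1_{-i}$ this element is $\sigma$-skew, and since it lies in $E=k[\beta]$ it commutes with $\beta$; hence $\eta_i\in\tf{h}$. Under the decomposition of $\tf{h}$ behind definition \ref{defLFForProductsInTheUnitaryCase}, together with the identification $a\mapsto a-a^{\sigma,V_{-i}}$ from definition \ref{defEmbGLUh}, the element $\eta_i$ corresponds to the unit $\id_{V_i}$ of $\tilde{\tf{h}}_i$; the identity preserves any lattice function, so $\eta_i$ lies in $\LF(x_i,\tilde{\tf{h}}_i)(0)\subseteq\LF(x,\tf{h})(0)$ and therefore in $\LF(y,\tilde{\tf{g}})(0)$ by CLF. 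Using $1_i 1_{-i}=0$ one computes $\eta_i^2=1_i+1_{-i}\in\LF(y,\tilde{\tf{g}})(0)$, whence $1_i=\tfrac12(\eta_i^2+\eta_i)$ and $1_{-i}=\tfrac12(\eta_i^2-\eta_i)$ both lie in $\LF(y,\tilde{\tf{g}})(0)$.

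For $i\in J_{un}$ with $\beta_i\ne 0$, the restriction $\sigma_i$ of $\sigma$ to $\End_D(V_i)$ restricts further to an involution of the field $E_i=k[\beta_i]$ that sends $\beta_i$ to $-\beta_i$ and is therefore non-trivial; let $(E_i)_0$ denote its fixed subfield, so that $[E_i:(E_i)_0]=2$. I would pick $\alpha_i\in E_i^{-}\cap o_{E_i}^{\times}$, which is possible because $E_i^{-}$ is a free rank-one $(E_i)_0$-module and can be rescaled by a suitable power of a uniformiser of $(E_i)_0$. Since $\alpha_i$ is $\sigma_i$-skew and $\alpha_i\in o_{E_i}$ preserves the $o_{E_i}$-stable self-dual lattice function $\Lambda_{x_i}$ provided by the model of section \ref{secBruhatTitsBuildingOfUh}, we obtain $\alpha_i\in\LF(x_i,\tf{h}_i)(0)\subseteq\LF(x,\tf{h})(0)$ and hence $\alpha_i\in\LF(y,\tilde{\tf{g}})(0)$. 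Integrality and unit-ness of $\alpha_i$ make its minimal polynomial over $k$ an element of $o_k[X]$ whose constant term equals $\pm N_{E_i/k}(\alpha_i)\in o_k^{\times}$, so $\alpha_i^{-1}=R(\alpha_i)$ for some $R\in o_k[X]$. Setting $P(X):=XR(X)$ and computing componentwise in $E=\prod_j E_j$, the $j$-th component of $P(\alpha_i)$ vanishes for $j\ne i$ because $P$ has no constant term and $\alpha_i$ acts as $0$ on $V_j$, while the $i$-th component is $\alpha_i\alpha_i^{-1}=1$; thus $P(\alpha_i)=1_i\in o_k[\alpha_i]\subseteq\LF(y,\tilde{\tf{g}})(0)$.

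It remains to handle the at most one exceptional index $i_0\in J_{un}$ with $\beta_{i_0}=0$ allowed by lemma \ref{lem1betai0}: from $1\in\LF(y,\tilde{\tf{g}})(0)$ and the cases already treated, $1_{i_0}=1-\sum_{i\ne i_0}1_i$ also lies in $\LF(y,\tilde{\tf{g}})(0)$. The point I expect to require the most care is the bookkeeping in the $J_+$-case, namely verifying that the $J_+$-component of $\LF(x,\tf{h})$ identifies with the full square lattice function $\LF(x_i,\tilde{\tf{h}}_i)$ under $a\mapsto a-a^{\sigma,V_{-i}}$, so that $\id_{V_i}$ really pulls back to $\eta_i\in\LF(x,\tf{h})(0)$; once this identification is granted, the rest of the argument is purely formal manipulation inside the ring $\LF(y,\tilde{\tf{g}})(0)$.
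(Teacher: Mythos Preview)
Your treatment of $i\in J_+$ and of the possible index $i_0$ with $\beta_{i_0}=0$ matches the paper's argument. The problem lies in your handling of $i\in J_{un}$ with $\beta_i\neq 0$: the element $\alpha_i\in E_i^-\cap o_{E_i}^\times$ you want to pick need not exist. If the quadratic extension $E_i\mid (E_i)_0$ is \emph{ramified}, then $E_i^-=(E_i)_0\cdot\pi_{E_i}$ for a uniformiser $\pi_{E_i}$, so every nonzero skew element has valuation lying in $\nu(\pi_{E_i})+\nu((E_i)_0^\times)$, which never hits $0$; rescaling by powers of a uniformiser of $(E_i)_0$ cannot fix this. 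This situation genuinely occurs: for instance with $k=k_0$, $D=k$, $\bG=\bSp_2$, and $\beta=\begin{pmatrix}0&1\\ \pi_k&0\end{pmatrix}$ one gets $E=k[\beta]\cong k(\sqrt{\pi_k})$ with $(E)_0=k$, a ramified extension.

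The paper circumvents this by working with $\beta_i$ itself at two different filtration levels rather than seeking a skew unit. Since $\beta_i\in E_i^\times$ is skew and central in $\tilde{\tf{h}}_i$, multiplication by $\beta_i$ shifts the filtration by $\nu(\beta_i)$, and one checks
\[
\LF(x_i,\tilde{\tf{h}}_i)(0)=\LF(x_i,\tf{h}_i)(0)+\beta_i\,\LF(x_i,\tf{h}_i)(-\nu(\beta_i))\subseteq \LF(y,\tf{g})(0)+\LF(y,\tf{g})(\nu(\beta_i))\cdot\LF(y,\tf{g})(-\nu(\beta_i))\subseteq\LF(y,\tilde{\tf{g}})(0),
\]
which contains $1_i$. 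A minimal repair of your approach in the same spirit is to note that both $\beta_i$ and $\beta_i^{-1}$ are skew, hence lie in $\LF(y,\tf{g})(\nu(\beta_i))$ and $\LF(y,\tf{g})(-\nu(\beta_i))$ respectively via (\ref{eqIntersectionProperty}), so that $1_i=\beta_i\cdot\beta_i^{-1}\in\LF(y,\tilde{\tf{g}})(0)$.
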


\begin{proof}
\textbf{Case 1:} We firstly consider an index $i\in J_{+}.$
$1_i-1_{-i}$ is an element of $\LF(x_i,\tf{h}_i)(0),$ thus an element of
$\LF(y,\tf{g})(0)$ by (\ref{eqIntersectionProperty}) and therefore 
\[1_i+1_{-i}=(1_i-1_{-i})^2\in\LF(y,\tilde{\tf{g}})(0).\]
Hence $1_i$ and $1_{-i}$ are elements of $\LF(y,\tilde{\tf{g}})(0)$ since $2$ is
invertible in $o_k.$ 

\textbf{Case 2:} We take an index $i\in J_{un}$ and we assume that 
$\beta_i$ is not zero. Since $\beta_i\in E_i$ is skewsymmetric we have for all
$t\in \bbR$
\[\beta_i\LF(x_i,\tilde{\tf{h}}_i)(t)=\LF(x_i,\tilde{\tf{h}}
_i)(t+\nu(\beta_i))\]
and  
\[\beta_i\LF(x_i,\tf{h}_i)(t)=\LF(x_i,\tilde{\tf{h}}
_i)(t+\nu(\beta_i))\cap\Sym(\tilde{\tf{h}}_i,\sigma_i).\]
By the invertibility of $2$ in $o_k$ every element of $\LF(x_i,\tilde{\tf{h}}_i)(t)$ is a sum of a skewsymmetric and a symmetric
element of $\LF(x_i,\tilde{\tf{h}}_i)(t).$ Thus we obtain 
\begin{align*}\label{eqnLFIncl}
\LF(x_i,\tilde{\tf{h}}_i)(0)&= \LF(x_i,\tf{h}_i)(0)+\beta_i\LF(x_i,\tf{h}
_i)(-\nu(\beta_i))\\
&\subseteq \LF(y,\tf{g})(0)+ \LF(y,\tf{g})(\nu(\beta_i))\LF(y,\tf{g})(-\nu(\beta_i))\\
&\subseteq \LF(y,\tilde{\tf{g}})(0)
\end{align*}
and the $i$th idempotent $1_i$ is an element of $\LF(y,\tilde{\tf{g}})(0).$

\textbf{Case 3:} If there is an index $i_0$ such that $\beta_{i_0}=0,$
by lemma \ref{lem1betai0} it is unique, and the two cases above imply
\[1_{i_0}=1-\sum_{i\neq i_0}1_i\in \LF(y,\tilde{\tf{g}})(0).\]
\end{proof}
\vspace{1em}

The idea of the proof of case 2 is taken from 
\cite[11.2]{broussousStevens:09}..

\begin{corollary}\label{corPsiJ}
The $o_D$-lattice function of $y$ splits under $(V_i)_{i\in J}$ 
and $y$ is in the image of the injective, affine and 
$\prod_{i\in J_{un+}}\bG_i(k_0)$-equivariant CLF-map 
\[\psi_J:\prod_{i\in J_{un+}}\Building(\bG_i,k_0)\ra\building(\bG,k_0)\] which
is defined by taking the direct sum of the self-dual lattice functions. 
\end{corollary}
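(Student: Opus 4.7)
The plan is first to verify the listed properties of $\psi_J$, and then to use Lemma \ref{lemId} to exhibit a preimage of $y$.

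I would first establish the properties of $\psi_J$ as straightforward extensions of Lemma \ref{lemSplitting} to the finer decomposition $V=\bigoplus_{i\in J_{un+}}(V_i+V_{-i})$. The defining relation $h(1_j v,w)=h(v,\sigma(1_j)(w))=h(v,1_{-j}w)$ shows that $h(V_j,V_l)=0$ whenever $l\neq -j$, so the blocks $V_i+V_{-i}$ are pairwise $h$-orthogonal. By Remark \ref{remDSOSDLF} the direct sum of self-dual lattice functions on the blocks is self-dual on $V$, so $\psi_J$ is well-defined; affineness, injectivity and $\prod_{i\in J_{un+}}\bG_i(k_0)$-equivariance follow from Remark \ref{remDSOLF}; and the CLF property follows from Proposition \ref{propCLFunderDS} applied blockwise, using that each $\End_D(V_i+V_{-i})$ is $\sigma$-stable.

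For the image assertion, Lemma \ref{lemId} provides $1_i\in\LF(y,\tilde{\tf{g}})(0)=\End(\Lambda_y)(0)$ for every $i\in J$, so each $1_i$ stabilises every lattice $\Lambda_y(t)$. Together with $\sum_i 1_i=1$ and $1_i 1_j=\delta_{ij}1_i$ this will give
\[\Lambda_y(t)=\bigoplus_{i\in J}\bigl(\Lambda_y(t)\cap V_i\bigr)\qquad\text{for all }t\in\bbR,\]
which is precisely the first assertion of the corollary. Setting $\Lambda^y_i(t):=\Lambda_y(t)\cap(V_i+V_{-i})$ for $i\in J_{un+}$ one obtains $\Lambda_y=\bigoplus_{i\in J_{un+}}\Lambda^y_i$ as an external direct sum of $o_D$-lattice functions on the subspaces $V_i+V_{-i}$.

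The only remaining, and main technical, step will be the self-duality of each $\Lambda^y_i$ with respect to $h_i$. Since $h$ pairs $V_j$ only with $V_{-j}$, the computation of the $h$-dual of $\Lambda_y(t)$ should split into independent computations on each block: for $i\in J_+$ this is Proposition \ref{propIDOLF} applied to the maximal isotropic decomposition $V_i\oplus V_{-i}$ of $V_i+V_{-i}$, and for $i\in J_{un}$ it is the trivial analogue using the non-degeneracy of $h_i$ on $V_i$. This will yield $\Lambda_y^{\#}=\bigoplus_{i\in J_{un+}}(\Lambda^y_i)^{\#}$ with the dual of each block taken with respect to $h_i$, and the global self-duality $\Lambda_y=\Lambda_y^{\#}$ will then force $\Lambda^y_i=(\Lambda^y_i)^{\#}$ for every $i\in J_{un+}$. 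Hence $(\Lambda^y_i)_{i\in J_{un+}}$ is the preimage of $y$ under $\psi_J$ that is sought. I do not expect any essential difficulty beyond Lemma \ref{lemId}; the principal technical point is the block-wise reduction of the duality, which rests entirely on the identity $\sigma(1_i)=1_{-i}$.
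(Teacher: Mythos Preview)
Your proposal is correct and follows the same approach the paper intends: the corollary is stated without a separate proof, as an immediate consequence of Lemma~\ref{lemId}, and you have simply filled in the routine details (splitting of $\Lambda_y$ from $1_i\in\End(\Lambda_y)(0)$, block-wise self-duality from $\sigma(1_i)=1_{-i}$, and the properties of $\psi_J$ by the same argument as Lemma~\ref{lemSplitting}).
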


The fixed element $y$ is in the image of $\psi_J,$ i.e.
\[y=\psi_J((y_i)_{i\in J_{un+}})\]
 for some 
\[(y_i)\in\Building(\prod_{i\in J_{un+}}\bG_i,k_0)=\prod_{i\in
J_{un+}}\Building(\bG_i,k_0).\]
We now prove property (\ref{eqIntersectionProperty}) for the coordinates.

\begin{lemma}\label{eqIntersectionPropertyComp}
For all $i\in J_{un+}$ we have 
\[\LF(x_i,\tf{h}_i)=\tf{h}_i\cap\LF(y_i,\tf{g}_i).\]
\end{lemma}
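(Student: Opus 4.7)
The plan is to reduce the intersection property at $(x,y)$ to its componentwise version by exploiting the fact that every filtration in sight splits under the decomposition $V = \bigoplus_{i\in J_{un+}}(V_i+V_{-i})$.

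First, by corollary \ref{corPsiJ}, the self-dual lattice function $\Lambda_y$ is the direct sum $\oplus_i\Lambda_{y_i}$. Iterating proposition \ref{propCLFunderDS} along the decomposition of $V$ gives
\[\LF(y,\tilde{\tf{g}})(t)\cap\bigoplus_i\End_D(V_i+V_{-i})=\bigoplus_i\LF(y_i,\tilde{\tf{g}}_i)(t).\]
Since $h(V_i+V_{-i},V_j+V_{-j})=0$ for $i\neq j$, the involution $\sigma$ preserves this decomposition, so intersecting with the skewsymmetric elements yields
\[\LF(y,\tf{g})(t)\cap\bigoplus_i\tf{g}_i=\bigoplus_i\LF(y_i,\tf{g}_i)(t).\]
On the other hand, the elements of $\tf{h}$ commute with $\beta$ and hence with each idempotent $1_i$, so $\tf{h}=\bigoplus_i\tf{h}_i$ is contained in $\bigoplus_i\tf{g}_i$, and by definition \ref{defLFForProductsInTheUnitaryCase} we have $\LF(x,\tf{h})(t)=\bigoplus_i\LF(x_i,\tf{h}_i)(t)$.

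Combining these facts with the hypothesis $\LF(x,\tf{h})=\LF(y,\tf{g})\cap\tf{h}$, I would compute
\begin{align*}
\bigoplus_i\LF(x_i,\tf{h}_i)(t)
&=\LF(x,\tf{h})(t)\\
&=\LF(y,\tf{g})(t)\cap\tf{h}\\
&=\Bigl(\LF(y,\tf{g})(t)\cap\bigoplus_i\tf{g}_i\Bigr)\cap\bigoplus_i\tf{h}_i\\
&=\bigoplus_i\LF(y_i,\tf{g}_i)(t)\cap\bigoplus_i\tf{h}_i\\
&=\bigoplus_i\bigl(\LF(y_i,\tf{g}_i)(t)\cap\tf{h}_i\bigr).
\end{align*}
Projecting onto each component of the direct sum $\bigoplus_i\tf{h}_i$ then yields the claimed equality $\LF(x_i,\tf{h}_i)(t)=\LF(y_i,\tf{g}_i)(t)\cap\tf{h}_i$ for every $i\in J_{un+}$.

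There is no real obstacle here; the only point requiring care is the passage from proposition \ref{propCLFunderDS}, stated for two summands, to the full multi-summand decomposition along $(V_i+V_{-i})_{i\in J_{un+}}$, which is a routine induction. Once the key compatibility $\LF(y,\tf{g})(t)\cap\bigoplus_i\tf{g}_i=\bigoplus_i\LF(y_i,\tf{g}_i)(t)$ is secured, the rest is pure bookkeeping: an equality of direct sums of subspaces, each summand lying in its own ambient factor, holds coordinatewise.
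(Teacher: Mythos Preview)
Your proof is correct and follows essentially the same route as the paper: both arguments establish the componentwise splitting $\LF(y,\tf{g})(t)\cap\bigoplus_i\tf{g}_i=\bigoplus_i\LF(y_i,\tf{g}_i)(t)$ and then run the same chain of equalities to conclude. You simply spell out more of the justifications (the appeal to corollary~\ref{corPsiJ} and proposition~\ref{propCLFunderDS}, and the $\sigma$-stability of the decomposition) that the paper leaves implicit.
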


\begin{proof}
Let $t\in\bbR.$ We have
\begin{align*}
\prod_i\LF(x_i,\tf{h}_i)(t)&= \LF(x,\tf{h})(t)\\
&= \LF(y,\tf{g})(t)\cap \tf{h}\\
&= (\LF(y,\tf{g})(t)\cap \prod_i\tf{g}_i)\cap \tf{h}\\
&= (\prod_i\LF(y_i,\tf{g}_i)(t))\cap (\prod_i\tf{h}_i)\\
&= \prod_i(\LF(y_i,\tf{g}_i)(t)\cap\tf{h}_i).
\end{align*}
Thus we have for all indexes $i\in J_{un+}$ and for all 
$t\in\bbR$ the property
\[\LF(x_i,\tf{h}_i)(t)=\LF(y_i,\tf{g}_i)(t)\cap\tf{h}_i.\]
\end{proof}
\vspace{1em}

The last two lemmatas lead to a factorization of a CLF-map.
More precisely we can prove the following proposition.

\begin{proposition}\label{propFactOfCLFMap}
If $j$ is a CLF-map from $\Building(\bH,k_0)$ to 
$\building(\bG,k_0)$ there is a unique map
\[\tau:\ \Building(\bH,k_0)\ra\Building(\prod_{i\in J_{un+}}\bG_i,k_0)\text{
such that }j=\psi_J\circ\tau.\]
The map $\tau$ is
\be
\item a CLF-map,
\item affine if $j$ is affine, and
\item $\bH(k_0)$-equivariant if $j$ is $\bH(k_0)$-equivariant.
\ee
\end{proposition}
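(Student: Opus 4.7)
The plan is to build $\tau$ pointwise by inverting $\psi_J$ on the image of $j$, then transport each property of $j$ across this inversion. First I would fix $x\in\Building(\bH,k_0)$ and set $y:=j(x)$; since $j$ is CLF, $y$ is an extension of $x$, so by Corollary \ref{corPsiJ} (applied to this particular $(x,y)$) the point $y$ lies in the image of $\psi_J$. Because $\psi_J$ is injective, there is a unique $(y_i)_{i\in J_{un+}}\in\prod_{i\in J_{un+}}\Building(\bG_i,k_0)$ with $\psi_J((y_i))=y$, and I would define $\tau(x):=(y_i)$. Uniqueness of $\tau$ with $\psi_J\circ\tau=j$ is forced by the injectivity of $\psi_J$.

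Next I would verify the CLF property. Lemma \ref{eqIntersectionPropertyComp} applied to the pair $(x,y)$ above gives $\LF(x_i,\tf{h}_i)=\tf{h}_i\cap\LF(y_i,\tf{g}_i)$ for every $i\in J_{un+}$. Taking the direct sum over $i$ and using Definition \ref{defLFForProductsInTheUnitaryCase} (Lie algebra filtrations in products are sums of those in the factors), the equality rearranges to
\[
\LF(\tau(x),\Lie(\textstyle\prod_{i}\bG_i)(k_0))\cap\Lie(\bH)(k_0)=\LF(x,\tf{h}),
\]
which is the CLF condition for $\tau$ with respect to the inclusion $\bH\hookrightarrow\prod_{i\in J_{un+}}\bG_i.$

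For the affineness and equivariance, the key observation is that $\psi_J$ is an injective affine map, hence an affine bijection onto its image, so the inverse $\psi_J^{-1}$ restricted to $\im(\psi_J)$ is affine. If $j$ is affine then $\tau=\psi_J^{-1}\circ j$ is a composition of affine maps and therefore affine. If $j$ is $\bH(k_0)$-equivariant, then for any $h\in\bH(k_0)\subseteq\prod_{i\in J_{un+}}\bG_i(k_0)$ and any $x\in\Building(\bH,k_0)$ the $\bG(k_0)$-equivariance of $\psi_J$ gives
\[
\psi_J(h\cdot\tau(x))=h\cdot\psi_J(\tau(x))=h\cdot j(x)=j(h\cdot x)=\psi_J(\tau(h\cdot x)),
\]
and injectivity of $\psi_J$ yields $\tau(h\cdot x)=h\cdot\tau(x)$.

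There is no substantive obstacle here: the real work has been done in Lemmas \ref{lemId} and \ref{eqIntersectionPropertyComp} (which together force the image of $j$ to split along the decomposition $V=\oplus_{i\in J} V_i$ and respect the filtrations factor by factor) and in the injectivity statement of Corollary \ref{corPsiJ}. The only point to be careful about is to recognise that, while $\psi_J$ is in general not surjective, the CLF condition automatically drives $j(x)$ into $\im(\psi_J)$, so that $\tau$ is defined on all of $\Building(\bH,k_0)$.
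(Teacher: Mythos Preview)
Your proof is correct and follows essentially the same route as the paper: use Corollary \ref{corPsiJ} to land $j(x)$ in $\im(\psi_J)$, invert via injectivity to define $\tau$, and then read off the three properties from the fact that $\psi_J$ is injective, affine, $\prod_i\bG_i(k_0)$-equivariant and CLF. Your explicit appeal to Lemma \ref{eqIntersectionPropertyComp} for the CLF property of $\tau$ is more detailed than the paper's terse treatment, but the underlying argument is the same.
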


\begin{proof}
The value $j(x')$ of a point $x'$ is an extension of $x'$
and lies in the image of $\psi_J$ by corollary \ref{corPsiJ}.
The injectivity of $\psi_J$ implies the unique existence of $\tau.$
In addition to the injectivity the map $\psi_J$ is affine and $\prod_{i\in
J_{un+}}\bG_i(k_0)$-equivariant which implies 2 and 3.
\end{proof}

\begin{remark}\label{remRedOfJun+}
The proposition allows us to reduce proofs to the case where
$J_{un+}$ has only one element, i.e. where $E$ is a field or 
a product of two fields which are switched by $\sigma.$
The first case corresponds to a non-empty $J_{un}$ and the 
second case to a non-empty $J_+.$
\end{remark}


\section{\texorpdfstring{Uniqueness if $J_{GL}$ is empty}{Uniqueness if JGL is empty}}

\begin{theorem}\label{thmUniquenessForJGLIsEmpty}
Assume that $J_{GL}$ is empty and that no $\bH_i$ is $k_0$-isomorphic to
$\bO_{2,k_0}^{is}.$ There is exactly one CLF-map 
$j^\beta$ from $\Building(\bH,k_0)$ to $\building(\bG,k_0).$
Indeed  we have the stronger result that
$j^\beta(x)=y$ if $y$ is an extension of $x.$
\end{theorem}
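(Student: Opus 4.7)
The plan is to invoke Proposition \ref{propFactOfCLFMap} first, which factors any CLF-map $j \colon \Building(\bH,k_0) \to \building(\bG,k_0)$ through the injective map $\psi_J$ and reduces the problem to showing uniqueness of extension coordinatewise: given $i \in J_{un+}$ and an extension $y_i \in \Building(\bG_i,k_0)$ of $x_i \in \Building(\bH_i,k_0)$, I must show $y_i$ is uniquely determined by $x_i$. Under the hypothesis $J_{GL}=\emptyset$, each such $i$ lies in $J_{un}$, so $E_i$ is a field, and the excluded case in the theorem says $\bH_i$ is not $k_0$-isomorphic to $\bO^{is}_{2,k_0}$. I would then split the argument according to whether $\beta_i=0$.

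For $\beta_i \neq 0$, the involution $\sigma|_{E_i}$ is non-trivial (since $\sigma(\beta_i) = -\beta_i$) and $\beta_i$ is invertible in $E_i$. The strategy is to upgrade the CLF identity $\LF(x_i,\tf{h}_i)(t) = \LF(y_i,\tf{g}_i)(t) \cap \tf{h}_i$ to the inclusion $\LF(x_i,\tilde{\tf{h}}_i)(t) \subseteq \LF(y_i,\tilde{\tf{g}}_i)(t)$ in the full $E_i$-centralizer Lie algebra, after which Theorem \ref{thmBS10.3} forces $y_i = j^{E_i}(x_i)$, which by construction coincides with $j^\beta(x_i)$. The square lattice function $\LF(x_i,\tilde{\tf{h}}_i)$ is $\sigma$-stable (because $x_i$ corresponds to a self-dual lattice function), so each $a \in \LF(x_i,\tilde{\tf{h}}_i)(t)$ decomposes as $a = a_{sk} + a_s$ with skew and symmetric parts at the same filtration level (char not two). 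The skew summand transfers directly via CLF. For the symmetric summand I use that $\beta_i$ lies in the centre $E_i$ of $\tilde{\tf{h}}_i$ and is skew, so $\beta_i a_s$ is skew while $\beta_i^{-1}$ is skew as well; both belong to $\tf{h}_i$ at levels $t+\nu(\beta_i)$ and $-\nu(\beta_i)$ respectively, so by CLF they lie in $\LF(y_i,\tf{g}_i)$, and their product $a_s = \beta_i^{-1}(\beta_i a_s)$ lands in $\LF(y_i,\tilde{\tf{g}}_i)(t)$ as required. This approach mirrors case 2 of Lemma \ref{lemId}.

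For $\beta_i = 0$ one has $E_i = k$ and $\bH_i = \bG_i$, so the CLF condition degenerates to the plain equality $\LF(x_i,\tf{g}_i) = \LF(y_i,\tf{g}_i)$. Here I would prove directly that the map $x \mapsto \LF(x,\tf{g}_i)$ from $\Building(\bG_i,k_0)$ into $\Latttwo{o_{k_0}}{\tf{g}_i}$ is injective whenever $\bG_i$ is not $k_0$-isomorphic to $\bO^{is}_{2,k_0}$. Passing to the self-dual lattice function model via Theorem \ref{thmBS4.2}, the full square lattice function $\End(\Lambda)$ determines the self-dual $\Lambda$ uniquely (self-dual lattice functions have no nontrivial translates by the calculation $(\Lambda+s)^{\#} = \Lambda-s$), so it suffices to recover the symmetric part $\End(\Lambda) \cap \Sym(\End_D V,\sigma)$ from the skew part $\End(\Lambda) \cap \tf{g}_i$. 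My plan is to exploit anticommutators: for $a,b \in \End(\Lambda) \cap \tf{g}_i$ the element $ab + ba$ is symmetric and still in $\End(\Lambda)$, and a case analysis using a Witt basis of $h_i$ (with the induced splitting of $\End(\Lambda)$ into the blocks of Witt's normal form) should show that such anticommutators generate the symmetric part over $o_{k_0}$ in every case except $\bO^{is}_{2,k_0}$, where $\tf{g}_i$ collapses to a one-dimensional abelian subalgebra that cannot generate $\Sym$, consistent with the remark in Section \ref{secIsotropicO2} that every point of $\Building(\bO^{is}_{2,k},k)$ carries the same Lie algebra filtration.

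The main obstacle is this last step in the $\beta_i = 0$ case, namely the case-by-case verification that the skew part of $\End(\Lambda)$ generates its symmetric part via anticommutators for every classical unitary group other than $\bO^{is}_{2,k_0}$. Once both cases are settled, the coordinatewise uniqueness feeds back through Proposition \ref{propFactOfCLFMap} to give simultaneously the strong statement that any extension $y$ of $x$ must equal $j^\beta(x)$ and the uniqueness of the map $j^\beta$ itself.
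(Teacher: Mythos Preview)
Your reduction via Proposition \ref{propFactOfCLFMap} and your treatment of the case $\beta_i\neq 0$ coincide with the paper's: the paper likewise upgrades the skew CLF identity to the inclusion $\LF(x_i,\tilde{\tf{h}}_i)\subseteq\LF(y_i,\tilde{\tf{g}}_i)$ by writing $a=a_{sk}+\beta_i^{-1}\cdot(\beta_i a_s)$ and then invokes Theorem \ref{thmBS10.3} (this is Proposition \ref{propBS11.2}, and the symmetric/skew split via multiplication by $\beta_i$ is exactly case~2 of Lemma \ref{lemId}).

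For $\beta_i=0$ the paper proceeds differently and more concretely than your anticommutator plan. First, if $\sigma$ is of the second kind there is a nonzero skew element $\beta'\in k$, and replacing $\beta$ by $\beta'$ puts you back in the $\beta\neq 0$ case with the same $\bH=\bG$; this disposes of the unitary case without any work on symmetric parts. If $\sigma$ is of the first kind the paper does not try to \emph{generate} the symmetric filtration from the skew one. Instead it fixes a Witt basis, writes $\End(\Lambda)(t)=\bigoplus_{i,j}\tf{p}_D^{[(t+\alpha_j-\alpha_i)d]+}E_{i,j}$, and observes that for each elementary matrix with $E_{i,j}^\sigma\neq E_{i,j}$ the skew filtration alone satisfies
\[
\LF(y,\tf{g})(t)\cap k\,(E_{i,j}-E_{i,j}^\sigma)=\tf{p}_k^{[t+\alpha_j-\alpha_i]+}(E_{i,j}-E_{i,j}^\sigma),
\]
so one can read off the exponent $\alpha_j-\alpha_i$ directly. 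A short combinatorial case analysis (anisotropic part present; $\epsilon=-1$; or $\epsilon=1$ with $m\geq 4$) shows that enough of these differences are visible to recover all of them, and hence $\End(\Lambda)$; the only configuration in which this fails is precisely $\bO^{is}_{2,k}$. Your anticommutator route would in principle reach the same destination, but it asks for more: you would have to show that for every $t$ the $o_{k_0}$-module $\End(\Lambda)(t)\cap\Sym$ is spanned by $\{ab+ba:a\in\End(\Lambda)(s)\cap\Skew,\ b\in\End(\Lambda)(t-s)\cap\Skew\}$, which is a genuinely harder (and somewhat redundant) statement than merely reconstructing the scalars $\alpha_j-\alpha_i$.
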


By remark \ref{remRedOfJun+} it is enough to prove the theorem for the case
where $E$ is a field. We only have to ensure that $\bO_{2,k_0}^{is}$ does not
occur among the $\bG_i.$

\begin{lemma}\label{lemO2isDoesNotOccur}
Under the assumptions of theorem \ref{thmUniquenessForJGLIsEmpty}
no group $\bG_i$ is $k_0$-isomor\-phic to $\bO_{2,k_0}^{is}.$
\end{lemma}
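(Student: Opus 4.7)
The plan is to argue by contradiction: suppose some index $i \in J_{un+}$ gives $\bG_i$ which is $k_0$-isomorphic to $\bO_{2,k_0}^{is}$, and derive a contradiction. Since $J_{GL}$ is empty we have $J = J_{un}$, so $-i = i$, $V_{-i} = V_i$, and $\bH_i = (\bG_i)_{E_i}$. Applying Proposition \ref{propCondO2is} to $\bG_i$ pins down the structure: $D = k = k_0$, $\dim_D V_i = 2$, the restricted involution $\sigma_i$ is orthogonal and $h_i$ is isotropic.

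I would then split into two cases according to whether $\beta_i$ vanishes. If $\beta_i = 0$, the identity $\beta^n \cdot 1_i = \beta_i^n = 0$ for $n \geq 1$ forces $E_i = k[\beta] \cdot 1_i = k \cdot 1_i$. Since $1_i = 1_{V_i}$ is central in $\End_D(V_i)$, the centraliser $(\bG_i)_{E_i}$ is all of $\bG_i$, and hence $\bH_i = \bG_i \cong \bO_{2,k_0}^{is}$, contradicting the hypothesis that no $\bH_j$ is $k_0$-isomorphic to $\bO_{2,k_0}^{is}$.

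If $\beta_i \neq 0$, I would exploit the explicit description in Remark \ref{remCalcisO2}: in the canonical Witt basis of $V_i$, the involution $\sigma_i$ swaps the diagonal entries of $M_2(k_0)$, so the skewsymmetric elements form the one-dimensional space of matrices $\diag(a,-a)$. Hence $\beta_i = \diag(a,-a)$ for some $a \in k_0^{\times}$. The identity $\beta_i^2 = a^2 \cdot 1_{V_i}$ shows that $k[\beta_i] = k \cdot 1_{V_i} + k \cdot \beta_i$, and since $\Char k \neq 2$ one sees that this algebra contains the nontrivial idempotents $\frac{1}{2}(1_{V_i} + a^{-1}\beta_i) = \diag(1,0)$ and $\diag(0,1)$. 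However, $\beta_i$ lies in $E_i$ and $1_i = 1_{V_i}$ is the identity of $E_i$, so $k[\beta_i]$ is a unital $k$-subalgebra of $E_i$. Since $E_i$ is a field by the Wedderburn decomposition $E = \prod_j E_j$, it contains no nontrivial idempotents, which is a contradiction.

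The only substantive step is the dichotomy on $\beta_i$; in the nonzero case the point is to notice that the one-dimensional skewsymmetric line in $\Lie(\bG_i)(k_0)$ forces $k[\beta_i]$ to be a two-dimensional \'etale $k$-algebra, which cannot embed into the field $E_i$, while the vanishing case collapses $\bH_i$ onto $\bG_i$. Everything else is bookkeeping with the idempotents $1_i$ and the relation between $E_i$ and $\beta_i$.
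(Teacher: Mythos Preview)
Your proof is correct and follows essentially the same approach as the paper's: both use Remark \ref{remCalcisO2}[3.] to see that a nonzero $\beta_i$ would generate $k\times k$ inside the field $E_i$, forcing $\beta_i=0$ and hence $\bH_i=\bG_i\cong\bO_{2,k_0}^{is}$, contradicting the hypothesis. The paper compresses your two cases into a single sentence (directly concluding $\beta_i=0$), but the underlying argument is identical.
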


\begin{proof}
If $\bG_i$ is $k_0$-isomorphic to $\bO_{2,k_0}^{is}$ then $\beta_i$ has to be
zero by remark \ref{remCalcisO2}[3.] because $E_i$ is stable under $\sigma,$
i.e. $\bH_i$ equals $\bG_i$ and is $k_0$-isomorphic to $\bO_{2,k_0}^{is}$ which
is excluded by the assumption of the theorem.
\end{proof}
\vspace{1em}

Theorem \ref{thmUniquenessForJGLIsEmpty} will be proven by two steps. 

\begin{proposition}\label{propBS11.2}(Compare with
\cite[11.2]{broussousStevens:09})
Theorem \ref{thmUniquenessForJGLIsEmpty} is true if $E$ is a field and $\beta$
is not zero.
\end{proposition}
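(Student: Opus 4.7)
My plan is to upgrade the extension hypothesis from the skew-symmetric Lie algebras $\tf{h} \subseteq \tf{g}$ to an analogous containment on the ambient Lie algebras $\tilde{\tf{h}} \subseteq \tilde{\tf{g}}$, and then to invoke the $\bGL_D$-uniqueness Theorem \ref{thmBS10.3} applied to the separable field $E|k$. Since $E = k[\beta]$ is a field and $\beta \neq 0$, the element $\beta$ is a unit of $E$; combined with $\sigma(\beta) = -\beta$ this places both $\beta$ and $\beta^{-1}$ in $\tf{h}$ and (using $\Char(k) \neq 2$) ensures that left multiplication by $\beta$ exchanges $\tf{h} = \Skew(\tilde{\tf{h}}, \sigma)$ with $\Sym(\tilde{\tf{h}}, \sigma)$ inside $\tilde{\tf{h}}$.

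The main step is the inclusion $\LF(x, \tilde{\tf{h}})(t) \subseteq \LF(y, \tilde{\tf{g}})(t) \cap \tilde{\tf{h}}$ for every $t$. Given $a$ on the left-hand side, I would split $a = a_- + a_+$ into its skew and symmetric parts under $\sigma$; self-duality of $\LF(x, \tilde{\tf{h}})$, inherited from the self-dual lattice-function model of $\Building(\bH, k_0)$, places both summands in $\LF(x, \tilde{\tf{h}})(t)$. The skew part $a_-$ lies in $\LF(x, \tf{h})(t)$ and is handled directly by (\ref{eqIntersectionProperty}). For the symmetric part $a_+$, I would multiply by $\beta$ to pass from $\Sym(\tilde{\tf{h}}, \sigma)$ into $\tf{h}$: using that $\LF(x, \tilde{\tf{h}})$ is an $o_E$-lattice function, $\beta a_+$ lies in $\LF(x, \tf{h})(t + \nu(\beta))$, which extends to $\LF(y, \tilde{\tf{g}})(t + \nu(\beta))$.

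To descend from $\beta a_+$ back to $a_+$ inside $\LF(y, \tilde{\tf{g}})(t)$, I need $\beta^{-1} \in \LF(y, \tilde{\tf{g}})(-\nu(\beta))$. This is where $\beta \neq 0$ truly enters: the same trick applied to $\beta^{-1} \in \tf{h}$, using that the order $\LF(x, \tilde{\tf{h}})(0)$ is unital and $o_E$-stable, gives $\beta^{-1} \in \LF(x, \tf{h})(-\nu(\beta))$, and hypothesis (\ref{eqIntersectionProperty}) transports this to $\LF(y, \tf{g})(-\nu(\beta)) \subseteq \LF(y, \tilde{\tf{g}})(-\nu(\beta))$. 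The filtered-ring structure of $\LF(y, \tilde{\tf{g}})$ then produces $a_+ = \beta^{-1}(\beta a_+) \in \LF(y, \tilde{\tf{g}})(t)$, and hence $a \in \LF(y, \tilde{\tf{g}})(t)$.

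With the $\bGL_D$-level containment established, Theorem \ref{thmBS10.3} forces $y = j^E(x)$. Since $J_{GL} = \emptyset$ and $E$ is a field (so $J_{un+} = \{i\}$ with $i \in J_{un}$), the map $j^\beta$ built in Theorem \ref{thmExistence} reduces via Lemma \ref{lemCaseJ0} to the restriction of $j^E$ to the self-dual locus, whence $j^\beta(x) = j^E(x) = y$, which is the asserted strong uniqueness. The main obstacle is precisely the symmetric--skew exchange by $\beta^{\pm 1}$: it is what allows the hypothesis, which a priori only lives on the skew-symmetric Lie subalgebras, to be promoted to the ambient $\End_D$-filtrations where \ref{thmBS10.3} is applicable; in particular the argument breaks down exactly at $\beta = 0$, consistent with the hypothesis of the proposition.
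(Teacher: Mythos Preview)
Your proposal is correct and follows essentially the same approach as the paper: decompose $\LF(x,\tilde{\tf{h}})(t)$ into its symmetric and skew-symmetric parts using the invertibility of $2$, use multiplication by the invertible skew element $\beta$ to exchange the two pieces, push everything into $\LF(y,\tilde{\tf{g}})(t)$ via the extension hypothesis and the filtered-ring structure, and then invoke Theorem~\ref{thmBS10.3}. The paper packages this as a reference back to case~2 of the proof of Lemma~\ref{lemId}, where the identical $\beta$-trick is carried out at the level of lattice functions rather than elementwise, but the content is the same.
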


\begin{proof}
If $y$ is an extension of $x$ we have by the same argument as in 
case 2 in the proof of lemma \ref{lemId} that
\[ \LF(x,\tilde{\tf{h}})(t)\subseteq \LF(y,\tilde{\tf{g}})(t). \]
We now apply theorem \ref{thmBS10.3} and obtain 
\[y=j^E(x)\]
if we consider $x$ as an element of $\building(\tibH,E)$
and $y$ as an element of $\building(\tibG,k).$
Thus for every $x\in\Building(\bH,k_0)$ there is only one extension
in $\building(\bG,k_0).$
\end{proof}

\begin{lemma}\label{lemBeta=0}
The theorem is true if $\beta$ is zero.
\end{lemma}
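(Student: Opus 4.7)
The plan is to exploit Broussous' hint of using roots. Since $\beta=0$ forces $E=k[\beta]=k$ and hence $\bH=\bG$, the lemma reduces to the assertion that the map $x\mapsto \LF(x,\bG,k_0)$ is injective on $\Building(\bG,k_0)$. So I fix $x,y\in\Building(\bG,k_0)$ with $\LF(x,\bG,k_0)=\LF(y,\bG,k_0)$ and aim to conclude $x=y$. I will first reduce to a single apartment, then read off the values of all relative roots from the Lie algebra filtration, and finally invoke convention \ref{conbGnotO2is} to conclude.

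To reduce to one apartment, I appeal to the standard Bruhat--Tits axiom that any two points of the building lie in a common apartment: choose a maximal $k_0$-split torus $T$ of $\bG$ with $x,y\in A(T)$. Writing $\Phi$ for the set of (relative) $k_0$-roots of $T$ in $\bG$, the Lie algebra decomposes as
\[\tf{g}=\Centr_{\tf{g}}(T(k_0))\oplus\bigoplus_{\alpha\in\Phi}\tf{g}_\alpha.\]
By the theorem of Lemaire cited after definition \ref{defLieAlgebraFiltrationUh}, $\LF(\cdot,\bG,k_0)$ coincides with the Moy--Prasad filtration, and for $z\in A(T)$ the intersection $\LF(z,\bG,k_0)(r)\cap\tf{g}_\alpha$ takes the shape $\{X\in\tf{g}_\alpha\mid \psi_\alpha(X)\geq r-\alpha(z)\}$, with $\psi_\alpha$ a filtration on $\tf{g}_\alpha$ depending only on $T$ (it comes from the valuation of the underlying valuated root datum). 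Equating these filtrations for $x$ and $y$ and tracking the positions of their jumps as step functions of $r$ yields $\alpha(x)=\alpha(y)$ for every $\alpha\in\Phi$.

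Hence $x-y\in A(T)$ lies in the common zero of all relative roots, i.e.\ in the real cocharacter space of the connected centre of $\bG$. Here convention \ref{conbGnotO2is} is crucial: excluding $\bG\cong\bO_{2,k_0}^{is}$ forces $\Building(\bG,k_0)=\building(\bG,k_0)$ by the remark after definition \ref{defBuildingU(h)}, so the apartment is modelled on the subquotient of $X_*(T)_{k_0}\otimes\bbR$ on which the roots separate points. I therefore obtain $x=y$. The necessity of the exclusion is visible in remark \ref{remCalcisO2}: for $\bG\cong\bO_{2,k_0}^{is}$ the Lie algebra filtration is constant on the building, so the present lemma would fail without it.

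The main obstacle I anticipate is verifying the Moy--Prasad formula for $\LF(z,\bG,k_0)(r)\cap\tf{g}_\alpha$ in the quasi-split but not $k_0$-split unitary case, where the relative root spaces $\tf{g}_\alpha$ may be higher-dimensional and the relative root system can be non-reduced. I would either appeal to Lemaire's identification theorem together with the explicit Moy--Prasad structure worked out in \cite{bruhatTitsIV:87}, or bypass it by working in the self-dual lattice-function model of theorem \ref{thmBS4.2}: after fixing a Witt basis that splits the common apartment, each $\tf{g}_\alpha$ meets $\End(\Lambda_z)\cap\Lie(\bG)(k_0)$ in a subspace that literally shifts by $\alpha(z)$, so the equality of self-dual lattice functions in each root component lets one read off $\alpha(x)=\alpha(y)$ by inspection.
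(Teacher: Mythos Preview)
Your argument is correct and follows the same underlying idea as the paper --- recovering the root values $\alpha(z)$ from the Lie algebra filtration --- but the organization differs in two noteworthy ways.

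First, the paper does not treat all involution types uniformly. When $\sigma$ is of the second kind it observes that $k\neq k_0$ contains a nonzero skewsymmetric element $\beta'\in k$; since $k[\beta']=k$ one still has $\bH=\bG$, but now $\beta'\neq 0$ and Proposition~\ref{propBS11.2} applies directly. This trick disposes of the unitary case in one line and leaves only first-kind involutions for the explicit computation. Your uniform Moy--Prasad argument does not need this reduction, but it is worth knowing.

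Second, for the first-kind case the paper carries out concretely what you sketch as your fallback: it fixes a Witt basis splitting an apartment containing~$y$, writes the square lattice function as $\End(\Lambda)(t)=\bigoplus_{i,j}\tf{p}_D^{[(t+\alpha_j-\alpha_i)d]+}E_{i,j}$, and computes $\LF(y,\tf{g})(t)\cap k(E_{i,j}-E_{i,j}^\sigma)$ to read off $\alpha_j-\alpha_i$ whenever $E_{i,j}^\sigma\neq E_{i,j}$. A short case analysis (presence of an anisotropic part; sign of $\epsilon$; $m\geq 4$ versus $m=2$) then shows that these recoverable differences determine all $\alpha_i-\alpha_j$, the sole exception being precisely $\bO^{is}_{2,k_0}$. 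This is your ``self-dual lattice-function model'' route made explicit, and it avoids any appeal to the general Moy--Prasad formalism. Conversely, your abstract formulation makes transparent why the argument works and why the exclusion of $\bO^{is}_{2,k_0}$ is exactly the condition that the relative roots separate points of the apartment. The paper also works with a single point $y$ and shows $\LF(y,\tf{g})$ determines $\End(\Lambda_y)$ outright, rather than first placing $x$ and $y$ in a common apartment; the two organizations are equivalent, but the paper's is marginally more direct.
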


For the proof we need the following operation on square matrices.

\begin{definition}\label{defTildeOpSqMatr}
For a square matrix $B=(b_{i,j})\in M_r(D)$ the matrix $\tilde{B}$ is defined to
be $(b_{r+1-j,r+1-i})_{i,j},$ i.e. $\tilde{B}$ is obtained from $B$ by a
reflection on the antidiagonal.
\end{definition}

\begin{proof}
If $\sigma$ is of the second type there is a skewsymmetric non-zero element
$\beta'$ in $k$ and we can replace $\beta$ by $\beta'$ and apply theorem
\ref{propBS11.2}. 
Thus we only need to consider $\sigma$ to be of the first kind.
We fix a point $y\in\building(\bG,k_0)$ and fix an apartment 
containing $y.$ This apartment is determined by a Witt decomposition
and thus determined by a Witt basis $(w_i)$ by corollary
\ref{corExistOfWittBasis}. The self-dual $o_D$-lattice function $\Lambda$
corresponding to $y$ is split by this basis and is thus described by its
intersections with the lines $w_iD,$
i.e. there are real numbers $\alpha_i$ such that 
\[\Lambda(t)=\bigoplus_iv_i\textfrak{p}_{D}^{[(t-\alpha_i)d]+}.\]
Thus the square lattice function of $\Lambda$ is 
\[\End(\Lambda)(t)=\bigoplus_{i,j}\textfrak{p}_{D}^{[(t+\alpha_j-\alpha_i)d]+}
E_{i,j}\] 
where $E_{i,j}$ denotes the matrix with a 1 in the intersection of the ith row
and the jth column and zeros everywhere else. See for example
\cite[I.4.5]{broussousLemaire:02}.

What we have to show is that $\End(\Lambda)$ is determined by the Lie algebra
filtration $\LF(y,\tf{g}).$ This is enough since the self-dual square lattice
function of a point determines the point uniquely.
The Gram matrix $\Gram_{(v_i)}(h)$ of the $\epsilon$-hermitian form $h$ has the
form 
\begin{displaymath}
\left(\begin{array}{ccc}
0 & M & 0\\
\epsilon M & 0 & 0\\
0 & 0 & N\\
\end{array}\right)
\end{displaymath}
with $M:=\antidiag(1,\ldots,1)$ and a diagonal regular matrix $N.$
The adjoint involution of $h$  
\[B\mapsto B^{\sigma}=\Gram_{(v_i)}(h)^{-1}(B^\rho)^T \Gram_{(v_i)}(h)\] on
$\Matr_m(D)$ has under this basis the form 
\begin{displaymath}
\left(\begin{array}{ccc}
B_{1,1} & B_{1,2} & B_{1,3}\\
B_{2,1} & B_{2,2} & B_{2,3}\\
B_{3,1} & B_{3,2} & B_{3,3}\\
\end{array}\right)\mapsto 
\left(\begin{array}{ccc}
\tilde{C}_{2,2} & \epsilon \tilde{C}_{1,2} & \epsilon M C_{3,2}^TN\\
\epsilon \tilde{C}_{2,1} & \tilde{C}_{1,1} & M C_{3,1}^T N\\
\epsilon N^{-1} C_{2,3}^TM & N^{-1} C_{1,3}^T M & N^{-1} C_{3,3}^T N\\
\end{array}\right).
\end{displaymath}
The matrices $B_{1,1},B_{1,2},B_{2,1}$ and $B_{2,2}$ are $r\times r$-matrices
and $C:=B^\rho$ where $r$ is the Witt index of $h.$
By the above calculation we obtain that 
$E_{i,j}^{\sigma}$ is $+E_{i,j},$ $-E_{i,j}$ or $\lambda E_{u,l}$ with
$(i,j)\neq (u,l)$ for some $\lambda\in D^\times.$ From the self-duality of 
$\End(\Lambda)$ and since 2 is invertible in $o_k$ we get:
\[\LF(y,\tf{g})(t)\cap k(E_{i,j}-E_{i,j}^{\sigma})
=\tf{p}_k^{[t+\alpha_j-\alpha_i]+}(E_{i,j}-E_{i,j}^{\sigma}).\]
For the calculation see the lemma below. Thus we can get the ex\-po\-nent
$\alpha_j-\alpha_i$ from the know\-ledge of the Lie algebra fil\-tration if
$E_{i,j}$ is not fixed by $\sigma.$ We now con\-si\-der two ca\-ses.\\
\textbf{Case 1:} We assume that there is an anisotropic part in the Witt
de\-com\-po\-si\-tion, i.e. $N$ occurs. The matrix $E_{i,m}$ is fixed by
$\sigma$ if and only if $i$ equals $m.$
Thus from the know\-ledge of the Lie al\-ge\-bra fil\-tra\-tion we know all
dif\-fe\-ren\-ces $\alpha_i-\alpha_{m}$ for all in\-de\-xes $i$ dif\-fe\-rent
from $m,$ and thus by sub\-trac\-tions we know the dif\-fe\-ren\-ces
$\alpha_i-\alpha_j$ for all $i$ and $j.$\\
\textbf{Case 2:} Now we assume that there is no an\-iso\-tro\-pic part in the Witt
de\-com\-po\-si\-tion. If $\epsilon$ is $-1,$ no $E_{i,j}$ is fixed and we can
de\-duce the differences $\alpha_i-\alpha_j$ for all $i$ and $j$ and, as a
consequence, we only have to consider the case where $h$ is hermitian and $D=k.$
Here the matrix $E_{i,j}$ is fixed by $\sigma$ if and only if $i+j=m+1.$ Thus we
can determine all differences $\alpha_i-\alpha_j$ where $i+j\neq m+1.$ If $m$ is
at least $4$ for an index $i$ there is an index $k\neq i$ with $i+k\neq m+1$ and
we can obtain $\alpha_i-\alpha_{m+1-i}$ if we substract
$\alpha_k-\alpha_{m+1-i}$ from $\alpha_i-\alpha_k.$
The only subcase left is when $m$ equals $2$ and $\epsilon$ is 1. Here the group
$\bG$ is 
$k$-isomorphic to $\bO^{is}_{2,k}$ which is excluded by the assumption of the
theorem. 
\end{proof}

To complete the proof we need the following lemma.

\begin{lemma}
For all $t\in\bbR$ we have
\[\tf{p}_D^{[td]+}\cap k=\tf{p}_k^{[t]+}.\]
\end{lemma}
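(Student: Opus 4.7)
The plan is to simply unwind all the notation and reduce the claim to an elementary identity about the ceiling function.

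First I would recall that since $D$ is a central skewfield over the local field $k$ of degree $d$, the ramification index satisfies $e(D\mid k)=d$, so the unique extension of $\nu$ to $D$ takes values in $\frac{1}{d}\bbZ$ (with $\nu$ normalised so that $\nu(k^\times)=\bbZ$), and $\nu(\pi_D)=\frac{1}{d}$. Consequently, for any integer $n$,
\[
\tf{p}_D^n=\pi_D^no_D=\{x\in D\mid \nu(x)\geq n/d\}.
\]
Applying this with $n=[td]+=\lceil td\rceil$ yields
\[
\tf{p}_D^{[td]+}\cap k=\{x\in k\mid \nu(x)\geq \lceil td\rceil/d\}.
\]

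Next I would observe that for $x\in k$ the value $\nu(x)$ lies in $\bbZ\cup\{\infty\}$, so the condition $\nu(x)\geq \lceil td\rceil/d$ is equivalent to the integer condition $\nu(x)\geq \lceil \lceil td\rceil/d\rceil$. The proof then comes down to the purely numerical identity
\[
\Bigl\lceil \frac{\lceil td\rceil}{d}\Bigr\rceil=\lceil t\rceil.
\]
This is a standard fact: one direction follows from $\lceil t\rceil\cdot d\geq td$, which forces $\lceil t\rceil\cdot d\geq \lceil td\rceil$ (the left-hand side being an integer), hence $\lceil t\rceil\geq \lceil td\rceil/d$, and since $\lceil t\rceil\in\bbZ$ this gives $\lceil t\rceil\geq \lceil \lceil td\rceil/d\rceil$. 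For the reverse direction, $\lceil \lceil td\rceil/d\rceil\cdot d\geq \lceil td\rceil\geq td$ gives $\lceil \lceil td\rceil/d\rceil\geq t$, and integrality gives $\lceil \lceil td\rceil/d\rceil\geq \lceil t\rceil$.

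Combining these steps, $\tf{p}_D^{[td]+}\cap k=\{x\in k\mid \nu(x)\geq \lceil t\rceil\}=\tf{p}_k^{[t]+}$. There is essentially no obstacle in the proof; the only point one must verify carefully is that the valuation on $D$ has value group $\frac{1}{d}\bbZ$, which relies on the fact that the ramification index of a central division algebra over a non-archimedean local field equals its degree.
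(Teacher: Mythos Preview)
Your proof is correct and follows essentially the same approach as the paper: both reduce the claim to the equivalence, for $x\in k$ with $\nu(x)\in\bbZ$, of $\nu(x)\geq \lceil td\rceil/d$ and $\nu(x)\geq \lceil t\rceil$, i.e.\ to the identity $\lceil \lceil td\rceil/d\rceil=\lceil t\rceil$. The only cosmetic difference is that the paper verifies this identity by writing $\lceil td\rceil=ld+k$ with $1\leq k\leq d$ and reading off $\lceil t\rceil=l+1$, whereas you give the direct two-sided ceiling-inequality argument.
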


\begin{proof}
For an element $x$ of $k$ we have:
\[x\in\tf{p}_D^{[td]+}\text{ if and only if }\]
\[\nu(x)\geq\frac{[td]+}{d}\]
There are integers $l$ and $k$ such that 
\[[td]+=ld+k\text{ and }1\leq k\leq d,\]
and thus $[t]+= l+1$ and we get that
\[ \nu(x)\geq\frac{[td]+}{d}\text{ if and only if }\nu(x)\geq [t]+.\]
The  "only if" follows from $\nu(x)\in\bbZ.$
\end{proof}

The proof of theorem \ref{thmUniquenessForJGLIsEmpty} follows now from 
lemmas \ref{lemO2isDoesNotOccur} and \ref{lemBeta=0} and proposition
\ref{propBS11.2}. 

\begin{corollary}
For an index $i\in J_{un}$ the following statements are equivalent.
\be
\item $\bH_i$ is $k_0$-isomorphic to $\bO_{2,k_0}^{is}.$
\item \[\beta_i=0,k=k_0=D,\dim_kV_i=2,\epsilon=1\]
and the Witt index of $h_i$ is 1.
\item $\bG_i$ is $k_0$-isomorphic to $\bO_{2,k_0}^{is}.$
\item There are at least two CLF-maps from $\Building(\bH_i,k_0)$
to $\Building(\bG_i,k_0).$ 
\item There are infinitely many CLF-maps from $\Building(\bH_i,k_0)$
to $\Building(\bG_i,k_0).$ 
\ee
\end{corollary}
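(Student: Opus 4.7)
The plan is to derive $(2)\Leftrightarrow(3)$ from proposition \ref{propCondO2is}, extend the equivalence to $(1)$, and then dispose of the CLF-map statements $(4)$ and $(5)$ using theorem \ref{thmUniquenessForJGLIsEmpty} on one side and the constancy of the Lie algebra filtration on $\Building(\bO_{2,k_0}^{is},k_0)$ on the other.

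For $(2)\Leftrightarrow(3)$ I would apply proposition \ref{propCondO2is} to the hermitian datum on $V_i$. The direction $(2)\Rightarrow(3)$ is immediate; for the converse, $(3)$ together with that proposition yields $D=k=k_0$, $\dim_k V_i=2$, $\epsilon=1$ and Witt index one, while the remaining condition $\beta_i=0$ follows from remark \ref{remCalcisO2}[3.]: any nonzero element of $\Lie(\bO_{2,k_0}^{is})(k_0)$ generates the split algebra $k_0\times k_0$, whereas $\beta_i$ generates the field $E_i$. This also forces $\bH_i=(\bG_i)_{\beta_i}=\bG_i$, so $(2)\Rightarrow(1)$ and $(3)\Rightarrow(1)$ follow. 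For the key direction $(1)\Rightarrow(3)$, I would invoke the structural description from subsection \ref{subsecForUh}: since $i\in J_{un}$,
\[
\bH_i \;\cong_{k_0}\; \Res_{(E_i)_0 | k_0}\bigl(\bU(\sigma|_{\End_{\tens{E_i}{k}{D}}(V_i)})\bigr).
\]
As $\bO_{2,k_0}^{is}$ has $k_0$-dimension one, multiplicativity of dimension under Weil restriction forces $(E_i)_0=k_0$. If in addition $[E_i:k_0]=2$, then $\sigma$ restricts to an involution of the second kind on $\End_{\tens{E_i}{k}{D}}(V_i)$, and by proposition \ref{propHermForms} the resulting unitary group is a connected $L|k_0$-form of some $\bGL_N$, contradicting the disconnectedness of $\bO_{2,k_0}^{is}$. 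Hence $E_i=k_0$, and the skew relation $\sigma(\beta_i)=-\beta_i$ combined with $\sigma|_{E_i}=\id$ forces $\beta_i=0$; thus $\bH_i=\bG_i\cong\bO_{2,k_0}^{is}$.

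For $(3)\Rightarrow(5)$, the remark preceding proposition \ref{propExistenceO2Is} notes that every point of $\Building(\bO_{2,k_0}^{is},k_0)$ carries the same Lie algebra filtration, so under $(3)$ the CLF condition is vacuous: any map between the two copies of $\Building(\bO_{2,k_0}^{is},k_0)\cong\mathbb{R}$ (cf.\ remark \ref{remIdO2}) is automatically CLF, yielding uncountably many. The implication $(5)\Rightarrow(4)$ is trivial, and $(4)\Rightarrow(1)$ proceeds by contraposition: if $\bH_i\not\cong\bO_{2,k_0}^{is}$, theorem \ref{thmUniquenessForJGLIsEmpty} applied to the single-factor datum $(V_i,h_i,\beta_i)$ (with $J_{GL}=\emptyset$) provides exactly one CLF-map, contradicting $(4)$.

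The main obstacle is the implication $(1)\Rightarrow(3)$: a priori the centraliser $\bH_i$ can carry an involution of a different type than $\bG_i$ — in particular a second-kind one when $\sigma$ preserves a non-trivial quadratic subextension of $E_i$ over $(E_i)_0$ — and this subcase must be genuinely excluded. The cleanest lever is the disconnectedness of $\bO_{2,k_0}^{is}$ versus the connectedness of every $L|k_0$-form of $\bGL_N$, combined with the dimension count under Weil restriction.
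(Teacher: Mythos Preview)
Your argument is correct, but it is organised differently from the paper's. The paper closes the cycle
\[3.\Ra 1.\Ra 5.\Ra 4.\Ra 2.\Ra 3.\]
and never proves $(1)\Rightarrow(3)$ directly. The step $4.\Rightarrow 2.,3.$ is obtained by going back to the \emph{proofs} of proposition~\ref{propBS11.2} and lemma~\ref{lemBeta=0} and observing that convention~\ref{conbGnotO2is} was only invoked in the single residual subcase $m=2$, $\epsilon=1$, $D=k=k_0$, isotropic $h$; hence non-uniqueness forces exactly condition~(2). Your route instead settles $(1)\Leftrightarrow(2)\Leftrightarrow(3)$ first by a purely structural argument (dimension under Weil restriction plus the disconnectedness of $\bO_{2,k_0}^{is}$ versus the connectedness of forms of $\bGL_N$ in the second-kind case), and only then invokes theorem~\ref{thmUniquenessForJGLIsEmpty} cleanly, since by that point you already know $\bG_i\not\cong\bO_{2,k_0}^{is}$ and the convention is satisfied. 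This avoids having to re-open earlier proofs, at the cost of the extra group-theoretic input; the paper's approach is shorter but relies on the reader tracking exactly where the $\bO_{2}^{is}$ exclusion was used.

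One small point worth making explicit in your write-up of $(1)\Rightarrow(3)$: once you conclude $E_i=k_0$, the inclusion $k\subseteq E_i$ forces $k=k_0$ as well, which you use implicitly when asserting $\sigma|_{E_i}=\id$.
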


\begin{proof}
That 1. follows from 3. is a consequence of lemma \ref{lemO2isDoesNotOccur}.
From 1. follows 5. because we have infinitely many translations of
$\Building(\bO^{is}_{2,k_0},k_0).$
5. implies 4.. We did not use that $\bG$ is not $\bO_{2,k_0}$ for the proofs of
lemma \ref{lemBeta=0} and theorem \ref{propBS11.2}. Thus we obtain from 4. the
statements 1., 2. and 3..
3. follows from 2. obviously. We summarise:
\[3.\Ra 1.\Ra 5.\Ra 4.\Ra 2.\Ra 3..\]
\end{proof}


\section{The image of a CLF-map}

\begin{proposition}\label{propCLFETFP}
The image of a CLF-map from $\Building(\bH,k_0)$ to $\building(\bG,k_0)$ is a
subset of the set of $o_E$-$o_D$-lattice functions.
\end{proposition}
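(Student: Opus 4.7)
The approach is to reduce to a factor-wise application of theorem \ref{thmBS10.3}, using the splitting already contained in corollary \ref{corPsiJ}.

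Fix a CLF-map $j$, a point $x\in\Building(\bH,k_0)$, and put $y:=j(x)$ with associated self-dual $o_D$-lattice function $\Lambda$. By corollary \ref{corPsiJ} the function $\Lambda$ splits as $\Lambda=\oplus_{i\in J}\Lambda_i$ under $V=\oplus_{i\in J}V_i$, and by lemma \ref{eqIntersectionPropertyComp} each coordinate $y_i\in\Building(\bG_i,k_0)$ is an extension of $x_i$ with respect to $\bH_i\hookrightarrow\bG_i$. By the definition of an $o_E$-$o_D$-lattice function it therefore suffices to show, for every $i\in J_{un+}$, that the $V_i$-component $\Lambda_i$ is an $o_{E_i}$-lattice function.

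Viewing $\Lambda_i$ (up to translation) as a point of $\building(\tibG_i,k)$, theorem \ref{thmBS10.3} combined with corollary \ref{corImageOfTildejIsETimesFP} reduces the task to the inclusion
\[\LF(x_i,\tilde{\tf{h}}_i)\subseteq\LF(\Lambda_i,\tilde{\tf{g}}_i).\]
If $i\in J_{un}$ and $\beta_i=0$, then $E_i=k$ and the statement is vacuous. If $i\in J_+$, the identification $\tilde{\tf{h}}_i\cong\tf{h}_i$, $a\mapsto a\oplus(-a^{\sigma,V_{-i}})$, together with the CLF inclusion $\LF(x_i,\tf{h}_i)\subseteq\LF(y_i,\tf{g}_i)$ and the projection of the ambient $\End_D(V_i\oplus V_{-i})$-filtration to its $V_i$-component, gives the inclusion at once. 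If $i\in J_{un}$ and $\beta_i\neq 0$, I decompose $\tilde{\tf{h}}_i=\tf{h}_i\oplus\Sym(\tilde{\tf{h}}_i,\sigma_i)$ via the invertibility of $2$ and note that multiplication by the skew invertible scalar $\beta_i\in E_i$ interchanges the two summands with shift $\nu(\beta_i)$; this yields
\[\LF(x_i,\tilde{\tf{h}}_i)(t)=\LF(x_i,\tf{h}_i)(t)+\beta_i^{-1}\LF(x_i,\tf{h}_i)(t+\nu(\beta_i)).\]
Both summands sit inside $\LF(y_i,\tilde{\tf{g}}_i)(t)$: the first by CLF, and the second by CLF combined with $\beta_i^{-1}\in\LF(y_i,\tilde{\tf{g}}_i)(-\nu(\beta_i))$ and the ring-like multiplicativity of square lattice functions.

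The main obstacle is the last point, namely seeing that $\beta_i^{-1}$ is controlled by the Lie algebra filtration of $y_i$. This however is immediate: both $\beta_i$ and $\beta_i^{-1}$ are skew elements of $E_i$, and since $x_i$ corresponds to an $o_{E_i}$-$o_D$-lattice function on $V_i$, both act on it as scalars, lying in $\LF(x_i,\tf{h}_i)(\pm\nu(\beta_i))$. The CLF property transports them to $\LF(y_i,\tf{g}_i)(\pm\nu(\beta_i))\subseteq\LF(y_i,\tilde{\tf{g}}_i)(\pm\nu(\beta_i))$, completing the argument.
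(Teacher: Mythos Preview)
Your proof is correct and follows the same overall strategy as the paper: factorise via corollary \ref{corPsiJ} and lemma \ref{eqIntersectionPropertyComp} (the paper packages these as proposition \ref{propFactOfCLFMap}), then handle each factor separately. The execution differs slightly. For $i\in J_{un}$ with $\beta_i\neq 0$ the paper simply cites the uniqueness theorem \ref{thmUniquenessForJGLIsEmpty} together with theorem \ref{thmExistence}, whereas you inline the underlying calculation from proposition \ref{propBS11.2} (the $\beta_i^{\pm 1}$ trick is exactly that argument). For $i\in J_+$ the paper verifies $o_{E_i}$-stability of $\Lambda_i$ by hand, checking that $a-a^{\sigma}$, $\pi_{E_i}-\pi_{E_i}^{\sigma}$ and $\pi_{E_i}^{-1}-(\pi_{E_i}^{-1})^{\sigma}$ lie in the correct filtration levels, while you establish the full inclusion $\LF(x_i,\tilde{\tf{h}}_i)\subseteq\End(\Lambda_i)$ and invoke theorem \ref{thmBS10.3}. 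Your route is more uniform across the two cases and avoids re-citing the uniqueness result; the paper's route is slightly more elementary in the $J_+$ case since it does not need the full strength of \ref{thmBS10.3}.
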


\begin{proof}
By proposition \ref{propFactOfCLFMap} we can assume that 
\[J_{un+}=\{i\}.\] 

\textbf{Case 1:} We assume $i\in J_{un}.$ If $\beta$ is zero we have $E=k$
and therefore the ${E^\times}$-action is trivial. If $\beta$ is non-zero
there is only one CLF-map by theorem \ref{thmUniquenessForJGLIsEmpty}
and it fullfils the assertion by theorem \ref{thmExistence}.

\textbf{Case 2:} We assume $i\in J_+.$ Let $y\in\building(\bG,k_0)$ be an extension
of $x\in\Building(\bH,k_0).$ The lattice function $\Lambda$ of $y$ splits under
$(V_i,V_{-i})$ by corollary \ref{corPsiJ} and by the self duality we only have
to pove that $\Lambda\cap V_i$ is an $o_{E_i}$-lattice function. The building
\[\Building(\bH,k_0)=\Building(\bGL_{\tens{E_i}{k}{D}}(V_i),E_i)\]
is identified with the set of lattice functions over a skewfield whose center is
$E_i.$ Thus we get
\bi
\item $a-a^{\sigma}\in\LF(x,\tf{h})(0)\subseteq \LF(y,\tf{g})(0)$ for all $a\in
o_{E_i}^\times,$
\item $\pi_{E_i}-\pi_{E_i}^{\sigma}\in \LF(x,\tf{h})(\frac{1}{e})\subseteq
\LF(y,\tf{g})(\frac{1}{e})$ and 
\item $\pi_{E_i}^{-1}-(\pi_{E_i}^{-1})^{\sigma}\in
\LF(x,\tf{h})(-\frac{1}{e})\subseteq \LF(y,\tf{g})(-\frac{1}{e})$
\ei
where $e$ is the ramification index of $E_i|k$ and $\pi_{E_i}$ is a prime
element of $E_i.$ We conclude that $1_i\Lambda$ is an $o_{E_i}$-lattice
function.
\end{proof}


\section{Rigidity of Euclidean buildings}

\begin{definition}
Let $S$ be a set with affine structure. An \textit{affine
functional}\idxD{affine functional} $f$ on 
$S$ is an affine map from $S$ to $\bbR,$ i.e.
\[f(tx+(1-t)y)=tf(x)+(1-t)f(y)\]
for all $t\in [0,1]$ and $x,y\in S.$
\end{definition}

We analyse affine functionals on the buildings $\building(\bG,k_0)$ and
$\Building(\tibG,k).$ At first we give the general statement.

\begin{proposition}\label{propRigidityForAffineBuildings}
Let $\Omega$ be a thick Euclidean building and $|\Omega|$ be its geometric
realisation, then every affine functional $a$ on $|\Omega|$ is constant.
\end{proposition}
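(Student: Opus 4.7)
The strategy is to show first that $a$ is constant on each apartment, and then invoke the building axiom that any two points of $|\Omega|$ are contained in a common apartment to conclude global constancy. So fix an apartment $A$; since the affine structure of $|\Omega|$ agrees with the Euclidean one on $A$, the restriction $a|_A$ is an ordinary affine function, $a|_A = \ell_A + c_A$, and the task reduces to proving $\ell_A = 0$.

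To this end, pick a wall $H$ of $A$ with unit normal $n_H$. By thickness of $\Omega$, at any panel of $H$ there are at least three chambers, and hence one obtains three pairwise distinct half-apartments $X_1, X_2, X_3$ all bounded by $H$. A standard consequence of the BN-pair axioms is that any two distinct half-apartments with the same bounding wall glue to form an apartment of $\Omega$, so each $X_i \cup X_j$ is an apartment. For each $i$, let $\gamma_i$ denote the directional derivative of $a|_{X_i}$ in the inward normal direction of $X_i$. Since $a$ is affine on $X_i \cup X_j$, and because the inward normals of $X_i$ and $X_j$ across $H$ point opposite to one another, I obtain the relation $\gamma_i + \gamma_j = 0$ for every pair. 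The three equations $\gamma_1 + \gamma_2 = \gamma_1 + \gamma_3 = \gamma_2 + \gamma_3 = 0$ together force $\gamma_1 = \gamma_2 = \gamma_3 = 0$. In particular, taking $X_1$ to be the half-apartment of $A$ lying on one side of $H$, this yields $\ell_A(n_H) = 0$.

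Running this argument over every wall $H$ of $A$ shows that $\ell_A$ vanishes on every wall-normal vector of $A$. In a (thick) Euclidean building these normals span the vector space of directions of the apartment, since they form an affine root system whose vector part is a spanning root system. Hence $\ell_A = 0$ and $a|_A = c_A$ is constant. Different apartments share points (indeed any two points share an apartment), so the various constants agree and $a$ is constant on $|\Omega|$.

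The main step that really uses the thickness hypothesis is the claim that two distinct half-apartments with the same bounding wall form an apartment of $\Omega$; this is a classical but nontrivial consequence of the chamber-system structure and the existence of at least three chambers at every panel of $H$. Everything else is straightforward bookkeeping with affine functions on Euclidean spaces.
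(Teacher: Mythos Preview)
Your argument is correct, but it takes a different route from the paper's. The paper argues locally at a panel: given three chambers $C_1,C_2,C_3$ sharing a codimension-one face $S$, with $P_i\in C_i$ the vertex opposite $S$, one checks (using the apartment isomorphisms fixing $\bar C_i\cap\bar C_j$) that the three segments $[P_i,P_j]$ all pass through a single point $Q\in|S|$. Assuming $a(Q)=0$, the affineness on each $[P_i,P_j]$ forces $a(P_1),a(P_2),a(P_3)$ to have the same sign, and the only consistent possibility is $a(P_1)=a(P_2)=a(P_3)$. Propagating along galleries shows $a$ is constant on all vertices of a fixed type, and since such vertices affinely span any apartment, $a$ is constant.

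Your approach replaces this chamber-level combinatorics with a wall-level linear-algebra argument: three half-apartments along a wall $H$, the ``any two glue to an apartment'' lemma, and the resulting system $\gamma_i+\gamma_j=0$ forcing all normal derivatives to vanish. This is clean and conceptually appealing, and it makes the role of thickness very transparent. The trade-off is that the half-apartment gluing fact, while standard, is a genuine lemma about buildings that goes beyond the bare axioms listed in the paper's Remark~\ref{remPropertiesOfAThickEuclideanBuilding}; the paper's argument, by contrast, only needs those axioms directly and is in that sense more self-contained. Both proofs ultimately reduce to the same ``three things summing pairwise to zero'' trick, just at different scales (vertices versus half-apartments).
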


For the definition of a Euclidean building and its geometric realisation
see \cite[VI.3]{brown:89} or chapter \ref{chapterEuclideanBuildingOfGLDV} in
part 2.
 We use the following properties of a thick Euclidean building in the next
proof.

\begin{remark}\label{remPropertiesOfAThickEuclideanBuilding}
\be
\item A building is a chamber complex, especially two arbitrary chambers are
connected by a gallery.    
\item The thickness, i.e. at every corank $1$ face $S$ there are at least three
different chambers which have $S$ as a common subface.
\item The geometric realisation of a Euclidean building of rank $r$ has an
affine structure and the geometric realisation of an apartment is affine
isomorphic to $\bbR^{r-1}.$
\item For two arbitrary faces there is an apartment containing them.
\item If $\Sigma$ and $\Sigma'$ are two apartments containing a chamber $C$
there is an isomorphism of simplicial complexes from $\Sigma$ to $\Sigma'$ which
fixes the intersection of $\Sigma$ and $\Sigma'.$ It induces an affine
isomorphism between the geometric realisations.
\ee
\end{remark}

\begin{proof}(of \ref{propRigidityForAffineBuildings})
Assume that we are given three vertices $P_1,\ P_2$ and $P_3$ of adjacent
chambers $C_1,\ C_2$ and $C_3,$ more precisely the three chambers have a common
codimension 1 face $S$ and the vertex $P_i\in C_i$ does not lie on $S.$ 
The line segment $[P_1,P_2]$ meets $[P_1,P_3]$ and $[P_2,P_3]$ in a point $Q\in
S.$ This is a consequence of \ref{remPropertiesOfAThickEuclideanBuilding}[4.,
5.] as follows.  
We are working in three different apartments simultaneously. If $\Delta_{ij}$
denotes an apartment 
containing $C_i$ and $C_j,$  for different $i$ and $j,$ the affine isomorphism 
from $|\Delta_{12}|$ to $|\Delta_{13}|$ fixing $|\Delta_{12}\cap\Delta_{13}|$ sends
$[P_1,P_2]$ to $[P_1,P_3]$ and 
thus the unique intersection point in $[P_1,P_2]\cap |\bar{C_1}|\cap |\bar{C_2}|$
lies on $[P_1,P_3],$ and analogously on $[P_3,P_2].$  
Without loss of generality assume that $a(Q)$ vanishes. If $a(P_1)$ is negative
then $a(P_2)$ and $a(P_3)$ are positive by the affiness of $a.$ Thus 
$a(Q)$ is positive since it lies on $[P_2,P_3].$ A contradiction. Using
galleries we obtain that $a$ is constant on vertices of the same type. An
apartment is affinely generated by its vertices of a fixed type. Thus $a$ is
constant on every apartment and therefore on $|\Omega|.$ 
\end{proof}
\vspace{1em}

We remind again that $\bG$ is not $k_0$-isomorphic to $\bO_{2,k_0}^{is}.$

\begin{proposition}\label{propRigidityForClassicalGroups}
Every affine functional on $\building(\bG,k_0)$ is constant. 
\end{proposition}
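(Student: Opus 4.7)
The plan is to obtain this as a direct corollary of Proposition \ref{propRigidityForAffineBuildings}. The only work left is to verify that, under the standing convention \ref{conbGnotO2is}, the Bruhat-Tits building $\building(\bG,k_0)$ falls within the hypothesis of that proposition (or is trivial). This is already essentially recorded in the remark following Definition \ref{defBuildingU(h)}, which asserts that $\building(\bG,k_0)$ is a point when $h$ is anisotropic and the geometric realisation of a thick Euclidean building when $h$ is isotropic.

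First I would split into two cases according to the Witt index of $h$. If $h$ is anisotropic, then $\building(\bG,k_0)$ consists of a single point, so every function on it, affine functional or otherwise, is trivially constant.

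In the remaining isotropic case, I would invoke the remark after Definition \ref{defBuildingU(h)} to conclude that $\building(\bG,k_0)$ is the geometric realisation of a thick Euclidean building. Here convention \ref{conbGnotO2is} is essential: the excluded group $\bO_{2,k_0}^{is}$ is the one isotropic situation in which the associated Bruhat-Tits building degenerates to a line (cf. remark \ref{remIdO2}), on which the identity map gives a nonconstant affine functional. Once thickness is in place, Proposition \ref{propRigidityForAffineBuildings} applies verbatim and forces every affine functional on $\building(\bG,k_0)$ to be constant.

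The main, and essentially only, obstacle was already handled in Proposition \ref{propRigidityForAffineBuildings}; the present statement requires no further geometric argument beyond citing the thickness of $\building(\bG,k_0)$ under our conventions. If desired, one can also unify both cases by remarking that in the anisotropic situation the building is a zero-dimensional thick Euclidean building (trivially a single chamber), so that the proposition applies uniformly.
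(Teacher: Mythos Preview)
Your proof is correct and follows essentially the same approach as the paper: distinguish the anisotropic case (building is a point, trivially constant) from the isotropic case (building is the geometric realisation of a thick Euclidean building, so Proposition \ref{propRigidityForAffineBuildings} applies). The paper's proof is just a two-line version of what you wrote.
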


\begin{proof}
If $\bG$ is totally isotropic then $\building(\bG,k_0)$ is a point and otherwise
it is the geometric realisation of a thick Euclidean building. Now we apply
proposition \ref{propRigidityForAffineBuildings}.  
\end{proof}

\begin{proposition}\label{propRigidityForGLnSO2}
\be
\item A $k^\times$-invariant affine functional $a$ on $\Building(\tibG,k)$ is
constant. 
\item Every $k^\times$-invariant affine functional on
$\Building(\bO_{2,k}^{is},k)$ is constant. 
\ee
\end{proposition}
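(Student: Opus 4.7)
The plan is to exploit the affine product decomposition
\[ \Building(\tibG,k) \cong \building(\tibG,k) \times \bbR \]
which is canonically available in the lattice function model by sending $\Lambda\in\Lattone{o_D}{V}$ to the pair consisting of its class $[\Lambda]\in\Latt{o_D}{V}$ and a translation parameter (see definition \ref{defLattoDV} and remark \ref{remAffineStructure}); the second factor realises the vector space $\Hom(X^*(\tibG)_k,\bbR)\cong\bbR$. First I would identify the $k^{\times}$-action under this splitting: since the center acts on $V$ by scalar multiplication, $(\pi\Lambda)(r)=\pi\Lambda(r)$, and combining this with the defining axiom $\Lambda(r+\nu(\pi_D))=\Lambda(r)\pi_D$ and $\nu(k^{\times})=\bbZ$ shows that $\pi\in k^{\times}$ fixes $[\Lambda]$ and shifts the $\bbR$-factor by $\nu(\pi)$.

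Any affine functional $a$ on $\Building(\tibG,k)$ then splits as $a([\Lambda],s)=a_1([\Lambda])+\ell s+c$ with $a_1$ affine on $\building(\tibG,k)$ and $\ell,c\in\bbR$. When $m:=\dim_D V\geq 2$, $\building(\tibG,k)$ is the geometric realisation of a thick Euclidean building of type $\tilde{A}_{m-1}$, and proposition \ref{propRigidityForAffineBuildings} forces $a_1$ to be constant; the case $m=1$ is trivial since the building is a point. The $k^{\times}$-invariance of $a$ then reads $\ell\cdot\nu(\pi)=0$ for every $\pi\in k^{\times}$, and $\nu(k^{\times})=\bbZ$ gives $\ell=0$, so $a$ is constant. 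This establishes (1).

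For (2), remark \ref{remIdO2} identifies $\Building(\bO^{is}_{2,k},k)$ with $\bbR$ as an affine $\bO^{is}_{2,k}(k)$-set, and the $\bGm(k)=k^{\times}$-action on $\bbR$ is translation by $\nu$ (example \ref{exOmittedCase}). An affine functional $y\mapsto\ell y+c$ on $\bbR$ that is invariant under the translations $y\mapsto y+\nu(\alpha)$ for all $\alpha\in k^{\times}$ must satisfy $\ell=0$, and is therefore constant.

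I expect the main obstacle to be the bookkeeping in (1): verifying carefully that the natural product splitting of $\Building(\tibG,k)$ realises the center's action as pure translation on the $\bbR$-factor with image $\bbZ$, and isolating the case $m=1$ where the non-enlarged factor collapses to a point. Once the splitting is in place, the rigidity of proposition \ref{propRigidityForAffineBuildings} together with $\nu(k^{\times})=\bbZ$ finishes both parts mechanically.
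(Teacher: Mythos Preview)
Your proof is correct and follows essentially the same route as the paper: reduce to an affine functional on the non-enlarged building $\building(\tibG,k)$ and invoke proposition \ref{propRigidityForAffineBuildings}. The paper compresses your product-decomposition argument into the single observation that the fibers of $a$ are unions of translation classes (which is exactly your $\ell=0$ step), and for part (2) it reduces to part (1) via the $k^{\times}$-equivariant affine identification $\Building(\bO^{is}_{2,k},k)\cong\Building(\bGm,k)$ rather than arguing directly on $\bbR$ as you do.
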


\begin{proof}
\be
\item We can consider $a$ as a map on $\building(\tibG,k),$ because the fibers
of $a$ are unions of classes of $o_D$-lattice functions. Now we apply
proposition \ref{propRigidityForAffineBuildings}.
\item It follows from part 1, because
\[\Building(\bO_{2,k}^{is},k)\cong\Building(\bG_m,k)\] by a
$k^\times$-equivariant affine bijection. 
\ee
\end{proof}


\section{Uniqueness in the general case}\label{secUniquenessInTheGeneralCase}

In this section we want to generalise theorem \ref{thmUniquenessForJGLIsEmpty}
to the case where there are no restrictions on $J.$ CLF-maps can differ by
translations in the following sence.

\begin{definition}\label{defTranslation}
\be
\item Fix a natural number $n$ and a real number $s.$ A \textit{
translation}\idxD{translation of a Bruhat-Tits building} of
$\Building(\tibG,k)$ by $s$ is a map 
\[t:\ \Building(\tibG,k)\ra\Building(\tibG,k)\]
defined by \[t(\Lambda):=\Lambda+s\]
in terms of $o_D$-lattice functions of $V.$
 Here $\Lambda+s$ denotes the lattice function 
 \[r\mapsto \Lambda(r-s).\] This also defines translations
 on $\Building(\bO_{2,k}^{is},k)\cong\Building(\bG_m,k).$
\item We only call the identity of $\building(\bG,k_0)$ a \textit{translation} of
$\building(\bG,k_0).$
\item A \textit{translation} of $\Building(\bH,k_0)$ is a product
of translations $t_i$ of $\Building(\bH_i,k_0)$ where $i$ runs over
$J_{un+}.$
\ee
\end{definition}

\begin{remark}
A translation of $\Building(\bH,k_0)$ is $\bH(k_0)$-equivariant if there is no
$\bH_i$ $k_0$-iso\-mor\-phic to $\bO^{is}_{2,k_0}.$ Otherwise we get $k=k_0$ and  
such a translation is 
\[(\prod_{i\in J_{un+},
\bH_i\stackrel{/k}{\not\cong}\bO^{is}_{2,k}}\bH_i(k))\times
(\bO^{is}_{2,k})^0(k)-\] and especially $\bH^0(k)$-equivariant, but in general
not $\bH(k)$-equivariant, see \ref{remUniqO2isEquivTranslation}.
\end{remark}
	
Let $j$ be a map from $\Building(\bH,k_0)$ to $\building(\bG,k_0)$ constructed
as in the proof of theorem \ref{thmExistence}.

\begin{theorem}\label{thmCLFUnitaryCase}
If $\phi$ is an affine and $\Centr(\bH^0(k_0))$-equivariant CLF-map from\\
$\Building(\bH,k_0)$ to $\building(\bG,k_0)$ then $j^{-1}\circ\phi$ is a
translation of $\Building(\bH,k_0).$ In  terms of lattice functions the image of
$\phi$ is the set of self dual $o_E$-$o_D$-lattice functions on $V$ and $\phi$
is $\bH^0(k_0)$-equivariant. 
\end{theorem}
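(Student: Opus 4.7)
The plan is to set $\psi := j^{-1}\circ\phi$ and show that this well-defined self-map of $\Building(\bH,k_0)$ is a translation in the sense of definition \ref{defTranslation}.

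First I would verify that $\psi$ is well defined. By proposition \ref{propCLFETFP} the image of $\phi$ is contained in the set of self-dual $o_E$-$o_D$-lattice functions on $V$, which by theorem \ref{thmExistence} coincides with the image of the injective map $j$. So $\psi$ is a well-defined affine $\Centr(\bH^0(k_0))$-equivariant self-map of $\Building(\bH,k_0)$. The decisive observation is that $\psi$ preserves Lie algebra filtrations: applying the CLF property to both $j$ and $\phi$,
\[\LF(\psi(x),\bH,k_0)=\LF(j(\psi(x)),\bG,k_0)\cap\Lie(\bH)(k_0)=\LF(\phi(x),\bG,k_0)\cap\Lie(\bH)(k_0)=\LF(x,\bH,k_0),\]
and the direct sum decomposition of definition \ref{defLFForProductsInTheUnitaryCase} transfers this identity to every factor of $\Building(\bH,k_0)=\prod_{i\in J_{un+}}\Building(\bH_i,k_0)$.

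Next I would use the factorisation $\phi=\psi_J\circ\tau$ of proposition \ref{propFactOfCLFMap} together with the analogous factorisation $j=\psi_J\circ(\prod_i j_i)$ built into the construction of $j$ in the proof of theorem \ref{thmExistence}, which gives $\psi_i(x)=j_i^{-1}(\tau_i(x))$ with $\tau_i(x)\in\Building(\bG_i,k_0)$ a CLF-extension of $x_i$ (by lemma \ref{eqIntersectionPropertyComp}). If $i\in J_{un}$ and $\bH_i\not\cong\bO^{is}_{2,k_0}$, the stronger uniqueness statement of theorem \ref{thmUniquenessForJGLIsEmpty}, applied to the subproblem $\bH_i\subseteq\bG_i$, forces $\tau_i(x)=j_i(x_i)$ and therefore $\psi_i(x)=x_i$. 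In the remaining cases ($\bH_i\cong\bO^{is}_{2,k_0}$ or $i\in J_+$), theorem \ref{thmBLI.4} determines only the class of the lattice function of $\tau_i(x)$, leaving a one-parameter translation freedom in the enlargement direction, so we may write $\psi_i(x)=x_i+t_i(x)$ for an affine function $t_i:\Building(\bH,k_0)\ra\bbR$.

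The main obstacle is showing each such $t_i$ is constant. The central subgroup $\Centr(\bH_j^0(k_0))$ acts on $\Building(\bH,k_0)$ by non-trivial translation along the enlargement direction of the $j$-th factor (and trivially elsewhere), and the $\Centr(\bH^0(k_0))$-equivariance of $\psi$ translates into invariance of $t_i$ under all these central translations. Affineness together with invariance under a non-trivial translation on each enlargement line forces $t_i$ to be constant along every such line, so $t_i$ factors through the projection to $\prod_j\building(\bH_j,k_0)$; on each non-trivial factor this is the geometric realisation of a thick Euclidean building, so proposition \ref{propRigidityForAffineBuildings} applied factor by factor shows $t_i$ is constant. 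Consequently every $\psi_i$ is a translation by a constant and $\psi$ itself is a translation of $\Building(\bH,k_0)$. Such a translation is bijective and $\bH^0(k_0)$-equivariant by the remark following definition \ref{defTranslation}, so $\phi=j\circ\psi$ has the same image as $j$, namely the self-dual $o_E$-$o_D$-lattice functions, and inherits $\bH^0(k_0)$-equivariance from the $\bH(k_0)$-equivariance of $j$.
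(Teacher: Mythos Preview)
Your proposal is correct and follows essentially the same approach as the paper: both arguments show $j^{-1}\circ\phi$ is well-defined via proposition \ref{propCLFETFP}, use the factorisation through $\psi_J$ together with lemma \ref{eqIntersectionPropertyComp} to reduce to individual factors, invoke theorem \ref{thmUniquenessForJGLIsEmpty} for the indices $i\in J_{un}$ with $\bH_i\not\cong\bO^{is}_{2,k_0}$, and handle the remaining factors by combining the $\Centr(\bH^0(k_0))$-equivariance with the rigidity of affine functionals on Euclidean buildings.

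The organization differs slightly. The paper proceeds in two steps (lemmas \ref{lemCoordinatesDependOnxi} and \ref{lemCoordinatesAretranslations}): it first shows each component $\tau_i$ depends only on $x_i$ by applying rigidity (propositions \ref{propRigidityForClassicalGroups} and \ref{propRigidityForGLnSO2}) to the other coordinates, and then shows the resulting one-variable map $\tilde\tau_i$ is a translation. You instead show directly that $t_i(x)=\psi_i(x)-x_i$ is constant, first using centre-equivariance to kill the enlargement directions and then applying proposition \ref{propRigidityForAffineBuildings} on the non-enlarged factors. The paper's use of proposition \ref{propRigidityForGLnSO2} already contains this passage to the non-enlarged building in its proof, so the two arguments are really the same rigidity step packaged differently; your version is a bit more uniform across the two kinds of exceptional index. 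Your intermediate observation that $\psi$ preserves the Lie algebra filtrations (which justifies writing $\psi_i(x)=x_i+t_i(x)$ for $i\in J_+$) is exactly what the paper uses implicitly when it defines $a_i$ by $\Lambda_{\tau_i(x)}=\Lambda_{x_i}+a_i(x)$.
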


\begin{proof} The image of $\phi$ is a subset of the image of $j$ which is the
set of selfdual $o_E$-$o_D$-lattice functions by \ref{propCLFETFP} and
\ref{thmExistence}, especially $\tau:=j^{-1}\circ\phi$ is well-defined.
We prove in the lemmas below that $\tau$ is a translation. A translation is a
bijection and we conclude that $\phi$ and $j$ have the same image. 
The $\bH^0(k_0)$-equivariance of $\phi$ follows because $j$ and $\tau$ are
$\bH^0(k_0)$-equivariant. 
\end{proof}
\vspace{1em}

We work with the notation of the theorem and its proof. 
The coordinates of $\tau$ are denoted by $\tau_i,$ $i\in J_{un+}.$ 

\begin{lemma}\label{lemCoordinatesDependOnxi}
The coordinate $\tau_i$ only depends on $x_i.$
For all $i\in J_{un}$ for which $\bO_{2,k_0}^{is}$ is not $k_0$-isomorphic to
$\bH_i$ we have $\tau_i(x)=x_i$ for all $x\in\Building(\bH_i,k_0).$ 
\end{lemma}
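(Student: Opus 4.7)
My approach is to exploit the factorization $j=\psi_J\circ\prod_i j_i$ in order to reduce the problem coordinate by coordinate, applying the uniqueness Theorem \ref{thmUniquenessForJGLIsEmpty} on each factor where it is available and handling the rest by affine-ness, $\Centr(\bH^0(k_0))$-equivariance, and rigidity.

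First I would rewrite $\phi(x)=\psi_J((j_i(\tau_i(x)))_i)$, so that $\phi(x)$ splits under $(V_i)_{i\in J}$ with $i$-th summand $j_i(\tau_i(x))$. Because $\tf{h}=\bigoplus_i\tf{h}_i$ is block-diagonal in $\tf{g}$ and $\LF(x,\tf{h})=\bigoplus_i\LF(x_i,\tf{h}_i)$, intersecting the CLF identity $\LF(\phi(x),\tf{g})\cap\tf{h}=\LF(x,\tf{h})$ block by block gives
\[\LF(j_i(\tau_i(x)),\tf{g}_i)\cap\tf{h}_i=\LF(x_i,\tf{h}_i)\qquad (i\in J_{un+}),\]
so that $j_i(\tau_i(x))$ is an extension of $x_i$ in $\bG_i$.

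Next, for each $i\in J_{un}$ with $\bH_i\not\cong\bO_{2,k_0}^{is}$, the reduced inclusion $\bH_i\subseteq\bG_i$ has $J_{GL}=\emptyset$ and no $\bO_{2,k_0}^{is}$-factor, so Theorem \ref{thmUniquenessForJGLIsEmpty} applies in its strong form and identifies $j_i(x_i)$ as the unique extension of $x_i$. Hence $j_i(\tau_i(x))=j_i(x_i)$, and injectivity of $j_i$ yields $\tau_i(x)=x_i$. This proves the second claim of the lemma and, for these indices, the first claim as well.

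For the remaining indices ($i\in J_+$ or $i\in J_{un}$ with $\bH_i\cong\bO_{2,k_0}^{is}$) the extension of $x_i$ is unique only up to a translation of $\Building(\bH_i,k_0)$: for $i\in J_+$ this follows from Theorem \ref{thmBS10.3} after passing to the non-enlarged $\bGL$-building, while for $\bH_i\cong\bO_{2,k_0}^{is}$ the Lie algebra filtration is already constant on the whole one-dimensional building. Thus I may write $\tau_i(x)=x_i+s_i(x)$ with $s_i$ an affine $\bbR$-valued function of $x$. The $\Centr(\bH^0(k_0))$-equivariance of $\phi$ translates factor by factor into invariance of $s_i$ under each $Z_j=\Centr(\bH_j^0(k_0))$; restricting to the $j$-th factor (with the others fixed) produces a $Z_j$-invariant affine functional on $\Building(\bH_j,k_0)$. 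Proposition \ref{propRigidityForClassicalGroups} makes every affine functional on a semisimple factor constant automatically, while Proposition \ref{propRigidityForGLnSO2} handles the $J_+$-type and $\bO_{2,k_0}^{is}$-type factors using the $Z_j$-invariance. Hence $s_i$ is globally constant and $\tau_i$ depends only on $x_i$.

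The main obstacle will be this third step, specifically in ensuring that the central equivariance yields exactly the $Z_j$-invariance required by each rigidity proposition, and that the reduction to the non-enlarged $\bGL$-setting for $i\in J_+$ is carried out carefully enough to invoke Theorem \ref{thmBS10.3}. The first two steps are direct consequences of the factorization and the uniqueness theorem.
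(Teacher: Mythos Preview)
Your proof is correct and follows essentially the same route as the paper. The paper also splits into the case $i\in J_{un}$ with $\bH_i\not\cong\bO_{2,k_0}^{is}$ (settled by Theorem~\ref{thmUniquenessForJGLIsEmpty} exactly as you do) and the remaining indices, where it fixes all coordinates but one $x_t$ with $t\neq i$ and applies the rigidity Propositions~\ref{propRigidityForClassicalGroups} and~\ref{propRigidityForGLnSO2} to the resulting affine functional; your introduction of $s_i(x)=\tau_i(x)-x_i$ is just a slightly more explicit packaging of the same argument (and in fact your claim that $s_i$ is \emph{globally} constant already absorbs the content of the subsequent Lemma~\ref{lemCoordinatesAretranslations}).
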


\begin{proof}
We have to look at three cases.\\
\textbf{Case 1:} For the indexes $i$ in $J_{un}$ for which ${\bH}_i$ is not
$k_0$-isomorphic to $\bO_{2,k_0}^{is}$ we know by theorem
\ref{thmUniquenessForJGLIsEmpty} that $\tau_i(x)$ equals $x_i.$\\
\textbf{Case 2:} We assume that we have an index $i\in J_{un}$ such that $\bH_i$ is
$k_0$-isomorphic to $\bO_{2,k_0}^{is}.$ In this case we have $k=k_0.$  A lattice
function $\Lambda$ corresponding to a point of the building
$\Building(\bO_{2,k},k)$ is identified with a real number,
see remark \ref{remIdO2}.
 If we fix an index $t\in J\setminus\{i\}$ and coordinates $x_l$ for $l\in
J\setminus\{t\}$ then the map 
\[x_t\mapsto \tau_i(x)\]
is constant by proposition \ref{propRigidityForClassicalGroups} or
\ref{propRigidityForGLnSO2} and thus $\tau_i$ does not depend on $x_t.$\\
\textbf{Case 3:} In the case of $i\in J_+$ an analogous argument like in case 2
applies.
The affine map we use is the map $a_i$ defined by 
\[\Lambda_{\tau_i(x)}=\Lambda_{x_i}+a_i(x).\]
\end{proof}
\vspace{1em}

The last lemma allows us to define a map $\tilde{\tau}_i$ by
\[\tilde{\tau}_i(x_i):=\tau_i(x),\ x\in \Building(\bH,k_0).\] 

\begin{lemma}\label{lemCoordinatesAretranslations}
The map $\tilde{\tau}_i$ is a translation.
\end{lemma}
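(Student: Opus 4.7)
The plan is to use the CLF-conditions on both $j$ and $\phi$ to force $\tilde\tau_i$ to preserve the Lie algebra filtration, and then to eliminate anything beyond pure translation via the rigidity of thick Euclidean buildings. The key intermediate statement is that $\LF(\tilde\tau_i(x_i),\bH_i,k_0) = \LF(x_i,\bH_i,k_0)$ for every $i \in J_{un+}$: indeed, since $\phi = j \circ \tau$ and both sides are CLF, the defining identity gives $di(\LF(x,\bH,k_0)) = di(\LF(\tau(x),\bH,k_0))$; injectivity of $di$ together with the factorisation of the Lie algebra filtration over $J_{un+}$ (definition \ref{defLFForProductsInTheUnitaryCase}) yields the claim. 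For $i \in J_{un}$ with $\bH_i$ not $k_0$-isomorphic to $\bO^{is}_{2,k_0}$ this step is not even needed, as lemma \ref{lemCoordinatesDependOnxi} already gives $\tilde\tau_i = \id$, which is a translation by convention.

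For $i \in J_+$, $\Building(\bH_i,k_0)$ is identified with the enlarged building $\Building(\tibH_i,k)$ of a general linear group, and theorem \ref{thmBLI.4} tells us that the square lattice function, i.e.\ the Lie algebra filtration, determines the equivalence class of the $o_D$-lattice function. Hence $\tilde\tau_i(x_i)$ and $x_i$ project to the same point of the non-enlarged building $\building(\tibH_i,k)$, so in the lattice-function model $\Lambda_{\tilde\tau_i(x_i)} = \Lambda_{x_i} + f(x_i)$ for a real function $f$. A direct check in a common splitting basis (using that the $\bbR$-shift distributes over affine combinations of lattice functions, as in remark \ref{remAffineStructure}) shows that $f$ is an affine functional on $\Building(\tibH_i,k)$; tracking the identity $\Lambda_{\tilde\tau_i(zx_i)} = z\Lambda_{\tilde\tau_i(x_i)}$ for $z \in \Centr(\bH_i^0)(k_0) = E_i^{\times}$ further gives $f(zx_i) = f(x_i)$. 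The analogue of proposition \ref{propRigidityForGLnSO2}(1) for $\tibH_i$ over $E_i$ then forces $f$ to be constant, so $\tilde\tau_i$ is a translation.

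In the remaining case $i \in J_{un}$ with $\bH_i \cong \bO^{is}_{2,k_0}$, by remark \ref{remIdO2} we have $\Building(\bH_i,k_0) \cong \bbR$; since $\tilde\tau_i$ is affine, write $\tilde\tau_i(x) = ax + b$. The centre $\bGm(k_0) = k_0^{\times}$ of $\bH_i^0(k_0)$ acts by integer translations, and $\bbZ$-equivariance of $\tilde\tau_i$ forces $a = 1$, so $\tilde\tau_i$ is again a translation. The main subtlety lies in the $J_+$ case: one must argue cleanly that the real shift $f$ defined by $\Lambda_{\tilde\tau_i(x_i)} = \Lambda_{x_i} + f(x_i)$ is genuinely an affine functional and is invariant under the scalar action, which boils down to the explicit formula for affine combinations of lattice functions in a common splitting basis.
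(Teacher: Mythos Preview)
Your proof is correct and follows essentially the same route as the paper's: split into the three cases (trivial for $i\in J_{un}$ with $\bH_i\not\cong\bO^{is}_{2,k_0}$; use affineness plus equivariance under integer translations on $\bbR$ for the $\bO^{is}_{2,k_0}$ case; and for $i\in J_+$ define the real shift $f$, check it is an affine functional invariant under the central scalar action, then apply the rigidity proposition \ref{propRigidityForGLnSO2}). The one place you are more explicit than the paper is in justifying via the CLF property of both $j$ and $\phi$ that $\tilde\tau_i(x_i)$ and $x_i$ have the same Lie algebra filtration, hence lie in the same translation class --- the paper takes this for granted when it introduces $a_i$ in case~3 of lemma~\ref{lemCoordinatesDependOnxi}, so your added sentence is a genuine clarification rather than a different argument.
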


\begin{proof}
We firstly consider an index $i\in J_{un}$ such that $\bH_i$ is $k_0$-isomorphic
to $\bO_{2,k_0}^{is}.$ We identify $\Building(\bO_{2,k_0}^{is},k_0)$ with
$\bbR.$ In this case we have $k=k_0$ and the $\SO_2(k)$-equivariance of $\tau_i$
gives
\[\tilde{\tau}_i(x_i+1)=\tilde{\tau}_i(x_i)+1.\] 
The affineness property implies that $\ti{\tau}_i$ is a translation.
For $i\in J_+$ the map $a_i$ in case 3 of the preceding proof is an affine
functional and the $k^{\times}$-equivariance of $\tau_i$ implies the
$k^{\times}$-invariance of $a_i,$ because one gets in terms of lattice functions
\begin{align*}
\pi_k\Lambda+a_i(\pi_k\Lambda)&= \ti{\tau}_i(\pi_k\Lambda)\\
&= \pi_k\ti{\tau}_i(\Lambda)\\
&= \Lambda+a_i(\Lambda)-1\\
&= \pi_k\Lambda+a_i(\Lambda)
\end{align*}.
Thus $a_i$ is constant by proposition \ref{propRigidityForGLnSO2}.
\end{proof}


\section{Generalisation to the non-separable case}\label{secGeneralisationToTheNonseparableCase}

All theorems and propositions of the preceding sections of chapter
\ref{chMapsWhichAreCLF} and \ref{chUniquenessResults} work if we forget all the
separability assumptions, but we have to explain the definition of the enlarged
Bruhat-Tits building of the centraliser. This definition was introduced in
\cite{broussousStevens:09}.

\textbf{Case 1:} We firstly summarise changes of subsection \ref{subsecForGLDV}. We
assume that $E$ is commutative, semisimple and not separable over $k.$
We define 
\bi
\item $\building(\tibG_E,k)$\idxS{$\building(\tibG_E,k)$ for $E$ not separable
over $k$} (resp. $\Building(\tibG_E,k)$)\idxS{$\Building(\tibG_E,k)$ for $E$ not
separable over $k$} to be the 
product (\ref{eqbuildProdGL}) (resp. (\ref{eqBuildProdGL})),
\item $\tibG_E(k):=\tibG(k)\cap \tibG_E$ and\idxS{$\tibG_E(k)$ for $E$ not
separable over $k$}
\item $\Lie(\tibG_E)(k):=\Centr_{\Lie(\tibG)(k)}(E).$\idxS{$\Lie(\tibG_E)(k)$
for $E$ not separable over $k$}
\ei

\textbf{Case 2:}
We now come to the case of a unitary group, i.e. we come to subsection
\ref{subsecForUh}. 
Let us assume that $k[\beta]$ is semisimple but not separable over $k.$ 
In this case $\bH:=\bG_{\beta}$ is well defined but not reductive. We
define 
\bi
\item $\Building(\bH,k_0)$ to be the product \idxS{$\Building(\bH,k_0)$ for
$\beta$ not separable}
\beq\prod_{i\in J_{un}} \Building (\bU (\sigma|_{\End_{\tens
{E_i}{k}{D}}(V_i)}),(E_i)_0)\times \prod_{i>0}\Building (\bGL_{\tens
{E_i}{k}{D}}(V_i),E_i).
\eeq
\item $\bH(k_0):=\bG(k_0)\cap \bH,$\idxS{$\bH(k_0)$ for $\beta$ not separable}
$\bH^0(k_0):=\bG(k_0)\cap \bH^0$ and 
\item $\Lie(\bH)(k_0):=\Centr_{\Lie(\bG)(k_0)}(\beta).$\idxS{$\Lie(\bH)(k_0)$
for $\beta$ not separable}
\ei

As in \ref{defLFInProduct} and \ref{defLFForProductsInTheUnitaryCase} we define
the Lie algebra filtration of a point $x=(x_i)_i$ as the direct sum of the Lie
algebra filtrations of the $x_i.$ Also for the non-separable case we have the
definition of a CLF-map. A map $j$ between a subset of the (enlarged) building
of $\tibG_E(k)$ and a subset of the (enlarged) building of $\tibG(k)$ is a \textit{
CLF-map}\idxD{CLF-map} if for every element $x$ of the first and $y$
of the second building with $j(x)=y$ or $j(y)=x$ the equality 
\[\LF(y,\tibG,k)(t)\cap\Lie(\tibG_E)(k)=\LF(x,\tibG_E,k)(t)\]
holds for all $t\in\bbR.$ Analogously for the unitary case.

\begin{theorem}\label{thmGenToTheNonSeparableCase}
The theorems \ref{thmExistence}, \ref{thmUniquenessForJGLIsEmpty} and
\ref{thmCLFUnitaryCase} are still valid 
if one assumes $k[\beta]$ to be semisimple but not necessarily separable over
$k.$
\end{theorem}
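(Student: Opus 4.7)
The plan is to trace through the proofs of the three theorems and verify that each step extends to the non-separable setting, using the ad hoc definitions of $\Building(\bH,k_0)$, $\bH(k_0)$, and $\Lie(\bH)(k_0)$ given above.

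First I would observe that the factorisation reduction of proposition \ref{propFactOfCLFMap} carries over directly. Indeed, lemma \ref{lem1betai0} relies only on the semisimplicity of $k[\beta]$, and lemma \ref{lemId} uses only the skew-symmetry of $\beta_i$ together with multiplication in the Lie algebra filtrations; both of which are valid without separability. Thus, after reducing to the case $J_{un+}=\{i\}$, it suffices to treat separately the cases $i\in J_{un}$ and $i\in J_+$, in which $E=E_i$ is a single (possibly inseparable) field extension of $k$.

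For the existence statement (theorem \ref{thmExistence}), I would construct the maps $\psi$, $\phi_{un}$, and $\phi_{GL}$ exactly as in lemmas \ref{lemSplitting}, \ref{lemCaseJ0}, and \ref{lemExCaseJ+}. The key inputs are the Broussous--Lemaire map $j^{E_i}$ of theorem \ref{thmBLII.1.1} and theorem \ref{thmBS10.3}; these results take as input a field extension $E_i|k$ and produce a map between Bruhat--Tits buildings, but the building $\building(\bGL_{\tens{E_i}{k}{D}}(V_i),E_i)$ is defined over the local field $E_i$ and its construction makes no reference to separability of $E_i$ over $k$. The CLF property then requires checking that the intersection $\LF(y,\tf{g})\cap \Lie(\bH)(k_0)$ matches the direct sum of the component Lie algebra filtrations, and since $\Lie(\bH)(k_0)$ is now defined to be $\Centr_{\Lie(\bG)(k_0)}(\beta)$, this reduces to the same compatibility of endomorphism filtrations with intersections used in the separable case. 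For the uniqueness statements, the main ingredients are lemma \ref{lemBeta=0} (which uses only Witt decompositions, Gram matrices, and lattice function bookkeeping) and proposition \ref{propBS11.2}. The argument of \ref{propBS11.2} first shows $\LF(x,\tilde\tf{h})(t)\subseteq \LF(y,\tilde\tf{g})(t)$ by the same multiplication-by-$\beta_i$ argument used in lemma \ref{lemId}, and then invokes theorem \ref{thmBS10.3}. Since the latter is a statement purely about the building of $\tibG_E$ over the field $E$, it applies whether or not $E|k$ is separable. The uniqueness up to translations (theorem \ref{thmCLFUnitaryCase}) then follows by combining the existence with \ref{thmUniquenessForJGLIsEmpty} and proposition \ref{propCLFETFP}, whose proof similarly rests only on the action of $o_{E_i}^{\times}$ on the Lie algebra filtration.

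The main obstacle is conceptual rather than technical: in the separable case, the group $\bH$ is reductive and the identification
\[\bH\cong\prod_{i\in J_{un}}\Res_{(E_i)_0|k_0}(\bU(\sigma|_{\End_{\tens{E_i}{k}{D}}(V_i)}))\times\prod_{i\in J_+}\Res_{E_i|k_0}(\bGL_{\tens{E_i}{k}{D}}(V_i))\]
underlies the definitions of $\Building(\bH,k_0)$ and $\Lie(\bH)(k_0)$ via Weil restriction of scalars, which may fail to behave well when $E_i|k$ is inseparable. The resolution, following \cite{broussousStevens:09}, is simply to \emph{define} $\Building(\bH,k_0)$ as the product of the reductive buildings appearing on the right and $\Lie(\bH)(k_0)$ as the centraliser of $\beta$ in $\Lie(\bG)(k_0)$. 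With these definitions in place, the proofs of chapter \ref{chMapsWhichAreCLF} and chapter \ref{chUniquenessResults} translate verbatim, since they manipulate only lattice functions and Lie algebra filtrations in $\End_D(V)$ and never invoke the reductivity of $\bH$ nor the separability of $\beta$ outside of justifying these definitions in the first place.
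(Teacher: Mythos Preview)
Your proposal is correct and matches the paper's approach exactly: the paper's own proof consists of the single sentence ``The proofs of the mentioned theorems are valid without changes,'' and your write-up is simply a detailed justification of precisely this claim. You have correctly identified the one conceptual issue (that $\bH$ need not be reductive and Weil restriction may misbehave) and its resolution via the ad hoc definitions from \cite{broussousStevens:09}, and you have traced through the relevant lemmas to confirm that none of them actually use separability beyond securing those definitions.
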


\begin{proof}
The proofs of the mentioned theorems are valid without changes. 
\end{proof}

\chapter{Torality}\label{chapterTorality}

In this chapter we use the notation of subsection \ref{subsecForUh} but we skip the assumption
that $\beta$ is separable. We only assume that $E$ is semisimple over $k$ and we apply the conventions and definitions of section \ref{secGeneralisationToTheNonseparableCase} in the case that $\beta$ is not separable. 

\begin{definition}
A map 
\[f:\ \Building(G_1,k_0)\ra\Building(G_2,k_0)\]
between two enlarged buildings of reductive groups defined over $k_0$ is called \textit{toral}\/ if for each maximal $k_0$-split
torus $S$ of $G_1$ there is a maximal $k_0$-split torus $T$ of $G_2$ containing $S$ such that
$f$ maps the apartment corresponding to $S$ into the apartment corresponding to $T.$ 
An analogous definition applies to maps between non-enlarged buildings.
\end{definition}

\begin{proposition}\label{propToralityOfj}
The map $j$ constructed in the proof of theorem \ref{thmExistence} maps apartments into apartments. Further $j$ is toral if $\beta$ is separable.
\end{proposition}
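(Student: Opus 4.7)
The plan is to track, for each apartment on the domain side, an explicit splitting datum: a $D$-splitting basis of the underlying vector space in the $\bGL_D(V)$-case (proposition \ref{propBTII2.16}(1)) and a Witt basis in the $\bU(h)$-case (corollary \ref{corExistOfWittBasis}). The strategy is then to follow such a datum through the three factors $\phi_{GL,1}$, $\phi_{GL,2}$ and $\phi_{un}$ making up $\phi_{un}\times\phi_{GL}$, and finally through the direct-sum map $\psi$ of lemma \ref{lemSplitting}, checking at each step that the transported datum parametrises an apartment on the target side.

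For $\phi_{GL,1}$, an apartment on the source corresponds to a splitting basis of $V_i$ over the larger ring $\tens{E_i}{k}{D}$; such a basis is a fortiori a $D$-splitting basis of $V_i$, and formula (\ref{eqBTBL}) shows that the associated class of lattice functions on the target is split by it. For $\phi_{GL,2}$, a $D$-splitting basis of $V_+$ is completed in $V_-$ by its $\sigma$-dual, yielding a Witt basis of $h_{GL}$ for which the direct-sum construction of proposition \ref{propAffEmb} produces precisely the image apartment. For $\phi_{un}$ the argument parallels that for $\phi_{GL,1}$, except that one must additionally diagonalise the anisotropic piece of the Witt decomposition of $h_{un}$ to extract an orthogonal basis of it; this is permitted by theorem \ref{thmOrthogonalSum} because the relevant form has sign $+1$. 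The final step $\psi$ is transparent: the concatenation of a Witt basis of $h_{un}$ and one of $h_{GL}$ is a Witt basis of $h$, since $h(V_{un},V_{GL})=\{0\}$.

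For the torality assertion assume $\beta$ separable, so that $\bH$ is reductive. A maximal $k_0$-split torus $S$ of $\bH$ decomposes as a product $(S_i)_{i\in J_{un+}}$ by the product form of $\bH$, and each $S_i$ is the stabiliser in $\bH_i$ of a splitting datum of the kind described above, on the space $V_i+V_{-i}$. The construction of the previous paragraph transports those data to a global splitting datum of $V$; the corresponding maximal $k_0$-split torus $T$ of $\bG$ visibly contains $S$, and by the first part of the proposition $j$ sends the apartment of $S$ into the apartment of $T$.

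The technical point demanding the most care is the compatibility between torus inclusions and basis transport inside $\phi_{GL,1}$ and $\phi_{un}$: one has to verify that the same vectors of $V_i$ which form an $\tens{E_i}{k}{D}$-splitting basis at the source serve as a $D$-splitting basis at the target, so that the torus fixing $(v_j)$ in $\bH_i$ really embeds into the torus fixing $(v_j)$ in $\bG_i$. This is essentially encoded in the Broussous--Lemaire formula (\ref{eqBTBL}), but spelling it out explicitly is exactly what is needed to close the argument.
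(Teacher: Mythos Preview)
Your treatment of the $\bGL$-factor is essentially the paper's Case~1, but you underquote the input: formula~(\ref{eqBTBL}) only records the lattice function attached to a point of an apartment; it does \emph{not} show that the Broussous--Lemaire map $\tilde j$ sends a $\Delta_i$-frame of $W_i$ into a $D$-frame of $V_i$. A $\Delta_i$-line of $W_i$ corresponds to a simple $\tens{E_i}{k}{D}$-submodule of $V_i$, which has $D$-dimension typically $>1$, so one must \emph{refine} it to $D$-lines in a way compatible with the lattice functions. This is precisely the content of \cite[5.1]{broussousLemaire:02}, which the paper invokes, and the separability of $E_i|k$ enters there for the torality claim. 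Your ``splitting basis over $\tens{E_i}{k}{D}$ is a fortiori a $D$-splitting basis'' is therefore false as stated.

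The substantive gap is in $\phi_{un}$. Saying the argument ``parallels $\phi_{GL,1}$'' plus ``diagonalise the anisotropic piece'' does not work. First, the refinement problem above recurs. Second, the Witt decomposition $W=W^+\oplus W^-\oplus W^0$ for $h_E$ transports via the idempotents to $V=V^+\oplus V^-\oplus V^0$, and one does check that $V^\pm$ are $h$-isotropic and $V^0$ is $h$-orthogonal to $V^+\oplus V^-$; but there is no reason for $V^0$ to be $h$-\emph{anisotropic}, so this is not a Witt decomposition of $(V,h)$, and your appeal to theorem~\ref{thmOrthogonalSum} with ``sign $+1$'' is misplaced (nothing forces $\epsilon=1$, and in any case the issue is not diagonalising a fixed anisotropic piece but locating one).

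The paper's Case~2 handles $\phi_{un}$ by a genuinely different mechanism. It builds a commutative diagram splitting the problem over $(V^+,V^-,V^0)$; the $W^0$-factor of any source apartment is a single point (since $h_E|_{W^0}$ is anisotropic), so the question collapses to the map on the $V^+$-factor, which is a map between $\bGL$-buildings. That map is then \emph{identified} with $j^E$ by the CLF-uniqueness of \cite[II.1.1]{broussousLemaire:02}, after which \cite[5.1]{broussousLemaire:02} gives both the apartment-preservation and (under separability) the torality on $V^+$; the single point over $V^0$ can then be placed in any apartment of $\Building(\bU(h|_{V^0\times V^0}),k_0)$. This reduction-by-CLF-uniqueness is the idea your sketch is missing.
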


The proof is divided into two cases. Because of the construction of $j$ as a direct sum of maps it is enough to restrict 
to the following two cases.
\be
\item case 1: $J_{0+}=J_+=\{i\}$ and
\item case 2: $J_{0+}=J_0=\{i\}.$
\ee

\begin{proof}[Case 1]
We assume that $J_{0+}=J_+=\{i\}.$ By \cite[5.1]{broussousLemaire:02} the map $\phi_{GL,1}$ from $\Building(\bH,k_0)$ to 
\[\Building(\bGL_D(V_i),k)=\Building(\Res_{k|k_0}(\bGL_D(V_i)),k_0)\] mentioned in the proof of lemma \ref{lemExCaseJ+} maps apartments into apartments and further is toral if $E_i|k$ is separable.
We prove that the map $\phi_{GL,2}$ from $\Building(\Res_{k|k_0}(\bGL_D(V_i)),k_0)$ to $\Building(\bG,k_0)$ is toral. 
A maximal $k$-split torus $S$ of $\bGL_D(V_i)$ corresponds to a decomposition of $V_i$ in one-dimensional $k$-subspaces, i.e. there is a decomposition $V_i=\oplus_lV_{i,l}$ such that 
\[ S(k)=\{g\in\GL_D(V_i)|\ g(V_{i,l})\subseteq V_{i,l} \text{ for all } l\}.\]
Let $V_{-i,j}$ be the subspace of $V_{-i}$ dual to $V_{i,j}$ and let $T$ be the torus given by the decoomposition 
\[V=\oplus_j(V_{i,l}\oplus V_{-i,l}).\]
Under the canonical embedding of $\GL_D(V_i)$ into $\GL_D(V)$ 
\beq\label{eqEmbedGroup}
g\mapsto g\oplus (g^{\sigma,V_{-i}})^{-1}
\eeq
the set $S(k)$ is mapped into $T(k)$ and the image of the apartment of $S$ under 
\beq\label{eqEmbedBuild}
\Lambda\in\Lattone{o_D}{V_i}\mapsto\Lambda\oplus\Lambda^{\#,V_{-i}}\Lattone{h}{V}.
\eeq
is a subset of the apartment of $T.$ Let $S'$ and $T'$ be the maximal $k_0$-split 
subtori of $\Res_{k|k_0}(S)$ and $\Res_{k|k_0}(T)$ respectively. The set $S'(k_0)$ is mapped into \mbox{$(T'\cap\bG)(k_0)$} under 
(\ref{eqEmbedGroup}).  The image of 
(\ref{eqEmbedBuild}) only consists of selfdual lattice functions. Hence 
$\phi_{GL,2}$ seen as a map 
from $\Building(\Res_{k|k_0}(\bGL_D(V_i)),k_0)$ to $\Building(\bG,k_0)$ is toral.  
\end{proof}
\vspace{1em}

We make the following definition for the proof of proposition \ref{propToralityOfj} in case two.

\begin{definition}
Assume we have given a decomposition 
\[V=V'^+\oplus V'^-\oplus V'^{0}\]
such that $V'^+$ and $V'^-$ are maximal totally isotropic and $V'^+\oplus V'^-$ is orthogonal to $V'^{0}$ with respect to $h.$
A maximal $k_0$-split torus $T$ of $\bU(h)$ is \textit{adapted to } $(V'^+,V'^-,V'^0)$ if there is a Witt decomposition $(V'^k)$ corresponding to $T$ with anisotropic part $V'^0$ such that 
\[\oplus_{k>0}V'^k=V'^+\text{ and } \oplus_{k<0}V'^k=V'^-.\]
An apartment of $\Building(\bG,k_0)$ is adapted to 
$(V'^+,V'^-,V'^0)$ if every lattice function in this apartment is split by $(V'^+,V'^-,V'^0).$
\end{definition}

\begin{proof}[Case 2]
Here we assume $J_{0+}=J_0=\{i\}.$ Thus we have $E=E_i.$ There is a central division algebra $\Delta$ over $E$ and
 a finite dimensional right vector space $W$ such that  $\End_{\tens{E}{k}{D}}(V)$ is $E$-algebra isomorphic 
to $\End_{\Delta}(W).$ We identify the $E$-algebras $\End_{\tens{E}{k}{D}}(V)$ and $\End_{\Delta}(W)$ via a fixed isomorphism and we fix a signed hermitian form $h_E$ which corresponds to the restriction $\sigma_E$ 
of $\sigma$ to the $E$-algebra $\End_{\Delta}(W).$ Let $r$ be the Witt index of $h_E.$ We fix a decomposition 
\beq\label{eqDirSumW}
W=(W^+ \oplus W^{-})\oplus W^0
\eeq
such that $W^+$ and $W^{-}$ are maximal isotropic subspaces of $W$ contained in the orthogonal complement of $W^0.$
 Let $e_+, e_-$ and $e_0$ be the projections to the vector spaces $W^+,W^-$ and $W^0$ via the direct sum (\ref{eqDirSumW}). We define 
\[ V^+:=e^+V,\ V^-:=e^-V\text{ and }V^0:=e^0V.\]
Consider the following diagram.
\[\begin{matrix}
\Building(\bH,k_0) & \stackrel{j}{\ra} & \Building(\bG,k_0) \\
\ua &  & \ua \\
\Building(\bU((h_E)|_{W^0\times W^0}), E_0)\times\Building(\bGL_{\Delta}(W^+),E) & \stackrel{\alpha}{\ra} & \Building(\bU(h|_{V^0\times V^0}),k_0)\times\\ 
 & & \Building(\bGL_D(V^+),k)\\ 
\da & & \da \\
\building(\bGL_{\Delta}(W^+),E) & \ra & \building(\bGL_D(V^+),k)\\
\end{matrix}\]
where the rows are induced by $j.$ The lower horizontal arrow fulfils the CLF-property and its image only consists of $E^{\times}$ fixed points of $\building(\bGL_D(V^+),k),$ both properties inherited from $j.$ Thus the map in the last row is $j^E,$ i.e. the inverse of $j_E,$
because otherwise we could construct a CLF-map from $\building(\bGL_D(V^+),k)^{E^{\times}}$ to $\building(\bGL_{\Delta}(W^+),E)$ different from $j_E,$ but such a CLF-map is unique by \cite[II.1.1.]{broussousLemaire:02}. Now $j^E$ maps apartments into apartments which implies that $j$ maps apartments adapted to 
$(W^+,W^-,W^0)$ into apartments adapted to $(V^+,V^-,V^0).$ 

We now prove that $j$ is toral if $E|k$ is separable. Let us assume that $E|k$ is separable. This implies that the last row $j^E$ is toral by \cite[5.1]{broussousLemaire:02} which implies the torality of $\alpha$ because the only maximal $E_0$-split torus of $\bU((h_E)|_{W^0\times W^0})$ is the trivial group. The torality of $\alpha$ implies the torality of $j$ on tori adapted to 
$(W^+,W^-,W^0).$ Hence $j$ is toral because the triple $(W^+,W^-,W^0)$ was choosen arbitrarily.
\end{proof}

\chapter{Summary of the main theorems}\label{chapterSummaryOfTheTheorems}

In this section we just want to summarise the main results of part 1
in one theorem. See definition \ref{defLocalHermkDatum} of a local hermitian
datum and see \ref{defTranslation} for the definition of a translation.

\begin{theorem}
Let
\[((A,V,D),\rho,k_0,h,\epsilon,\sigma)\]
be a hermitian datum over a local non-Archimedean field $k$ of residue characteristic
different from 2. Let $\beta$ be an element 
of $\Lie(\bU(h))(k_0)$ such that $E:=k[\beta]$ is semisimple over $k.$ We put $\bG:=\bU(h)$ and
 $\bH:=\bG_{\beta}.$ 
\be
\item There is an injective, affine and $\bH(k_0)$-equivariant CLF-map 
\[j:\Building(\bH,k_0)\ra\Building(\bG,k_0)\] such that:
\be
\item $j$ maps apartments into apartments and is toral if $\beta$ is separable,
\item in terms of lattice functions the
image of 
$j$ is the set of selfdual $o_E$-$o_D$-lattice functions,
\ee
\item If $j$ and $j'$ are two affine, $\Centr(\bH^0(k_0))$-equivariant CLF-maps
then there is a translation $\tau$ of $\Building(\bH,k_0)$ such that 
\[j=j'\circ\tau.\] Both maps are $\bH^0(k_0)$-equivariant and their image is the
set of selfdual $o_E$-$o_D$-lattice functions.
\item The $k$-algebra $E$ is a product of fields $E_i.$ Assume further that every $E_i$
is invariant under $\sigma$ (i.e. $J=J_{un}$). Let $1_i$ be the one element of $E_i,$ $V_i:=1_iV$
and let $\bH_i$ be the centraliser of $1_i\beta$ in $\bU(h|_{V_i\times V_i}).$
If no $\bH_i$ is $k_0$-isomorphic to $\bO^{is}_{2,k_0}$ then there is exactly
one CLF-map $j$ from $\Building(\bH,k_0)$ to $\Building(\bG,k_0).$ This map is
denoted by $j^{\beta}.$  
\ee
\end{theorem}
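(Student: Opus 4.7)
The plan is to recognise this theorem as a synthesis of the main results established in the preceding chapters rather than as an independent statement, so the task reduces to matching each clause with the relevant theorem and checking that no hypothesis has been silently weakened. In particular, I would organise the argument by the three numbered assertions and treat the separable and non-separable cases uniformly via section \ref{secGeneralisationToTheNonseparableCase}.

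For assertion (1), I would invoke Theorem \ref{thmExistence}, which supplies an injective, affine, $\bH(k_0)$-equivariant CLF-map $j$ in the separable case, and identify its image with the set of selfdual $o_E$-$o_D$-lattice functions via Lemma \ref{lemMapsWithImageETFP}. Property (a), that $j$ sends apartments into apartments and is toral when $\beta$ is separable, is precisely Proposition \ref{propToralityOfj}, proved for the specific $j$ built in the proof of Theorem \ref{thmExistence}. To cover the general case where $k[\beta]$ is semisimple but $\beta$ is not separable, I would appeal to Theorem \ref{thmGenToTheNonSeparableCase}, which states that the proof of Theorem \ref{thmExistence} transports verbatim once one adopts the product definition of $\Building(\bH,k_0)$ given in section \ref{secGeneralisationToTheNonseparableCase}.

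For assertion (2), Theorem \ref{thmCLFUnitaryCase} shows directly that any affine, $\Centr(\bH^0(k_0))$-equivariant CLF-map $j'$ satisfies $j^{-1}\circ j'\in\{\text{translations of }\Building(\bH,k_0)\}$, and furthermore gives the $\bH^0(k_0)$-equivariance of $j'$ and the identification of $\im(j')$ with the selfdual $o_E$-$o_D$-lattice functions; this last identification uses Proposition \ref{propCLFETFP}. The extension to non-separable $\beta$ is again Theorem \ref{thmGenToTheNonSeparableCase}. For assertion (3), the additional hypothesis $J=J_{un}$ forces $J_{GL}=\emptyset$, so together with the exclusion of $\bO^{is}_{2,k_0}$-factors this is exactly the hypothesis of Theorem \ref{thmUniquenessForJGLIsEmpty}, which yields the unique CLF-map $j^{\beta}$; the non-separable case is again handled by Theorem \ref{thmGenToTheNonSeparableCase}.

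The main (and essentially only) obstacle is bookkeeping: I must verify that in the non-separable case the symbols $\bH(k_0)$, $\Lie(\bH)(k_0)$, $\bH^0(k_0)$ and $\Building(\bH,k_0)$ are read in the sense of section \ref{secGeneralisationToTheNonseparableCase} rather than as the $k_0$-points and reductive quotient of the scheme-theoretic centraliser, and that this convention is consistent across all three assertions. Once this is done, no further argument is needed and the summary is obtained by concatenating the cited theorems.
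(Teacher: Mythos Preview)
Your proposal is correct and follows exactly the paper's own approach: the paper's proof is a one-line citation of Theorems \ref{thmExistence}, \ref{thmUniquenessForJGLIsEmpty}, \ref{thmCLFUnitaryCase}, \ref{thmGenToTheNonSeparableCase} together with Propositions \ref{propExistenceO2Is}, \ref{propTransO2Is}, \ref{propToralityOfj}, and you have recovered precisely this decomposition, with the correct attribution of each clause.

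There is one small omission worth flagging. All of Theorems \ref{thmExistence}, \ref{thmUniquenessForJGLIsEmpty} and \ref{thmCLFUnitaryCase} are proved under Convention \ref{conbGnotO2is}, which excludes the case where $\bG$ itself is $k_0$-isomorphic to $\bO^{is}_{2,k_0}$. The summary theorem carries no such restriction, so the edge case $\bG\cong\bO^{is}_{2,k_0}$ must be handled separately; this is why the paper also cites Propositions \ref{propExistenceO2Is} (existence of the CLF-map) and \ref{propTransO2Is} (uniqueness up to translation) for that case. You should add these two references to your bookkeeping.
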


This theorem follows from the theorems \ref{thmExistence},
\ref{thmUniquenessForJGLIsEmpty}, \ref{thmCLFUnitaryCase} and
\ref{thmGenToTheNonSeparableCase} and the propositions \ref{propExistenceO2Is}
, \ref{propTransO2Is}, \ref{propToralityOfj}.

\part{Embedding types and their geometry}

\chapter{Introduction and notation}

\section{First remark}

This part answers a question that naturally arises from the papers of M.Grabitz and P. Broussous (see \cite{broussousGrabitz:00}) and P. Broussous and B. Lemaire (see \cite{broussousLemaire:02}). 
For an Azumaya-Algebra $A$ over a non-archimedean local field $F,$ M. Grabitz and P. Broussous have introduced embedding invariants for field em\-beddings, that is for pairs $(E,\textfrak{a})$, where $E$ is a field exten\-sion of $F$ in $A$ and $\textfrak{a}$ is a hereditary order which is normalised by $E^{\times}.$ On the other hand if we take such a field extension $E$ and define $B$ to be the centraliser of $E$ in $A,$ then $G:=A^{\times}$ and $G_{E}:=B^{\times}$ are sets of rational points of reductive groups $\bG$ and $\bH$ defined over $F$ and $E$ respectively. P. Broussous and B. Lemaire have defined a map $j_E:\ \building(\bG,F)^{E^\times}\rightarrow \building(\bH,E)$, i.e. between the Bruhat-Tits buildings of $\bG$ over $F$ and $\bH$ over $E,$ 
see section \ref{secBTBuildingsGLDV} and theorem \ref{thmBLII1.1}. 
The task Prof. Zink has given to me was to relate the embedding invariants to the behavior of the map $j_E$ with respect to the simplicial structures of $\building(\bG,F)$ and $\building(\bH,E).$ 

\section{Notation}
\be
\item The letter $\nu$ denotes the valuation on $F$ with $\nu(\pi_F)=1.$ 
\item We assume $D$ to be a finite dimensional central division algebra over $F$ of index $d.$ 
\item We fix an $m$ dimensional right $D$ vector space $V$, $m\in\mathbb{N},$ and put $A:=\End_D(V).$ In particular $V$ is a left $\tens{A}{F}{D^{op}}$-module.
\item The letter $L$ denotes a maximal unramified field extension of $F$ in $D$ and we assume that $\pi_D$ is a uniformizer of $D$ which normalizes $L,$ i.e. the map \[x\mapsto\sigma(x):=\pi_D x\pi_D^{-1},\ x\in D,\] generates $\Gal (L|F)$. 
\item For a positive integer $f|d$ we denote by $L_f$ the subfield of degree $f$ over $F$ in $L.$
\item In this part all $o_{F'}$-lattice functions on a vector space over a field $F'$ have period 1, i.e. we have \[\pi_{F'}\Lambda(x)=\Lambda(x+1).\] This is different to part 1. 
\ee

\chapter{Preliminaries}

\section{Vectors and Matrices up to cyclic permutation}\label{secVectorsMatrices}\idxD{vector classes}

\begin{remark}
All invariants which are considered in this part are vectors or matrices modulo cyclic permutation. 
\end{remark}

\begin{definition}
Let $s$ be a positive integer and $R$ be an arbitrary non-empty set. Two vectors $w,w'\in R^{1\times s}$ are said to be \textit{equivalent} if 
$w'$ can be obtained from $w$ by cyclic permutation of the entries of $w,$ i.e.
\[w'=(w_k,\ldots,w_s,w_1,\ldots,w_{k-1})\text{ for a }k\in\mathbb{N}_s.\]
The equivalence class is denoted by $\langle w\rangle.$
\end{definition}

\textbf{Vectors:} We denote by $\MopRow(s,t)$\idxS{$\MopRow(s,t)$} the set of all vectors $w\in\mathbb{N}_0^s$ whose sum of entries is $t,$ 
where $s$ and $t$ are natural numbers, i.e.
\[\sum_{i=1}^sw_i=t.\]
One can represent the class $\langle w\rangle$ of a vector $w\in\MopRow(s,t)$ by pairs 
\[\pairs(\langle w\rangle):=\langle(w_{i_0},i_1-i_0),(w_{i_1},i_2-i_1),\ldots,(w_{i_{k}},i_0+s-i_k)\rangle ,\]\idxS{$\pairs(\ )$}
where $(w_{i_j})_{0\leq j\leq k}$ is the subsequence of the non-zero coordinates.   
Given a vector $w$ with \[\pairs(\langle w\rangle)=\langle (a_0,b_0),\ldots,(a_k,b_k)\rangle \]
we define the \textit{complement of} $\langle w\rangle$, denoted by $\langle w\rangle^c$ to be the class $\langle w'\rangle$,
such that \[\pairs(\langle w'\rangle)=\langle (b_0,a_1),(b_1,a_2),(b_2,a_3),\ldots,(b_k,a_0)\rangle .\] 
This is a bijection \[(\ )^c:\ \MopRow(s,t)/\sim\ra \MopRow(t,s)/\sim.\]\idxS{$(\ )^c$} \vspace{3mm} 

\textbf{Matrices:} For $r,s,t\in\mathbb{N},$ $\Mint{r}{s}{t}$\idxS{$\Mint{r}{s}{t}$} denotes the set of $r\times s$-matrices with non-negative integer entries, such that
\bi
 \item in every column there is an entry greater than zero, and
 \item the sum of all entries is $t$.
 \ei 
For a matrix $M=(m_{i,j})\in\Mint{r}{s}{t},$ we define the vector $\row(M)\in\MopRow(rs,t)$ to be 
\[(m_{1,1},m_{1,2},\ldots,m_{1,s},m_{2,1},\ldots,m_{2,s},\ldots,m_{r,s}).\]
Two matrices $M,N\in\Mint{r}{s}{t}$ are said to be \textit{equivalent} if $\row(M)$ and 
$\row(N)$ are. The equivalence class is denoted by $\langle M\rangle .$

\begin{example}
\[\left(\begin{array}{cc}2&0\\1&3\\0&1\end{array}\right)\backsim\left(\begin{array}{cc}1&2\\0&1\\3&0\end{array}\right) 
\]
\end{example}

\section{Hereditary orders and lattice chains}

In the next section we need the concept of hereditary orders and lattice chains. As references we recommand \cite{reiner:03} for hereditary orders and \cite{broussousLemaire:02} for lattice chains. We use definition \ref{defODLattice} of a full $o_D$-lattice. We omit the word full. 

\begin{definition}
A unital subring $\textfrak{a}$ of $A$, is called an $o_F$-\textit{order of} $A$ if  $\textfrak{a}$ is an $o_F$-lattice of $A.$
We call an $o_F$-order $\textfrak{a}$ \textit{hereditary}\idxD{hereditary order} if  the Jacobson radical $\rad{\textfrak{a}}$ is a projective right-$\textfrak{a}$ module. The set of all hereditary orders is denoted by $\Her{A}.$\idxS{$\Her{A}$}
For $\textfrak{a}\in\Her{A}$ we denote by $\lattices{\textfrak{a}}$\idxS{$\lattices{\textfrak{a}}$} the set of all $o_D$-lattices $\Gamma$ of $V$ such that $a\Gamma\subseteq\Gamma$ for all $a\in\textfrak{a}.$
\end{definition}

\begin{definition}
\be
\item Let $R$ be a non-empty set, and take $r\in\mathbb{N}$. Given non-empty subsets $R_{i,j}$ of $R,$ $(i,j)\in{\mathbb{N}}_{r}^2,$ and natural numbers $n_1,\ldots,n_r$, we denote by $(R_{i,j})^{n_1,\ldots,n_r}$ the set of all block matrices in $\Matr_{\sum_{i=1}^rn_i}(R)$, such that for all $(i,j)$ the \mbox{$(i,j)$}-block lies in $\Matr_{n_i,n_j}(R_{i.j}).$ 
\item Given $r\in\mathbb{N},$ $\bar{n}=(n_1,\ldots,n_r)\in\mathbb{N}^r,$ we get a hereditary order \[\textfrak{a}^{\bar{n}}:=(R_{i,j})^{n_1,\ldots,n_r},\text{ where }\]  \[R_{i,j}:=\left\{\begin{array}{ll}o_D,&\text{if}\ j\leq i\\\pD,&\text{if}\ i<j\end{array}\right. .\] 
\item A hereditary order of $\Matr_{m}(D)$ of this form is called \textit{in standard form.}\idxD{standard form of a hereditary order} The class $\langle \bar{n}\rangle $ is called the \textit{invariant}\idxD{invariant of a hereditary order} and $r$ the \textit{(simplicial) rank}\idxD{rank of a hereditary order} of $\textfrak{a}^{\bar{n}}.$
\ee
\end{definition}

If we say that sets are conjugate to each other, we mean conjugate by an element of $A^\times.$ The proof of the next theorem is given in \cite{reiner:03}.

\begin{theorem}
We fix a $D$-basis of $V$ and identify $A$ with $\Matr_m(D).$
\be
\item Two hereditary orders in standard form of $A$ are conjugate to each other if and only if they have the same invariant.
\item Every $\textfrak{a}\in\Her{A}$ is conjugate to a hereditary order in standard form. 
\ee
\end{theorem}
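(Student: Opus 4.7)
The plan is to run the standard correspondence between hereditary $o_F$-orders in $A=\End_D(V)$ and $o_D$-lattice chains in $V$. To a hereditary order $\textfrak{a}$ one attaches the set $\lattices{\textfrak{a}}$; the crucial structural fact (which I would extract from Reiner and which uses the projectivity of $\rad{\textfrak{a}}$) is that $\lattices{\textfrak{a}}$ is totally ordered by inclusion, stable under $\Gamma\mapsto\Gamma\pi_D$, and that $\textfrak{a}$ is recovered as the common stabiliser $\{a\in A\mid a\Gamma\subseteq\Gamma\text{ for all }\Gamma\in\lattices{\textfrak{a}}\}$. Under this bijection conjugation by $g\in A^\times$ corresponds to pushing chains forward along $g\in\GL_D(V)$, and $\textfrak{a}^{\bar n}$ is precisely the stabiliser of the standard chain obtained from the decomposition $V\cong D^{n_1}\oplus\cdots\oplus D^{n_r}$, with $\Gamma_i$ equal to $o_D^{n_i}\oplus\cdots\oplus o_D^{n_r}\oplus\textfrak{p}_D^{n_1}\oplus\cdots\oplus\textfrak{p}_D^{n_{i-1}}$.

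For part (2) I would start from an arbitrary $\textfrak{a}\in\Her{A}$ and its chain $\lattices{\textfrak{a}}$. Pick one period $\Gamma_1\supsetneq \Gamma_2\supsetneq\cdots\supsetneq \Gamma_r\supsetneq \Gamma_1\pi_D$ and reduce modulo $\Gamma_1\pi_D$: the images form a strictly decreasing flag in the $m$-dimensional $\kappa_D$-vector space $\Gamma_1/\Gamma_1\pi_D$. Set $n_i:=\dim_{\kappa_D}(\Gamma_i/\Gamma_{i+1})$, lift a basis of $\Gamma_1/\Gamma_1\pi_D$ adapted to this flag to elements of $\Gamma_1$, and use Nakayama to conclude that these lifts form an $o_D$-basis of $\Gamma_1$ and hence a $D$-basis of $V$. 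With respect to this basis each $\Gamma_i$ has the shape required by the definition of the standard chain attached to $\bar n=(n_1,\ldots,n_r)$, so the change-of-basis element $g\in A^\times$ satisfies $g\textfrak{a}g^{-1}=\textfrak{a}^{\bar n}$.

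For part (1) the ``if'' direction is immediate: a cyclic shift of $\bar n$ corresponds to starting the period one step further in the chain, which is realised by the permutation matrix translating $\Gamma_1$ to $\Gamma_2$ (composed with a suitable diagonal $\pi_D$-adjustment when one wraps around). For ``only if'', if $g\textfrak{a}^{\bar n}g^{-1}=\textfrak{a}^{\bar m}$ then $g$ carries the chain of $\textfrak{a}^{\bar n}$ onto that of $\textfrak{a}^{\bar m}$ preserving inclusions and commuting with $\pi_D$. Reading off the jump sizes $\dim_{\kappa_D}(\Gamma_i/\Gamma_{i+1})$ of the two chains and noting that $g$ is a $D$-linear isomorphism that may only change the distinguished starting lattice within the chain, one obtains $\bar m$ from $\bar n$ by a cyclic permutation, i.e.\ $\langle\bar n\rangle=\langle\bar m\rangle$.

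The main obstacle is the input from hereditary order theory, namely that $\lattices{\textfrak{a}}$ really is a chain and that $\textfrak{a}$ is determined by it; everything downstream is linear algebra over $o_D$ and $\kappa_D$, streamlined by the $\pi_D$-periodicity. Once the chain picture is granted, the invariant $\langle\bar n\rangle$ is visibly the only datum attached to $\textfrak{a}$, which is exactly why it classifies standard forms up to conjugation.
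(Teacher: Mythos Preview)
Your proposal is correct and follows the standard lattice-chain approach; note that the paper itself does not give a proof of this theorem at all but simply refers the reader to \cite{reiner:03}, so there is nothing to compare against beyond observing that your sketch is precisely the argument one finds there.
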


By this theorem the notion of \textit{invariant}\idxD{invariant of a hereditary order} and \textit{rank}\idxD{rank of a hereditary order} carries over to every element of $\Her{A}$ and they do not depend on the choice of the basis. 

\begin{definition}
A sequence $(\Gamma_i)_{i\in\mathbb{Z}}$ of lattices of $V$ is called an \textit{$o_D$-lattice chain in}\idxD{lattice chain} $V$ if  
\be
\item for all integers $i,$ we have $\Gamma_{i+1}\subsetneqq\Gamma_{i},$ and
\item there exists a natural number $r$ such that for all integers $i$ we have $\Gamma_i\pi_D=\Gamma_{i+r}.$  
\ee
We call $r$ the \textit{rank of the lattice chain.} \idxD{rank of a lattice chain}
For a lattice chain $\Gamma$ we put
\[\lattices{\Gamma}:=\{\Gamma_i|\ i\in\mathbb{Z}\}.\]\idxS{$\lattices{\Gamma}$}
Two lattice chains $\Gamma,\ \Gamma'$ are called \textit{equivalent}\/ if  $\lattices{\Gamma}$ and $\lattices{\Gamma'}$ are equal. We write $[\Gamma]$ for the equivalence class. We define an order by $[\Gamma]\leq [\Gamma']$ if  $\lattices{\Gamma}$ is a subset of $\lattices{\Gamma'}.$ The set of all lattice chains in $V$ is denoted by $\LC_{o_D}(V).$\idxS{$\LC_{o_D}(V)$} 
\end{definition}

\begin{remark}
For every lattice chain $\Gamma$ in $V,$ the set 
\[\textfrak{a}_{\Gamma}:=\{a\in A|\ \forall i\in\mathbb{Z}:\ a\Gamma_i\subseteq\Gamma_i\}\]\idxS{$\textfrak{a}_{\Gamma}$} 
is a hereditary order of $A.$
\end{remark}

\begin{theorem}\cite[(1.2.8)]{bushnellFroehlich:83}
$[\Gamma]\mapsto \textfrak{a}_{\Gamma}$ defines a bijection between the set of equivalence classes of lattice chains in $V$ and the set of hereditary orders of $A.$ We have:
\[[\Gamma]\leq[\Gamma']\ \Leqa\ \textfrak{a}_{\Gamma}\supseteq\textfrak{a}_{\Gamma'}\]
\end{theorem}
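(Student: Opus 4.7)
My plan is to establish the bijection in three pieces: well-definedness, surjectivity, and injectivity, and then to verify the order-reversing compatibility. Well-definedness is immediate: the definition of $\textfrak{a}_\Gamma$ only refers to the set $\lattices{\Gamma}$, so equivalent lattice chains produce the same hereditary order.

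For \textbf{surjectivity}, I would reduce to the standard form. Since every $\textfrak{a}\in\Her{A}$ is conjugate to some $\textfrak{a}^{\bar n}$ with $\bar n=(n_1,\ldots,n_r)$, and since conjugation by an element $g\in A^\times$ carries lattice chains to lattice chains and satisfies $\textfrak{a}_{g\Gamma}=g\textfrak{a}_\Gamma g^{-1}$, it suffices to exhibit, for each $\bar n$, an explicit lattice chain whose associated hereditary order is $\textfrak{a}^{\bar n}$. Fixing the $D$-basis $(e_i)$ used to define the standard form and setting $s_k:=n_1+\cdots+n_k$, I would define
\[
\Gamma_k := \Bigl(\bigoplus_{i\leq s_k} e_i\pD\Bigr)\oplus \Bigl(\bigoplus_{i>s_k}e_i\vr{D}\Bigr)\qquad (0\leq k<r),
\]
and extend by $\Gamma_{k+r}:=\Gamma_k\pi_D$. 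A direct block-matrix computation shows $\textfrak{a}_\Gamma=\textfrak{a}^{\bar n}$.

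For \textbf{injectivity}, the task is to recover $\lattices{\Gamma}$ from $\textfrak{a}_\Gamma$; concretely I would show that $\lattices{\textfrak{a}_\Gamma}$ coincides with $\lattices{\Gamma}$. The inclusion $\lattices{\Gamma}\subseteq \lattices{\textfrak{a}_\Gamma}$ is by construction. For the reverse, I would again conjugate into standard form and use the structure of $\textfrak{a}^{\bar n}$: the diagonal idempotents $1_k$ corresponding to the blocks lie in $\textfrak{a}^{\bar n}$, so any $\textfrak{a}^{\bar n}$-invariant $o_D$-lattice $\Gamma'$ decomposes as $\Gamma'=\oplus_k 1_k\Gamma'$ along the block decomposition; then the presence of all $\pD$-blocks below the diagonal and $\vr{D}$-blocks on and above forces each component $1_k\Gamma'$ to equal $\pD^{a_k}\oplus\cdots$ for a specific pattern of exponents, which is exactly the pattern that appears in one of the $\Gamma_j$. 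The main obstacle, and the technical heart of the proof, is this rigidity step; I expect to carry it out by examining how the matrix units $E_{i,j}\in \textfrak{a}^{\bar n}$ act on a hypothetical additional $\textfrak{a}^{\bar n}$-stable lattice to force it into the chain.

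Finally, for the \textbf{order compatibility} $[\Gamma]\leq[\Gamma']\Leftrightarrow \textfrak{a}_\Gamma\supseteq\textfrak{a}_{\Gamma'}$: the direction ($\Rightarrow$) is immediate since an element of $A$ stabilising every lattice of a larger set also stabilises every lattice of a smaller set. For ($\Leftarrow$), if $\textfrak{a}_\Gamma\supseteq\textfrak{a}_{\Gamma'}$, then $\lattices{\textfrak{a}_{\Gamma'}}\supseteq\lattices{\textfrak{a}_\Gamma}$ by definition, and the injectivity argument identifies these with $\lattices{\Gamma'}$ and $\lattices{\Gamma}$ respectively, giving $[\Gamma]\leq[\Gamma']$. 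Putting the four steps together yields the asserted bijection together with the order statement.
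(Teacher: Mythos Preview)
The paper does not give its own proof of this statement: it is stated with a citation to Bushnell--Fr\"ohlich \cite[(1.2.8)]{bushnellFroehlich:83} and no argument is supplied in the text. So there is no in-paper proof to compare against; your outline is essentially the standard argument one finds in that reference (and in Reiner, which the paper also cites for the surrounding material on hereditary orders).

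Your plan is sound. Two small remarks. First, in your injectivity paragraph you describe the standard-form order as having ``$\pD$-blocks below the diagonal and $\vr{D}$-blocks on and above''; with the paper's convention $R_{i,j}=o_D$ for $j\leq i$ and $R_{i,j}=\pD$ for $i<j$, it is the other way around (the $\pD$-blocks sit strictly above the diagonal). Your explicit chain in the surjectivity step is correctly matched to this convention, so this is only a slip in the prose. Second, the ``rigidity step'' you flag as the technical heart is indeed straightforward once you use the matrix units: since each $E_{i,j}$ with $i,j$ in the same block lies in $\tf{a}^{\bar n}$, any $\tf{a}^{\bar n}$-stable lattice splits along the basis, and then the off-diagonal units with entries in $o_D$ or $\pD$ force the exponents in adjacent blocks to differ by at most one in the appropriate direction, pinning the lattice to one of the $\Gamma_j$. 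With these adjustments your argument goes through and matches the classical proof.
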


In this part we need the definition of $\Latt{o_D}{V}$ given in 
\ref{defLattoDV} of part 1. We put $\tf{a}_{\Lambda}:=\End(\Lambda)$\idxS{$\tf{a}_{\Lambda}$} to emphasize that we mainly are interested in a filtration "around" a hereditary order than a lattice function of $A,$ see \ref{subsecSquarelatticeFct}.
We also put
\[\rank([\Lambda]):=\rank(\Lambda),\]\idxS{$\rank([\Lambda])$}
see \ref{defLatticeFunction}, and
\[\lattices{\Lambda}:=\image(\Lambda)\]\idxS{$\lattices{\Lambda}$}
for \[\Lambda\in\Lattone{o_D}{V}.\]

\section{Embedding types}

For a field extension $E|F$ we denote by $E_D|F$ the maximal field extension in $E|F,$ which is $F$-algebra isomorphic to a subfield of $L.$ Its degree is the greatest common divsor of $d$ and the residue degree of $E|F.$  

\begin{definition}
An \textit{embedding}\idxD{embedding} is a pair $(E,\textfrak{a})$\idxS{$(E,\textfrak{a}),$ an embedding} satisfying 
\be
\item $E$ is a field extension of $F$ in $A$,
\item $\textfrak{a}$ is a hereditary order of $A$, normalised by $E^{\times}.$ 
\ee
Two embeddings $(E,\textfrak{a})$ and $(E',\textfrak{a}')$ are said to be \textit{equivalent}\idxD{equivalent embeddings} if 
there is an element $g\in A^{\times}$, such that $gE_Dg^{-1}=E'_D$ and $g\textfrak{a}g^{-1}=\textfrak{a}'$.
\end{definition}

\begin{remark}
In each equivalence class of embeddings there is a pair such that the field can
be embedded in $L$. 
\end{remark}

The definition of $\Mint{r}{s}{t}$ is in the previous section.
Until the end of this section we fix a $D$-basis of $V$ and identify $A$ with $\Matr_m(D).$
\begin{definition}
Let $f|d$ and $r\leq m$.
A matrix with $f$ rows and $r$ columns is called an \textit{embedding datum}\idxD{embedding datum} if it belongs to $\Mint{f}{r}{m}$. Given an embedding datum $\lambda$, we define the pearl embedding as follows. 
The \textit{pearl embedding}\idxD{pearl embedding} of $\lambda$ (with respect to the fixed $D$-basis of $V$) is the embedding $(E,\textfrak{a})$, with the
following conditions:
\be
\item $[E:F]=f$,
\item $E$ is the image of the monomorphism \[x\in L_f\mapsto \diag(M_1(x),M_2(x),\ldots,M_r(x))
\in\Matr_m(D)\]
where
\[M_j(x)=\diag(\sigma^0(x)\EMatr{\lambda_{1,j}},\sigma^1(x)\EMatr{\lambda_{2,j}},\ldots,\sigma^{f-1}(x)\EMatr{\lambda_{f,j}})\]
\item $\textfrak{a}$ is a hereditary order in standard form according to the partition $m=n_1+\ldots +n_r$ where $n_j:=\sum_{i=1}^f\lambda_{i,j}.$
\ee 
\end{definition}

\begin{theorem}\cite[2.3.3 and 2.3.10]{broussousGrabitz:00}\label{thmPearlEmb}
\be
\item Two pearl embeddings are equivalent if and only if the embedding
  data are equivalent.
\item In any class of embeddings lies a pearl embedding.
\ee 
\end{theorem}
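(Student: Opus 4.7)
The plan is to prove (2) constructively by producing a pearl representative from arbitrary data, and then to deduce (1) by computing how inner automorphisms of $A$ act on the resulting matrix $\lambda$.

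For (2), start with an arbitrary embedding $(E,\textfrak{a})$ and assume, after a preliminary conjugation by an element of $A^\times$, that $E_D=L_f\subseteq L\subseteq D\subseteq A$, where $f=[E_D:F]$. The hypothesis that $E^\times$ normalises $\textfrak{a}$ forces $o_{E_D}\subseteq\textfrak{a}$, so each lattice $\Gamma_j$ in the chain associated to $\textfrak{a}$ is stable under $o_{E_D}$. Each graded piece $\Gamma_j/\Gamma_{j+1}$ is then simultaneously a $\kappa_D$-vector space and a module over $\kappa_{E_D}=o_{E_D}/\pi_{E_D}o_{E_D}$, and the $f$ embeddings $\kappa_{E_D}\hra\kappa_D$ are the powers $\bar\sigma^{i-1}$, $i=1,\ldots,f$, of the residual Frobenius. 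Setting $\lambda_{i,j}$ equal to the $\kappa_D$-dimension of the $\bar\sigma^{i-1}$-isotypic component of $\Gamma_j/\Gamma_{j+1}$ yields a matrix $\lambda\in\Mint{f}{r}{m}$, where $r=\rank(\textfrak{a})$. Choosing a $D$-basis of $V$ compatible with both the chain and the isotypic decomposition then realises $(E,\textfrak{a})$ as the pearl embedding associated to $\lambda$.

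Both halves of (1) rest on understanding the action of the normaliser $N_{A^\times}(\textfrak{a})$ on the pearl data. The key operator is a prime element $\Pi$ generating $\rad(\textfrak{a})$ as an $\textfrak{a}$-bimodule: it shifts the chain by one step ($\Pi\Gamma_j=\Gamma_{j+1}$) and simultaneously conjugates $L$ by $\sigma$. A direct calculation on a pearl model shows that $\Inn(\Pi)$ implements a basic cyclic shift by one position on $\row(\lambda)$. Iterating $\Inn(\Pi)$, together with inner automorphisms by units of $\textfrak{a}$ (which leave $\lambda$ unchanged), produces every cyclic shift of $\row(\lambda)$, giving the ``if'' half of (1). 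For the ``only if'' half, observe that the construction of $\lambda$ in (2) attaches an invariant $\langle\lambda\rangle$ to the pair $(\Gamma,o_{E_D})$ canonically up to exactly two ambiguities: the chain has no canonical origin (column shifts) and the identification $E_D=L_f$ is determined only up to the choice of Frobenius generator $\sigma$ (row shifts); both ambiguities are realised by iterates of $\Inn(\Pi)$ and combine into the one-dimensional cyclic shift equivalence on $\row(\lambda)$.

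The main technical obstacle will be the direct verification that $\Inn(\Pi)$ acts as a single cyclic shift of $\row(\lambda)$ by one position, rather than as independent row and column shifts. This requires picking $\Pi$ carefully in an adapted basis and tracking both the reordering of blocks inside $\textfrak{a}^{\bar n}$ and the $\sigma$-twist applied to each diagonal entry $\sigma^{i-1}(x)$ of the pearl embedding; a careful bookkeeping shows that these two effects combine exactly to a shift of $\row(\lambda)$ by one position. Once this is established, both parts of (1) follow from the orbit--stabiliser principle applied to the cyclic group generated by $\Inn(\Pi)$.
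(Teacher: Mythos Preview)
The paper does not give its own proof of this theorem; it is quoted from \cite{broussousGrabitz:00} and used as a black box. Your outline follows the Broussous--Grabitz argument: the isotypic decomposition of each $\Gamma_{j}/\Gamma_{j+1}$ under the residual action of $o_{E_D}$ produces the matrix $\lambda$, and a suitably chosen radical generator $\Pi$ realises the cyclic permutation.

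One point in your sketch deserves sharpening. You describe the ambiguity in the construction as a combination of independent ``column shifts'' (from the chain origin) and ``row shifts'' (from the Galois identification $E_D\cong L_f$), but these taken separately do \emph{not} act by cyclic permutation of $\row(\lambda)$: a naive column shift of a $2\times 2$ matrix already fails to be a cyclic shift of its $\row$-vector. The actual mechanism is that the periodicity $\Gamma_{j+r}=\Gamma_j\pi_D$, together with the $\sigma^{-1}$-semilinearity of right multiplication by $\pi_D$ over $\kappa_D$, forces a row jump at the wrap-around; hence shifting the chain origin by one step shifts $\row(\lambda)$ by exactly one position. The Galois ambiguity is then the $r$-fold iterate of this single operation, not an independent degree of freedom. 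With this understood --- and it is exactly the bookkeeping you correctly flag as the main obstacle --- your argument goes through.
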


\begin{definition}
Let $(E,\textfrak{a})$ be an embedding. By the theorem it is equivalent to a
pearl-embedding. The class of the corresponding matrix $(\lambda_{i,j})_{i,j}$
is called the \textit{embedding type}\idxD{embedding type} of $(E,\textfrak{a}).$ This definition does not depend on the choice of the basis by the theorem of Skolem-Noether.  
\end{definition}


\chapter{\texorpdfstring{The simplicial structure of  $\building(\bGL_D(V),F)$}{The simplicial structure of the building of  GLD(V)}}\label{chapterEuclideanBuildingOfGLDV}

In section \ref{secBTBuildingsGLDV} we gave the definition of $\Building(\bGL_D(V),F),$ i.e. of the Bruhat-Tits building of $\bGL_D(V)$ over $F.$ In this part of the thesis we are interested in its simplicial structure. 

\section{Definitions}

Here we give the basic definitions in order to be able to state precisely the description of the Euclidean building of $\bGL_D(V)$ over $F$ with lattice chains.
Basic definitions of the notions of simplicial complex and chamber complex are given in \cite[Ch. I App.]{brown:89}.
For the definition of a Coxeter complex see \cite[Ch. III]{brown:89}.

\begin{definition}
A \textit{building} \idxD{building} is a triple $(\Omega,\mathcal{A},\leq)$, such that
$(\Omega,\leq)$ is a simplicial complex and $\mathcal{A}$ is a set of subcomplexes of $(\Omega,\leq)$ which cover $\Omega,$ i.e.
\[\bigcup \mathcal{A} =\Omega,\]
(The elements of $\mathcal{A}$ are called \textit{apartments.}) statisfying the following \textit{building axioms}\/:
\bi
\item \textbf{B0} Every element of $\mathcal{A}$ is a Coxeter complex.
\item \textbf{B1} For \textit{faces} (also called simplicies), i.e. elements, $S_1$ and $S_2$ of $\Omega$ there is an apartment $\Sigma$ containing them. 
\item \textbf{B2} If $\Sigma$ and $\Sigma'$ are two arpartments containing $S_1$ and $S_2$ then there is a poset isomorphism from $\Sigma$ to $\Sigma'$ which fixes $\bar{S}_1$ and $\bar{S}_2$ where $\bar{S}$ for a face $S$ is defined to be the set of all faces $T\leq S.$ 
\ei
The minimal faces are the \textit{vertices}\idxD{vertex} and the rank of a face $S$ is the number of vertices in $\bar{S}.$ The maximal faces are the \textit{chambers}.\idxD{chamber} Faces of rank two are \textit{edges}.\idxD{edge} A building is said to be \textit{thick} \idxD{thick} if every codimension 1 face is attached to at least three chambers. 
\end{definition}

\begin{remark}
A building in this part of the thesis consist either only of one element or is thick. 
\end{remark}

A \textit{Euclidean Coxeter complex} \idxD{Euclidean Coxeter complex} is a Coxeter complex $(\Sigma,\leq)$ 
which is poset-isomorphic to a simplicial complex $\Sigma(W,V)$ defined by an essential irreducible infinite affine reflection group $(W,V).$
For a face $S$ of a simplicial complex $(\Omega,\leq)$ the set of all formal sums
\[\Sigma_{v\leq S,rk(v)=1}\lambda_vv\] with positive real coefficients such that $\Sigma_{v\leq S,rk(v)=1}\lambda_v=1$ is denoted by $|S|.$ The set 
\[|\Omega|:=\bigcup_{S\in\Omega}|S|\] 
is called the geometric realisation of $\Omega.$
A \textit{morphism} of simplicial complexes from $(\Omega,\leq)$ to $(\Omega',\leq')$ is a
map $f:(\Omega,\leq)\ra (\Omega',\leq'),$ such that for every face $S\in\Omega$ the restriction 
$f:\ \bar{S}\ra \bar{f(S)}$ is a poset isomorphism. In \cite{brown:89} the notion of non-degenerate simplicial map is used instead of morphism. A morphism $f$ induces a map $|f|$ between the geometric realisations, by 
\[|f|(\sum_v\lambda_vv):=\sum_v\lambda_vf(v).\]
\begin{definition}
Given two buildings $(\Omega,\mathcal{A},\leq)$ and $(\Omega',\mathcal{A}',\leq')$ a \textit{morphism}
from the first to the latter is a morphism of simplicial complexes such that the image of an apartment of $\mathcal{A}$ is contained in an apartment of $\mathcal{A}'.$
\end{definition}

As described in \cite{brown:89} VI.3 there is a canonical way to define a metric, up to a scalar, on the geometric realisation of a Euclidean building by pulling back the metric from an affine reflection group to the apartment and this defines a canonical affine structure on the geometric realisation of the building.
The map $|\phi|$ between the geometric realisations of two Euclidean buildings induced by an isomorphism $\phi$ is affine. 

\section{The description with lattice chains}\label{secDiscrWithLatticeChains}

Let $\Omega$\idxS{$\Omega$} be the simplicial structure of $\building(\bGL_D(V),F).$ We denote 
\[\calli{I}:=|\Omega|=\building(\bGL_D(V),F).\]\idxS{$\calli{I}$}
By theorem \ref{thmBLI1.4} there is a unique affine and $A^{\times}$-equivariant bijection from 
$\calli{I}$ to $\Latt{o_D}{V}.$ 
We describe the Euclidean building $\Omega$ of $A^{\times}$ in terms of lattice chains and hereditary orders as it is done in \cite[I.3]{broussousLemaire:02}.

\begin{proposition}
\be
\item The posets $(\LC_{o_D}(V),\leq)$ and $(\Her{A},\supseteq)$ are simplicial complexes  of rank $m.$ They are isomorphic via $\Psi([\Gamma]):=\textfrak{a}_{\Gamma}$ as simplicial complexes. 
\item A hereditary order is a vertex (resp. a chamber) if and only if its rank is 1 (resp. m).
\ee
\end{proposition}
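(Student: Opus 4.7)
The plan is to reduce everything to the combinatorics of lattice chains modulo $\pi_D$-translation and then transport the simplicial structure to $\Her{A}$ through the bijection $\Psi$.

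First I would verify that $(\LC_{o_D}(V),\leq)$ is an abstract simplicial complex (with faces the poset elements and vertices the rank-$1$ elements). For a lattice chain $\Gamma$ of rank $r$, the quotient set $\lattices{\Gamma}/\pi_D^{\bbZ}$ has cardinality $r$. Every nonempty subset of this quotient determines a sub-lattice-chain by taking its $\pi_D^{\bbZ}$-orbit and reindexing, and conversely every $[\Gamma']\leq[\Gamma]$ arises this way. Hence the downset $\overline{[\Gamma]}$ is order-isomorphic to the poset of nonempty subsets of an $r$-element set, which is exactly the axiom needed for $(\LC_{o_D}(V),\leq)$ to be a simplicial complex in which $[\Gamma]$ is a face of rank $r$.

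Next I would bound the rank by $m$: in a chain $\Gamma_0\supsetneq\Gamma_1\supsetneq\cdots\supsetneq\Gamma_r=\Gamma_0\pi_D$, the successive quotients $\Gamma_i/\Gamma_{i+1}$ are nonzero $\kappa_D$-vector spaces whose dimensions sum to $\dim_{\kappa_D}(\Gamma_0/\Gamma_0\pi_D)=\dim_D V=m$. Hence $r\leq m$, with equality attained precisely when every successive quotient is one-dimensional, so the complex has rank exactly $m$.

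By the theorem of Bushnell--Fr\"ohlich cited before the proposition, $\Psi$ is a bijection satisfying $[\Gamma]\leq[\Gamma']\Leftrightarrow\textfrak{a}_\Gamma\supseteq\textfrak{a}_{\Gamma'}$, so it is an isomorphism of posets from $(\LC_{o_D}(V),\leq)$ to $(\Her{A},\supseteq)$. To promote this to an isomorphism of simplicial complexes I would reconcile the two notions of rank: after conjugating $\textfrak{a}_\Gamma$ to standard form with invariant $(n_1,\ldots,n_r)$, the $\kappa_D$-dimensions of the successive quotients $\Gamma_i/\Gamma_{i+1}$ are (up to cyclic permutation) exactly $n_1,\ldots,n_r$, so the rank of $\textfrak{a}_\Gamma$ as a hereditary order agrees with the rank of $[\Gamma]$ as a lattice chain. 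Part 2 then follows immediately: a vertex is a face of rank $1$ (a chain consisting of a single $o_D$-lattice up to homothety, corresponding under $\Psi$ to a maximal hereditary order of invariant $(m)$), and a chamber is a face of the maximal rank $m$ (a full flag of $m$ lattices in one period, corresponding to a minimal hereditary order of invariant $(1,\ldots,1)$). The only mild obstacle is the bookkeeping that identifies the two rank notions under $\Psi$; once this is in place the rest is purely formal.
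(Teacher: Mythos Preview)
Your argument is correct and complete. The paper, however, does not actually prove this proposition: it is stated without proof, and the subsequent, stronger proposition (that the two triples are isomorphic Euclidean buildings via $\Psi$) is accompanied only by the sentence ``For steps and calculations for the proof see for example \cite{reiner:03}.'' So there is no paper-proof to compare against; your write-up supplies precisely the combinatorial verification that the paper delegates to the literature. The one point worth noting is that your reconciliation of the two notions of rank (simplicial rank of $[\Gamma]$ versus the invariant-length of $\textfrak{a}_\Gamma$) is exactly the content the paper later takes for granted when it remarks that the simplicial rank of a hereditary order as a face coincides with its rank as a hereditary order; you have made that explicit.
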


\begin{definition}
A \textit{frame of} $V$ is a set of lines $v_1D,\ldots,v_mD$, where $v_i, i\in\mathbb{N}_m,$ is a $D$-basis of $V.$ If $\textfrak{R}$ is a frame we say that a lattice $\Gamma$ \textit{is split by}\/ $\textfrak{R}$ if  \[\Gamma=\bigoplus_{W\in\textfrak{R}}(\Gamma\cap W).\] 
A lattice chain $\Gamma$, lattice function $\Lambda,$ hereditary order $\textfrak{a}$ is \textit{split by}\/ $\textfrak{R}$ if  every element of $\lattices{\Gamma}$, $\lattices{\Lambda},$ $\lattices{\textfrak{a}}$ resp. is split by $\textfrak{R}$. An equivalence class is \textit{split by}\/ $\textfrak{R}$ if  every element of the equivalence class is split by $\textfrak{R}.$ The set of these classes split by $\textfrak{R}$ is called the \textit{apartment corresponding to \textfrak{R}} and is denoted by ${\LC}_{o_D}(V)_{\textfrak{R}},$  $\Her{A}_{\textfrak{R}},$ $\Latt{o_D}{V}_{\textfrak{R}}$ resp.. For the set of these apartments we write
\[\textfrak{A}({\LC}_{o_D}(V)),\ \textfrak{A}(\Her{A})\ \&\  \textfrak{A}(\Latt{o_D}{V}).\]
\end{definition}

\begin{definition}
The left action of $A^{\times}$ on the set of $o_D$-lattices of $V,$ i.e.
\[g.\Gamma:=\{g\gamma|\ \gamma\in\Gamma\},\]
defines an $A^{\times}$-action on ${\LC}_{o_D}(V),\ \Latt{o_D}{V}$ and $\Her{A}.$ 
\end{definition}

\begin{proposition}
\be
\item The two triples \[({\LC}_{o_D}(V),\textfrak{A}({\LC}_{o_D}(V)),\leq)\ \&\  (\Her{A},\textfrak{A}(\Her{A}),\supseteq)\] are isomorphic Euclidean buildings via $\Psi.$
\item $\Psi$ is $A^{\times}$-equivariant.
\item For every frame $\textfrak{R}$ the image of  ${\LC}_{o_D}(V)_{\textfrak{R}}$ under $\Psi$ is $\Her{A}_{\textfrak{R}}.$
\ee
\end{proposition}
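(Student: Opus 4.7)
The plan is to reduce the entire statement to the simplicial isomorphism already established in the preceding proposition, combined with Theorem~\ref{thmBLI1.4} and Proposition~\ref{propBTII2.16}, so that the Euclidean-building structure on $\Omega$ transports through $\Psi$.

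First I would handle (2), which is the shortest. For $g\in A^\times$ and a lattice chain $\Gamma$ in $V$, a direct computation gives
\[\textfrak{a}_{g.\Gamma}=\{a\in A\mid ag\Gamma_i\subseteq g\Gamma_i\ \forall i\}=g\{b\in A\mid b\Gamma_i\subseteq\Gamma_i\ \forall i\}g^{-1}=g\textfrak{a}_{\Gamma}g^{-1}.\]
Since the $A^\times$-action on $\Her{A}$ is, by the definition in the paragraph preceding the proposition, the one induced from the action on $\LC_{o_D}(V)$ via the bijection of the preceding proposition, the equivariance $\Psi(g.[\Gamma])=g.\Psi([\Gamma])$ is immediate.

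Next I would prove (3). The key observation is that $\lattices{\Gamma}=\lattices{\textfrak{a}_{\Gamma}}$: the $o_D$-lattices stable under $\textfrak{a}_{\Gamma}$ are exactly the members of the chain $\Gamma$. Consequently $\Gamma$ is split by a frame $\textfrak{R}$ if and only if $\textfrak{a}_{\Gamma}$ is, so $\Psi$ restricts to a bijection $\LC_{o_D}(V)_{\textfrak{R}}\to\Her{A}_{\textfrak{R}}$, and the same correspondence holds for entire apartments.

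For (1) I would proceed as follows. By Theorem~\ref{thmBLI1.4} there is a unique affine, $A^\times$-equivariant bijection $\calli{I}\to\Latt{o_D}{V}$, and by Proposition~\ref{propBTII2.16}(1) the apartments of $\Omega$ are, under this identification, precisely the sets $\Latt{o_D}{V}_{\textfrak{R}}$ indexed by frames $\textfrak{R}$ of $V$. Assigning to an element $[\Lambda]\in\Latt{o_D}{V}$ the lattice chain $\image(\Lambda)$ induces a simplicial bijection between $\Omega$ and $(\LC_{o_D}(V),\leq)$: by Proposition~\ref{propBTII2.16}(2) the simplicial rank is preserved, and the face relation corresponds on both sides to inclusion of the underlying lattice sets. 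Composing with $\Psi$ transports the Euclidean building structure from $\Omega$ to $(\LC_{o_D}(V),\textfrak{A}(\LC_{o_D}(V)),\leq)$ and to $(\Her{A},\textfrak{A}(\Her{A}),\supseteq)$; axioms \textbf{B0}--\textbf{B2} are inherited automatically from $\Omega$, and the correspondence of apartments is exactly (3).

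The only point that requires genuine attention — and which I expect to be the main obstacle — is verifying that the simplicial bijection $\Omega\leftrightarrow\LC_{o_D}(V)$ really sends apartments to apartments, rather than merely faces to faces. This amounts to checking that, for each frame $\textfrak{R}$, the Coxeter complex on $\Latt{o_D}{V}_{\textfrak{R}}$ determined by the affine root hyperplanes $a_{i,j}=k/d$ of Proposition~\ref{propBTII2.16} is the one whose chambers (resp.~faces of rank $k$) are the classes of lattice functions of rank $m$ (resp.~$k$) split by $\textfrak{R}$. This is essentially the content of the proof of Proposition~\ref{propBTII2.16}(2) applied inside a fixed apartment, so no new combinatorial argument is required once the vertex-level correspondence is in hand.
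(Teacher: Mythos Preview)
The paper does not give a proof of this proposition at all; immediately after its statement it writes ``For steps and calculations for the proof see for example \cite{reiner:03}.'' The intended argument is therefore a direct verification of the building axioms for $(\LC_{o_D}(V),\leq)$ (equivalently for $(\Her{A},\supseteq)$) using the structure theory of hereditary orders, independent of the Bruhat--Tits building $\Omega$.

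Your approach is genuinely different: rather than verifying \textbf{B0}--\textbf{B2} from scratch, you transport the already-known Euclidean building structure of $\Omega=\building(\bGL_D(V),F)$ through the identification $\calli{I}\cong\Latt{o_D}{V}$ of Theorem~\ref{thmBLI1.4} and the rank description of Proposition~\ref{propBTII2.16}. This is correct and in fact more economical inside this thesis, since it recycles Part~1 rather than invoking an external reference. What it buys is that you are essentially proving the present proposition and the subsequent one (the isomorphism $\LC_{o_D}(V)\cong\Omega$ via $\tau$, cited to \cite{broussousLemaire:02}) in one stroke; what the paper's route buys is logical independence from the Bruhat--Tits construction, so that the lattice-chain building stands on its own. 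One small caution: your map ``$[\Lambda]\mapsto\image(\Lambda)$'' is a map from points of $|\Omega|$ to lattice chain classes, not directly a map of simplicial complexes; to get the simplicial bijection you should say that a face $S$ of $\Omega$ is sent to the lattice chain class $[\image(\Lambda)]$ for any $[\Lambda]\in|S|$, and check (via Proposition~\ref{propBTII2.16}(2)) that this is well-defined and order-preserving. This is what you sketch in your final paragraph, but it deserves to be stated as the definition of the map rather than as an afterthought.
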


For steps and calculations for the proof see for example \cite{reiner:03}.

\begin{remark}
Every $\textfrak{a}\in\Her{A}$ has a rank as a face in the chamber complex $(\Her{A},\supseteq),$ and this coinsides with the simplicial rank, but we never mean the $o_F$-rank of $\textfrak{a}$.
\end{remark}


From now on we need the affine structure on $\Latt{o_D}{V},$
see definition \ref{defLattoDV}. Now the next proposition explains why one can replace $\Omega$ by the building of classes of lattice functions. The geometric realisation of ${\LC}_{o_D}(V)$ can be identified with $\Latt{o_D}{V}$ in the following way. We put \[ [x]+:=\inf\{z\in\mathbb{Z}|\ x\leq z\},\ x\in\mathbb{R},\]\idxS{$[x]+$}
and we define a bijective map \[\tau:\ |{\LC}_{o_D}(V)|\ra \Latt{o_D}{V}\] as follows.
A convex barycenter 
\[\sum \beta_i[\Gamma^i],\ \beta_i\geq 0\text{ and }\sum_i\beta_i=1,\] with vertices $[\Gamma^i]$ of a chamber of ${\LC}_{o_D}(V)$ is mapped to $\sum_i\beta_i[\Lambda^i]$, where $\Lambda^i(t):=\Gamma^i_0\textfrak{p}_D^{[td]+}.$  

\begin{remark}
The definition of $\tau$ and proposition \ref{propBTII2.16} im\-ply that $\Latt{o_D}{V}$ in\-herits the same sim\-pli\-cial struc\-ture from $\Omega$ and from ${\LC}_{o_D}(V).$ 
\end{remark}

\begin{proposition}[\cite{broussousLemaire:02} sec. I.3]
The composition of the bijection from\\ $\Latt{o_D}{V}$ to $\calli{I}$ with $\tau$ in\-du\-ces an $A^\times$-equi\-va\-riant iso\-mor\-phism from \[({\LC}_{o_D}(V),\textfrak{A}({\LC}_{o_D}(V)),\leq)\]
to the buil\-ding $\Omega.$
\end{proposition}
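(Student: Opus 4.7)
The strategy is to build the isomorphism apartment-by-apartment, relying on the $A^\times$-equivariant affine bijection $\Phi:\Latt{o_D}{V}\iso\calli{I}$ from theorem \ref{thmBLI1.4} together with the rank criterion of proposition \ref{propBTII2.16}. First I would check that $\tau$ is well-defined on the geometric realisation. Given a simplex of $\LC_{o_D}(V)$, one can choose representatives of its vertices whose underlying lattice sets form a single chain, and then the normalised functions $\Lambda^i(t):=\Gamma^i_0\tf{p}_D^{[td]+}$ admit a common splitting frame; the convex combination $\sum_i\beta_i[\Lambda^i]$ is therefore unambiguously defined via remark \ref{remAffineStructure}. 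Changing the representative $\Gamma^i$ to $\Gamma^i\tf{p}_D^{\ell}$ shifts $\Lambda^i$ by an integer translation, which disappears after passing to classes, so the construction is independent of the choices made.

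Second, I would match the apartment structures. For a frame $\textfrak{R}=\{v_1D,\ldots,v_mD\}$, both $|{\LC}_{o_D}(V)_{\textfrak{R}}|$ and $\Latt{o_D}{V}_{\textfrak{R}}$ are canonically in bijection with $\bbR^m/\bbR(1,\ldots,1)$, and a direct computation with remark \ref{remAffineStructure} shows that $\tau$ intertwines the two identifications. The simplicial subdivision of $|{\LC}_{o_D}(V)_{\textfrak{R}}|$ coming from inclusion of lattice chains is then mapped exactly onto the Weyl chamber decomposition of the apartment of $\Omega$ cut out by the hyperplanes $a_{i,j}=k/d$ described in proposition \ref{propBTII2.16}. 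Combined with the observation that a point in the relative interior of a simplex spanned by $k$ distinct vertex classes $[\Gamma^1],\ldots,[\Gamma^k]$ is sent to a class $[\Lambda]$ with $\#\lattices{\Lambda}\cap\Lambda([0,\nu(\pi_D)[)=k$, the rank statement of proposition \ref{propBTII2.16} yields that $\Phi\circ\tau$ carries each open $k$-simplex of $\LC_{o_D}(V)$ homeomorphically onto the relative interior of a rank-$k$ face of $\Omega$, and the face-wise bijection respects the incidence order $\leq$.

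Third, the induced simplicial map is a global bijection because every class of $\Latt{o_D}{V}$ is split by some frame and thus lies in some apartment, while apartments on both sides are canonically in bijection with frames. Thus $\Phi\circ\tau$ defines a poset isomorphism $\LC_{o_D}(V)\iso\Omega$ which sends $\textfrak{A}({\LC}_{o_D}(V))$ bijectively to the set of apartments of $\Omega$, i.e.\ an isomorphism of buildings. The $A^\times$-equivariance is automatic: the action $g.\Gamma:=g\Gamma$ on lattices corresponds to $(g.\Lambda)(t):=g(\Lambda(t))$ on lattice functions, which commutes with the barycentric formula defining $\tau$, and $\Phi$ is $A^\times$-equivariant by theorem \ref{thmBLI1.4}.

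The main obstacle is step two, namely checking inside a single apartment that the simplicial decomposition produced by lattice chains agrees under $\tau$ with the affine Weyl structure on $\Latt{o_D}{V}_{\textfrak{R}}$. Once the normalised form $\Lambda^i(t)=\Gamma^i_0\tf{p}_D^{[td]+}$ is inserted into the barycenter formula, this reduces to a short combinatorial verification that the hyperplane arrangement $\{x_i-x_j\in\tfrac{1}{d}\bbZ\}$ in $\bbR^m/\bbR(1,\ldots,1)$ is precisely the arrangement dual to $\textfrak{R}$-split lattice chains; everything else is bookkeeping transported through $\Phi$.
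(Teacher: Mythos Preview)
The paper does not supply its own proof of this proposition; it is stated with a citation to \cite{broussousLemaire:02}, sec.~I.3, and the only argument the thesis itself offers is the one-line remark immediately preceding it: the definition of $\tau$ together with proposition~\ref{propBTII2.16} force $\Latt{o_D}{V}$ to inherit the same simplicial structure from $\Omega$ and from ${\LC}_{o_D}(V)$. Your proof is a correct and reasonably careful unpacking of exactly that remark --- you use the rank criterion of proposition~\ref{propBTII2.16} and the explicit barycentric formula for $\tau$ to match faces apartment by apartment, then glue via the $A^\times$-action and theorem~\ref{thmBLI1.4}. So there is nothing to compare against beyond saying that you have filled in the details the paper leaves to the cited reference.
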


\begin{notation}
By the propositions above we can identify $\Omega$ with \[(\Her{A},\textfrak{A}(\Her{A}),\supseteq).\]
\end{notation}

\bdfn
We call $\Omega$ the \textit{Euclidean building of $A^{\times}.$}\idxD{Euclidean building of $A^{\times}$}
\edfn

\chapter{\texorpdfstring{The map $j_E$}{The map jE}}\label{chThemapjE}

\begin{notation}
For this section let $E|F$ be a field extension in $A$ and we set $B$ to be the centraliser of $E$ in $A,$ i.e.
\[B:=\Centr_A(E):=\{a\in A|\ ab=ba\ \forall b\in B\}.\]
We denote the Euclidean building of $B^{\times}$ by $\Omega_E$ and its geometric realisation by $\mathcal{I}_E.$
\end{notation}

The next results are taken from \cite{broussousLemaire:02}.
We restate the following theorem in the notation of this part.

\begin{theorem}\cite[Thm. II.1.1.]{broussousLemaire:02}\label{thmBLII1.1}
There exists a unique application $j_E:\mathcal{I}^{E^{\times}}\ra \mathcal{I}_E$ such that for any $x\in \mathcal{I}^{E^{\times}}$ and $t\in\bbR$ we have $\tf{a}_{j_E(x)}(t)=B\cap\tf{a}_x(e(E|F)t).$
The map $j_E$ satisfies the following properties:
\be
\item it is bijecive,
\item it is a $B^\times$-equivariant and
\item it is affine.
\ee
Moreover its inverse $j_E^{-1}$ is the only map $\mathcal{I}_E\ra \mathcal{I}$ such that 2. and 3. hold.
\end{theorem}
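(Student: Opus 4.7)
The plan is to work in the lattice function models of both buildings. Using theorem \ref{thmBLI.4} for $A^{\times}$ and its analogue for $B^{\times}$, I would identify $\calli{I}$ with $\Latt{o_D}{V}$ and $\calli{I}_E$ with $\Latt{o_{D_E}}{W}$, where $D_E$ is the centraliser of $E$ in $\tens{E}{F}{D}$ (a skewfield with centre $E$, since $\tens{E}{F}{D}$ is central simple over $E$) and $W$ is $V$ regarded as a right $D_E$-module, so that $B = \End_{D_E}(W)$. The key observation is that a class of $o_D$-lattice functions on $V$ is $E^{\times}$-invariant if and only if it admits a representative that is simultaneously an $o_E$-lattice function, which via the identification of $V$ with $W$ as abelian groups is exactly an $o_{D_E}$-lattice function on $W$. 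This produces a natural bijection $\calli{I}^{E^{\times}}\liso \calli{I}_E$ which I would take as $j_E$.

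To verify the compatibility formula $\tf{a}_{j_E(x)}(t) = B \cap \tf{a}_x(e(E|F)t)$, I would compute both endomorphism filtrations in a basis simultaneously splitting the $D$- and $D_E$-structures on $V$. The ramification factor $e(E|F)$ enters because the part-2 convention normalises lattice function periods to $1$ on each side, so passing from the $F$-valuation to the $E$-valuation on $B$ rescales the argument by $e(E|F)$. Properties (1)--(3) of $j_E$ then follow directly from the construction: bijectivity is built in, $B^{\times}$-equivariance is tautological since $B^{\times}\subseteq A^{\times}$ acts naturally on both sides by the same formula, and affineness uses that any two $o_{D_E}$-lattice functions admit a common splitting basis (which is automatically a splitting basis over $D$).

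For the final uniqueness statement, let $i\colon\calli{I}_E\ra\calli{I}$ be any $B^{\times}$-equivariant affine map. Since $E^{\times}\subseteq \Centr(B^{\times})$ acts trivially on the non-enlarged building $\calli{I}_E$, equivariance forces $i(y)\in \calli{I}^{E^{\times}}$ for every $y$. Thus $\tau := j_E\circ i$ is a $B^{\times}$-equivariant affine self-map of $\calli{I}_E$. On the apartment $\Sigma$ attached to a maximal $E$-split torus $S$ of $B^{\times}$, the group $S(E)$ acts as a full-rank lattice of translations, so any affine self-map commuting with this action must be a translation of $\Sigma$. Pinning this translation down by equivariance under the whole of $B^{\times}$ (and matching apartments via $B^{\times}$-conjugation) forces $\tau = \id$, hence $i = j_E^{-1}$.

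The main obstacle will be the lattice function identification in the first paragraph: establishing rigorously that the $E^{\times}$-fixed classes in $\Latt{o_D}{V}$ are exactly those represented by $o_{D_E}$-lattice functions of $W$, together with correctly accounting for the period-$1$ normalisation and the ramification rescaling. Once that bookkeeping is clean, the remaining verifications reduce to direct computations with endomorphism rings of lattice functions and to the standard fact, used already in proving theorem \ref{thmBLI.4}, that affine $B^{\times}$-equivariant structure on an apartment of $\calli{I}_E$ is rigid under torus actions.
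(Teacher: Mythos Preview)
The paper does not prove this theorem: it is stated as a citation of \cite[Thm.\ II.1.1.]{broussousLemaire:02} and no argument is given in the text. There is therefore no proof in the paper to compare your proposal against.

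That said, your outline is essentially the Broussous--Lemaire strategy (lattice-function models on both sides, identification of $E^{\times}$-fixed classes with $o_E$-lattice functions, then rigidity of affine $B^{\times}$-equivariant maps), with one slip worth flagging. Your description of $D_E$ as ``the centraliser of $E$ in $\tens{E}{F}{D}$'' is not right: $E$ sits in the centre of $\tens{E}{F}{D}$, so that centraliser is the whole algebra. What you want is the Wedderburn decomposition $\tens{E}{F}{D}\cong \Matr_r(\Delta)$ for a skewfield $\Delta$ central over $E$, and then $W$ is $V$ viewed as a $\Delta$-module with $B\cong\End_{\Delta}(W)$; in the special unramified case treated explicitly later in the paper one has $\Delta=\Centr_D(L_f)$. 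Your uniqueness sketch is also a bit thin: torus equivariance alone only pins $\tau$ down to a translation of a single apartment, and you need the full $B^{\times}$-action (or at least the normaliser of the torus together with transitivity on apartments) to conclude $\tau=\id$; this is exactly the kind of rigidity used in the proof of theorem~\ref{thmBLI1.4}.
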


We briefly give Broussous and Lemaire's description of $j_E$ in terms of lattice functions but only in the case, where $E|F$ is isomorphic to a subextension $L_f|F$ of $L|F.$ Then $\tens{E}{F}{L}\cong\bigoplus_{k=0}^{f-1}L$ coming from the decomposition $1=\sum_{k=0}^{f-1}1_k$ labeled such that the
$\Gal(L|F)$-action on the second factor gives $\sigma(1_k)=1_{k-1}$ for $k\geq
1$ and $\sigma(1_0)=1_{f-1}$.
Applying it on the $\tens{E}{F}{L}$-module $V$, we get $V=\bigoplus_kV_k$, $V_k:=1_kV.$

\begin{remark}
\be
\item $B\cong\End_{\Delta_E}(V_0)$ and
\item $B\cong \Matr_{m}(\Delta_E)$ 
\ee
where $\Delta_E:=\Centr_{D}(L_f).$
\end{remark}

\begin{theorem}\label{thmDescrjE}\cite[II 3.1.]{broussousLemaire:02}
In terms of lattice functions $j_E$ has the form
$j_E^{-1}([\Theta])=[\Lambda]$, with 
\[\Lambda(s):=\bigoplus_{k=0}^{f-1}\Theta(s-\frac{k}{d})\pi_D^k,\ s\in \mathbb{R}\] 
where $\Theta$ is an $o_{\Delta}$-lattice function on $V_0.$
\end{theorem}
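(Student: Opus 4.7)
The plan is to verify the asserted formula by defining $\Lambda$ via the given expression, checking that it is a genuine $o_D$-lattice function on $V$ whose class is $E^{\times}$-stable, establishing the characteristic CLF identity from theorem \ref{thmBLII1.1}, and finally appealing to the uniqueness statement there.

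First I would confirm that for each $k\in\{0,\ldots,f-1\}$ the summand $\Theta(s-k/d)\pi_D^k$ is an $o_D$-lattice in $V_k.$ The key observation, obtained from the action relation $(e\otimes l)\cdot(v\pi_D)=((e\otimes\sigma(l))\cdot v)\pi_D$ combined with $\sigma(1_k)=1_{k-1},$ is that right multiplication by $\pi_D^k$ is a bijection $V_0\to V_k.$ Left continuity and decreasingness of $\Lambda$ are inherited term-by-term from $\Theta.$ For the period one property, using that $\pi_F\in F$ is central one can pull $\pi_F$ past $\pi_D^k,$ and the period one property of $\Theta$ as an $o_\Delta$-lattice function (noting $\pi_E=\pi_F$ since $E=L_f$ is unramified) yields $\Theta(s-k/d)\pi_F=\Theta(s+1-k/d),$ hence $\Lambda(s)\pi_F=\Lambda(s+1).$ The $E^{\times}$-stability of $[\Lambda]$ is then immediate from the block structure: each unit of $o_E$ acts on $V_k$ by right multiplication by a unit of $o_L$, preserving $\Theta(s-k/d)\pi_D^k$ after using $u\pi_D^k=\pi_D^k\sigma^{-k}(u),$ and the action of $\pi_E=\pi_F$ merely shifts $\Lambda$ by $+1.$

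The core step is the CLF identity $\tf{a}_\Theta(t)=B\cap \tf{a}_\Lambda(t)$ for all $t\in\mathbb{R}.$ Here $B=\Centr_A(E)\cong\End_{\Delta_E}(V_0),$ and any $b\in B$ commutes both with $E$ (by definition) and with right multiplication by $L$ (since $b\in\End_D(V)$ and $L\subseteq D$), hence with every idempotent $1_k\in\tens{E}{F}{L};$ thus $b$ preserves each $V_k.$ Moreover $b(v\pi_D^k)=b(v)\pi_D^k$ for $v\in V_0,$ so $b|_{V_k}$ is entirely determined by $b|_{V_0}.$ Therefore the condition $b\Lambda(s)\subseteq\Lambda(s+t)$ decomposes into the family of inclusions $b(\Theta(s-k/d))\pi_D^k\subseteq\Theta(s+t-k/d)\pi_D^k,$ which is equivalent to $b\Theta(s')\subseteq\Theta(s'+t)$ for every $s',$ i.e.\ to $b\in\tf{a}_\Theta(t).$ Since $e(E|F)=1,$ theorem \ref{thmBLII1.1} identifies $[\Lambda]$ as $j_E^{-1}([\Theta]).$

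The main obstacle is bookkeeping: one must keep straight the interaction of the $\tens{E}{F}{L}$-idempotents $1_k$ (encoding the decomposition $V=\bigoplus V_k$) with the noncentral uniformizer $\pi_D$ (which cyclically permutes the $V_k$ via $V_0\pi_D^k=V_k$), and reconcile the period one conventions for $o_D$-lattice functions on $V$ versus $o_\Delta$-lattice functions on $V_0$ through the relations $\pi_D^d=\pi_F$ (up to a unit) and $\pi_D^f=\varpi,$ a uniformizer of $\Delta_E.$
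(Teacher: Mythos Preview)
The paper does not actually prove this theorem; it is imported from \cite[II.3.1.]{broussousLemaire:02}. The closest the paper comes to an argument is in the proof of example~\ref{exampleSimplification}(3), where (for the diagonal embedding) it observes that the map $[\Theta]\mapsto[\Lambda]$ defined by the stated formula is affine and $B^{\times}$-equivariant, and then invokes the \emph{second} uniqueness clause of theorem~\ref{thmBLII1.1}: $j_E^{-1}$ is the only map $\mathcal{I}_E\to\mathcal{I}$ with these two properties.

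Your route is different: you verify the \emph{first} characterisation in theorem~\ref{thmBLII1.1}, namely the filtration identity $\tf{a}_{\Theta}(t)=B\cap\tf{a}_{\Lambda}(t)$, and conclude from the uniqueness of $j_E$ as a CLF-map. This is perfectly valid, and the block computation you sketch (using that $b\in B$ commutes with each $1_k$ and with right $\pi_D$-multiplication) is exactly the right mechanism. The paper's route is slightly shorter because affineness and $B^{\times}$-equivariance of the explicit formula are almost immediate from the shape of the direct sum, whereas your approach has to unwind the square lattice functions; on the other hand, your computation makes the CLF compatibility explicit, which is informative.

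One small gap: for $\Lambda$ to be an $o_D$-lattice function you need the $\pi_D$-periodicity $\Lambda(s+\tfrac{1}{d})=\Lambda(s)\pi_D$, not merely the $\pi_F$-periodicity you checked. This does hold, via the identification $V_0\pi_D^f=V_0$ and the fact that $\pi_D^f$ is a uniformizer of $\Delta_E$ (so $\Theta(s+\tfrac{f}{d})=\Theta(s)\pi_D^f$), which you allude to in your final paragraph but do not actually deploy in the verification.
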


\chapter{Embedding types through barycentric coordinates}\label{chapterIdea}

In this chapter we keep the notation from chapter \ref{chThemapjE}.
We repeat that $E_D|F$ denotes the big\-gest field extension of $E|F$ which can be embedded in $L|F.$ 
The centraliser of $E_D$ in $A$ is denoted by $B_D.$ 
We need a notion of orientation on $\Omega_{E_D}$ to order the barycentric co\-or\-di\-nates of a point in $\mathcal{I}_{E_D}.$ 

\begin{definition}
An edge of $\Omega$ with vertices $e$ and $e'$ is \textit{oriented towards} $e',$
if there are lattices $\Gamma\in\lattices{e}$ and $\Gamma'\in\lattices{e'},$ such that 
$\Gamma\supseteq\Gamma'$ with the quotient having $\kappa_D$-dimension 1, i.e. 
$\kappa_F$-dimension $d.$ We write $e\ra e'.$ If $x$ is a point in $\mathcal{I}$ then there is a chamber $C\in\Omega$ such that $x$ lies in the closure of $|C|,$ i.e. in
\[\bigcup_{S\leq C}|S|.\] The vertices of $C$ can be given in the way
\[e_1\ra e_2\ra\ldots\ra e_{m}\ra e_1.\] 
If $(\mu_i)$ are the barycentric coordinates of $x$ with respect to $(e_i),$ i.e.  
\[x=\sum_i\mu_ie_i,\]
then the class $\langle \mu\rangle $ is called the \textit{local type of} $x.$
\end{definition}

This definition applies for $\mathcal{I}_{E_D}$ as well. The skewfield is then $\Centr_D(E_D)$ instead of $D$ and one has to substitute $d$ by $\frac{d}{[E_D:F]}.$ 

\begin{proposition}
The notion of local type does not depend on the choice of the chamber $C$ and the starting vertex $e_1.$
\end{proposition}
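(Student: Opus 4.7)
The plan is to identify the (unique) smallest face $S$ of $\Omega$ such that $x\in|S|$, and show that the coordinate vector $\mu$ depends, up to cyclic rotation, only on $S$ and $x$. Any chamber $C$ with $x\in\overline{|C|}$ automatically satisfies $S\leq C$, so this gives independence of the choice of $C$; independence of the starting vertex is then the tautology that rotating the vertex sequence rotates $\mu$ by the same cyclic shift, fixing the class $\langle\mu\rangle$.

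First I would check the compatibility of orientations. Write a representative chain $\Gamma_1\supsetneq\Gamma_2\supsetneq\ldots\supsetneq\Gamma_m\supsetneq\Gamma_1\pi_D$ for $C$ with $\dim_{\kappa_D}(\Gamma_k/\Gamma_{k+1})=1$, and let $\Gamma_{i_1}\supsetneq\ldots\supsetneq\Gamma_{i_r}\supsetneq\Gamma_{i_1}\pi_D$ be the subchain corresponding to $S$. The cyclic order $[\Gamma_{i_1}]\to\ldots\to[\Gamma_{i_r}]\to[\Gamma_{i_1}]$ inherited from $C$ agrees with the one intrinsically attached to $S$ via its own lattice chain, since each consecutive pair $\Gamma_{i_k}\supsetneq\Gamma_{i_{k+1}}$ has finite $\kappa_D$-length (telescoping the unit-length steps of $C$), so the orientation criterion $e\to e'$ of the definition is satisfied. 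In particular the cyclic ordering of the vertices of $S$ is canonical and independent of $C$.

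Second, I would track the coefficients along the vertex cycle of $C$. Setting $n_j:=\dim_{\kappa_D}(\Gamma_{i_j}/\Gamma_{i_{j+1}})$ (with indices cyclic mod $r$), the chamber $C$ refines the chain of $S$ by inserting exactly $n_j-1$ vertices between $[\Gamma_{i_j}]$ and $[\Gamma_{i_{j+1}}]$, so that $\sum_j n_j = m$. Since $x\in|S|$, the coefficients on all inserted vertices vanish; reading off $C$'s cyclic list starting at $[\Gamma_{i_1}]$ therefore yields
\[
\mu = (\mu_1,\underbrace{0,\ldots,0}_{n_1-1},\mu_2,\underbrace{0,\ldots,0}_{n_2-1},\ldots,\mu_r,\underbrace{0,\ldots,0}_{n_r-1}),
\]
whose data $(n_j,\mu_j)$ depend only on $S$ and $x$. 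Hence any two chambers $C,C'\supseteq S$ produce the same vector once a common starting vertex of $S$ is chosen, and a change of starting vertex among $[\Gamma_{i_1}],\ldots,[\Gamma_{i_r}]$ acts on $\mu$ by a cyclic permutation, leaving $\langle\mu\rangle$ unchanged.

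The only delicate point is the orientation-compatibility of step one: abstractly the orientation is defined from a condition on $\kappa_D$-quotients of dimension one, whereas the subchain of $S$ inside $C$ only sees inclusions of dimension $n_j$. Once it is observed that this suffices, because the orientation criterion merely requires \emph{some} representatives with strict inclusion, the remainder reduces to the standard bookkeeping on refinements of lattice chains already recalled in the text.
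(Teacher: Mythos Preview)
The paper states this proposition without proof, so there is nothing to compare against; your argument is the only one on the table.

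Your approach is correct. The key observations are exactly the ones you make: the smallest face $S$ with $x\in|S|$ is contained in every admissible chamber $C$; the nonzero barycentric coordinates of $x$ are its barycentric coordinates in $|S|$ and hence depend only on $S$; and the number of inserted zeros between two consecutive vertices $[\Gamma_{i_j}],[\Gamma_{i_{j+1}}]$ of $S$ equals $\dim_{\kappa_D}(\Gamma_{i_j}/\Gamma_{i_{j+1}})-1$, which is intrinsic to $S$.

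One expository correction on your ``delicate point'': you write that ``the orientation criterion merely requires \emph{some} representatives with strict inclusion'', but the paper's definition of $e\to e'$ does require a $\kappa_D$-dimension-$1$ quotient, not just a strict inclusion. Fortunately you do not actually need to orient the edges of $S$. What you need is only that the cyclic order on the vertices of $C$ determined by the orientation restricts to a cyclic order on the vertices of $S$ that is independent of $C$. This is immediate: the orientation on $C$ encodes the strict-inclusion order $\Gamma_1\supsetneq\Gamma_2\supsetneq\cdots\supsetneq\Gamma_m\supsetneq\Gamma_1\pi_D$ of a representing chain, and restricting to the subchain for $S$ gives the canonical cyclic order on $\lattices S$ coming from $S$'s own lattice chain. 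So the argument stands; just rephrase that paragraph so it does not appear to misquote the definition.
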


For the definition of $\langle  \rangle ^c$ see section \ref{secVectorsMatrices}.

\begin{theorem}\label{thmconnection}
Let $(E,\textfrak{a})$ be an embedding of $A$ with embedding type $\langle \lambda\rangle $ and suppose $\textfrak{a}$ to have rank $r.$ If $M_{\textfrak{a}}$ denotes the barycenter of $\textfrak{a}$ in $\mathcal{I}$ and $\langle \mu\rangle $ the local type of $j_{E_D}(M_{\textfrak{a}}),$ then the following holds.
\be
\item $r[E_D:F]\mu\in \mathbb{N}_0^m,$ and
\item $\langle \row(\lambda)\rangle =\langle [E_D:F]r\mu\rangle ^c.$ 
\ee
\end{theorem}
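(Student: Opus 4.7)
The plan is to reduce to the pearl form of the embedding and then compute everything explicitly. By Theorem~\ref{thmPearlEmb}(2), after conjugating $(E,\tf{a})$ by an element of $A^\times$ I may assume it is the pearl embedding associated with the matrix $\lambda=(\lambda_{i,j})\in\Mint{f}{r}{m}$, where $f=[E_D:F]$. This reduction is permissible because the embedding type is $A^\times$-invariant by definition, and so is the local type of $j_{E_D}(M_\tf{a})$, since $A^\times$ acts on $\mathcal{I}$ and $\mathcal{I}_E$ by simplicial automorphisms preserving the orientation $\to$ and since $j_{gE_Dg^{-1}}(g\cdot x)=g\cdot j_{E_D}(x)$ by the uniqueness in Theorem~\ref{thmBLII1.1}. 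In the pearl setting $E=E_D\cong L_f$ sits diagonally in $A=\Matr_m(D)$, $\tf{a}=\tf{a}^{\bar n}$ is in standard form with $\bar n=(n_1,\dots,n_r)$ and $n_j=\sum_i\lambda_{i,j}$, and the $r$ vertices of the face $\tf{a}$ are the lattice classes $[\Gamma^{(s)}]$ for $s=0,\dots,r-1$, with
\[\Gamma^{(s)}=\bigoplus_{j\leq s}\pi_D o_D^{n_j}\oplus\bigoplus_{j>s}o_D^{n_j};\]
each is fixed by $E_D^\times=L_f^\times$ (since $\pi_{L_f}$ is a unit multiple of $\pi_F$ and acts centrally on $V$), so $M_\tf{a}=\tfrac{1}{r}\sum_s[\Gamma^{(s)}]\in\mathcal{I}^{E_D^\times}$.

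Next I would push $M_\tf{a}$ through $j_{E_D}$ using its affineness (Theorem~\ref{thmBLII1.1}(3)), so that $j_{E_D}(M_\tf{a})=\tfrac{1}{r}\sum_s j_{E_D}([\Gamma^{(s)}])$, and compute each $j_{E_D}([\Gamma^{(s)}])$ using the explicit formula of Theorem~\ref{thmDescrjE}, which amounts to $j_{E_D}([\Lambda])=[\Theta]$ with $\Theta(t)=\Lambda(t)\cap V_0$, where $V=\bigoplus_{k=0}^{f-1}V_k$ under the idempotents of $\tens{E_D}{F}{L}$ and $V_0$ is a free right $\Delta_{E_D}$-module of rank $m$. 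In the pearl setup one checks $W_{j,i}\cap V_0=(\pi_D^{i-1}\Delta_{E_D})^{\lambda_{i,j}}$ directly, using that $V_0=\{v\in V:xv=vx\ \forall x\in L_f\}$, and for integers $c$ one has
\[\pi_D^c o_D\cap\pi_D^{i-1}\Delta_{E_D}=\pi_D^{i-1}\pi_{\Delta_{E_D}}^{\lceil(c-i+1)/f\rceil}o_{\Delta_{E_D}}.\]
Applied to the vertex lattice function $\Lambda^{(s)}(t)=\Gamma^{(s)}\pi_D^{[td]+}$, this expresses $\Theta^{(s)}$ as a split lattice function on $V_0$ in the basis $\{\pi_D^{i-1}e_\ell:1\leq i\leq f,\ 1\leq j\leq r,\ 1\leq\ell\leq\lambda_{i,j}\}$, with the $(j,i)$-block having $\pi_{\Delta_{E_D}}$-valuation $m_{s,j,i}(t)=\lceil([td]+ +\epsilon_{s,j}-i+1)/f\rceil$, where $\epsilon_{s,j}=\mathbf{1}_{\{j\leq s\}}$.

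Averaging shows that $j_{E_D}(M_\tf{a})$ corresponds to the valuation $\frac{1}{r}\sum_s m_{s,j,i}(t)$ on the $(j,i,\ell)$-basis vector, independent of $\ell$. A short calculation shows these valuations lie in $\frac{1}{rd}\mathbb{Z}$, so after converting to the barycentric coordinates $\mu$ of the smallest closed chamber of $\Omega_{E_D}$ containing $j_{E_D}(M_\tf{a})$ one gets $rf\mu\in\mathbb{N}_0^m$, proving part~(1). For part~(2), sorting the basis vectors by valuation gives the row-major order $(i,j)\mapsto(i-1)r+j$ on the pearl matrix, with all $\lambda_{i,j}$ vectors in a given $(i,j)$-block sharing one valuation; consequently the $k$-th non-zero entry of $rf\mu$ equals the row-major gap between the $(k-1)$-st and $k$-th non-empty block, while the gap in $rf\mu$ to the next non-zero entry equals the block size $\lambda_{i_k,j_k}$. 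This is exactly the swap prescribed by the complement, so $\langle\row(\lambda)\rangle=\langle rf\mu\rangle^c$ up to the single cyclic shift tolerated by the $\pairs$ equivalence. The degenerate case $f=1$, where $j_{E_D}=\mathrm{id}$ and $r\mu$ is the indicator of the $r$ vertices of $\tf{a}$ with inter-vertex gaps $n_1,\dots,n_r$, provides a quick sanity check.

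The main obstacle is pinning down the cyclic order that the $\to$-oriented chamber structure of $\Omega_{E_D}$ imposes on the basis $\{\pi_D^{i-1}e_\ell\}$ of $V_0$ and verifying that it is exactly the row-major order on $(i,j)$ used to define $\row(\lambda)$. The subtlety is that right multiplication by $\pi_D$ cyclically permutes the pieces $V_k$ via $V_0\pi_D^k=V_{-k}$, coupling the orientation of $\Omega_{E_D}$ with the pearl index $i$ in a non-obvious way; checking that the two cyclic structures align and that the residual cyclic shift agrees with the one built into the $\pairs$-complement definition constitutes the main combinatorial bookkeeping the proof requires.
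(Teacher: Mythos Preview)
Your approach is sound but takes a genuinely different route from the paper's. Both begin with the conjugation reduction (the paper's Lemma~\ref{lemmaCommutativeDiagram}), but you then attack arbitrary rank $r$ head-on: write $M_\tf{a}$ as the affine average of the $r$ vertices $[\Gamma^{(s)}]$, push each through $j_{E_D}$ via the intersection formula $\Theta(t)=\Lambda(t)\cap V_0$, average the resulting $\Delta_{E_D}$-valuations, and read off the local type. The paper instead computes directly only in rank~$1$ (its Case~1, a clean affine-coordinate calculation after conjugating the pearl embedding to the plain diagonal one), and for $r>1$ invokes two structural devices you bypass entirely: the \emph{rank reduction lemma} (Lemma~\ref{LemRankReduction}) and the \emph{changing skewfield lemma} (Lemma~\ref{lemmaChangingSkewfield}). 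These let the paper pass to an auxiliary skewfield $D'$ of index $rd$, where the rank-$r$ face becomes the $j_{F'}$-image of a single vertex, collapsing Case~2 back to Case~1.

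What each buys: your averaging argument is more direct and avoids the auxiliary construction, but the price is exactly the bookkeeping you flag as the main obstacle---checking that the $\to$-orientation on $\Omega_{E_D}$ puts the basis $\{\pi_D^{i-1}e_\ell\}$ of $V_0$ in row-major order on $(i,j)$, and that the $r$ images $j_{E_D}([\Gamma^{(s)}])$ lie in a common closed chamber so that averaging their valuations really produces the barycentric coordinates of the average point. The paper's rank reduction absorbs the rank into the skewfield index, so only the rank-$1$ combinatorics ever needs to be carried out. Your sketch would go through once that orientation check is nailed down; the paper trades a modest amount of setup (the two lemmas and the auxiliary $D'$) for not having to do it.
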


\begin{remark}
With theorem \ref{thmconnection} we can calculate the embedding type from the local type. For example take $r=2,\ f=6,\ m=7$ and  assume that $j_{E_D}(M_{\textfrak{a}})$ is
\[\frac{3}{12}b_0+\frac{2}{12}b_1+\frac{1}{12}b_2+\frac{0}{12}b_3+\frac{0}{12}b_4+\frac{4}{12}b_5+\frac{2}{12}b_6.\]
and thus \[\langle 12\mu\rangle =\langle 3,2,1,0,0,4,2\rangle \equiv\langle (3,1),(2,1),(1,3),(4,1),(2,1)\rangle .\]
From the complement 
\[\langle 12\mu\rangle ^c\equiv \langle (1,2),(1,1),(3,4),(1,2),(1,3)\rangle \equiv \langle 1,0,1,3,0,0,0,1,0,1,0,0\rangle \] 
applying theorem \ref{thmconnection} we can deduce  the embedding type of $(E,\textfrak{a}):$ 
\begin{equation*}
\left(\begin{array}{cc}1&0\\1&3\\0&0\\0&1\\0&1\\0&0\end{array}\right).
\end{equation*}
\end{remark}

For the proof we can restrict to the case where $E=E_D$ and thus $B=B_D.$ We put $f:=[E:F],$ i.e.
\[E\cong L_f\subseteq L\]
and 
\[F\subseteq E\subseteq B\subseteq A.\]
 Firstly we need some lemmas. The actions of $G$ on square lattice functions by conjugation induces maps  
\[m_g:\ \Omega \ra \Omega,\ x\mapsto g.x \]
and 
\[c_g:\ \mathcal{I}_E\ra \mathcal{I}_{gEg^{-1}},\ y\in \Latttwo{o_E}{B}\mapsto gyg^{-1}\in\Latttwo{o_{gEg^{-1}}}{gBg^{-1}} \]
for $g\in G.$ 

\begin{lemma}\label{lemmaCommutativeDiagram}
$|m_g|$ and $c_g$ induce isomorphisms on the simplicial structures of the Euclidean buildings, which preserve the orientation, i.e. an oriented  edge is mapped to an oriented edge such that the direction is preserved. In particular $|m_g|$ and $c_g$ are affine bijections, $m_g$ preserves the embedding type, $c_g$ the local type, and the following diagram is commutatve:
\begin{equation*}
\xymatrix{
\mathcal{I}^{E^\times} \ar[r]^{ |m_g| } \ar[d]^{j_E} & \mathcal{I}^{(gEg^{-1})^\times} \ar[d]^{j_{gEg^{-1}}}\\
\mathcal{I}_{E} \ar[r]^{c_g} & \mathcal{I}_{gEg^{-1}}
}
\end{equation*}
\end{lemma}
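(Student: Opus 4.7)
The plan is to verify each assertion by working in the lattice-function / hereditary-order model from chapter \ref{chapterEuclideanBuildingOfGLDV}, and then to deduce the commutativity of the diagram either from the uniqueness part of theorem \ref{thmBLII1.1} or directly from the defining formula $\mathfrak{a}_{j_E(x)}(t) = B\cap \mathfrak{a}_x(e(E|F)t)$.

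First I would check the simplicial/apartment structure. Conjugation by $g$ on $A$ sends a hereditary order $\mathfrak{a}$ to the hereditary order $g\mathfrak{a}g^{-1}$, preserves the inclusion relation, and sends a frame $\{v_i D\}$ to the frame $\{gv_i D\}$; hence $m_g\colon \Omega\to\Omega$ is a simplicial automorphism permuting the apartments, and $|m_g|$ is the induced affine bijection on $\mathcal{I}$ (affinity is immediate from the identification of $\mathcal{I}$ with $\Latt{o_D}{V}$ and the formula $\mathfrak{a}_{g\Lambda}(t)=g\mathfrak{a}_\Lambda(t)g^{-1}$, since conjugation is linear). The same argument, applied to $B$ instead of $A$ together with the observation that $g$ conjugates $o_E$-lattice chains to $o_{gEg^{-1}}$-lattice chains, shows that $c_g$ is a simplicial isomorphism $\Omega_E\to\Omega_{gEg^{-1}}$ that is affine on geometric realizations.

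Next I would verify orientation preservation. If $e\to e'$ is an oriented edge coming from $\Gamma\supsetneq \Gamma'$ with $\dim_{\kappa_D}\Gamma/\Gamma'=1$, then $g\Gamma\supsetneq g\Gamma'$ with the same codimension, so $|m_g|$ preserves orientation; similarly $c_g$ preserves orientation of edges of $\Omega_E$ (the residue skewfield of $B$, namely $\Centr_D(E)$, is unchanged up to isomorphism by conjugation). The preservation of embedding type by $m_g$ is then tautological, because the embedding type was defined precisely as the equivalence class of $(E,\mathfrak{a})$ under $A^\times$-conjugation. The preservation of local type by $c_g$ follows from the three facts already assembled: $c_g$ is a simplicial isomorphism, it is affine on the geometric realization, and it preserves orientation of edges; hence it carries an oriented cycle of vertices $e_1\to\cdots\to e_m\to e_1$ of a chamber in $\Omega_E$ to an oriented cycle of vertices of a chamber in $\Omega_{gEg^{-1}}$, and the barycentric coordinates are carried through unchanged, giving the same equivalence class under cyclic permutation.

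Finally, for commutativity of the square, I would use the characterisation in theorem \ref{thmBLII1.1}. For $x\in \mathcal{I}^{E^\times}$ and $t\in\bbR$,
\[
\mathfrak{a}_{j_{gEg^{-1}}(|m_g|(x))}(t)
= gBg^{-1}\cap \mathfrak{a}_{g.x}(e(gEg^{-1}|F)\,t)
= g\bigl(B\cap \mathfrak{a}_x(e(E|F)\,t)\bigr)g^{-1}
= g\,\mathfrak{a}_{j_E(x)}(t)\,g^{-1},
\]
using that $e(gEg^{-1}|F)=e(E|F)$ and that $m_g$ and conjugation on $A$ commute with intersection. The right-hand side is by definition the square lattice function of $c_g(j_E(x))$, so the two points of $\mathcal{I}_{gEg^{-1}}$ coincide. (Alternatively, one observes that $c_g\circ j_E\circ |m_g|^{-1}$ is $(gBg^{-1})^\times$-equivariant and affine from $\mathcal{I}^{(gEg^{-1})^\times}$ to $\mathcal{I}_{gEg^{-1}}$, and invokes uniqueness in theorem \ref{thmBLII1.1}.) The only mildly delicate point in the whole argument is the identification of apartments in the $c_g$-step, which requires noting that a frame of the $\Centr_D(E)$-vector space $V$ in the definition of an apartment of $\Omega_E$ is sent by $g$ to a frame of $V$ as a $\Centr_D(gEg^{-1})$-vector space; everything else is essentially bookkeeping.
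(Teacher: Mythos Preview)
Your argument is correct. The paper states this lemma without proof, so there is nothing to compare against; your verification via the hereditary-order/lattice-function model and the defining formula of $j_E$ from theorem \ref{thmBLII1.1} is exactly the intended routine check. One small remark: in your alternative argument for commutativity via uniqueness, note that theorem \ref{thmBLII1.1} gives uniqueness for $j_E^{-1}$ (the affine $B^\times$-equivariant map $\mathcal{I}_E\to\mathcal{I}$), so the cleanest phrasing is that $|m_g|\circ j_E^{-1}\circ c_g^{-1}:\mathcal{I}_{gEg^{-1}}\to\mathcal{I}$ is affine and $(gBg^{-1})^\times$-equivariant, hence equals $j_{gEg^{-1}}^{-1}$. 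Also, the parenthetical ``the residue skewfield of $B$, namely $\Centr_D(E)$'' is slightly off notationally since $E\subseteq A$ rather than $E\subseteq D$; what you mean is the skewfield $\Delta_E$ Morita-equivalent to $B$, but the substance of the orientation argument is unaffected.
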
  

The following lemma gives a geometric interpretation of the map
\[\{\text{embedding types}\}\ra\{\text{embedding types of vertices}\} \] 
\[ \langle \lambda\rangle\mapsto \langle \row(\lambda)^T \rangle. \]

\begin{lemma}[rank reduction lemma]\label{LemRankReduction}
Assume there is a field extension $K|F$ of degree $s$ in $E|F,$ where $2\leq s\leq m.$ Let $\textfrak{a}$ be a vertex in $\Omega^{E^\times}$ such that $\textfrak{a}\cap \Centr_A(K)$ is a face of rank $s$ in $\Omega^{E^\times}_{K}$ and assume $(E,\textfrak{a})$ has embedding type $\langle \lambda\rangle $ and \mbox{$(E,\textfrak{a}\cap \Centr_{K}(A))$} has embedding type $\langle \lambda'\rangle .$ Then we get 
\[\row(\lambda)\sim\row(\lambda'),\text{ i.e. }\lambda\sim\trans{\row(\lambda')}.\]
\end{lemma}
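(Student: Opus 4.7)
The plan is to invoke Theorem \ref{thmconnection} twice, once in $A$ for $(E,\tf{a})$ and once in the centraliser $B_K:=\Centr_A(K)$ for $(E,\tf{a}\cap B_K)$, and to identify the two outputs via a single point of $\mathcal{I}_{E_D}$. Since $K$ embeds in $L$, we have $K\subseteq E_D\subseteq E$, and $B_K$ is a central simple $K$-algebra with $L\subseteq D_K:=\Centr_D(K)$ its maximal unramified subfield. A dimension count gives $B_K\cong\Matr_m(D_K)$ with the same integer $m$ as for $A\cong\Matr_m(D)$, and the ``$E_D$'' of $E$ in the $B_K$-context is again $E_D$, with $[E_D:K]=[E_D:F]/s$ and $\deg(D_K)=d/s$. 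In particular, $\mathcal{I}_{E_D}$ is intrinsic: its chambers have $m$ vertices and its edge orientation is determined by $\Centr_{D_K}(E_D)=\Centr_D(E_D)$ and the integer $d/[E_D:F]=(d/s)/[E_D:K]$, so the local type of a point $y\in\mathcal{I}_{E_D}$ is the same whether $y$ is viewed from $A$ or from $B_K$.

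Next I would establish two functoriality statements. First, the characterisation of $j_E$ in Theorem \ref{thmBLII1.1}, combined with $e(K|F)=1$ (so $e(E_D|F)=e(E_D|K)$) and $\Centr_{B_K}(E_D)=\Centr_A(E_D)$, forces
\[j_{E_D}=j_{E_D}^{(K)}\circ j_K\qquad\text{on }\mathcal{I}^{E_D^{\times}},\]
where $j_{E_D}^{(K)}\colon\mathcal{I}_K^{E_D^\times}\to\mathcal{I}_{E_D}$ is the Broussous-Lemaire map for $E_D|K$ inside $B_K$: both sides have square lattice function $t\mapsto\Centr_A(E_D)\cap\tf{a}_x(e(E_D|F)\,t)$. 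Second, I claim
\[j_K(M_{\tf{a}})=M_{\tf{a}\cap B_K}.\]
Reducing to the pearl-embedding normal form $\tf{a}=\Matr_m(o_D)$, the vertex $M_\tf{a}$ has square lattice function $\tf{a}_{M_\tf{a}}(t)=\pi_D^{\lceil td\rceil}\tf{a}$, so Theorem \ref{thmBLII1.1} yields $\tf{a}_{j_K(M_\tf{a})}(t)=B_K\cap\pi_D^{\lceil td\rceil}\tf{a}$. Since $\pi_D^s$ centralises $K=L_s$ and is a uniformiser of $D_K$ up to a unit, a direct computation matches these intersections with the successive powers of $\rad(\tf{a}\cap B_K)$, which are exactly the values of $\tf{a}_{M_{\tf{a}\cap B_K}}(t)$ at the jumps $t=k/d$ (note $\tf{a}\cap B_K$ has rank $s$ in $\Omega_K$, so its $\pi_{D_K}$-period is partitioned into $s\cdot d_K=d$ steps).

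Combining these facts, Theorem \ref{thmconnection} applied in $A$ to $(E,\tf{a})$ of rank $r=1$ gives, for $\langle\mu\rangle$ the local type of $j_{E_D}(M_\tf{a})$,
\[\langle\row(\lambda)\rangle=\langle[E_D:F]\,\mu\rangle^c,\]
and applied in $B_K$ to $(E,\tf{a}\cap B_K)$ of rank $s$ gives, for $\langle\mu'\rangle$ the local type of $j_{E_D}^{(K)}(M_{\tf{a}\cap B_K})$,
\[\langle\row(\lambda')\rangle=\langle s\cdot[E_D:K]\,\mu'\rangle^c=\langle[E_D:F]\,\mu'\rangle^c.\]
The two functoriality statements yield $j_{E_D}(M_\tf{a})=j_{E_D}^{(K)}(M_{\tf{a}\cap B_K})$ as points of $\mathcal{I}_{E_D}$; by the intrinsic nature of the local type, $\langle\mu\rangle=\langle\mu'\rangle$, and therefore $\langle\row(\lambda)\rangle=\langle\row(\lambda')\rangle$, which is the lemma.

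The main technical obstacle will be the identification $j_K(M_\tf{a})=M_{\tf{a}\cap B_K}$: while both sides have level-$0$ equal to $\tf{a}\cap B_K$, promoting this to an equality of full square lattice functions requires $B_K\cap\pi_D^k\tf{a}=\rad(\tf{a}\cap B_K)^k$ for every $k$, which rests on the structural description of $\tf{a}\cap B_K$ via the idempotent decomposition of $V$ arising from $\tens{K}{F}{L}$ and on $\pi_D^s$ serving as a uniformiser of $D_K$.
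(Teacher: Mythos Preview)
Your argument is circular. You invoke Theorem \ref{thmconnection} for the embedding $(E,\tf{a}\cap B_K)$ of rank $s>1$ inside $B_K$, but in the paper the proof of Theorem \ref{thmconnection} for rank greater than one (its Case 2) is obtained precisely \emph{from} the rank reduction lemma together with the rank-one case. So the second of your two applications of the theorem presupposes the very statement you are trying to establish. The rank-one application in $A$ is harmless, since Case 1 of the theorem is proved by a direct barycentric computation and does not use the lemma; it is the rank-$s$ application in $B_K$ that closes the loop.

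The paper avoids this by proving the lemma directly, with no appeal to Theorem \ref{thmconnection}. One reduces (by Lemma \ref{lemmaCommutativeDiagram}) to the pearl embedding attached to $\lambda$, treats the case $s=2$ explicitly, and handles general $s$ similarly. Concretely, with $\tf{a}=\Matr_m(o_D)$ one applies a permutation moving the odd $\sigma$-exponents in front of the even ones, then conjugates by $\diag(\EMatr{n_1},\pi_D^{-1}\EMatr{n_2})$; the resulting pair has $K'=L_2$ diagonally embedded, its centraliser is $\Matr_m(\Delta_{K'})$, and $\tf{a}'\cap\Matr_m(\Delta_{K'})$ is visibly the standard hereditary order with invariant $\langle n_1,n_2\rangle$. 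Because the $\sigma$-exponents appearing are all even and $\pi_D^2$ is a uniformiser of $\Delta_{K'}$ normalising $L$, one can read off the embedding type of $(E',\tf{a}'\cap\Matr_m(\Delta_{K'}))$ directly and see that its $\row$ agrees with $\row(\lambda)$ up to cyclic permutation.

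If you wanted to rescue your strategy, you would have to supply an independent proof of the rank-$s$ instance of Theorem \ref{thmconnection} inside $B_K$, or set up a genuine induction on $s$; as written, the appeal to the theorem for rank $s$ is not available.
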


\begin{proof}
By lemma \ref{lemmaCommutativeDiagram} it is enough to show the result only for one embedding equivalent to $(E,\textfrak{a}).$ For simplicity we can restrict ourself to the case of $s=2.$ The argument for $s>2$ is similar. We fix a $D$-basis of $V.$ It is $(E,\textfrak{a})$ equivalent to the pearl embedding $(E_\lambda,\textfrak{a}_\lambda)$ of  $\lambda,$ moreover $\textfrak{a}_\lambda$ is $\Matr_{m}(o_D).$  Now we apply a permutation $p$ on $(E_\lambda,\textfrak{a}_\lambda)$ such that the odd exponents of $\sigma$ in $pE_\lambda p^{-1}$ are behind all even exponents, i.e. $pE_\lambda p^{-1}$ is the image of 
\[x\in L_{f}\mapsto \diag(M_{n_1}(x),M_{n_2}(x)),\ n_1:=\sum_{i\text{ odd}}\lambda_i,
\ n_2:=\sum_{i\text{ even}}\lambda_i\]
where
\[M_{n_1}(x)=\diag(\sigma^0(x)\EMatr{\lambda_{1}},\sigma^2(x)\EMatr{\lambda_{3}},\ldots,\sigma^{f-2}(x)\EMatr{\lambda_{f-1}})\]
and
\[M_{n_2}(x)=\diag(\sigma^1(x)\EMatr{\lambda_{2}},\sigma^3(x)\EMatr{\lambda_{4}},\ldots,\sigma^{f-1}(x)\EMatr{\lambda_{f}}).\]
For the embedding $(E',\textfrak{a}')$ obtained by conjugating $p(E_\lambda,\textfrak{a}_\lambda) p^{-1}$ with the matrix 
\[\diag(\EMatr{n_1},\pi_D^{-1}\EMatr{n_2})\] we have the following properties. Let $K'|F$ be the field extension of degree two in $E'|F.$
\bi
\item $K'$ is the image of the diagonal embedding of $L_2$ in $\Matr_m(D)$ and its centraliser is $\Matr_m(\Delta_{K'}),$ where $\Delta_{K'}:=\Centr_{D}(L_2).$ This follows because even powers of $\pi_D$ commute with $L_2.$
\item The intersection of $\textfrak{a}'$ with $\Matr_m(\Delta_{K'})$ is a herditary order in standard form with invariant $\langle n_1,n_2\rangle .$ The positivity of the integers $n_i$ follows from the assumption that this intersection is a face of rank $2.$
\ei
Since $\pi_{\Delta_{K'}}:=\pi_D^2$ is a prime element of $\Delta_{K'}$ which normalises $L$ and since the powers of $\sigma$ occuring in the description of $E'$ are even we can read the embedding type of $(E',\textfrak{a}'\cap \Matr_m(\Delta_{K'}))$ directly. It is  
the class of
\begin{equation*}
\left(\begin{array}{cc}
\lambda_1 & \lambda_2\\
\lambda_3 & \lambda_4\\
\vdots &\vdots \\
\lambda_{f-1} & \lambda_{f}
\end{array}\right).
\end{equation*}
Thus the result follows.
\end{proof}
\vspace{1em}

The next lemma shows that changing the skewfield does not change the embedding type.

\begin{lemma}[changing skewfield lemma]\label{lemmaChangingSkewfield}
Let $D'$ be a central skewfield over a local field $F'$ of index $d$ with a maximal unramified extension field $L'$ normalized by a prime element $\pi_{D'}$ and assume that $V'$ is an $m$ dimensional right vector space over $D'.$ Denote the Euclidean building of $GL_m(D')$ by $\mathcal{I}'$ and let $\Sigma,\ \Sigma'$ be an apartment of $\mathcal{I},\ \mathcal{I}'$ corresponding to a basis $(v_i),\ (v'_i)$ respectively. Then $\Sigma'$ is fixed by the image $E'$ of the diagonal embedding of $L'_f$ in $\Matr_{m}(D').$ Assume further that $E$ is the image of the diagonal embedding of $L_f$ in $\Matr_m(D).$ Under these assumptions the map $\equiv$ from $|\Sigma|$ to $|\Sigma'|$ defined by
\[ [x\mapsto\bigoplus_iv_i\textfrak{p}_D^{[d(x+\alpha_i)]+}]\mapsto [x\mapsto\bigoplus_iv'_i\textfrak{p}_{D'}^{[d(x+\alpha_i)]+}] \]
is the geometric realisation from an isomorphism $\phi$ of simplicial complexes which preserves the orientation and the embedding type. The latter means that if $\textfrak{a}'$ is the image of a hereditary order $\textfrak{a}$ under $\phi$ then the embedding types of 
$(E,\textfrak{a})$ and $(E',\textfrak{a}')$ equal.
\end{lemma}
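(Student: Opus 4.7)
The map $\equiv$ is essentially a combinatorial rewriting: on the level of a single apartment the data of a lattice function split by a basis is encoded by the tuple $(\alpha_i)\in\mathbb{R}^m$ (modulo simultaneous integer translations, accounting for the period-one convention and the lattice-function-class equivalence), and this encoding does not see the skewfield, only its index $d$. The strategy is to (i) build an isomorphism $\phi$ at the combinatorial/simplicial level using these tuples, (ii) check that $|\phi|=\equiv$, and (iii) verify separately the orientation and embedding-type statements, both of which reduce to matching combinatorial data on the two sides.

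\textbf{Step 1: construction of $\phi$ and bijectivity.} A vertex of $\Sigma$ corresponds to a homothety class of $o_D$-lattices of the form $\bigoplus_i v_i\mathfrak{p}_D^{n_i}$, i.e.\ an element of $\mathbb{Z}^m/\mathbb{Z}(1,\ldots,1)$; more generally a face of $\Sigma$ is given by a collection of such classes forming a lattice chain, and by proposition \ref{propBTII2.16} the rank equals the number of distinct residues of the $n_i$'s mod the period. Define $\phi$ by sending the face of $\Sigma$ encoded by a collection of tuples $(n_i)$ to the face of $\Sigma'$ encoded by the same tuples applied to $(v'_i)$. This is manifestly a poset isomorphism between the simplicial structures of $\Sigma$ and $\Sigma'$, since both apartments realise the standard affine Coxeter complex of type $\tilde A_{m-1}$ via the same combinatorial description.

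\textbf{Step 2: $|\phi|=\equiv$.} A point of $|\Sigma|$ is a convex combination of vertices of a chamber; by the discussion before proposition \ref{propBTII2.16} (and the explicit formula in remark \ref{remAffineStructure}) such a point corresponds to an equivalence class of $o_D$-lattice functions $x\mapsto\bigoplus_i v_i\mathfrak{p}_D^{[d(x+\alpha_i)]+}$ with $(\alpha_i)\in\mathbb{R}^m$ determined up to a common integer shift. Under the definition of $\phi$ the barycentric coordinates of a face are preserved, so $|\phi|$ sends this class to the class with the same $(\alpha_i)$ on the basis $(v'_i)$, which is exactly the recipe for $\equiv$. Well-definedness of $\equiv$ on classes follows from the observation that replacing $(\alpha_i)$ by $(\alpha_i+t)$ has the same effect on both sides (it is a global translation of the lattice function by $t$).

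\textbf{Step 3: orientation and embedding type.} Two adjacent vertices $e\to e'$ in $\Sigma$ are connected by an elementary move that increases exactly one $n_i$ by $1$ (giving quotient of $\kappa_D$-dimension $1$, i.e.\ $\kappa_F$-dimension $d$); this is a purely combinatorial description, and the same move on $(v'_i)$ gives an edge $\phi(e)\to\phi(e')$ of $\Sigma'$ in $\mathcal{I}'$, which is oriented the same way since $d$ is the same on both sides. For the embedding type, given $\mathfrak{a}\in\Sigma^{E^\times}$, note that $\mathfrak{a}$ is a hereditary order in standard form with respect to the basis $(v_i)$, while the diagonal $E$ corresponds to a partition of the index set $\{1,\ldots,m\}$ into blocks indexed by powers of $\sigma$ on $L$, with block sizes $\lambda_{i,j}$; this is exactly the pearl embedding data defining the embedding type. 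The construction of $\phi$ respects standard-form invariants (same $\bar n$) and respects the basis indexing, and by hypothesis $E'$ is the diagonal image of $L'_f$, so the same partition gives the pearl embedding data of $(E',\phi(\mathfrak{a}))$, yielding the same embedding type by theorem \ref{thmPearlEmb}.

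\textbf{Main obstacle.} The only nontrivial point is the very last one: although the combinatorial description of embedding types looks skewfield-independent, one must check that the choice of prime element $\pi_{D'}$ normalising $L'$ makes the roles of the powers $\sigma^k$ match under $\phi$. This is built into the hypothesis that $\pi_{D'}$ normalises $L'$ and that $E'$ is the \emph{diagonal} embedding of $L'_f$: the identification of $\Gal(L_f|F)$ with $\Gal(L'_f|F')$ is canonical (both cyclic of order $f$, generated by the Frobenius), and the pearl embedding data only records the multiset of Frobenius orbits of basis vectors inside each block of $\mathfrak{a}$, hence transports verbatim under $\phi$.
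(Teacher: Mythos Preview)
Your Steps 1--2 and the orientation part of Step 3 are fine and match the paper's treatment (which simply declares these ``easily verified''). The gap is in your embedding-type argument.

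You assert that for $\mathfrak{a}\in\Sigma$ the order $\mathfrak{a}$ is already in standard form with respect to $(v_i)$, and that the diagonal $E$ gives a partition of $\{1,\dots,m\}$ into blocks indexed by powers of $\sigma$. Neither is true. A hereditary order split by the frame $\{v_iD\}$ corresponds to an arbitrary lattice chain $\mathfrak{L}_j=\bigoplus_i v_i\mathfrak{p}_D^{\nu_{i,j}}$; for most choices of exponents this is \emph{not} one of the orders $\mathfrak{a}^{\bar n}$ (e.g.\ for $m=2$, the maximal order attached to $v_1o_D\oplus v_2\mathfrak{p}_D$ lies in $\Sigma$ but is not $\Matr_2(o_D)$). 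And the \emph{diagonal} embedding of $L_f$ means literally $x\mapsto x\cdot\EMatr{m}$, so every basis vector sees the automorphism $\sigma^0$; there is no nontrivial $\sigma$-block structure to read off. Consequently $(E,\mathfrak{a})$ is in general not a pearl embedding, and you cannot extract the embedding datum $\lambda$ directly from it.

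The paper closes this gap by an explicit parallel conjugation: pick a permutation matrix $P$ and a diagonal matrix $T$ with entries powers of $\pi_D$ so that $TP$ carries $\mathfrak{L}$ to a chain whose order $\mathfrak{b}$ is in standard form; let $T'$ be the matrix obtained from $T$ by replacing each $\pi_D$ with $\pi_{D'}$. Then $T'P$ does the same job on the $D'$-side, yielding $\mathfrak{b}'$ in the \emph{same} standard form. Now $(TPEP^{-1}T^{-1},\mathfrak{b})$ and $(T'PE'P^{-1}T'^{-1},\mathfrak{b}')$ are pearl embeddings with visibly identical data (conjugation by $\pi_D^k$ turns $\sigma^0$ into $\sigma^k$, and the same happens with $\pi_{D'}^k$), hence equal embedding type; conjugating back preserves the type. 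This is the missing step you need to insert in place of your final paragraph.
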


\begin{proof}
We define $\phi$ to map the class of a lattice chain $\textfrak{L}$ with 
\[\textfrak{L}_j=\bigoplus_iv_i\textfrak{p}_D^{\nu_{i,j}}\]
to the class of $\textfrak{L}'$ with
\[\textfrak{L}'_j=\bigoplus_iv'_i\textfrak{p}_{D'}^{\nu_{i,j}}.\]
We only show the preserving of the embedding type. The other properties are verified easely. We take the two lattice chains $\textfrak{L}$ and $\textfrak{L}'$ with corresponding hereditary orders $\textfrak{a}$ and  $\textfrak{a}'.$
Applying from the left an appropriative permutation matrix $P$ and an apropritative diagonal matrix $T$ (resp. $T'$), whose entries are powers of the corresponding prime element, we obtain simultanously lattice chains corresponding to hereditary orders $\textfrak{b},\ \textfrak{b}'$ in the same standard form.  More precisely $T'$ is obtained from $T$ if $\pi_D$ is substituted by $\pi_{D'}.$ 
Thus $(TPEP^{-1}T^{-1},\textfrak{b})$ and $(T'PE'P^{-1}T'^{-1},\textfrak{b}')$ have the same embedding type and thus by conjugating back $(E,\textfrak{a})$ and $(E',\textfrak{a}')$ have the same embedding type.
\end{proof}
\vspace{1em}

We now fix a $D$-basis $v_1,\ldots,v_m$ of $V$ and therefore a frame 
\[\textfrak{R}:=\{R_i:=v_iD |\ 1\leq i\leq m\}\]
and an apartment $\Sigma=\Her{A}_{\textfrak{R}}$ of $\Omega.$
The algebra $A$ can be identified with $\Matr_m(D).$
By the affine bijection $|\Sigma| \cong\mathbb{R}^{m-1} $
which maps 
\[ [\Lambda] \text{ with }\Lambda(x)=\bigoplus_i\textfrak{p}_D^{[d(x+\alpha_i)]+}\]
to
\[ d(\alpha_1-\alpha_2,\ldots,\alpha_{m-1}-\alpha_m), \]
we can introduce affine coordinates on $|\Sigma|$ where the points of $|\Sigma|$ corresponding to the vectors $0,\ (f,0,\ldots,0),\ (0,f,0,\ldots,0),\ldots,\ (0,\ldots,0,f)$ are denoted by $Q_1,Q_2,\ldots,Q_m.$ 

\begin{remark}
The vertices of $\Sigma$ are exactly the points of
\[ Q_1+\sum_{i=2}^m\frac{1}{f}\mathbb{Z}(Q_i-Q_1).\]
\end{remark}

\begin{remark}\label{remarkDiagonalEmbedding}
For an element $g\in \cap_{i=1}^m(\End_D(R_i))^\times,$ i.e. a diagonal matrix,  $|m_g|$ induces an affine bijection of $|\Sigma |.$
If $g$ is $\diag(1,\ldots,1,\pi_D^k,1,\ldots,1),$ with $\pi_D^k$ in the i-th row, the map $|m_g|$ is of the form 
\[Q\mapsto Q+\frac{k}{f}(Q_{i+1}-Q_i),\]
where we set $Q_{m+1}:=Q_1.$
\end{remark}

\begin{example}\label{exampleSimplification}
Let us assume $E$ is the image of the diagonal embedding of $L_f$ in $\Matr_m(D),$ i.e.
\[E=\{(x,\ldots,x)|\ x\in L_f\}.\]
Then $B$ and $j_E$ simplify, i.e.
\be
\item $B=\End_{\Delta}(W)$ with $\Delta:=\Centr_{D}(L_f)$ and $W:=\bigoplus_iv_i\Delta$
\item The geometric realisation of $\Sigma$ is a subset of $\calli{I}^{E^\times}.$
\item For $[\Lambda]\in\calli{I}$ we have
\[ j_E([\Lambda])=[\Lambda\cap W]\]
where $\Lambda\cap W$ denotes the lattice function 
\[ x\mapsto \Lambda(x)\cap W.\]
\item The image of $j_E|_{|\Sigma|}$ is the geometric realisation of the apartment $\Sigma_E$ which belongs to the frame $\{v_i\Delta|\ 1\leq i\leq m\}$ and in affine coordinates the map has the form
\[x\in\mathbb{R}^{m-1}\mapsto \frac{1}{f}x\in\mathbb{R}^{m-1}.\]
\item The vertices of $\Sigma_E$ are the points of $|\Sigma_E|$ with affine coordinate vectors in $\mathbb{Z}^{m-1}.$ Specifically the points $P_i:=j_E(Q_i)$ are vertices of a chamber of $\Sigma_E.$
\item  The edge from $P_i$ to $P_{i+1}$ is oriented to $P_{i+1}.$ 
\ee
\end{example}

\begin{proof}[example]
To prove the statements of the example it is enough to calculate $j_E$ in terms of lattice functions, i.e to show 3. The statements then follow by similar and standard calculations.\\
For 2: We have $|\Sigma|\subseteq \calli{I}^{E^{\times}}$ because for an $o_D$-lattice function $\Lambda$ split by $\textfrak{R}$ the action of an element of $E^{\times}$ on $\Lambda$ is the multiplication of every lattice $\Lambda(t)$ by a fixed element $x\in D^{\times}.$ \\
For 3: We use the decomposition 
\[ V=\tens{W}{\Delta}{D}=W\oplus W\pi_D\oplus W\pi_D^2\oplus\ldots\oplus W\pi_D^{f-1}, \]
the function 
\[[\Gamma]\in\calli{I}^{E^{\times}}\mapsto [\Lambda]\in\calli{I}\]
with 
\[\Lambda(x):=\bigoplus_{i=0}^{f-1}\Gamma(x-\frac{i}{d})\pi_D^{i}\]
is affine and $B^{\times}$-equivariant. By \ref{thmBLII1.1} it has to be $j^{-1}_E$ and thus 
\[j_E([\Lambda])=[\Lambda\cap W].\]  
The appearence of $j_E$ in terms of coordinates follows now from
\[\textfrak{p}_D^{[x]+}\cap \Delta = \textfrak{p}_\Delta^{[\frac{[x]+}{f}]+}=\textfrak{p}_\Delta^{[\frac{x}{f}]+}.\]  
\end{proof}

\begin{proof}(of theorem \ref{thmconnection})
By lemma \ref{lemmaCommutativeDiagram} and by theorem \ref{thmPearlEmb}
we can assume that we are in the situation of the example \ref{exampleSimplification} above and that there is a diagonal matrix $h$ consisting of powers of $\pi_D$  with exponents in $\mathbb{N}_{f-1}\cup\{0\}$ such that 
\[(hEh^{-1},h\textfrak{a}h^{-1})\]
is the pearl embedding of $\lambda.$
We consider two cases for the proof.\\
\textbf{Case 1:} $\textfrak{a}$ has rank 1, i.e. 
\[h\textfrak{a}h^{-1}=\Matr_m(o_D)=Q_1\]
and $\lambda$ is only one column.
We get $\textfrak{a}$ from $Q_1$ by applying $m_{h^{-1}}$ which is a composition of 
maps $m_g$ where $g$ differs from the identity matrix by only one diagonal entry $\pi_D^k.$ Now remark \ref{remarkDiagonalEmbedding} gives
\[\textfrak{a}=Q_1-\sum_{j=1}^{m}\frac{a_j}{f}(Q_{j+1}-Q_j)\]
where $a_j:=k-1$ if 
\[\sum_{i=1}^{k-1}\lambda_{i}<j\leq\sum_{i=1}^{k}\lambda_{i}.\]
Thus in barycentric coordinates $j_E(M_\textfrak{a})$ has the form
\[\frac{f-a_m+a_1}{f}P_1+\frac{a_2-a_1}{f}P_2+\ldots +\frac{a_{m}-a_{m-1}}{f}P_m.\]
and therefore the vector
\[\mu:=(\frac{f-a_m+a_1}{f},\frac{a_2-a_1}{f},\ldots,\frac{a_{m}-a_{m-1}}{f})\]
fullfils part one of the theorem.
If $(\lambda_{i_l})_{1\leq l\leq s}$ is the subsequence of non-zero entries we define the indexes 
\[j_l:=\lambda_1+\ldots+\lambda_{i_{l-1}}+1\]
and $j_1:=1.$
This are the indexes where the $\mu_j$ are non-zero, more precisely from 
\[j_l=\sum_{i=1}^{i_l-1}\lambda_i+1\leq \sum_{i=1}^{i_l}\lambda_i\]
we obtain for $a_j$ the following values:
\[a_j=a_{j_l}=i_l-1,\ j_l\leq j<j_{l+1}\]
and 
\[a_j=a_{j_s}=i_s-1,\ j_s\leq j\leq m,\]
and thus the subsequence of non-zero entries of $f\mu$ is 
\[(f\mu_{j_l})=(f-i_{s}+i_1,i_2-i_1,i_3-i_2,\ldots,i_{s}-i_{s-1}).\]
Therefore we get for $\pairs(\langle f\mu\rangle )$ the expression
\[\langle (f-i_{s}+i_1,\lambda_{i_1}),(i_2-i_1,\lambda_{i_2}),(i_3-i_2,\lambda_{i_3}),\ldots,(i_{s}-i_{s-1},\lambda_{i_s})\rangle \]
and this is precisely $\langle \row(\lambda)\rangle ^c.$\\
\textbf{Case 2:} Assume the rank $r$ of $\textfrak{a}$ is not 1. Here we want to use rank reduction. We fix an unramified field extension $L'|F$ of degree $rd$ in an algebraic closure of $F.$
Denote by $D'$ a skewfield which is a central cyclic algebra over $F$ with maximal field $L'$ and an $L'$-normalising prime element $\pi_{D'},$ i.e.
\[D'=\bigoplus_{i=0}^{dr-1}L'\pi_{D'}^i,\] \[\pi_{D'}L'\pi_{D'}^{-1}=L',\ \text{and } \pi_{D'}^{d+r}=\pi_F.\]
The images of $L'_r,$ $L'_{rf}$ under the diagonal embedding of $L'$ in $\Matr_{m}(o_{D'})$ are denoted by $F',$ $E'$ respectively and the apartment of the Euclidean building $\Omega'$ of $GL_m(D')$ corresponding to the standard basis is denoted by $\Sigma',$ i.e.
we have a field tower
\[ E'\supseteq F'\supseteq F\]
and apartments $\Sigma',\ \Sigma'_{E'},\ \Sigma'_{F'}$
in the buildings $\mathcal{I}',\ \mathcal{I}'_{E'},\ \mathcal{I}'_{F'}$ respectively.
We then obtain a commutative diagram of bijections, where the lines are induced by isomorphisms of chamber complexes which preserve the orientation. 
\begin{equation*}
\xymatrix{
|\Sigma| \ar[r]^{{\equiv}_F} \ar[d]^{j_E} & |\Sigma'_{F'}| \ar[d]^{j_{E'}} \\ |\Sigma_E|\ar[r]^{{\equiv}_E} & |\Sigma'_{E'}|
}
\end{equation*}
The map ${\equiv}_F$ is given by
\[[x\mapsto \bigoplus_{i=0}^{m-1}v_i\textfrak{p}_D^{[d(x+\alpha_i)]+}]\mapsto [x\mapsto \bigoplus_{i=0}^{m-1}e_i\textfrak{p}_{\Centr_{D'}(L'_r)}^{[d(x+\alpha_i)]+}]\]
and ${\equiv}_E$ analogously. Here $(e_i)$ is the standard basis of $D'^{m}.$ Because of lemma \ref{lemmaChangingSkewfield} the map ${\equiv}_F$ preserves the embedding type and thus 
we can finish the proof by applying lemma \ref{LemRankReduction} on
\[\Sigma'\ra\Sigma'_{F'}\ra\Sigma'_{E'}.\]
More precisely, let $S_r$ be a face of rank $r$ in $\Sigma'_{F'}.$ Its barycenter has affine coordinates in $\frac{1}{r}\mathbb{Z}^{m-1}$ and therefore the preimage of it under $j_{F'}$ is a point $S_1$ with integer affine coeffitients, i.e. it corresponds to a vertex of $\mathcal{I}'.$ 
To emphasise the base field we write field extensions as the index of $j.$ Because of 
\[j_{E'|F'}(M_{S_r})=j_{E'|F'}(j_{F'|F}(S_1))=j_{E'|F}(S_1)\]
the theorem follows now from the rank reduction lemma and case 1.
\end{proof}
\part{Appendix, references and indexes}
\appendix
\chapter{The Weil-restriction}\label{appWeilRestriction}

Good references are \cite[1.3.]{weil:82} and \cite[20.5]{knus:98}

Let $E|F$ be a finite separable field extension, and let $V$ be an affine variety defined over $E.$
The functor
\[B\mapsto V(\tens{E}{F}{B})\]
from the category of commutative $F$-algebras to the category of sets is represented by an absolutely reduced finitely generated $F$-algebra $\tilde{A},$
see for example the proof in \cite[20.6]{knus:98}. The affine variety corresponding to $\tilde{A}$ is called \textit{Weil-restriction}\idxD{Weil-restriction} of $V$ from $E$ to $F$ and denoted by $\Res_{E|F}(V).$\idxS{$\Res_{E\mid F}(V)$}

Another way to construct the Weil-restriction is the following.
One introduces coordinates in choosing an $F$-basis in $E$ and the polynomial equations defining $V$ become polynomial equations with coeffitients in $F.$ The set of solutions of these equations is the Weil-restriction of $V$ from $E$ to $F$ and the map
\[e\in E\mapsto (\sigma(e))_\sigma\]
induces an isomorphism
\[\Res_{E|F}(V)\cong\prod_{\sigma:E\hra\bar{F}} V^\sigma\]
defined over the normal hull of $E.$ Here $\sigma$ runs over the set of $F$-algebra monomorphisms from $E$ into $\bar{F}.$
\chapter{The building of a valuated root datum}\label{appChBuildingOfAValuatedRootDatum}

The aim of this section is to give the definition of a valuated root datum and its building as it is done in \cite[chapter 6 and 7]{bruhatTitsI:72}. For the theory of root systems see
\cite[chapter IV]{bourbaki:81}.

Let $V$ be a finite dimensional $\mathbb{R}$-\vs\ and let $\Phi$ be a root system in $V^*.$ We denote its dual root system in $V$ by $\hat{\Phi}.$ The reflection $r_a$ of $V$ corresponding to $a\in \Phi$ is defined by
\[r_a(v):=v-a(v)\hat{a}.\] The Weyl-group of $\Phi,$ i.e. the group generated by all $r_a,\ a\in\Phi,$ is denoted by $\leftexp{v}{W}.$ We take an invariant positive definite inner product on $V$ and we get a canonical isomorphism from $V$ to $V^*$ via 
\[v\mapsto (v,*).\] It transfers the action of $\leftexp{v}{W}$ to $V^*.$ The Weyl-group stabilizes $\Phi$ and $\hat{\Phi}.$ The fixed point sets of the orthogonal reflections $r_a$ give a cell decomposition of $V$, see for example \cite[chapter 1]{brown:89}. The chambers of $V$ are in one to one correspondence to the bases of $\Phi.$ We fix a basis of $\Phi.$ Let $\Phi^+$ be the set of positive roots of $\Phi$ corresponding to this basis. 

\section{Valuated root datum}\label{appSecValuatedRootDatum}

\bdfn\cite[6.1.1]{bruhatTitsI:72}
Let $G$ be an arbitrary group. A system \[(T,(U_a,M_a)_{a\in\Phi})\] is a \textit{root} \textit{datum} \textit{of} \textit{type} $\Phi$ \textit{in} $G$\idxD{root datum} if the following holds.
\bi
\item (DR 1) The sets $T$ and $U_a$ are subgroups of $G.$ The groups $U_a$ are non-trivial.
\item (DR 2) For all roots $a,b$ the commutator group $[U_a,U_b]$ is a subset of the group generated by all $U_{na+mb}$ where $n$ and $m$ run over all natural numbers for which $na+mb$ is a root. 
\item (DR 3) If $a$ and $2a$ are elements of $\Phi$ then $U_{2a}$ is proper subset of $U_a.$
\item (DR 4) The set $M_a$ is a right coset of $T$ in $G$ and it holds
\[U_{-a}^*:=U_{-a}\setminus \{1\}\subseteq U_aM_aU_a.\]
\item (DR 5) For all roots $a$ and $b$ and all $m\in M_a$ on has $mU_{b}m^{-1}\subseteq U_{r_a(b)}.$
\item (DR 6) The group $U^+$ generated by all $U_a$ with positive root $a$ and the group $U^{-}$ generated by all the other $U_a$ have the property that the intersection of $T.U^{+}$ with $U^{-}$ is $\{1\}.$
\ei
A root datum of type $\Phi$ is \textit{generative} if $G$ is generated by the union of $T$ and all $U_a,\ a\in\Phi,$ i.e.
\[G=<T\cup \bigcup_{a\in\Phi} U_a>.\]
\edfn

\brem\label{remRootDatum}\cite[6.1.2(4),(9),(10)]{bruhatTitsI:72}
Given a root datum the cosets $M_a,$ $M_{-a}$ and $M_a^{-1}$ equal and are determined by $(T,(U_a)_{a\in\Phi}).$ 
Let $N$ be the group generated by $T$ and all $M_a.$ There is a group epimorphism
\[\leftexp{v}{\mu}:\ N\ra \leftexp{v}{W}\] such that 
\[nU_an^{-1}=U_{\leftexp{v}{\mu}(n)}.\]
One has for example $\mu(M_a)=\{r_a\}.$
\erem

The initials DR stand for "donn\'ee radicielle" the name given in \cite{bruhatTitsI:72}. 
Such a root datum can be defined for any reductive group defined over $k.$ 

\bex \cite[6.1.3]{bruhatTitsI:72}\label{ExRootDatum}
Let $k$ be a field.

Step 1: The group $\SL_2(k)$ has a generative root datum of type $A_1$
\[(T,M_1,M_{-1},U_1,U_{-1})\]
where $T$ is the set of diagonal matrices, $U_1$ \/ (resp. $U_{-1}$) the set of unitary upper (resp. lower) triangular matrices and $M_1$ the set of antidiagonal matrices in $\SL_2(k).$

Step 2: Let $G$ be a split, affine, connected and simple group defined over $k.$ We fix a maximal $k$-split torus $T$ of $G.$ Let $\Phi$ be the set of roots $\Phi(G,T)_k$ of the action of $T(k)$ on $\Lie(G)(k).$ By \cite[18.7]{borel:91} there is a unique family of unipotent connected closed $k$-subgroups $(U_a)_{a\in\Phi}$ of $G$ such that there are $k$-isomorphisms
\[\theta_a:\ \bA^1(\bar{k})\ra U_a\] satisfying
\[\Inn(t)\circ\theta_a(x)=\theta_a(a(t)x)\text{ for all } x\in\bA^1(\bar{k}),\ t\in T.\]
One can choose the maps $\theta_a$ such that following two assertions hold.
\bi
\item For every root $a$ there is an isomorphism from $SL_2(\bar{k})$ to the subgroup generated by $U_a$ and $U_{-a}$ which maps the upper and the lower triangular unitary matrix with non-diagonal entry $u$ to $\theta_a(u)$ and $\theta_{-a}(u)$ respectively.  
\item For every pair of roots $a$ and $b$ such that $a\neq -b$ there is a family of integers $(C_{a,b,n,m})$ such that 
\[[\theta_a(u),\theta_b(u')]=\prod_{n,m}\theta_{na+mb}(C_{a,b,n,m}u^nu'^m),\ u,\ u'\in \bA^1(\bar{k})\]
where $n$ and $m$ run over the natural numbers with $na+mb \in \Phi.$ 
\ei
The system $(T(k),(U_a(k))_{a\in \Phi})$ is part of a generative root datum of type $\Phi$ in $G(K).$

Step 3: Let $G$ be a split semi-simple connected group defined over $k.$ Then by \cite[22.10]{borel:91} $G$ is 
an almost direct product of the minimial closed connected normal $k$-subgroups $G_i$ of $G$ of strictly positive dimension.
A maximal $k$-split torus $T$ of $G$ is the image of a maximal $k$-split torus $\prod T_i$
of $\prod_iG_i$ where $T_i$ is a maximal $k$-split torus of $G_i$ because the canonical isogeny
\[f:\prod_iG_i\ra G\] is central, see \cite[22.9, 22.6]{borel:91}. 
$f$ is also separable, i.e. the differential $d_ef$ is surjective, and therefore we have 
\[f(\prod_iG_i(k))=G(k)\]
The map $d_ef$ is in fact the isomorphism 
\[\oplus_i\Lie(G_i)\cong\Lie(G)\]
and taking $\prod_i(T_i)$-fixed points on the left and $T$-fixed points on the right side we obtain 
\[\oplus_i\Lie(T_i)\cong\Lie(T).\] Thus 
\[\prod_iT_i\ra T\] is separable and we obtain
\[f(\prod_iT_i(k))=T(k).\] We now take for every $i$ a root datum 
\[(T_i(k),(M_a(k),U_a(k))_{a\in\Phi(G_i,T_i)})\] as done in $G_i$ by step 2.
We now apply $f$ on the product of the root data and we obtain a generative root datum
\[(T(k),(M_a(k),U_a(k))_{a\in\Phi(G,T)})\]
of type $\Phi(G,T)$ in $G(k).$

Step 4: We assume now that $G$ is an affine reductive group defined over $k.$ Then the group 
$G^0/\Rad(G)$ is affine, connected, semisimple and defined over $k$ by \cite[18.2(ii),Prop. 11.21, 6.8]{borel:91}. Thus we can assume that $G$ is semisimple and connected.
Let $T$ be a maximal $k$-split torus of $G.$ One can choose a maximal torus $T'$ of $G$ which is defined over $k$ and contains $T$ by the remark below. $T'$ is split over a finite separable extention $k'$ of $k.$ We take $\Phi':=\Phi(G,T')$ and the groups $U'_a,\ a\in\Phi',$ obtained from step 2. For $a\in\Phi:=\Phi(G,T)_k$ we define $U_a$ as the closed subgroup of $G$ generated by all
$U_{a'}$ where $a$ is the restriction of $a'$ to $T.$ The tupel
\[(\Centr_G(T)(k),(U_a(k))_{a\in\Phi})\]
is part of a generative root datum of type $\Phi$ of $G(k).$
\eex

\brem
If $G$ is a connected reductive $k$-group and $T$ is a maximal $k$-split torus of $G.$ We can choose a maximal torus of $G$ containing $T$ which is defined over $k.$
\erem

\bproof
A maximal torus $S$ containing $T$ is split over $k^{sep}$ and is a subset of $H:=\Centr_G(T)^0,$
which is normalized by $S$ and $k$-closed. Thus by \cite[20.3]{borel:91} $H$ is defined over $k^{sep}.$ The separability of $k^{sep}|k$ implies that $H$ is defined over $k.$ The theorem 
\cite[18.2(i)]{borel:91} ensures the existence of a maximal torus of $H$ which is defined over $k.$ This torus contains $T$ and it is a maximal torus of $G$ because it is conjugated to $S$
in $H.$
\eproof
\vspace{1em}

We now come to the definition of a valuation of a root datum.
\bass\label{assRootDatum}
Let 
\[RD:=(T,(U_a,M_a)_{a\in\Phi})\]
be a generative root datum of type $\Phi$ of a group $G.$ We put $U_{2a}:=\{1\}$
if $a\in\Phi$ and $2a\not\in\Phi.$
\eass

\bdfn\cite[6.2.1]{bruhatTitsI:72}\label{defValuationOfARootDatum}
A \textit{valuation}\idxD{valuation of a root datum} of $RD$ is a family $\phi=(\phi_a)_{a\in\Phi}$ of maps 
\[\phi_a:\ U_a\ra \mathbb{R}\cup \{\infty \} \]
such that the following contitions hold.
\be
\item (V0) For every $a$ the image of $\phi_a$ has at least 3 elements.
\item (V1) For every $a$ and for every $r\in\mathbb{R}\cup\{\infty\}$ the set 
\[U_{a,r}:=\phi_a^{-1}([r,\infty])\] is a subgroup of $U_a$ and $U_{a,\infty}$ is trivial.
For the images one writes
\[\Gamma_a:=\phi_a(U^*_a)\text{ and }\Gamma_a' :=\{\phi_a(u)\mid u\in U^*_a,\phi_a(u)=\sup\phi_a(uU_{2a})\}.\]
\item (V2) For every $a$ and for every $m\in M_a,$ the function 
\[u\mapsto \phi_{-a}(u)-\phi_a(mum^{-1})\] is constant on $U_{-a}^*.$
\item (V3) For $a,b\in\Phi$ with $b\not\in -\mathbb{R}_{+}a$ and $k,l\in\mathbb{R}$ the commutator group
$[U_{a,k},U_{b,l}]$ is contained in the group generated by $U_{pa+qb,pk+ql},\ p,q\in \mathbb{N}$ with
$pa+qb\in \Phi.$
\item (V4) If $a$ and $2a$ are in $\Phi$ the map $\phi_{2a}$ is the restriction of $2\phi_a$ on $U_{2a}.$
\item (V5) If $a\in \Phi,\ u\in U_a$ and $u',u''\in U_{-a}$ such that $u'uu''\in M_a$ then 
\[\phi_a(u)=-\phi_{-a}(u').\]
\ee
\edfn

\brem
One has $\Gamma'_a=\Gamma_a$ if $2a$ is not in $\Phi.$
\erem

\bdfn
A valuation $\phi$ is \textit{discret}\idxD{discret valuation} if every group $\Gamma_a$ is a discret subgroup of $\bbR.$
\edfn

If $(k,\nu)$ is a non-Archimedian local field there is a valuation of the root datum of  
\ref{ExRootDatum} by \cite[6.2.3 and chapter 10]{bruhatTitsI:72}.
For $\SL_2(k)$ it is given by
\[\phi_1\left(\zzmatrix{1}{u}{0}{1}\right):=\phi_{-1}\left(\zzmatrix{1}{0}{u}{1}\right):=\nu(u).\]
and in the case of a split, semisimple $k$-group by
\[\phi_a(\theta_a(t)):=\nu(t).\]
These valuations and the valuations considered in part 1 of this thesis are discret.

\brem
Let $\phi$ be a valuation of $RD$ and let $\lambda:\ \Phi\ra \mathbb{R}_+^*$ be a function which is constant on the irreducible components of $\Phi.$ For a vector $v\in V$
the family $\psi:=\lambda \phi+v$ defined by
\[u\mapsto \lambda(a)\phi_a(u)+a(v)\]
is again a valuation of the root datum. If $\lambda$ is the constant function 1, then one says that 
$\psi$ is a \textit{translation}\idxD{translation of a valuation of a root datum} of $\phi$ by $v.$
\erem

\bdfn\cite[6.2.5]{bruhatTitsI:72}
Two valuations are \textit{equipollent}\idxD{equipollent valuations of a root datum}\/ if the second is a translation of the first by a vector of $V.$
The group $N$ acts on the set of valuations of $RD$ in the following way.
If $w$ is the element $\leftexp{v}{\mu}(n)$ for some $n\in N$ one puts
\[(n.\phi)_a(u):=\phi_{w^{-1}(a)}(n^{-1}un).\]
For $n\in N,\ v\in V$ and $\lambda:\Phi\ra \mathbb{R}$ on has 
\[n.(\lambda\phi+v)=\lambda (n.\phi)+\leftexp{v}{\nu}(n)(v).\]
\edfn 

\section{Building of a valuated root datum}\cite[chapter 7]{bruhatTitsI:72}\label{secBuildingOfAValuatedRootDatum}

\bass
In addition to assumption \ref{assRootDatum} we fix a valuation $\phi$ of $RD.$
\eass

Let $\Delta$ be the set of valuations of $RD$ equipollent to $\phi.$ It is an affine space over $V$ and we identify $\Delta$ and $V$ in choosing $\phi$ as the zero of $\Delta.$ The action of $N$ on the set of valuations of $RD$ restricts to an action of $\Delta$ and it defines a map $\nu$ from $N$ to the group of affine automorphisms of $\Delta.$ Its kernel is denoted by $H.$
The set of affine roots of $\Phi$ in $\Delta$ is the collection of the closed halfspaces 
\[\alpha_{a,k}:=\{x\in \calli{A}\mid \ a(x)+k\geq 0\}, a\in\Phi, k\in\Gamma_a'.\] The set of affine roots is denoted by $\Sigma.$ We put $U_{\alpha_{a,k}}:=U_{a,k}$ 
For a non-empty subset $S$ of $\Delta$ one defines 
\bi
\item $U_S$ to be the group generated by the $U_{\alpha}$ where $\alpha$ runs over the affine roots containing $S,$ and
\item $P_S:=HU_S.$
\ei
  
The \textit{Bruhat-Tits building $\calli{F}$ of $G$ with respect to $RD$ and $\phi$} is the set of equi\-valence classes of $G\times \Delta$ under the relation
\[(g,x)\sim (h,y)\text{ if and only if there exists an }n\in N\]
such that 
\[y=\nu(n)(x)\text{ and }g^{-1}hn\in P_{\{x\}}.\]
It is a $G$-set under the action on the first coordinate.
A subset $\Delta'$ of $\calli{F}$ is said to be an apartment of $\calli{F}$ if there is an element $g$ of $G$ such that $\Delta'=g\Delta.$

This definition does not need that $\phi$ is discrete. For the description of the faces we assume that $\phi$ is discrete, for the description in the general case see 
\cite[7.2]{bruhatTitsI:72}.
The faces of $\Delta$ are the cells of the cell decomposition given by the hyperplanes which are boundaries of affine roots, see \cite[chapter 1]{brown:89}. A subset $S$ of $\calli{F}$ is a \textit{face}\idxD{face} if there is a face $S'$ of $\Delta$ and an element $g$ of $G$ such that $S'=gS.$
\chapter{The enlarged building of a reductive group}\label{chapEnlargedBuildingOfAReductiveGroup}

\bass
In this section $k$ is a non-Archimedean local field with residual characteristic different from 2.
\eass

We follow the explaination in \cite[4.2.16]{bruhatTitsII:84}
The buildings introduced in \cite{corvallisTits:79} are already enlarged. 

Let $G$ be a connected affine reductive group defined over $k$ together with a Bruhat-Tits building $\calli{F}.$ Let $X^*(G)_k$ be the group of characters of $G$ defined over $k$ and let $V^1$ be the dual $\bbR$-vector space of $\tens{X^*(G)_k}{\bbZ}{\bbR}.$ The \textit{enlaged affine building of $(G,\calli{F})$}\idxD{enlarged building} is the set $\calli{F}\times V^1$ equipped with the $G(k)$-action 
\[g.(x,v):=(g.x,v+\theta(g))\]
where $\theta(g)$ is defined by 
\[\theta(g)(\chi):=-\nu(\chi(g)).\]
Apartments and faces carry over from $\calli{F}$ to $\calli{F}^1$ in the 
natural way. 

\begin{definition}
We say that there exists a proper enlarged building over $k$ if $X^*(G)_k$ is not trivial.
\end{definition}

We now discuss the cases where an enlarged building occurs in 
the case of the classical groups considered in this thesis. We fix a hermitian $k$-datum
\[((A,V,D),\rho,k_0,h,\epsilon,\sigma)\]
and we analyse below when $X^*(\bSU(h))_{k_0}$ or $X^*(\bU(h)^0)_{k_0}$ are trivial. 

\begin{theorem}\cite[Cor. 14.2]{borel:91}\label{thmtrkCharGp}
Let $G$ be an affine reductive group.
Then the following conditions are equivalent.
\be
\item The group is semisimple, i.e. the maximal normal connected solvable subgroup $\calli{R}(G)$ is trivial.
\item The connected component equals its commutator subgroup.
\item The center of $G^0$ is finite.
\ee 
\end{theorem}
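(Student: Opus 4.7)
The plan is to prove all three equivalences by passing through a fourth equivalent condition: the connected center $Z(G^0)^0$ is trivial. The backbone is the structure theorem for connected reductive groups (see e.g.\ \cite[14.2]{borel:91}): if $H$ is connected reductive, then $H=Z(H)^0\cdot[H,H]$ is an almost direct product, where $Z(H)^0$ is a central torus and $[H,H]$ is connected semisimple. Applied to $G^0$, which is also reductive, this gives the dimension identity
\[
\dim G^0=\dim Z(G^0)^0+\dim[G^0,G^0].
\]
Moreover, since $G$ is reductive the unipotent radical vanishes, so the radical $\calli{R}(G)$ (being a connected normal solvable subgroup of $G^0$, hence of $Z(G^0)^0$ after checking it is central in a reductive group) coincides with $Z(G^0)^0$. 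This last identification is the crux and will be invoked as a standing fact.

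First I would establish $(1)\Leftrightarrow(3)$: by the preceding, $\calli{R}(G)=Z(G^0)^0$, so $\calli{R}(G)=\{e\}$ iff $Z(G^0)^0=\{e\}$ iff the abelian algebraic group $Z(G^0)$ is finite (a commutative algebraic group is finite exactly when its identity component is trivial). Next I would establish $(2)\Leftrightarrow(3)$ using the almost direct product: if $Z(G^0)^0=\{e\}$, then $G^0=[G^0,G^0]$ follows immediately from the structure theorem; conversely, if $G^0=[G^0,G^0]$, then in the dimension formula $\dim Z(G^0)^0=0$, so $Z(G^0)^0$ is trivial, and again $Z(G^0)$ is finite.

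The main step to justify is the characterization $\calli{R}(G)=Z(G^0)^0$ for reductive $G$, i.e.\ that every connected normal solvable subgroup of a reductive group is a central torus. This is where the reductivity hypothesis is actually used (through the triviality of the unipotent radical and the conjugacy/rigidity of maximal tori), and it is the technical heart of the argument; I would cite \cite[11.21]{borel:91} or the analogous statement in \cite[14.2]{borel:91} rather than reprove it. The remaining implications, as indicated above, are then formal consequences of the almost direct product decomposition together with the elementary observation about finiteness of commutative algebraic groups; no further calculation is needed.
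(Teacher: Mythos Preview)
The paper does not supply its own proof of this theorem; it merely cites it as \cite[Cor.~14.2]{borel:91} and moves on. Your sketch is a correct outline of the standard argument one finds in Borel: identify $\calli{R}(G)=Z(G^0)^0$ using reductivity and rigidity of tori, then read off all three equivalences from the almost-direct-product decomposition $G^0=Z(G^0)^0\cdot[G^0,G^0]$. There is nothing to compare against here, and nothing wrong with what you wrote.
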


\begin{remark}
A semisimple connected group equals to its commutator subgroup which implies the triviality of the character group. Examples for semisimple connected groups are 
$\bSL_n(\bar{k}),$ $\bSp_{2n}(\bar{k}),$ and $\bSO_{n'}(\bar{k})$ for $n,n'\geq 1$ except $n'= 2.$ The connectivity is seen using transvections and the semisimplicity follows because these groups are generated by the images of connected subsets of $\bSL_2(\bar{k}).$
By proposition \ref{propHermForms} we obtain a trivial character group for $\bSU(h)$
if $\sigma$ is
\bi
\item symplectic, or
\item orthogonal and $md\neq 2$, or 
\item unitary.
\ei
\end{remark}

We firstly analyse the unitary case.

\blem\label{lemXUnIsTrivial}
The group $X^*(\bU(h))_{k_0}$ is trivial if $\sigma$ is unitary.
\elem

\bproof
We have $D=k$ by theorem \ref{thmDScharlau}. Using the isomorphism 
\[(\tens{\End_k(V)}{k_0}{\bar{k}},\tens{\sigma}{k_0}{\bar{k}})\cong (\Matr_m(\bar{k})\times\Matr_m(\bar{k}),\tilde{\sigma})\]
with 
\[\tilde{\sigma}(B,C)=(C^T,B^T)\]
we obtain for a $k_0$-rational character $\chi$ of $\bU(h)$ that its restriction to $\U(h)$ must be 
a power of the determinant. The involution $\sigma$ is conjugated to the transpostion which implies 
\[\chi(x)=\chi(\sigma(x))\] for all 
$x\in\U(h).$ 
In addition $\sigma(x)$ is the inverse of $x$ for $x\in\U(h).$ Thus
the only possible values of $\chi$ on $\U(h)$ are 1 and $-1.$
Thus $\chi$ is trivial because $\bU(h)$ is connected and $\U(h)$ is Zariski-dense in $\bU(h)$ by \cite[18.3]{borel:91}. 
\eproof

\blem
Let $\sigma$ be orthogonal and assume $dm=2.$
There exist a proper enlarged Bruhat-Tits-building for $\bSU(h)$ over $k$ if and only if  $d=1$ and $h$ is isotropic. 
\elem

Before we start the proof we recall that the \textit{$k$-rank}\idxD{$k$-rank of a reductive group} of a reductive connected $k$-group is the dimension of a maximal $k$-split torus.

\bproof
We have $\bSU(h)\cong \bGm(\bar{k})$ defined over $k$ if $d$ is one and $h$ is isotropic, i.e. all characters are $k$-rational and the character group is free of rank one.

If $d=2$ there is an isomorphism from $\bSU(h)$ to $\bGm(\bar{k})$ defined over $\bar{k}$ but not over $k$ because of the different $k$-ranks. There is an element $a\in\SU(h)\setminus\{1,-1\}$ because $\SU(h)$ is Zariski-dense in $\bSU(h)$ by \cite[18.3]{borel:91}. The degree of $D$ over $k$ is $2$ and therefore the centralizer of $k[a]$ in $D$ is $k[a],$ especially the commutative group $\SU(h)$ is a subset of $k[a]$. In addition $k[a]$ is invariant under $\sigma.$ Thus we can apply lemma \ref{lemXUnIsTrivial} and we obtain that there is no polynomial multiplicative map from $\SU(h)$ to $\bGm(k).$

For the last part of the proof we assume that $d=1$ and $\bSU(h)$ is anisotropic. There is a $k$-basis of $V$ such that the Gram-matrix of 
$h$ is of the form 
\[\zzmatrix{e}{0}{0}{f}\]
and we identify $A$ with $\Matr_2(k).$ A short calculation shows that 
\[\bSU(h)=\left\{\zzmatrix{a}{cf}{-ce}{a}\mid a,c\in\bar{k}\ s.t.\ a^2+efc^2=1\right\}.\]
We fix square roots $\sqrt{e}$ and $\sqrt{-f}.$
The conjugation with
\[\zzmatrix{\sqrt{e}}{\sqrt{-f}}{\frac{\sqrt{e}}{2}}{-\frac{\sqrt{-f}}{2}}\]
maps $\bSU(h)$ to $\SU(\tilde{h})$ where the Gram-matrix of $\tilde{h}$ under the standart basis is 
\[\zzmatrix{0}{1}{1}{0}.\]
The explicit formula for the map is 
\[\zzmatrix{a}{cf}{-ce}{a}\mapsto \zzmatrix{a-c\sqrt{-ef}}{0}{0}{a+c\sqrt{-ef}}.\]
Thus a character of $\bSU(h)$ is of the form 
\[\zzmatrix{a}{cf}{-ce}{a}\mapsto (a+c\sqrt{-ef})^z\]
for some integer $z.$ The inverse of $(a+c\sqrt{-ef})$ is $(a-c\sqrt{-ef}).$ If $z$ is positive in the binomial expansion of $(a+c\sqrt{-ef})$ the coeffitient in front of $\sqrt{-ef}$ is zero because $\sqrt{-ef}\notin k$ because $h$ is anisotropic. Thus a $k$-rational character $\chi$ of $\bSU(h)$ fulfils 
\[\chi(x)=\chi(x^{-1})\]
for all $x\in \SU(h).$ The density of $\SU(h)$ in $\bSU(h)$ and the connectivity of $\bSU(h)$ imply that $\chi$ is trivial.
\eproof
\vspace{1em}

If we summarize the two lemmas and the remark we obtain the following proposition.

\bprop\label{propExEnlargedBTBClassGrps}
$X^*(\bSU(h))_{k_0}\neq 1$ if and only if $m=2$ and $d=1$ and $\sigma$ is orthogonal and $h$ is isotropic. If $\sigma$ is unitary there is no nontrivial $k$-rational character of $\bU(h).$ 
\eprop \cleardoublepage
\phantomsection
\addcontentsline{toc}{chapter}{References}
\renewcommand{\bibname}{References}
\bibliographystyle{alphadin}
\bibliography{bibliography}

\chapter*{Indexes}
\phantomsection
\addcontentsline{toc}{chapter}{Indexes}
\printindex*
\end{document}